\newtheorem{theorem}{Theorem}[section]
\newtheorem{lemma}[theorem]{Lemma}
\newtheorem{corollary}[theorem]{Corollary}
\newtheorem{conjecture}[theorem]{Conjecture}
\theoremstyle{definition}
\newtheorem{definition}[theorem]{Definition}
\newtheorem{example}[theorem]{Example}
\newtheorem{remark}[theorem]{Remark}
\newtheorem{condition}[theorem]{Condition}
\newtheorem{assumption}[theorem]{Assumption}
\newtheorem{Notation}[theorem]{Notation}
\numberwithin{equation}{section}
\newcommand{\CC}{\mathbb C}
\newcommand{\HH}{\mathbb H}
\newcommand{\NN}{\mathbb N}
\newcommand{\cD}{\mathcal D}
\newcommand{\cA}{\mathcal A}
\newcommand{\cH}{\mathcal H}
\newcommand{\cE}{\mathcal E}
\newcommand{\PP}{\mathbb P}
\newcommand{\QQ}{\mathbb Q}
\newcommand{\RR}{\mathbb R}
\newcommand{\ZZ}{\mathbb Z}
\newcommand{\F}{\mathbb F}
\newcommand{\GL}{\mathop{\mathrm {GL}}\nolimits}
\newcommand{\SL}{\mathop{\mathrm {SL}}\nolimits}
\newcommand{\Sp}{\mathop{\mathrm {Sp}}\nolimits}
\newcommand{\Orth}{\mathop{\null\mathrm {O}}\nolimits}
\newcommand{\U}{\mathop{\null\mathrm {U}}\nolimits}
\newcommand{\im}{\mathop{\mathrm {Im}}\nolimits}
\newcommand{\rk}{\mathop{\mathrm {rk}}\nolimits}
\newcommand{\latt}[1]{{\langle{#1}\rangle}}
\newcommand{\ord}{\mathop{\mathrm {ord}}\nolimits}
\newcommand{\orb}{\mathop{\mathrm {orb}}\nolimits}
\def\Grit{\operatorname{Grit}}
\def\Borch{\operatorname{Borch}}
\def\Div{\operatorname{div}}
\def\dim{\operatorname{dim}}
\def\det{\operatorname{det}}
\def\w{\operatorname{w}}
\newenvironment{psmallmatrix}
  {\left(\begin{smallmatrix}}
{\end{smallmatrix}\right)}
\begin{document}

\title[Modular forms with poles on hyperplane arrangements]{Modular forms with poles on hyperplane arrangements}

\author{Haowu Wang}

\address{Center for Geometry and Physics, Institute for Basic Science (IBS), Pohang 37673, Korea}

\email{haowu.wangmath@gmail.com}

\author{Brandon Williams}

\address{Lehrstuhl A für Mathematik, RWTH Aachen, 52056 Aachen, Germany}

\email{brandon.williams@matha.rwth-aachen.de}

\subjclass[2020]{11F55, 32S22, 17B22, 11F50, 11F46}

\date{\today}

\keywords{Modular forms on symmetric domains, Orthogonal groups, Unitary groups, Hyperplane arrangements, Looijenga compactification, Root systems, Jacobi forms, Theta blocks conjecture}

\begin{abstract} 
We study algebras of meromorphic modular forms whose poles lie on Heegner divisors for orthogonal and unitary groups associated to root lattices. We give a uniform construction of $147$ hyperplane arrangements on type IV symmetric domains for which the algebras of modular forms with constrained poles are free and therefore the Looijenga compactifications of the arrangement complements are weighted projective spaces. We also construct $8$ free algebras of modular forms on complex balls with poles on hyperplane arrangements. The most striking example is the discriminant kernel of the $2U\oplus D_{11}$ lattice, which admits a free algebra on $14$ meromorphic generators. Along the way, we determine minimal systems of generators for non-free algebras of orthogonal modular forms for $26$ reducible root lattices and prove the modularity of formal Fourier--Jacobi series associated to them. By exploiting an identity between weight one singular additive and multiplicative lifts on $2U\oplus D_{11}$, we prove that the additive lift of any (possibly weak) theta block of positive weight and $q$-order one is a Borcherds product; the special case of holomorphic theta blocks of one elliptic variable is the theta block conjecture of Gritsenko, Poor and Yuen.  
\end{abstract}

\maketitle

\begin{small}
\tableofcontents
\end{small}

\addtocontents{toc}{\setcounter{tocdepth}{1}} 

\section{Introduction}
\subsection{Modular forms with poles on hyperplane arrangements}
Geometric invariant theory provides a means of constructing and compactifying many interesting moduli spaces. On the other hand, global Torelli theorems often yield identifications of moduli spaces with locally symmetric varieties under the Hodge-theoretic period map. It is natural to compare the GIT-compactifications of these moduli spaces with the Baily--Borel compactifications of the related arithmetic quotients. The moduli space of polarized $\mathrm{K3}$ surfaces of a fixed degree can be identified with the quotient of a symmetric domain of type IV and dimension $19$ by an arithmetic group, and this identification extends to the respective compactifications. Similar identifications also hold for the moduli spaces of Enriques surfaces and Del Pezzo surfaces. 
However, for many interesting geometric objects, the image of the moduli space under the period map is not the full quotient of a Hermitian symmetric domain but rather the complement of a hyperplane arrangement.

In 2003, Looijenga \cite{Loo03a, Loo03b} constructed a compactification for the complements of certain hyperplane arrangements in arithmetic quotients of complex balls and type IV symmetric domains. The Looijenga compactification is, roughly speaking, an interpolation between the Baily--Borel and toroidal compactifications, and it coincides with the Baily--Borel compactification if the hyperplane arrangement is empty. For a number of moduli spaces, it has been proved that the GIT compactifications are identified with the Looijenga compactifications via an extension of the period map; for example, the moduli spaces of quartic curves \cite{Kon00}, genus four curves \cite{Kon02}, rational elliptic surfaces \cite{HL02}, cubic surfaces \cite{ACT02}, cubic threefolds \cite{ACT11, LS07} and cubic fourfolds \cite{Loo09, Laz10}.

Let $\cD$ be a Hermitian symmetric domain and let $\Gamma$ be a congruence subgroup acting on $\cD$.  The Baily--Borel compactification of $\cD / \Gamma$ can be understood as the $\mathrm{Proj}$ of the algebra of holomorphic modular forms on $\cD$ for $\Gamma$ \cite{BB66}. Let $\mathcal{H}$ be a $\Gamma$-invariant arrangement of hyperplanes on $\cD$. Assume that $\mathcal{H}$ satisfies the \emph{Looijenga condition} which guarantees in particular that a generalized Koecher's principle holds; that is, the algebra $M_*^!(\Gamma)$ of modular forms on $\cD$ for $\Gamma$ with poles contained in $\mathcal{H}$ is generated by forms of positive weight.  Looijenga proved that $M_*^!(\Gamma)$ is finitely generated and its $\mathrm{Proj}$ characterizes the Looijenga compactification of the complement $(\cD - \mathcal{H}) / \Gamma$.

Holomorphic modular forms on symmetric domains have a very rich theory. The study of (holomorphic) modular forms on orthogonal groups has important applications to infinite-dimensional Lie algebras and birational geometry; for example, the classification of generalized Kac--Moody algebras \cite{Bor95, GN98, Sch06}, the proof that moduli spaces of polarized $\mathrm{K3}$ surfaces of degree larger than $122$ are of general type \cite{GHS07}, and the proof of the finiteness of orthogonal modular varieties not of general type \cite{Ma18}. Modular forms with singularities on hyperplane arrangements, by contrast, have attracted less attention. There are only a few explicit structure theorems for these algebras in the literature. In this paper we investigate the automorphic side of the Looijenga compactification, and especially the construction of free algebras of meromorphic modular forms for which these compactifications are simple weighted projective spaces.  

In \cite{Wan21a} the first named author found necessary and sufficient conditions for an algebra of modular forms on $\Orth(l, 2)$ to be free. These conditions rely on the Jacobian of a set of potential generators having simple zeros exactly on the mirrors of reflections in the modular group. By Bruinier's converse theorem \cite{Bru02, Bru14} such a Jacobian must be a Borcherds product \cite{Bor98}. The necessary condition yields an explicit classification of free algebras of orthogonal modular forms \cite{Wan21a}.  Using the sufficient part of the criterion, we constructed a number of free algebras of orthogonal modular forms \cite{Wan21RNT, WW20b, WW20c}. 
In \cite{WW21a} we extended this approach to modular forms on complex balls attached to unitary groups of signature $(l,1)$. In this paper we further extend this argument to modular forms with singularities on hyperplane arrangements. Our first main theorem is

\begin{theorem}\label{MTH1}
Let $\cD_l$ be a symmetric domain of type IV and dimension $l\geq 3$ or a complex ball of dimension $l\geq 2$. Let $\Gamma$ be a congruence subgroup of $\Orth(l,2)$ or $\U(l,1)$ acting on $\cD_l$. Let $\cH$ be a $\Gamma$-invariant arrangement of hyperplanes satisfying the Looijenga condition. Then the algebra of modular forms on $\cD$ for $\Gamma$ with poles supported on $\mathcal{H}$ is freely generated by $l+1$ forms if and only if the Jacobian $J$ of the $l+1$ potential generators vanishes with multiplicity $d_\sigma-1$ on mirrors of reflections $\sigma$ in $\Gamma$ which are not contained in $\mathcal{H}$, and the other zeros and poles of $J$ are contained in $\mathcal{H}$, where $d_\sigma$ is the order of  $\sigma$. 
\end{theorem}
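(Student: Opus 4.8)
The plan is to reduce the theorem to the known free-structure criterion for holomorphic orthogonal (and unitary) modular forms, which is already available from \cite{Wan21a, WW21a}, by passing to a suitable holomorphic setting. The key observation is that a modular form with poles supported on $\cH$ becomes holomorphic after multiplication by an appropriate power of a Borcherds product whose divisor is exactly the arrangement $\cH$ (with the correct multiplicities). The Looijenga condition is precisely what guarantees the existence of such a product, or at least of enough modular forms vanishing to high order along $\cH$; so the first step is to fix, once and for all, a modular form $\Phi$ (ideally a Borcherds product) with $\Div \Phi = \cH$, and to describe $M_*^!(\Gamma)$ as the localization $M_*(\Gamma)[\Phi^{-1}]$ up to the graded pieces controlled by the Koecher-type principle that $M_*^!(\Gamma)$ is generated in positive weight.

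Next I would set up the Jacobian criterion in the meromorphic setting. Given $l+1$ candidate generators $f_0,\dots,f_l$ of $M_*^!(\Gamma)$ of weights $k_0,\dots,k_l$, form their Jacobian $J = J(f_0,\dots,f_l)$ with respect to affine coordinates on the cone over $\cD_l$; this is a meromorphic modular form of weight $k_0+\dots+k_l+l$ (with the usual character twist by $\det$), and it transforms with a sign under reflections, forcing it to vanish along every reflection mirror $\sigma$ not contained in $\cH$ to order at least $d_\sigma-1$. The forward implication is then the statement that if $M_*^!(\Gamma)$ is free on $f_0,\dots,f_l$, the only zeros and poles of $J$ come from these forced mirror contributions and from $\cH$: one argues that the map $\cD_l \dashrightarrow \PP(k_0,\dots,k_l)$ given by the $f_i$ is, away from $\cH$, a finite quasi-homogeneous map whose branch locus is exactly the reflection arrangement, exactly as in the holomorphic case, using that freeness identifies $M_*^!(\Gamma)$ with a polynomial ring and hence the Looijenga compactification with a weighted projective space.

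For the converse — which I expect to be the main obstacle — I would argue as follows. Assume $J$ has the stated divisor. Multiplying through by a high power $\Phi^N$ of the fixed arrangement form clears all poles: $g_i := \Phi^{N} f_i \in M_*(\Gamma)$ for suitable $N$, and the Jacobian of the $g_i$ differs from $\Phi^{(l+1)N} J$ by a Jacobian-type factor involving $\Phi$ and the $f_i$ (the usual multiplicativity of the Wronskian/Jacobian under common factors), so its divisor is still controlled. However the $g_i$ need not generate $M_*(\Gamma)$, and $M_*(\Gamma)$ need not be free, so the holomorphic criterion does not apply verbatim; instead I would work directly. The heart of the matter is to show that the $f_i$ are algebraically independent and that every meromorphic modular form with poles on $\cH$ is a polynomial in them. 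Algebraic independence follows because $J \not\equiv 0$. For generation, I would use the standard dimension/Hilbert-series bootstrap: the hypothesis on $\Div J$ together with the theory of Borcherds products and the local structure of $\cD_l/\Gamma$ near reflection mirrors (branched covers $z \mapsto z^{d_\sigma}$) forces the natural inclusion $\CC[f_0,\dots,f_l] \hookrightarrow M_*^!(\Gamma)$ to be an equality of graded rings by comparing Hilbert series — the ramification data pinned down by $J$ makes the codomain no larger than the polynomial ring. Concretely, one shows that the quotient singularity contributions to the dimension formula (via a Riemann--Roch or Hirzebruch--Mumford proportionality argument on a toroidal/Looijenga model, extended to the meromorphic setting by twisting with $\Phi$) are exactly accounted for by the weights $k_i$, which is where the precise multiplicity $d_\sigma-1$ enters. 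I would note that the unitary case $\U(l,1)$ runs in parallel, replacing Borcherds products on $\Orth(l,2)$ by the unitary Borcherds products of \cite{WW21a} and adjusting the Jacobian to the complex-ball coordinates; the reflections $\sigma$ there have order $d_\sigma$ possibly greater than $2$, which is why the multiplicity $d_\sigma-1$ is phrased in this generality.
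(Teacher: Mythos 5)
The forward implication in your proposal is essentially the paper's argument (the map to the weighted projective space $\PP(k_0,\dots,k_l)$ is unbranched away from the mirrors, plus the character of $J$), so that part is fine. However, your setup and your converse both have genuine problems. First, the foundational step of fixing a modular form $\Phi$ with $\Div\Phi=\cH$ is impossible under the hypotheses: if such a holomorphic $\Phi$ of weight $k>0$ existed, then $\Phi^{-1}$ would be a nonconstant modular form of negative weight with poles supported on $\cH$, contradicting the generalized Koecher principle that the Looijenga condition forces (a nonzero form of weight $k$ with poles of maximal multiplicity $c_F$ on $\cH$ satisfies $k\ge c_F$). The paper states explicitly that $\cH$ is never cut out by a holomorphic modular form when the Looijenga condition holds — so your claim that the Looijenga condition ``guarantees the existence of such a product'' is exactly backwards, $M_*^!(\Gamma)$ is not the localization $M_*(\Gamma)[\Phi^{-1}]$, and the pole-clearing reduction to the holomorphic criterion has no starting point.

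Second, your converse rests on a Hilbert-series comparison via ``Riemann--Roch or Hirzebruch--Mumford proportionality on a toroidal/Looijenga model, extended to the meromorphic setting by twisting with $\Phi$.'' This is not an argument: no dimension formula for spaces of meromorphic modular forms with poles constrained to $\cH$ is produced or available, proportionality only gives asymptotics for holomorphic forms, and the twist by $\Phi$ does not exist as noted above. The paper's converse needs none of this. It is the elementary determinant trick: if some $f_{l+2}$ of minimal weight lies outside $\CC[f_1,\dots,f_{l+1}]$, form the Jacobians $J_t$ obtained by omitting $f_t$ from $\{f_1,\dots,f_{l+2}\}$; the hypothesis on $\Div J$ together with the determinant character forces each $g_t:=J_t/J$ to lie in $M_*^!(\Gamma)$ with weight strictly smaller than that of $f_{l+2}$, and expanding a determinant with a repeated first row gives $0=\sum_{t}(-1)^t k_t f_t J_t$, hence $(-1)^{l+1}k_{l+2}f_{l+2}=\sum_{t\le l+1}(-1)^t k_t f_t g_t\in\CC[f_1,\dots,f_{l+1}]$ by minimality — a contradiction. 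You should replace the Hilbert-series bootstrap with this argument (and its unitary analogue, where the multiplicities $d_\sigma-1$ with $d_\sigma\in\{2,3,4,6\}$ enter in checking that $J_t/J$ has no poles outside $\cH$).
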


By applying the Jacobian criterion to symmetric domains of type IV attached to root lattices, we obtain the structure of the following algebras:
\begin{theorem}\label{MTH2}
Let $L$ be any lattice in the following three families of root lattices:
\begin{align*}
&\text{$A$-type:}& & \left(\bigoplus_{j=1}^t A_{m_j}\right)\oplus A_m, \quad t\geq 0, \quad m\geq 1, \quad (m+1)+\sum_{j=1}^t (m_j+1) \leq 11; \\
&\text{$AD$-type:}& & \left(\bigoplus_{j=1}^t A_{m_j}\right)\oplus D_m,  \quad t\geq 0, \quad m\geq 4, \quad   m + \sum_{j=1}^t (m_j+1) \leq 11; \\
&\text{$AE$-type:}& & E_6,\quad A_1 \oplus E_6, \quad A_2\oplus E_6, \quad E_7,\quad  A_1\oplus E_7.
\end{align*}
Let $U$ be the even unimodular lattice of signature $(1,1)$. Let $\widetilde{\Orth}^+(2U\oplus L)$ denote the subgroup of $\Orth(2U\oplus L)$ which respects the symmetric domain and acts trivially on the discriminant group $L'/L$. Then there exists an arrangement of hyperplanes $\mathcal{H}$ such that the ring of modular forms for $\widetilde{\Orth}^+(2U\oplus L)$ with poles supported on $\mathcal{H}$ is a polynomial algebra. 
\end{theorem}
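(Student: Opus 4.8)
The plan is to reduce Theorem~\ref{MTH2} to the Jacobian criterion of Theorem~\ref{MTH1}: for each lattice $L$ in the three families (so $l = \rk L + 2 \geq 3$) I would exhibit $l+1$ explicit meromorphic modular forms whose Jacobian has exactly the divisor prescribed there, together with a $\widetilde{\Orth}^+(2U \oplus L)$-invariant arrangement $\mathcal H$ of Heegner divisors satisfying the Looijenga condition.

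\emph{Constructing the generators.} Write $L = \bigoplus_j R_j$ and let $W = \prod_j W(R_j)$ be the product of the Weyl groups. Wirthmüller's theorem applies to every $L$ in the three families (there is no $E_8$-component), so the bigraded algebra of weak $W$-invariant Jacobi forms of lattice index $L$ is a polynomial ring over $M_*(\SL_2(\ZZ))$ on $\rk L$ homogeneous generators. From a suitable finite set of these Weyl-invariant Jacobi forms I would take Gritsenko (additive) lifts to $2U \oplus L$, and adjoin — as the count and the weights require — one or more Borcherds products, in general meromorphic with poles along $\mathcal H$, built from Borcherds' theorem with explicit vector-valued inputs, so as to obtain $l+1$ forms $F_0, \dots, F_l$. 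That a family of this shape generates the algebra of meromorphic modular forms with poles on $\mathcal H$ — and, before inverting the Borcherds product, generates the algebra of holomorphic modular forms, which is \emph{not} polynomial for the $26$ reducible cases — would be verified uniformly in $L$ by a Fourier--Jacobi analysis: one proves the \emph{modularity of formal Fourier--Jacobi series} attached to these lattices (every compatible sequence of Jacobi coefficients is realised by a modular form) and then reads off the ring structure from Wirthmüller's theorem. Because the three families are parametrised, it is natural to organise this inductively — adjoining an $A_1$, or lengthening an $A$- or $D$-chain — so that $F_0, \dots, F_l$, the arrangement $\mathcal H$, and the Jacobian inputs all update in a predictable way, reducing everything to finitely many base cases and one inductive step.

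\emph{Computing the Jacobian.} The heart of the proof is to determine the divisor of $J = J(F_0, \dots, F_l)$, the (meromorphic) modular Jacobian of the candidate generators, which transforms with the determinant character. The ramification divisor of $\widetilde{\Orth}^+(2U \oplus L)$ is the union of the rational quadratic divisors $v^\perp$ attached to $(-2)$-vectors $v \in 2U \oplus L$ — the reflections of the discriminant kernel, all of order $2$ — so $J$ necessarily vanishes to order at least $d_\sigma - 1 = 1$ along each such mirror not contained in $\mathcal H$; this is the easy direction. The substance is to rule out \emph{excess} vanishing: away from $\mathcal H$, the divisor of $J$ should be precisely the sum of these mirrors, each simple. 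I would establish this by first computing $\Div(F_i)$ for each generator — the zero divisor of a chosen Gritsenko lift is read off from its theta decomposition and is empty or a single controlled Heegner divisor, and the divisor of each Borcherds product is read off from its input — so that $J$ has zeros and poles supported on Heegner divisors only; Bruinier's converse theorem \cite{Bru02, Bru14} then forces $J$ to be a Borcherds product \cite{Bor98}, and a comparison of its weight and of the orders along finitely many Heegner divisors pins down the vector-valued input, hence the full divisor, confirming that no zeros occur beyond the forced ramification and $\mathcal H$. With $\Div(J)$ so identified, Theorem~\ref{MTH1} gives that the algebra of modular forms for $\widetilde{\Orth}^+(2U \oplus L)$ with poles on $\mathcal H$ is $\CC[F_0, \dots, F_l]$.

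\emph{Main obstacle.} I expect the genuinely hard step to be this divisor computation for $J$, and specifically the exclusion of excess vanishing uniformly across all cases: it needs a tight enough grip on the holomorphic ring — hence the modularity of formal Fourier--Jacobi series and Wirthmüller's theorem — together with careful control of the zero divisors of additive lifts, and the reducible root lattices, where the holomorphic algebra is not free and the Fourier--Jacobi bookkeeping is heaviest, will be the bottleneck. A secondary, more routine difficulty is checking the Looijenga condition for each arrangement $\mathcal H$ — needed both for the generalised Koecher principle and as a hypothesis of Theorem~\ref{MTH1} — which, though straightforward in any single instance, must be done for the whole list.
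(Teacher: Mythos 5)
Your overall skeleton --- the Jacobian criterion of Theorem \ref{MTH1}, generators modelled on Wirthm\"uller's basic Jacobi forms, and a case-by-case check of the Looijenga condition --- matches the paper. But the step you yourself identify as the heart of the matter, determining $\Div(J)$, is proposed via a route that does not work. You want to compute $\Div(F_i)$ for each generator, conclude that $\Div(J)$ is supported on Heegner divisors, invoke Bruinier's converse theorem to write $J$ as a Borcherds product, and then pin down its input from the weight. The first link already fails: the \emph{zero} divisor of a Gritsenko/additive lift is not ``read off from its theta decomposition'' --- only its singularities are controlled by the principal part of the input, and a holomorphic additive lift generically has a complicated zero divisor that is not a combination of Heegner divisors. (The cases where an additive lift does have Heegner divisor as its zero locus are exactly the ones governed by the theta block identity of Theorem \ref{MTH3}/Corollary \ref{cor:theta}, which the paper proves precisely in order to control the abelian-type generators, which are \emph{quotients} $\Grit(f)/\Grit(g)$ rather than lifts.) So you have no a priori grounds to apply Bruinier to $J$, and even if you did, ``comparison of its weight'' does not by itself determine the vector-valued input among all forms of weight $1-l/2$ with the same constant term.

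The paper's resolution runs in the opposite direction and is the idea you are missing: one first \emph{constructs} a Borcherds product $\Phi_L$ whose divisor is exactly the one Theorem \ref{MTH1} prescribes for the Jacobian (Theorem \ref{th:existence}). Its input is a weight-$0$, index-$1$ nearly holomorphic Jacobi form with principal part $q^{-1}$ plus prescribed $q^0$-term; existence follows from the module structure of $J^{\w}_{2*,L,1}$ and the obstruction principle (Lemma \ref{lem:Jacobi}), and the multiplicities and the weight are forced by the $q^0$-term identities of Lemma \ref{Lem:q^0-term} --- no converse theorem is needed. One then only needs the soft facts that $J$ vanishes on every mirror outside $\cH_L$ (from the determinant character) and that its poles lie in $\cH_L$; hence $J/\Phi_L$ has weight $0$ and is holomorphic away from $\cH_L$, so it is constant by the generalized Koecher principle (Lemma \ref{lem:Koecher}), and nonzero because the leading Fourier--Jacobi coefficients of the generators are algebraically independent by Wirthm\"uller. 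This is what rules out excess vanishing, with no direct computation of $\Div(J)$. Two smaller corrections: the meromorphic ring is not the localization of the holomorphic ring at a Borcherds product (the abelian-type generators have genuine poles on $\cH_{L,0}$ and do not arise that way), and the modularity of formal Fourier--Jacobi series is used in the paper only for the non-free holomorphic algebras of Theorem \ref{MTH4} and to supply the Jacobi-type generators in the $AE$ cases, not as the engine for freeness.
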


If we write $L = L_0 \oplus L_1$ with $L_1 = A_m, D_m, E_6, E_7$, then the hyperplane arrangement $\cH$ is a union $\cH_{L, 0} \cup \cH_{L, 1}$, where $\cH_{L, 0}$ consists of the orbits of hyperplanes $v^{\perp}$ for vectors $v\in L_0'$ of minimal norm in the dual of each component of $L_0$, and where $\cH_{L, 1}$ consists of the orbits of hyperplanes $(r+u)^{\perp}$, where $u\in L'$ are vectors of minimal norm in their cosets in $L'/L$ that satisfy $u^2 > 2$, and $r\in U$ with $(r,r)=-2$. This is made more precise in Theorem \ref{th:2precise}. There are $147$ hyperplane arrangements in total.

When $L = A_m$ for $1 \leq m \leq 7$ or $L = D_m$ for $4 \leq m \leq 8$ or $L = E_6, E_7$, the arrangement $\mathcal{H}$ above is empty. The corresponding free algebras of holomorphic modular forms have been constructed in \cite{Igu62, FH00, DK03, Kri05, Vin10, Vin18, WW20a}. By \cite{Wan21a}, these $14$ algebras and the algebra associated to $E_8$ computed in \cite{HU14} are the only free algebras of holomorphic modular forms for groups of type $\widetilde{\Orth}^+(2U\oplus M)$. The free algebra associated to $L=A_1\oplus A_2$ was determined in \cite{Nag21} using the period map between the moduli space of $U\oplus E_7\oplus E_6$-polarised $\mathrm{K3}$ surfaces and the arithmetic quotient attached to $2U\oplus A_1\oplus A_2$. The case $A_1\oplus A_1$ is related to Hermitian modular forms of degree two over $\QQ(\sqrt{-1})$. The remaining $131$ free algebras of modular forms with poles on (nonempty) arrangements seem to be new, and we expect them to also have interpretations of moduli spaces of lattice polarised $\mathrm{K3}$ surfaces.  We remark that Hermitian modular forms of degree two and Siegel modular forms of degree two have been treated by a similar method in our previous papers \cite{WW21c, WW21d}.

Vinberg and Shvartsman \cite{VS17} showed that the ring of holomorphic modular forms on symmetric domains of type IV and dimension $l>10$ is never free. However, Theorem \ref{MTH2} includes a number of free algebras of meromorphic modular forms in dimension $l > 10$. In the most extreme case, the algebra of meromorphic modular forms associated to $L=D_{11}$ will turn out to be freely generated in weights $1$, $4$, $4$, $6$, $6$, $8$, $8$, $10$, $10$, $12$, $12$, $14$, $16$ and $18$; the associated variety has dimension $13$.

Some of the lattices in Theorem \ref{MTH2} have complex multiplication over $\QQ(\sqrt{-1})$ or $\QQ(\sqrt{-3})$. If in addition the restriction $\cH_{\U}$ of $\cH$ to the complex ball satisfies the Looijenga condition, then the associated algebra of unitary modular forms whose poles are contained in $\cH_{\U}$ is also free. This is described in Theorem \ref{th:algebras-unitary}. For example, $2U\oplus D_{10}$ has complex multiplication over $\QQ(\sqrt{-1})$, and the ring of meromorphic modular forms on the $6$-dimensional complex ball attached to $D_{10}$ is freely generated in weights $2$, $4$, $8$, $8$, $12$, $12$, $16$. This modular group is an example of a finite-covolume reflection group acting on complex hyperbolic space of dimension $6$ with quotient birational to $\mathbb{CP}^6$.

Let us sketch the idea of the proof of Theorem \ref{MTH2}. To apply Theorem \ref{MTH1}, we need to construct the potential generators, verify that their Jacobian is nonzero, and that it has the zero divisor prescribed by Theorem \ref{MTH1}. In general, there seems to be no rule to characterize the weights of the generators; however, when the underlying lattice is related to a root system, we find that the weights of the generators can be predicted in terms of invariants of the root system, using the theory of Jacobi forms. Jacobi forms are connected to modular forms on orthogonal groups by the Fourier--Jacobi expansion (cf. \cite{EZ85, Gri94}). Wirthm\"uller \cite{Wir92} proved that for any irreducible root system $R$ other than $E_8$, the ring of weak Jacobi forms associated to its root lattice that are invariant under the Weyl group is a polynomial algebra, and the weights and indices of its generators are natural invariants of $R$. We found in \cite{WW20a} that the weights of the generators of a free algebra of holomorphic modular forms related to a root system are simply $k_i + 12t_i$, where $(k_i, t_i)$ are the weights and indices of the generators of the ring of Weyl-invariant weak Jacobi forms. The weights of the generators of the rings of meromorphic modular forms in Theorem \ref{MTH2} are more complicated (cf. Theorem \ref{th:2precise}) and are controlled by exceptional generators related to theta blocks (cf. \cite{GSZ19}). The first part $\cH_{L, 0}$ of the hyperplane arrangement corresponds to the divisors of these theta blocks. The generators are constructed in terms of Borcherds' additive and multiplicative singular theta lifts \cite{Bor95, Bor98} and we will be able to read their algebraic independence off of their leading Fourier--Jacobi coefficients. To compute the divisor of their Jacobian, we prove the existence of a preimage of the Jacobian under Borcherds' lift. We do not construct their preimages directly; instead, we prove their existence in a uniform way by an argument similar to that used by the first named author \cite{Wan19} in the classification of reflective modular forms.

It was proved in \cite{HU14} that the ring of holomorphic modular forms for $\widetilde{\Orth}^+(2U\oplus E_8)$ is also free. However, we know from \cite{Wan21b} that the ring of Weyl-invariant weak Jacobi forms for $E_8$ is not a polynomial algebra. For this reason, the proof of Theorem \ref{MTH2} does not apply to this case. For $9\leq m\leq 11$ we have $2U\oplus D_m \cong 2U\oplus E_8 \oplus D_{m-8}$, but it is not clear to us how the root lattice structure of the latter model is related to the algebra structure. The restrictions in Theorem \ref{MTH2} are natural; if the condition ‘‘$\leq 11$" in Theorem \ref{MTH2} fails to hold, then the Looijenga condition never holds; and if we define the arrangement in other ways then it seems that either the Looijenga condition fails or the algebra of meromorphic modular forms is non-free.

\subsection{The theta block conjecture}
The generators of ``abelian type" (cf. Theorem \ref{th:2precise}) of algebras of meromorphic modular forms are constructed as rational expressions in the Gritsenko lifts of basic Jacobi forms. In order to control the divisor that results, we will prove and then use at several points that fact that the (possibly singular) Gritsenko lift of any lattice-valued pure theta block of $q$-order one is a Borcherds product. This result is of independent interest: the case of holomorphic theta blocks associated to lattices of rank one is the \emph{Theta Block Conjecture} of Gritsenko--Poor--Yuen \cite{GPY15}, and it is closely related to the question of which modular forms are simultaneously Borcherds products and Gritsenko lifts.

The theory of theta blocks is developed in the paper \cite{GSZ19} of Gritsenko--Skoruppa--Zagier. The (pure) theta block associated to a function $f: \NN \to \NN$ with finite support is defined as
\begin{equation}
\Theta_f(\tau,z)=\eta(\tau)^{f(0)} \prod_{a=1}^\infty (\vartheta(\tau,a z)/\eta(\tau))^{f(a)}, \quad (\tau, z) \in \HH \times \CC,
\end{equation}
where $\eta(\tau)=q^{1/24}\prod_{n=1}^\infty(1-q^n)$ is the Dedekind eta function and $\vartheta$ is the odd Jacobi theta function
\begin{equation}
\vartheta(\tau,z)=q^{1/8}(e^{\pi iz}-e^{-\pi i z})\prod_{n=1}^\infty (1-q^ne^{2\pi i z})(1-q^ne^{-2\pi iz})(1-q^n), \quad q=e^{2\pi i\tau}.
\end{equation}

The function $\Theta_f$ is a weak Jacobi form (with character) of weight $f(0)/2$ in the sense of Eichler--Zagier \cite{EZ85}. Gritsenko--Poor--Yuen conjectured that if $\Theta_f$ is a holomorphic Jacobi form of integral weight and has $q$-order one, i.e. $$f(0)+2\sum_{a=1}^\infty f(a)=24,$$ then its Gritsenko lift \cite{Gri94} is a Borcherds product. This has been proved in \cite{GPY15, Gri18, GW20, DW20} for the eight infinite families of theta blocks of $q$-order one (built in \cite{GSZ19}) related to root systems, and in particular for all theta blocks of weight at least $4$. However, there are many theta blocks of $q$-order one in weights two and three which do not belong to these families. Surprisingly, the proof of the theta block conjecture in general becomes much easier when we extend it to the (singular) Gritsenko lifts of theta blocks which are merely weak Jacobi forms. We will prove that all such Gritsenko lifts are (meromorphic) Borcherds products. This follows from identities for singular theta lifts associated to the root lattices $D_m$:

\begin{theorem}\label{MTH3}
As a meromorphic modular form of weight $12-m$ on $\widetilde{\Orth}^+(2U\oplus D_m)$, the Borcherds additive lift of the generalized theta block
\begin{equation}
\vartheta_{D_m}(\tau, \mathfrak{z}) =\eta(\tau)^{24-3m} \prod_{j=1}^m \vartheta(\tau, z_j), \quad 1\leq m \leq 11
\end{equation}
is the Borcherds multiplicative lift of the weight zero weak Jacobi form  $-(\vartheta_{D_m}| T_{-}(2) ) / \vartheta_{D_m}$, where $T_{-}(2)$ is the index-raising Hecke operator on Jacobi forms. As a result, the singular additive lift of any generalized theta block of positive weight and $q$-order one is a Borcherds product. In particular, the Gritsenko--Yuen--Poor conjecture is true.
\end{theorem}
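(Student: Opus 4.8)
The plan is to reduce the whole statement to a single identity on $2U\oplus D_{11}$ and then descend by (quasi-)pullback. Write $\phi_0 = -(\vartheta_{D_m}|T_-(2))/\vartheta_{D_m}$. First I would verify, directly from the product expansions of $\eta$ and $\vartheta$ and the explicit action of the index-raising Hecke operator $T_-(2)$ on theta products, that $\phi_0$ is a genuine \emph{holomorphic} weak Jacobi form of weight $0$ and index $D_m$ with integral Fourier coefficients and constant term $24-2m$; in particular the quotient has no poles and the denominators introduced by $T_-(2)$ cancel, so that $\Borch(\phi_0)$ is defined and has weight $12-m$. Both $\Grit(\vartheta_{D_m})$ and $\Borch(\phi_0)$ are then meromorphic modular forms of weight $12-m$ for $\widetilde{\Orth}^+(2U\oplus D_m)$. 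If they have the same divisor, their quotient is a nowhere-vanishing holomorphic modular form of weight zero, hence constant, and the constant is fixed by comparing leading Fourier--Jacobi coefficients: this is $\vartheta_{D_m}$ on the additive side by construction, and on the multiplicative side Borcherds' product formula identifies it with the theta block encoded by the principal part of $\phi_0$, which one checks is again $\vartheta_{D_m}$ (this is exactly the reason for the particular choice of $\phi_0$).

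The crux is therefore the divisor computation, which I would first carry out for $m=11$, where $\Grit(\vartheta_{D_{11}})$ has weight $1$. Since $\phi_0$ is explicit, its principal part --- and hence $\Div\Borch(\phi_0)$ as a $\ZZ$-combination of Heegner divisors --- is computable in closed form. The remaining point is that the zeros and poles of $\Grit(\vartheta_{D_{11}})$ are supported on precisely these Heegner divisors with precisely these multiplicities. Following the strategy of \cite{Wan19} for reflective modular forms, rather than constructing a preimage of $\Grit(\vartheta_{D_{11}})$ under Borcherds' lift by hand, I would prove that a preimage exists: the Fourier--Jacobi and quasi-pullback behaviour of the theta block $\vartheta_{D_{11}}$ near the boundary confines the Heegner divisors that can occur in $\Div\Grit(\vartheta_{D_{11}})$ to a short reflective list, and an obstruction argument --- pairing against Eisenstein series in the sense of Bruinier's theory --- rules out any further contribution and produces the weakly holomorphic input whose Borcherds lift has that divisor; matching principal parts identifies this input with $\phi_0$. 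That the weight is $1$, far below the singular weight $l/2-1=11/2$, is what keeps the space of competing forms small enough for the final rigidity step to close.

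Once the $D_{11}$ identity is established, the identity for each $D_m$ with $1\le m\le 11$ follows by iterated quasi-pullback: by Jacobi's derivative formula $\vartheta_z(\tau,0)$ is a constant multiple of $\eta(\tau)^3$, so the quasi-pullback of $\vartheta_{D_m}$ as $z_m\to 0$ is a constant multiple of $\vartheta_{D_{m-1}}$, and both sides of the identity descend consistently because the Gritsenko lift is compatible with restriction of the elliptic variables (the restriction of $\Grit(\phi)$ is the Gritsenko lift of the restricted Jacobi form, or, in the quasi-pullback case, its leading term) and because the restriction of a Borcherds product to a sub-Grassmannian is again a Borcherds product with the correspondingly restricted input. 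The same mechanism gives the general statement: a generalized theta block of $q$-order one with $N$ theta factors has $f(0)=24-2N$, which is exactly the condition making it equal to $\vartheta_{D_N}$ after specializing the elliptic variables according to its defining vectors, and its weight $12-N$ is positive precisely when $N\le 11$; restriction then exhibits its Gritsenko lift as the corresponding restriction of $\Borch(\phi_0^{(N)})$, hence as a (meromorphic) Borcherds product. Finally, the Gritsenko--Poor--Yuen conjecture is the case of a classical holomorphic theta block of one elliptic variable and integral weight: such a block is in particular a weak Jacobi form of positive weight and $q$-order one, so its Gritsenko lift is a Borcherds product, and it is holomorphic because the Gritsenko lift of a holomorphic Jacobi form is holomorphic.

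The main obstacle is the divisor analysis of the second paragraph: proving that $\Grit(\vartheta_{D_{11}})$, which a priori is an arbitrary meromorphic modular form, has all of its zeros and poles on Heegner divisors with exactly the multiplicities dictated by $\phi_0$ --- equivalently, that it lies in the image of Borcherds' singular theta lift. This has a chance of working only because the candidate divisor is reflective and bounded, so that a \cite{Wan19}-type obstruction argument applies, and because weight $1$ leaves essentially no room for competing forms; arranging the argument uniformly --- or reducing uniformly to $m=11$ --- is the delicate part. A secondary technical point is the explicit verification, flagged above, that $-(\vartheta_{D_m}|T_-(2))/\vartheta_{D_m}$ is a holomorphic weak Jacobi form of weight $0$ with integral Fourier coefficients: a careful but essentially routine computation with the $\eta$--$\vartheta$ product and the definition of $T_-(2)$.
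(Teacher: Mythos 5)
The central step of your argument is where the gap lies. You propose to compute $\Div \Grit(\vartheta_{D_{11}})$ exactly and show it equals $\Div\Borch(\phi_0)$ by proving the existence of a preimage under the Borcherds lift via a Bruinier-type obstruction argument, with the final appeal that ``weight $1$ leaves essentially no room for competing forms.'' That last claim is not a theorem, and the divisor you would be chasing is not reflective: the components $H(3/8,\cdot)$ of the relevant Heegner divisor $\cH_{11}$ are not mirrors of reflections, so the machinery of \cite{Wan19} does not directly apply. More importantly, full divisor equality is never needed. The paper's proof establishes only two one-sided containments: (a) since $\delta_{D_n}=n/4<3$, all singular Fourier coefficients of $\vartheta_{D_n}$ occur in its $q^1$-term, so the poles of $\Grit(\vartheta_{D_n})$ are confined to the Heegner divisor $\cH_n$ of discriminant $n/4-2$; and (b) from the explicit $q^0$-term of $\Psi_{D_n}=-(\vartheta_{D_n}|T_{-}(2))/\vartheta_{D_n}$, the zeros of $\Borch(\Psi_{D_n})$ outside $\cH_n$ are simple and lie on $H(1/2,\varepsilon_1)$, where $\Grit(\vartheta_{D_n})$ also vanishes because its input vanishes on the corresponding divisor. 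The quotient is then a weight-zero form holomorphic away from $\cH_n$, and the decisive ingredient --- absent from your proposal --- is the verification that $\cH_n$ satisfies the Looijenga condition for $n=9,10,11$, so that the generalized Koecher principle (Lemma \ref{lem:Koecher}) forces the quotient to be constant. Without that step (or your much harder exact-divisor computation actually carried out), the proof does not close.

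A secondary gap is the descent from $D_{11}$ to $D_m$, $m\le 10$, by quasi-pullback. The restriction of $\vartheta_{D_{11}}$ to $z_{11}=0$ vanishes identically, and your assertion that the quasi-pullback of a Gritsenko lift is the Gritsenko lift of the leading Taylor coefficient of its input is a nontrivial claim requiring proof (it happens to hold here because the vanishing order is one and the weight exponent in $T_{-}(m)$ shifts accordingly, but you would need to show this). The paper avoids the issue by running the same Koecher argument for each $n=9,10,11$ directly, with $n\le 8$ already covered by \cite{GPY15}; pullback is used only for the passage to general theta blocks, and there it is an honest restriction along an embedding $\iota_{\mathbf{s}}\colon L\hookrightarrow D_d$ under which the theta block does not vanish identically, for which compatibility of the additive lift with restriction is standard. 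Your treatment of that last passage, and of the deduction of the Gritsenko--Poor--Yuen conjecture, does match the paper.
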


The inputs into Borcherds' singular additive lifts \cite{Bor95, Bor98} are Jacobi forms of positive weight. When the inputs are holomorphic Jacobi forms, the singular additive lift coincides with the Gritsenko lift. When $m\leq 8$, the form $\vartheta_{D_m}$ above is a holomorphic Jacobi form of index $D_m$ so its Gritsenko lift is a holomorphic modular form, and in these cases Theorem \ref{MTH3} was proved in \cite{GPY15}. When $m>8$, $\vartheta_{D_m}$ is no longer a holomorphic Jacobi form and its additive lift will have singularities along $\cH_{D_m,1}$ defined in Theorem \ref{MTH2} (note $\cH_{D_m,0}=\emptyset$).

The Borcherds product in Theorem \ref{MTH3} has only simple zeros outside of the hyperplane arrangement $\cH_{D_m,1}$, and these simple zeros are determined by the zeros of the theta block $\vartheta_{D_m}$. This allows us to reduce the proof of the theorem to the Koecher principle on the Looijenga compactification.

\subsection{Non-free algebras of holomorphic modular forms}
Theorem \ref{MTH2} also includes a class of generators of ``Jacobi type",  whose leading Fourier--Jacobi coefficients are $\Delta^{t}\phi_{k,t}$ where $\Delta=\eta^{24}$ is the cusp form of weight $12$ on $\SL_2(\ZZ)$, and $\phi_{k,t}$ are the generators of weight $k$ and index $t$ of the ring of Weyl invariant weak Jacobi forms. This motivates us to study the rings of holomorphic modular forms for $\widetilde{\Orth}^+(2U\oplus L)$ satisfying the following property: 

\vspace{3mm}

\textit{Let $J_{k,L,t}^{\w,\widetilde{\Orth}(L)}$ denote the space of weak Jacobi forms of weight $k$ and index $t$ associated to $L$ which are invariant under $\widetilde{\Orth}(L)$. For any $\phi_{k,t}\in J_{k,L,t}^{\w,\widetilde{\Orth}(L)}$, there exists a holomorphic modular form of weight $12t+k$ for $\widetilde{\Orth}^+(2U\oplus L)$ whose leading Fourier--Jacobi coefficient is  $\Delta^{t}\phi_{k,t}$.}

\vspace{3mm}

Following \cite{Aok00} we can estimate the dimensions of spaces of orthogonal modular forms in terms of Jacobi forms:
$$
\dim M_k(\widetilde{\Orth}^+(2U\oplus L)) \leq \sum_{t=0}^\infty \dim J_{k-12t,L,t}^{\w,\widetilde{\Orth}(L)}. 
$$
The above property means that this inequality is actually an equality in every weight. This property also implies the modularity of formal Fourier--Jacobi series, which has applications to the modularity of certain generating series which appear in arithmetic geometry (see \cite{BR15}). We classify all root lattices satisfying this property and we also determine the corresponding rings of modular forms in the last main theorem:

\begin{theorem}\label{MTH4}
Let $L$ be a direct sum of irreducible root lattices not of type $E_8$. Assume that $L$ satisfies the $\mathrm{Norm}_2$ condition, i.e. $\delta_L \leq 2$, where
$$
\delta_L := \max\{ \min\{\latt{y,y}: y\in L + x \} : x \in L' \}.
$$
Then the algebra of holomorphic modular forms for $\widetilde{\Orth}^+(2U\oplus L)$ is minimally generated by forms of weights $4$, $6$ and $12t+k$, where the pairs $(k, t)$ are the weights and indices of the generators of the ring of $\widetilde{\Orth}(L)$-invariant weak Jacobi forms. 
\end{theorem}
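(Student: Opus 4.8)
The plan is to exhibit explicit generators for $M_*(\widetilde{\Orth}^+(2U\oplus L))$ coming from additive (Gritsenko--Borcherds) lifts and then to force a polynomial ring structure by comparing Hilbert series with Aoki's inequality. First I would record the shape of the Jacobi ring. Since the only unimodular irreducible root lattice is $E_8$, which is excluded, no isomorphism between two summands of $L$ extends to an element of $\widetilde{\Orth}(L)$ --- such a permutation would act nontrivially on $L'/L$ --- so $\widetilde{\Orth}(L)=\prod_j\widetilde{\Orth}(L^{(j)})$, and $J_{*,L,*}^{\w,\widetilde{\Orth}(L)}$ is the tensor product over $M_*(\SL_2(\ZZ))$ of the invariant weak Jacobi rings of the irreducible summands. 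By Wirthm\"uller's theorem, together with the known description of the invariant ring under the full group $\widetilde{\Orth}(L^{(j)})$ (the diagram automorphisms and $-1$), every factor is a polynomial algebra; hence $J_{*,L,*}^{\w,\widetilde{\Orth}(L)}=\CC[E_4,E_6,\phi_{k_1,t_1},\dots,\phi_{k_r,t_r}]$ with $r=\rk L$, where $E_4,E_6$ have index $0$ and the $\phi_{k_i,t_i}$ are the generators of positive index. The list of admissible $L$ is finite, so no further case analysis is needed beyond this structural input.

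Next I would construct the candidate generators. For weights $4$ and $6$ take the Eisenstein series $\mathcal E_4,\mathcal E_6$ (for instance the Gritsenko lifts of the index-$1$ Jacobi Eisenstein series of weights $4$ and $6$), whose Fourier--Jacobi expansions begin in order $0$ with $E_4$ and $E_6$. For each generator of positive index set $F_i:=\Grit(\Delta^{t_i}\phi_{k_i,t_i})$. The decisive point, and the only place the $\mathrm{Norm}_2$ hypothesis enters, is the following claim: \emph{if $\delta_L\le 2$, then $\Delta^{t}\phi$ is a holomorphic Jacobi form for every weak Jacobi form $\phi$ of index $t$ associated to $L$.} I would prove this via the theta decomposition $\phi=\sum_{\mu\in L'/tL}h_\mu\theta_\mu$: weakness of $\phi$ forces each component $h_\mu$ to be a weakly holomorphic scalar modular form whose pole order at the cusp is at most $m_\mu/(2t)$, where $m_\mu$ is the minimal norm of a vector in the coset $\mu+tL$. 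Now $m_\mu$ is bounded by the squared covering radius of $tL$, which is $t^2$ times that of $L$, and for each of the root lattices in the list the squared covering radius coincides with $\delta$, both quantities being additive over orthogonal direct sums; hence $m_\mu\le t^2\delta_L\le 2t^2$ and the pole order is at most $t\delta_L/2\le t$. Multiplication by $\Delta^t$, which vanishes to order $t$ at the cusp, therefore produces a holomorphic modular form $\Delta^t h_\mu$, so $\Delta^t\phi=\sum_\mu(\Delta^t h_\mu)\theta_\mu$ is holomorphic. Granting this, $\Delta^{t_i}\phi_{k_i,t_i}$ is a holomorphic Jacobi form of weight $12t_i+k_i$ and index $t_iL$, so $F_i$ is a holomorphic modular form of weight $12t_i+k_i$ on $\widetilde{\Orth}^+(2U\oplus L)$ whose leading (order-$t_i$) Fourier--Jacobi coefficient is $\Delta^{t_i}\phi_{k_i,t_i}$; this is the ``property'' stated before the theorem, for the generators.

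It remains to check that $\mathcal E_4,\mathcal E_6,F_1,\dots,F_r$ freely generate $M_*$. Algebraic independence is read off the Fourier--Jacobi expansion: a monomial $\mathcal E_4^{a}\mathcal E_6^{b}\prod_i F_i^{c_i}$ has leading coefficient $\Delta^{\sum_i c_it_i}E_4^{a}E_6^{b}\prod_i\phi_{k_i,t_i}^{c_i}$, so in a hypothetical polynomial relation, restricting to the lowest Fourier--Jacobi order $t^\ast$ occurring among its monomials gives $\Delta^{t^\ast}$ times a vanishing $\CC$-combination of distinct monomials in $E_4,E_6,\phi_{k_1,t_1},\dots,\phi_{k_r,t_r}$, which forces all their coefficients to vanish because $J_{*,L,*}^{\w,\widetilde{\Orth}(L)}$ is a polynomial ring --- a contradiction. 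Consequently the subalgebra $A:=\CC[\mathcal E_4,\mathcal E_6,F_1,\dots,F_r]\subseteq M_*$ has Hilbert series $\bigl[(1-q^4)(1-q^6)\prod_i(1-q^{12t_i+k_i})\bigr]^{-1}$; substituting the index variable $p=q^{12}$ into the bigraded Hilbert series of the Jacobi ring shows this equals $\sum_k\bigl(\sum_{t\ge 0}\dim J_{k-12t,L,t}^{\w,\widetilde{\Orth}(L)}\bigr)q^k$. Aoki's inequality now gives $\dim M_k\le\sum_{t\ge 0}\dim J_{k-12t,L,t}^{\w,\widetilde{\Orth}(L)}=\dim A_k\le\dim M_k$ for every $k$, so $A=M_*$; it is a polynomial algebra on generators of weights $4$, $6$ and $12t_i+k_i$, and minimality of this generating set is automatic for a polynomial ring.

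The main obstacle is the holomorphy claim for $\Delta^t\phi$, i.e.\ pinning down precisely how the $\mathrm{Norm}_2$ condition controls the principal parts of the theta components of an arbitrary weak Jacobi form associated to $L$ (and, relatedly, confirming that the additive lift of a holomorphic Jacobi form of index $t_iL$ with $t_i\ge 2$ genuinely has $\Delta^{t_i}\phi_{k_i,t_i}$ as its leading Fourier--Jacobi coefficient). The free-polynomial description of $J_{*,L,*}^{\w,\widetilde{\Orth}(L)}$ used in the first paragraph --- including the diagram-automorphism bookkeeping for each irreducible summand --- is the other step requiring care, though it is essentially contained in Wirthm\"uller's theorem.
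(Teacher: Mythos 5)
Your high-level skeleton (Aoki's inequality, plus the exhibition of modular forms whose leading Fourier--Jacobi coefficients are $\Delta^{t}\phi\,\xi^{t}$ for a generating set of weak Jacobi forms, plus a dimension comparison) is the same as the paper's Theorem \ref{th:criterion}, but two of your key inputs fail. First, for reducible $L$ the bigraded ring $J_{*,L,*}^{\w,\widetilde{\Orth}(L)}$ is \emph{not} a polynomial algebra over $\CC[E_4,E_6]$ on $\rk(L)+1$ further generators. The isomorphism $J_{*,\oplus_j L_j,t}^{\w}\cong\bigotimes_j J_{*,L_j,t}^{\w}$ of Remark \ref{rem:sum of Jacobi forms} holds index by index (a Segre-type product in the index grading), so the minimal generators are the tensor products of generators of \emph{equal} index --- many more than $\rk(L)+1$ --- and they satisfy relations; already for $2A_1$ one has $(\phi_{0,1}\otimes\phi_{0,1})(\phi_{-2,1}\otimes\phi_{-2,1})=(\phi_{0,1}\otimes\phi_{-2,1})(\phi_{-2,1}\otimes\phi_{0,1})$. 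Accordingly the theorem asserts \emph{minimal}, not free, generation: all $26$ reducible cases give non-free algebras (e.g.\ $A_2\oplus E_6$ needs $33$ generators in Krull dimension $11$). Your algebraic-independence step and your Hilbert-series identity both rest on the polynomial-ring claim and collapse with it; the correct bookkeeping is the inductive argument on Fourier--Jacobi order in Theorem \ref{th:criterion}, which needs only surjectivity of the leading-coefficient maps, not independence.

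Second, and more seriously, $F_i:=\Grit(\Delta^{t_i}\phi_{k_i,t_i})$ is not a construction of a form on $\widetilde{\Orth}^+(2U\oplus L)$ when $t_i\geq 2$: the additive lift of an index-$t$ Jacobi form for $L$ lives on the orthogonal group of $U\oplus U(t)\oplus L$ (its Fourier--Jacobi coefficients are supported in indices divisible by $t$), not on the group at hand. There is no general device producing a form on $\widetilde{\Orth}^+(2U\oplus L)$ with prescribed leading Fourier--Jacobi coefficient $\Delta^{t}\phi\,\xi^{t}$; the paper's $E_8$ computation ($\dim M_{34}=12$ versus a Jacobi-side count of $13$, even though $\Delta^{t}\phi$ is always holomorphic for $E_8$) shows such forms can genuinely fail to exist, so any valid argument must use more than the $\mathrm{Norm}_2$ condition. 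This existence question is the actual content of the theorem whenever $L$ has a $D_m$, $E_6$ or $E_7$ summand, and the paper settles it case by case: explicit polynomial combinations of index-one lifts (products with the meromorphic forms $\Psi_{2,L}$, $\Psi_{4,L}$ of Lemma \ref{lem:L2} and corrections pulled back from $2U\oplus D_8$) for the index-two generators of $D$-type, and a computer-assisted lower bound on $\dim M_k$ via pullbacks to paramodular forms for $A_2\oplus E_6$ and $A_1\oplus E_7$. Your holomorphy argument for $\Delta^{t}\phi$ under the $\mathrm{Norm}_2$ condition is essentially correct (though the relevant invariant is $\delta_{tL}=t^2\delta_L$ evaluated on cosets $\mu+tL$ with $\mu\in L'\subseteq (tL)'$, rather than the covering radius), but that is the routine half; the step you defer as a detail is where the proof actually lives.
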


There are $40$ root lattices that satisfy the conditions of Theorem \ref{MTH4}. Fourteen of them are irreducible and they correspond to the free algebras of holomorphic modular forms which were included in Theorem \ref{MTH2}. The $26$ reducible root lattices in Theorem \ref{MTH4} yield non-free algebras of modular forms. Some of these non-free algebras have a lot of generators, and a complicated structure which seems hard to determine by other methods. For example, the algebra of holomorphic modular forms for $\widetilde{\Orth}^+(2U\oplus A_2\oplus E_6)$ has Krull dimension $11$ but is minimally generated by $33$ generators of weights 4, 4, 5, 6, 7, 7, 8, 9, 10, 10, 10, 11, 12, 12, 12, 13, 13, 14, 14, 15, 15, 16, 16, 16, 17, 18, 18, 18, 19, 20, 21, 22, 24. The construction of generators of Jacobi type for lattices of $AE$-type in Theorem \ref{MTH2} is completed by Theorem \ref{MTH4}. 

\subsection{The structure of this paper} In \S \ref{sec:Looijenga} we review the theory of meromorphic modular forms on type IV symmetric domains related to the Looijenga compactification, and we establish the modular Jacobian approach (i.e. Theorem \ref{MTH1}). In \S \ref{sec:Jacobiforms} we review Jacobi forms of lattice index and the Gritsenko and Borcherds lifts. \S \ref{sec:theta block conjecture} contains the proof of Theorem \ref{MTH3} and therefore the theta block conjecture. The heart of the paper is \S \ref{sec:type IV} where we prove Theorem \ref{MTH2}, with the exception of some lattices related to $E_6$ and $E_7$. In \S \ref{sec:non-free} we prove Theorem \ref{MTH4} and complete the proof of Theorem \ref{MTH2}. In \S \ref{sec:ball quotients} we review Looijenga's compactifications for unitary groups and we use Theorem \ref{MTH2} to construct eight free algebras of meromorphic unitary modular forms, including one such algebra on a six-dimensional complex ball.  In the appendix (cf. \S \ref{appendix}), we formulate the weights of generators of the algebras of modular forms described in Theorems \ref{MTH2} and \ref{MTH4}.

\addtocontents{toc}{\setcounter{tocdepth}{2}}

\section{Meromorphic modular forms on orthogonal groups}\label{sec:Looijenga}
In this section we review the Looijenga compactification on symmetric domains of type IV and establish the modular Jacobian approach for meromorphic modular forms whose poles are contained in an arrangement of hyperplanes.

\subsection{The Looijenga compactification}\label{subsec:Looijenga}
Let $M$ be an even lattice of signature $(l,2)$ for some $l\geq 3$ with bilinear form $(-,-)$. We equip the complex vector space $V(M):=M\otimes \CC$ with the $\QQ$-structure defined by $M$ and the non-degenerate symmetric bilinear form induced by $(-,-)$. The corresponding orthogonal group $\Orth(V(M))\cong\Orth(l,2)$ is therefore an algebraic group defined over $\QQ$. The set of complex lines $\mathcal{Z} \in \PP(V(M))$ satisfying $( \mathcal{Z}, \mathcal{Z} ) = 0$ and $( \mathcal{Z}, \overline{\mathcal{Z}} ) < 0$ has two conjugate connected components. We fix one component once and for all; it is a symmetric domain of Cartan type IV, denoted $\cD(M)$. 
Let $\Orth^+(M)$ be the subgroup of $\Orth(l,2)$ that preserves $M$ and $\mathcal{D}(M)$. The subgroup of $\Orth^+(M)$ which acts trivially on the discriminant group $M'/M$ is the \emph{discriminant kernel}, $$\widetilde{\Orth}^+(M) = \{\gamma \in \Orth^+(M): \; \gamma x - x \in M \; \text{for all} \; x \in M'\}.$$

The symmetric domain $\mathcal{D}(M)$ also determines a set of oriented real planes in $V(M)$ after identifying $\mathcal{Z} = X+iY$ with $\frac{i}{2} \mathcal{Z} \wedge \mathcal{\overline{Z}} = X \wedge Y$. For $\mu \in M$ let $[\mu]$ denote the space $\RR\mu$. For any rational isotropic line $I \in \PP(V_{\mathbb{Q}}(M))$, define the negative cone $$C_I = \{I \wedge [\mu]: \; \mu \in I^{\perp}, \; (\mu, \mu) < 0\} \cap \overline{\mathcal{D}(M)} \subseteq \mathbb{P}(\wedge^2 V_{\mathbb{R}}(M)),$$ and for any rational isotropic plane $J \subseteq \PP(V_{\mathbb{Q}}(M))$, define the isotropic half-line $$C_J = [\wedge^2 J] \cap \overline{\mathcal{D}(M)} \subseteq \mathbb{P}(\wedge^2 V_{\mathbb{R}}(M)).$$ Note that the sets $C_I$, $C_J$ are pairwise disjoint and that $I \subseteq J$ if and only if $C_J \subseteq \overline{C_I}$. We define $$C_0 = \{0\}, \quad C_{J, +} = C_J \cup \{0\}, \quad C_{I, +} = C_I \cup \bigcup_{I \subseteq J} C_J \cup \{0\}.$$ Then $C_0$, $C_{J, +}$ and $C_{I, +}$ are the $0$-, $1$- and $2$-dimensional faces of a rational polyhedral fan $\mathcal{C}(M) \subseteq \wedge^2 V_{\mathbb{R}}(M)$ called the \emph{conical locus} $\mathcal{C}(M)$ of $M$ (see \cite[Definition 2.1]{Loo03b}).

For any vector $v$ of positive norm in $M'$, the associated hyperplane (or rational quadratic divisor)
$$
\cD_v(M) := v^\perp \cap \cD(M) = \{ [\mathcal{Z}] \in \cD(M): (\mathcal{Z}, v) = 0 \} 
$$
is the type IV symmetric domain attached to the signature $(l-1,2)$ lattice $M_v:=v^\perp \cap M$. For any finite-index subgroup $\Gamma$ of $\Orth^+(M)$, a ($\Gamma$-invariant) hyperplane arrangement $\cH$ is defined to be a finite union of $\Gamma$-orbits of hyperplanes $\cD_v(M)$. An arrangement $\cH$ is said to satisfy \textit{the Looijenga condition} (cf. Corollary 7.5 of \cite{Loo03b}) if every one-dimensional intersection of hyperplanes in $\cH$ (taken in the ambient space $V(M)$) is positive definite.

Let $\cD^\circ$ denote the arrangement complement $\cD(M) - \cH$.  Looijenga \cite{Loo03b} defined a compactification $\widehat{X^{\circ}}$ of $X^\circ:=\cD^\circ / \Gamma$ by successively blowing up nonempty intersections of members of $\cH$ modulo $\Gamma$ and then blowing down the final blow-up. On the level of sets, $\widehat{X^\circ}:=\widehat{\cD^\circ} / \Gamma$, where $\widehat{\cD^\circ}$ is the disjoint union 
\begin{align*}
\widehat{\cD^\circ} = \cD^\circ \sqcup \bigsqcup_{L\in \mathrm{PO}(\cH)} \pi_L \cD^\circ \sqcup \bigsqcup_{\sigma \in \sum(\cH)} \pi_{V_\sigma} \cD^\circ. 
\end{align*}
The notation above is as follows. The first union is taken over the poset $\mathrm{PO}(\cH)$ of nonempty intersections inside of hyperplanes in $\cH$ within $\cD(M)$. In the second union, note that any hyperplane $H \in \cH$ containing $I$ defines a hyperplane $\{I \wedge [\mu]: \; \mu \in H\}$ in the facet $C_{I, +}$; that as $H$ runs through $\cH$ this family is locally finite; and therefore $\cH$ determines a decomposition of the conical locus $\mathcal{C}(M)$ into locally rational cones. The collection of these cones is labelled $\sum(\cH)$. For any $\sigma \in \sum(\cH)$ contained in the facet $C_{I, +}$ but not itself a face of the conical locus, its \emph{support space} is the span $V_{\sigma} \subseteq I^{\perp}$ of vectors $\mu$ for which $I \wedge [\mu] \subseteq \sigma$; and for any one-dimensional face $\sigma = C_{J, +} \in \sum(\cH)$, $$V_{\sigma} = J^{\perp} \cap \bigcap_{\substack{H \in \cH \\ J \subseteq H}} H.$$ Finally, for any vector subspace $W \subseteq V(M)$, $\pi_W$ is the canonical projection $$\pi_W : \Big( \PP(V(M)) - \PP(W) \Big) \longrightarrow \PP(V(M) / W).$$ This union carries a $\Gamma$-equivariant topology defined in \cite{Loo03b}. When $\mathcal{H}$ is empty, $\widehat{\cD^{\circ}} = \widehat{\cD}$ is the union of $\cD(M)$ with the entire conical locus and it carries the usual Satake--Baily--Borel topology, and therefore $\widehat{X^{\circ}}$ is exactly the Baily--Borel compactification.

The set $\cD^\circ$ is open and dense in $\widehat{\cD^\circ}$. When we pass to the $\Gamma$-orbit space, the boundary of the compactification $\widehat{X^\circ}$ of $X^\circ$ is decomposed into finitely many components. The Looijenga condition on $\cH$ guarantees that these boundary components have codimension at least two.

It is convenient to rephrase Looijenga's condition on $\cH$ in terms of intersections of rational quadratic divisors (within $\cD(M)$). Note that an intersection $\cD_{v_1}(M)\cap \cdots \cap \cD_{v_t}(M)$ is nonempty and of dimension $m\geq 0$ if and only if $\mathbb{Z}v_1 + ... + \mathbb{Z}v_t$ is a positive-definite lattice of rank $l-m$. It is not hard to show that the Looijenga condition is equivalent to the following condition.
\begin{condition}\label{condition}
For every nonempty intersection $\mathcal{I} = \cD(M) \cap \cD_{v_1}(M)\cap \cdots \cap \cD_{v_t}(M)$ of hyperplanes in $\cH$, the following hold:
\begin{enumerate}
    \item $\mathcal{I}$ is not discrete.
    \item If $\mathcal{I}$ is of dimension $2$, then the lattice $M\cap v_1^\perp \cap \cdots \cap v_t^\perp$ does not contain an isotropic plane. 
    \item If $\mathcal{I}$ is of dimension $1$, then the lattice $M\cap v_1^\perp \cap \cdots \cap v_t^\perp$ does not contain an isotropic vector. 
\end{enumerate}
\end{condition}

\subsection{Orthogonal modular forms}

The Baily--Borel compactification can be realized as the $\mathrm{Proj}$ of the algebra of holomorphic modular forms, and the Looijenga compactification has a similar characterization. In the latter case, the corresponding modular forms are defined as meromorphic sections of $\widehat{\mathcal{L}}$ which are regular on $X^\circ$, where $\widehat{\mathcal{L}}$ is the ample line bundle on $\widehat{X^\circ}$ which has the same restriction to $X^\circ$ as the automorphic line bundle on $X$, and the Looijenga compactification is again the Proj of the graded algebra of modular forms. We will explain this more explicitly below.

Let us define the affine cone over $\cD(M)$ as
$$\cA(M)=\{ \mathcal{Z} \in V(M): [\mathcal{Z}] \in \cD(M) \}.$$

\begin{definition}
Let $k$ be an integer. A \emph{modular form} of weight $k$ and character $\chi\in \mathrm{Char}(\Gamma)$ with poles on $\cH$ is a meromorphic function $F: \mathcal{A}(M)\to \CC$ which is holomorphic away from $\cH$ and satisfies
\begin{align*}
F(t\mathcal{Z})&=t^{-k}F(\mathcal{Z}), \quad \forall t \in \CC^\times,\\
F(g\mathcal{Z})&=\chi(g)F(\mathcal{Z}), \quad \forall g\in \Gamma.
\end{align*}
\end{definition}
All modular forms with poles on $\cH$ of integral weight and trivial character form a graded algebra over $\CC$ and we denote it by
$$
M_*^!(\Gamma)=\bigoplus_{k = -\infty}^{\infty} M_k^!(\Gamma).
$$
When $\cH$ is empty, this algebra reduces to the usual algebra of holomorphic modular forms
$$
M_*(\Gamma)=\bigoplus_{k = 0}^\infty M_k(\Gamma).
$$

It is possible that $\cH$ is defined by the zero of a holomorphic modular form (see \cite[\S 3.2 and \S 5.2]{Loo03b} for criteria). However, this will never happen when $\cH$ satisfies the Looijenga condition. In fact, the Looijenga condition on $\cH$ implies that every modular form with poles on $\cH$ defines a regular section of a non-negative tensor power of $\widehat{\mathcal{L}}$. Equivalently, a modular form of non-positive weight with poles on $\cH$ must be constant. Furthermore, Looijenga proved in \cite[Corollary 7.5]{Loo03b} that the algebra $M_*^!(\Gamma)$ is finitely generated by forms of positive weights and its $\mathrm{Proj}$ gives the Looijenga compactification $\widehat{X^\circ}$. 

As mentioned above, the Looijenga condition implies a form of Koecher's principle for modular forms with poles on $\cH$. Due to its importance, we give a new proof using the pullback trick.  

\begin{lemma}\label{lem:Koecher}
Assume that $\cH$ satisfies Condition \ref{condition}. Let $F$ be a nonzero modular form of weight $k$ with poles on $\cH$. Let $c_F$ denote the maximal multiplicity of poles of $F$. Then $k\geq c_F$. In particular, if $k=0$ then $F$ is constant. 
\end{lemma}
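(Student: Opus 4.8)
The plan is to use the "pullback trick" announced right before the lemma statement: restrict $F$ to a carefully chosen sub-symmetric-domain $\cD(M_1) \subseteq \cD(M)$ on which $F$ is finite and not identically zero, and on which the classical Koecher principle (or an induction hypothesis) applies. First I would reduce to the case of small dimension. The key point is that $\cH$ is a finite union of $\Gamma$-orbits of hyperplanes, so it is locally finite; hence for a generic vector $v\in M'$ of positive norm, the divisor $\cD_v(M)$ is not contained in $\cH$, and the restriction $F|_{\cD_v(M)}$ is a well-defined meromorphic function, holomorphic away from the (possibly empty) trace $\cH \cap \cD_v(M)$, which inherits Condition \ref{condition} inside the signature $(l-1,2)$ lattice $M_v = v^\perp\cap M$. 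Moreover we can choose $v$ so that $F|_{\cD_v(M)}\not\equiv 0$ (since $F\not\equiv 0$ and the $\cD_v$ sweep out a dense subset of $\cD(M)$), and so that $\Gamma$ still contains a finite-index subgroup $\Gamma_v \subseteq \widetilde{\Orth}^+(M_v)$ stabilizing $\cD_v(M)$. Then $F|_{\cD_v(M)}$ is a modular form of the same weight $k$ and with maximal pole order $\le c_F$ for $\Gamma_v$ with poles on the arrangement $\cH\cap\cD_v(M)$, and we induct downward on $l$.

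The base case is $l=3$, equivalently the Hilbert modular / paramodular picture where $\cD(M)$ is two-dimensional: here Condition \ref{condition}(1)--(3) forces every hyperplane section of $\cH$ to be non-discrete and the one-dimensional boundary strata of the Looijenga compactification to be non-existent or of the right codimension, and the statement $k\ge c_F$ becomes a direct estimate on the Fourier expansion of $F$ at a zero-dimensional cusp. Concretely, at a cusp attached to an isotropic line $I$, expand $F$ in a Fourier--Jacobi series; the vectors of $\cH$ through that cusp are, by Condition \ref{condition}, either absent or contribute poles along divisors transverse to the cusp direction, so the tube-domain coordinate $\tau$ (pairing with the isotropic direction) cannot appear in the polar part. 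A standard argument with the action of the parabolic (the real translations in the $\tau$-direction, which act on any meromorphic modular form of weight $k$ by the automorphy factor) then shows that a pole of order $c$ in the "finite" variables forces $|F|$ to grow like $|\tau|^{c}$-worth of Fourier modes, and comparing with the weight-$k$ automorphy under the scaling element of the parabolic yields $k\ge c_F$. The statement "$k=0\Rightarrow F$ constant" is then the special case $c_F=0$ combined with the fact, already recorded in the excerpt, that a modular form of non-positive weight with poles on $\cH$ must be constant.

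The main obstacle I anticipate is making the restriction step rigorous: I must ensure simultaneously that (i) $\cD_v(M)\not\subseteq\cH$, (ii) $F|_{\cD_v(M)}\not\equiv 0$, and (iii) the induced arrangement $\cH\cap\cD_v(M)$ still satisfies Condition \ref{condition} for $M_v$ — and that the pole multiplicity does not jump under restriction. Point (iii) is the delicate one: a one-dimensional intersection of members of $\cH\cap\cD_v(M)$ inside $v^\perp$ is also a one-dimensional intersection of members of $\cH$ inside $V(M)$ (just adjoin $\cD_v(M)$ itself to the list), so positive-definiteness is inherited for free; the only thing to check is that after intersecting with $v^\perp$ the relevant lattices do not acquire new isotropic vectors, which follows because $v^\perp\cap M \subseteq M$ and an isotropic vector of a sublattice is an isotropic vector of the overlattice. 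For (i) and (ii), since there are only finitely many $\Gamma$-orbits of hyperplanes in $\cH$ and $F$ has at worst a meromorphic divisor, the set of "bad" $v$ lies in a countable union of proper analytic subsets, so a generic $v$ (and one can even take $v$ with $\Gamma_v$ of finite index, by choosing $v$ inside a suitable orbit) works. With these checks in place the induction closes and the lemma follows.
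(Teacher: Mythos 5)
There is a genuine gap. You have correctly identified the ``pullback trick'' as the method and your verification that Condition \ref{condition} is inherited by the induced arrangement on a hyperplane section is fine, but you restrict to the \emph{wrong} hyperplanes. Restricting $F$ to a generic $\cD_v(M)$ \emph{not} contained in $\cH$ preserves the weight $k$ and (at best) the pole order $c_F$, so the induction step makes no progress toward comparing $k$ with $c_F$: the entire content of the lemma is deferred to your base case in low dimension. And that base case, as sketched, is not a proof. The assertion that ``a pole of order $c$ in the finite variables forces $|F|$ to grow like $|\tau|^{c}$-worth of Fourier modes, and comparing with the weight-$k$ automorphy \ldots yields $k\ge c_F$'' conflates two unrelated quantities: the weight enters only through the homogeneity $F(t\mathcal{Z})=t^{-k}F(\mathcal{Z})$ (equivalently the automorphy factor at the cusp), while the pole order along an interior divisor $\cD_u(M)\subseteq\cH$ is a local analytic invariant away from the cusp; no standard Fourier-expansion estimate converts one into a bound on the other. (Also, you would need the restriction to a generic section to preserve the \emph{maximal} pole order exactly, not merely bound it above, which you flag but do not resolve.)

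The missing idea is the \emph{quasi-pullback along a hyperplane of $\cH$ itself}, which is what the paper does. Choose $v$ so that $\cD_v(M)\in\cH$ is a component of the polar divisor on which $F$ attains its maximal pole order $c_F$, and consider $\bigl((\mathcal{Z},v)^{c_F}F(\mathcal{Z})\bigr)\big|_{\cD_v(M)}$. Since $(\mathcal{Z},v)$ is linear in $\mathcal{Z}$, this quasi-pullback is a \emph{nonzero} modular form of weight $k-c_F$ on $\cD(M_v)$, with poles supported on the restricted arrangement $\cH_v$, which still satisfies Condition \ref{condition} (intersections of members of $\cH_v$ are intersections of members of $\cH$, one of which is $\cD_v(M)$). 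It is this step that trades the pole multiplicity for a drop in weight. One then inducts on the dimension of the symmetric domain: the Looijenga condition forces the induced arrangements to become empty after finitely many such restrictions, at which point the classical Koecher principle gives that a nonzero holomorphic modular form has non-negative weight, whence $k-c_F\ge 0$. The case $k=0$ then forces $c_F=0$, so $F$ is holomorphic and constant by the usual Koecher principle. I would encourage you to rewrite your argument around this quasi-pullback; your bookkeeping about inherited conditions then slots in essentially unchanged.
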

\begin{proof}
Assume that $F$ has poles of multiplicity $c_F$ on the hyperplane  $\cD_v(M)$ in $\cH$.  The restrictions of members of $\cH$ to $\cD_v(M)$ define a hyperplane arrangement $\cH_v$ which also satisfies Condition \ref{condition}. The quasi pullback of $F$ to $\cD_v(M)$ is a modular form of weight $k-c_F$ with poles supported on $\cH_v$. This allows us to prove the lemma by induction of the dimension of the symmetric domain $\cD(M)$. When $k=0$, $c_F$ must be zero, which yields that $F$ is  holomorphic and then must be constant by the usual Koecher's principle. 
\end{proof}

In this paper the hyperplane arrangements will always be finite unions of Heegner divisors. Let $\gamma \in M'/M$ and $a$ be a positive rational number. The (primitive) Heegner divisor of discriminant $(a,\gamma)$ is
$$
H(a,\gamma)= \bigcup_{\substack{v \in M + \gamma \\ v \; \text{primitive in} \; M' \\ (v,v) = 2a}} \mathcal{D}_v(M),
$$
and it is locally finite and $\widetilde{\Orth}^+(M)$-invariant and therefore descends to an (irreducible) analytic divisor on the modular variety $\cD(M) / \widetilde{\Orth}^+(M)$. Note that $\mathcal{D}_v(M) = \mathcal{D}_{-v}(M)$ implies $H(a, \gamma) = H(a, -\gamma)$. 

The lemma below describes the intersections in $\cD(M)$ of members of $\widehat{H}(a, \gamma)$ and therefore is useful to verify the Looijenga condition \ref{condition} for arrangements of hyperplanes, where $\widehat{H}(a, \gamma)$ is the non-primitive Heegner divisor defined as
$$
\widehat{H}(a,\gamma)= \bigcup_{\substack{v \in M + \gamma \\ (v,v) = 2a}} \mathcal{D}_v(M),
$$
which is a finite union of some primitive Heegner divisors defined above.

\begin{lemma}\label{lem:intersection}
Let $u, v \in M+\gamma$ such that $\cD_u(M), \cD_v(M) \in \widehat{H}(a, \gamma)$. 
\begin{enumerate}
    \item If $a\leq \frac{1}{4}$, then $\cD_u(M)\cap\cD_v(M)=\emptyset $. 
    \item Suppose that $\cD_u(M)\cap\cD_v(M) \neq \emptyset $. If $\frac{1}{4}< a \leq \frac{1}{2}$, then $\widehat{H}(a, \gamma) \cap \cD_v(M)$, viewed as a non-primitive Heegner divisor in $\cD(M_v) = \cD_v(M)$, is 
    \begin{equation*}
    \widehat{H}(1 - 1/(4a), u') = \bigcup_{\substack{x \in M_v + u' \\ (x,x)/2 = 1-1/(4a) }} \cD_x(M_v).
    \end{equation*}
\end{enumerate}
Here $M_v$ is the orthogonal complement of  $v$ in $M$, and $u'$ is the orthogonal projection of $u$ to the dual lattice $M_v'$. 
\end{lemma}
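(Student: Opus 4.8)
The plan is to reduce everything to elementary computations with the Gram matrix of $u$ and $v$, working inside the rank-two sublattice $\latt{u,v}$ of $M$. Since $u, v \in M + \gamma$, both $u$ and $v$ have norm $2a$, so the Gram matrix of the pair $(u,v)$ has the form $\begin{psmallmatrix} 2a & b \\ b & 2a \end{psmallmatrix}$ for some $b = (u,v) \in \ZZ + (\gamma,\gamma)$ — more precisely $b \in \frac{1}{2}\ZZ$ with $u - v, u + v \in M$ giving integrality constraints. The first observation is that $\cD_u(M) \cap \cD_v(M) \neq \emptyset$ forces this $2\times 2$ form to be positive \emph{semi}definite (an intersection of rational quadratic divisors is nonempty iff the spanned lattice is positive definite when $u,v$ are independent; if $u = \pm v$ the divisors coincide and the ``intersection'' is the whole $\cD_v(M)$, which I will treat as the degenerate case). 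Positive definiteness means $4a^2 - b^2 > 0$, i.e. $|b| < 2a$. When $u, v$ are proportional, $u = \pm v$ and there is nothing to prove in (2). So assume $u \neq \pm v$.

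For part (1): if $a \leq \tfrac14$ then $2a \leq \tfrac12$, so $|b| < \tfrac12$; but $b \in \frac12 \ZZ$ (since $2u, 2v \in 2M' $ and $\gamma$ has half-integral pairings at worst, the relevant denominators are controlled — I would spell out exactly that $u + v, u - v \in M$ forces $(u,v) \equiv (\gamma,\gamma) \pmod 1$ and $2a \equiv (\gamma,\gamma) \pmod 1$, hence $b$ lies in a specific coset). The upshot is that the only value of $b$ with $|b| < 2a \leq \tfrac12$ compatible with these congruences is the one forcing $b = \pm 2a$, contradicting strict positive definiteness unless $u = \pm v$. Hence no genuine intersection, proving (1). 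The bookkeeping of which coset $b$ lives in is the one place requiring care, but it is routine once one writes $u = \gamma + m_1$, $v = \gamma + m_2$ with $m_i \in M$.

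For part (2): fix $v$ and restrict to $M_v = v^\perp \cap M$, whose dual is $M_v'$; let $u'$ be the orthogonal projection of $u$ onto $M_v \otimes \QQ \subseteq M'\otimes\QQ$, so $u = u' + \frac{(u,v)}{(v,v)} v = u' + \frac{b}{2a} v$. Then $u'$ has norm $(u',u') = 2a - \frac{b^2}{2a}$, and the divisor $\cD_u(M) \cap \cD_v(M)$ equals $\cD_{u'}(M_v)$ inside $\cD(M_v) = \cD_v(M)$. To identify this as a Heegner divisor of $M_v$, I compute $(u',u')/2 = a - \tfrac{b^2}{4a}$ and then run over all admissible $u \in M + \gamma$ with $\cD_u(M)$ meeting $\cD_v(M)$: for $\tfrac14 < a \leq \tfrac12$ the constraint $|b| < 2a$ together with the coset condition on $b$ pins $b$ down to essentially one absolute value (the extreme admissible one), giving $b^2$ a single value and hence $(u',u')/2 = a - \tfrac{b^2}{4a}$; matching constants, the minimal such $b^2$ makes $1 - \tfrac{b^2}{4a^2}\cdot$(something) collapse to $1 - \tfrac{1}{4a}$, so $(x,x)/2 = 1 - \tfrac{1}{4a}$ for $x = u'$ ranging over the coset of $u'$ in $M_v'/M_v$. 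Conversely any $x \in M_v + u'$ of that norm lifts back to a $u \in M + \gamma$ of norm $2a$ with $\cD_u(M) \supseteq \cD_x(M_v)$, which gives the reverse inclusion and establishes the claimed equality $\widehat H(a,\gamma) \cap \cD_v(M) = \widehat H(1 - 1/(4a), u')$.

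The main obstacle is entirely the discriminant-form bookkeeping: getting the coset of $b = (u,v)$ in $\tfrac12\ZZ/\ZZ$ right (it depends on $(\gamma,\gamma) \bmod 1$), checking that for $\tfrac14 < a \le \tfrac12$ the inequality $|b| < 2a$ admits \emph{exactly} the family of $b$ producing the single value $1 - 1/(4a)$ and no smaller norm, and verifying the lift $x \mapsto u$ stays in $M + \gamma$ and primitive when needed. None of this is deep, but the case $a = \tfrac14$ (boundary of (1) vs (2)) and the proportional case $u = \pm v$ both need to be handled explicitly so that the statement is clean. Everything else is a direct calculation in a rank-two positive semidefinite quadratic form.
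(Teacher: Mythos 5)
Your proposal is correct and follows essentially the same route as the paper: the key point in both is that $u - v \in M$ and $v \in M'$ force $(u,v) \equiv 2a \pmod{\ZZ}$, so that $|(u,v)| < 2a$ has no solution when $a \le \frac{1}{4}$ and the unique solution $(u,v) = 2a-1$ when $\frac{1}{4} < a \le \frac{1}{2}$, after which the projection computation gives $(u',u')/2 = a - (2a-1)^2/(4a) = 1 - 1/(4a)$. Before writing it up, fix two slips: $(u,v)$ lies in the coset $2a + \ZZ$, not in $\tfrac{1}{2}\ZZ$ (and $u+v \in M$ is false in general --- only $u-v \in M$ is needed), and the placeholder ``(something)'' in your norm computation must be replaced by the explicit value $(u,v)=2a-1$, which is exactly what makes the constant collapse to $1 - 1/(4a)$.
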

\begin{proof}
(1) $\cD_u(M)\cap\cD_v(M)$ is nonempty if and only if the sublattice generated by $u$ and $v$ is positive definite. Since $(u, u) = (v, v) = 2a$, this holds if and only if $|(u, v)| < 2a$. On the other hand, $u - v \in M$ implies
$$
(u,v) = (v,v) + (u-v, v) \in 2a + \ZZ,
$$
which forces $|(u,v)| \geq 2a$ if $a \le 1/4$. 

(2) Let $w\in M+\gamma$ with $(w,w)=2a$. As in (1), the intersection $\cD(M_v)\cap w^\perp $ is nonempty if and only if $|(v, w)| < 2a$, which occurs if and only if $(v, w)=2a-1$ under the assumption $1/4 < a \le 1/2$. When $(v, w)=2a-1$, the projection of $w$ to $M_v'$ is
$$
w'= w -\frac{(w,v)}{(v,v)}v = w + \left(\frac{1}{2a}-1\right)v,
$$
and it satisfies $(w',w')=2(1-\frac{1}{4a})$ and $w'-u'\in M_v$.  This proves our claim. 
\end{proof}

\begin{corollary}\label{cor:intersection} Define $a_0 = 0$ and $a_k = 1 / (4 - 4a_{k-1})$ for $k \ge 1$. Suppose that $\cH$ is a finite collection of distinct non-primitive Heegner divisors $\widehat{H}(a, \gamma)$ with $a<1/2$, of which $b_k$ have norm $a_{k-1} < a \le a_k$. If $$\sum_k k \cdot b_k < l - 2,$$ then $\cH$ satisfies the Looijenga condition \ref{condition}.
\end{corollary}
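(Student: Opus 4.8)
The plan is to establish the stronger assertion that, under the hypothesis $\sum_k k\cdot b_k<l-2$, \emph{every} nonempty intersection of members of $\cH$ has dimension at least $3$; this makes all three clauses of Condition \ref{condition} automatic. First I would note that the levels in the statement are well defined: the map $a\mapsto 1/(4-4a)$ has $1/2$ as its unique fixed point, so the sequence $(a_k)$ increases strictly to $1/2$ and every $a\in(0,1/2)$ lies in exactly one interval $(a_{k-1},a_k]$; call this integer $k$ the \emph{level} of $\widehat H(a,\gamma)$. The key elementary observation, which I would record next, is that restricting a Heegner divisor of level $k\geq 2$ to one of its own hyperplanes lowers the level by exactly one: the recursion gives $1-1/(4a_{k-1})=a_{k-2}$ and $1-1/(4a_k)=a_{k-1}$, and since $a\mapsto 1-1/(4a)$ is increasing, $a\in(a_{k-1},a_k]$ yields $1-1/(4a)\in(a_{k-2},a_{k-1}]$, i.e. level $k-1$.

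With this in hand, the heart of the argument is the following claim, which I would prove by induction on $k$: any finite family of hyperplanes of a single level-$k$ non-primitive Heegner divisor $\widehat H(a,\gamma)$ on an even lattice of signature $(n,2)$ whose common intersection $\mathcal J$ is nonempty spans a positive-definite sublattice of rank at most $k$, equivalently $\dim\mathcal J\geq n-k$. For $k=1$ this is Lemma \ref{lem:intersection}(1): level one means $a\leq a_1=1/4$, the hyperplanes are then pairwise disjoint, so at most one of them can occur in an intersecting family. For $k\geq 2$, given hyperplanes $\cD_{v_1}(M),\dots,\cD_{v_r}(M)\subseteq\widehat H(a,\gamma)$ with $\mathcal J\neq\emptyset$ (the cases $r\leq 1$ being trivial, assume $r\geq 2$), I would restrict everything to $\cD_{v_1}(M)=\cD(M_{v_1})$. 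By Lemma \ref{lem:intersection}(2) (applied with $v=v_1$) the restriction of $\widehat H(a,\gamma)$ to this hyperplane is a single Heegner divisor of norm $1-1/(4a)$ on $M_{v_1}$, which by the observation above has level $k-1$; moreover each $\cD_{v_i}(M)\cap\cD_{v_1}(M)$ equals $\cD_{v_i'}(M_{v_1})$ for the orthogonal projection $v_i'$ of $v_i$, and these projections all lie in one coset, so $\{v_2',\dots,v_r'\}$ is an intersecting family of hyperplanes of that level-$(k-1)$ divisor. The inductive hypothesis bounds $\rk\langle v_2',\dots,v_r'\rangle$ by $k-1$, and comparing $\dim\mathcal J=(n-1)-\rk\langle v_2',\dots,v_r'\rangle$ with $\dim\mathcal J=n-\rk\langle v_1,\dots,v_r\rangle$ forces $\rk\langle v_1,\dots,v_r\rangle\leq k$.

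Finally, for an arbitrary nonempty intersection $\mathcal I=\cD(M)\cap\cD_{v_1}(M)\cap\dots\cap\cD_{v_t}(M)$ of members of $\cH$, I would partition $\{v_1,\dots,v_t\}$ according to which of the finitely many Heegner divisors $\widehat H_1,\dots,\widehat H_s$ making up $\cH$ each $\cD_{v_i}(M)$ lies in, writing $\{v_1,\dots,v_t\}=\bigcup_j S_j$ with every $S_j$ an intersecting family inside some $\widehat H_j$ (they all contain $\mathcal I$). The claim bounds $\rk\langle S_j\rangle$ by the level of $\widehat H_j$, so subadditivity of rank gives $\rk\langle v_1,\dots,v_t\rangle\leq\sum_j\rk\langle S_j\rangle\leq\sum_k k\cdot b_k<l-2$, whence $\dim\mathcal I=l-\rk\langle v_1,\dots,v_t\rangle>2$. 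Hence $\mathcal I$ has dimension at least $3$ --- in particular it is not discrete, never of dimension $2$, and never of dimension $1$ --- so Condition \ref{condition} holds. I do not anticipate a genuine obstacle: the only delicate point is the bookkeeping around the level --- the interval computation for $1-1/(4a)$, together with the remark that every level occurring during the induction is at most $\sum_k k\cdot b_k\leq l-3$, so the lattices involved always have signature $(n,2)$ with $n\geq 3$ and Lemma \ref{lem:intersection} stays applicable throughout.
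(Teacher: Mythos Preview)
Your proposal is correct and follows essentially the same approach as the paper's own proof: both establish that restricting a level-$k$ Heegner divisor to one of its hyperplanes yields a level-$(k-1)$ divisor (via Lemma~\ref{lem:intersection}), iterate to bound the codimension of intersections within a single $\widehat{H}(a,\gamma)$ by its level $k$, and then combine across the divisors in $\cH$ by subadditivity of rank to conclude that every nonempty intersection has dimension at least $3$. Your write-up simply makes the induction and the rank bookkeeping explicit where the paper's argument is terse.
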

Recall that $M$ has signature $(l, 2)$. Only the values $a_1 = 1/4$, $a_2 = 1/3$, $a_3 = 3/8$, $a_4 = 2/5$ and $a_5 = 5/12$ will be needed in the paper.

\begin{proof} 
Let $\widehat{H}(a,\gamma)$ be a Heegner divisor contained in $\cH$ satisfying $a_{k-1}<a\leq a_k$. By Lemma \ref{lem:intersection}, the intersection between $\widehat{H}(a,\gamma)$ and one of its hyperplanes is a Heegner divisor $\widehat{H}(a',-)$ satisfying $a_{k-2}<a'\leq a_{k-1}$. By repeating this process, we find that all nonempty intersections of hyperplanes in $\widehat{H}(a,\gamma)$ have codimension at most $k$. It follows that the nonempty intersections of hyperplanes in $\cH$ have codimension at most $\sum_k kb_k$, and therefore dimension at least $3$. Thus $\cH$ satisfies the Looijenga condition \ref{condition}. 
\end{proof}

\subsection{Reflections and the Jacobian} 
In \cite{Wan21a} the first named author introduced the modular Jacobian criterion to decide when the ring $M_*(\Gamma)$ of holomorphic modular forms is a polynomial algebra. This states that a set of modular forms freely generates $M_*(\Gamma)$ if and only if their Jacobian vanishes exactly with multiplicity one on hyperplanes fixed by reflections in $\Gamma$. In this section we extend this approach to the ring $M_*^!(\Gamma)$ of modular forms with poles on a hyperplane arrangement.

Let $M$ be an even lattice of signature $(l, 2)$ with $l \ge 3$ and let $\Gamma \le \Orth^+(M)$ be a finite-index subgroup.
Let $r$ be a non-isotropic vector of $M$. The \textit{reflection} associated to $r$ is defined as
$$
\sigma_r:  v \mapsto \frac{2(v,r)}{(r,r)}r \in \Orth(M\otimes \QQ).
$$
A reflection $\sigma_r$ preserves $\cD(M)$ if and only if its spinor norm is $+1$, i.e. $(r,r)>0$. Therefore, we need only consider reflections associated to vectors of positive norm. The set of fixed points of $\sigma_r$ on $\cD(M)$ is the hyperplane $\cD_r(M)$, and is called the \emph{mirror} of $\sigma_r$. A vector $r\in M'$ of positive norm is called \textit{reflective} if its reflection $\sigma_r$ preserves the lattice $M$. By definition, a primitive vector $r\in M$ of norm $(r,r)=2d$ is reflective if and only if $\Div(r)=d$ or $2d$, where $\Div(r)$ denotes the positive generator of the ideal $\{ (r, v): v\in M \}$. It follows that a primitive vector $x \in M'$ is reflective if and only if there exists a positive integer $d$ such that $(x,x)=2/d$ and $\ord(x)=d$ or $d/2$, where $\ord(x)$ is the order of $x$ in $M'/M$.  Every vector of norm $2$ in $M$ is reflective, and the corresponding reflection acts trivially on $M'/M$; conversely, the discriminant kernel only contains this type of reflection. 

The Jacobian of modular forms is a generalization of the Rankin--Cohen--Ibukiyama differential operator for Siegel modular forms introduced in \cite{AI05}. For $0\leq j \leq l$, let $F_j$ be a modular form of weight $k_j$ and character $\chi_j$ for $\Gamma$ with poles on an arrangement $\cH$ of hyperplanes. View these $F_j$ as modular forms on the tube domain $c^\perp / c$, where $c$ is a primitive isotropic vector of $M$, and let $z_1, ..., z_l$ be coordinates on the tube domain. The Jacobian of these $l+1$ meromorphic modular forms in these coordinates is
$$
J(F_0, ..., F_{l+1}) = \mathrm{det} \begin{pmatrix} k_0 F_0 & k_1 F_1 & \cdots & k_l F_l \\ \partial_{z_1} F_0 & \partial_{z_1} F_1 & \cdots & \partial_{z_1} F_l \\  \vdots & \vdots & \ddots & \vdots \\ \partial_{z_l} F_0 & \partial_{z_l} F_1 & \cdots & \partial_{z_l} F_l  \end{pmatrix},
$$
and it defines a meromorphic modular form of weight $k_J$ and character $\chi_J$ for $\Gamma$, holomorphic away from $\cH$, where
$$
k_J=l + \sum_{j=0}^l k_j, \quad \chi_J = \mathrm{det} \otimes  \bigotimes_{j=0}^l \chi_j.
$$
Moreover, the Jacobian is not identically zero if and only if these $l+1$ forms are algebraically independent over $\CC$. 

The Jacobian satisfies the product rule in each component and every modular form with poles on $\cH$ can be written as a quotient $f = g/h$, where $g, h$ are holomorphic. Using $$J(g/h, f_1,...,f_l) = \frac{1}{h^2} \Big( h \cdot J(g, f_1,...,f_l) - g \cdot J(h, f_1,...,f_l) \Big)$$ and the analogous equations in the other components, we can reduce the claims above to the Jacobians of holomorphic modular forms, where they are already known (cf. \cite[Theorem 2.5]{Wan21a}).

Assume that the characters of all $F_j$ are trivial, so their Jacobian has the determinant character. Let $\cD_r(M)$ be the mirror of a reflection $\sigma_r\in \Gamma$ which is not contained in $\cH$. From $\det(\sigma_r)=-1$, we see that the Jacobian must vanish on $\cD_r(M)$.

We will prove some necessary conditions for $M_*^!(\Gamma)$ to be a free algebra.  The following theorem is an analogue of \cite[Theorem 3.5]{Wan21a} and the proof is essentially the same. 

\begin{theorem}\label{th:freeJacobian}
Let $\cH$ be a hyperplane arrangement satisfying the Looijenga condition. Suppose that the ring of modular forms for $\Gamma$ with poles on $\cH$ is freely generated by $F_j$ for $0\leq j \leq l$. 
\begin{enumerate}
    \item The group $\Gamma$ is generated by reflections. 
    \item The Jacobian $J:=J(F_0,...,F_l)$ is a nonzero meromorphic modular form of character $\det$ for $\Gamma$ which satisfies the conditions:
    \begin{enumerate}
        \item $J$ vanishes with multiplicity one on all mirrors of reflections in $\Gamma$ which are not contained in $\cH$;
        \item all other zeros and poles of $J$ are contained in $\cH$. \end{enumerate}
    \item Let $\{ \Gamma \pi_1, ..., \Gamma \pi_s\}$ be the distinct $\Gamma$-equivalence classes of mirrors of reflections in $\Gamma$ which are not contained in $\cH$.  Let $P$ be the polynomial expression of $J^2$ in terms of the generators $F_j$. Then $P$ decomposes into a product of $s$ irreducible polynomials, and each irreducible factor is a modular form for $\Gamma$ which vanishes with multiplicity two on some $\Gamma\pi_i$ and whose other zeros and poles are contained in $\cH$.
\end{enumerate}
\end{theorem}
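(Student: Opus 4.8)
The plan is to follow the proof of \cite[Theorem~3.5]{Wan21a}, adapting it to the meromorphic setting; the genuinely new inputs will be the generalized Koecher principle (Lemma~\ref{lem:Koecher}) and Looijenga's identification of $\widehat{X^\circ}$ with $\operatorname{Proj} M_*^!(\Gamma)$, a normal projective variety containing $X^\circ$ as the complement of a subset of codimension $\geq 2$. First I would record that, writing $A:=M_*^!(\Gamma)=\CC[F_0,\dots,F_l]$, the fact that $A$ is a polynomial ring generated in positive weights makes $\widehat{X^\circ}$, after passing to a well-formed model, the weighted projective space $\mathbb{P}(k_0,\dots,k_l)$: it is normal, hence regular in codimension one, and its divisor class group is $\ZZ$, so each of its prime divisors is the zero locus of a single irreducible weighted-homogeneous polynomial in the $F_j$.

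For part (1) I would take $\Gamma'$ to be the subgroup generated by all reflections of $\Gamma$ and, following \cite{Wan21a} (adjoining a neat normal subgroup of finite index via Selberg's lemma if necessary), arrange that $[\Gamma:\Gamma']<\infty$ with $\Gamma'$ still containing every reflection. Then $B:=M_*^!(\Gamma')$ is a normal domain (integral closedness being checked pointwise on $\cD^\circ$, where $\cD$ has regular local rings), finitely generated over $\CC$ by Looijenga's theorem, and module-finite over $A=B^{\Gamma/\Gamma'}$ via characteristic polynomials. The point is that the finite morphism $\operatorname{Spec} B\to\operatorname{Spec} A=\mathbb{A}^{l+1}$ is unramified in codimension one --- a prime divisor of $\cD^\circ/\Gamma'$ over which the map to $\cD^\circ/\Gamma$ ramified would be the image of a mirror, yet all reflections lie in $\Gamma'$ and the Looijenga boundary has codimension $\geq 2$ --- so by Zariski--Nagata purity of the branch locus it is étale, and since $\mathbb{A}^{l+1}$ is simply connected and $\operatorname{Spec} B$ is irreducible, the cover is trivial; thus $B=A$ and $\Gamma=\Gamma'$ is generated by reflections.

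For part (2) I would first note that $J:=J(F_0,\dots,F_l)$ is a nonzero meromorphic modular form (the $F_j$ being algebraically independent) of weight $k_J=l+\sum_j k_j$ and character $\det$, holomorphic on $\cD^\circ$ because it is a polynomial in the $F_j$ and their holomorphic derivatives, so it has no poles off $\cH$; and, as observed before the theorem, $\det(\sigma)=-1$ already forces $J$ to vanish along every mirror of a reflection $\sigma\in\Gamma$. The crucial step is then to compute $\operatorname{div}(J)$ on $\cD^\circ$ exactly. Using Looijenga's theorem, the $F_j$ exhibit $X^\circ=\cD^\circ/\Gamma$ as the complement of a codimension-$\geq 2$ subset of $\mathbb{P}(k_0,\dots,k_l)$; lifting to affine cones, the map $\Phi:=(F_0,\dots,F_l)$ realizes $\cA^\circ:=\cA(M)\setminus(\text{cone over }\cH)$ modulo $\Gamma$ as the complement of a codimension-$\geq 2$ subset of $\mathbb{A}^{l+1}$, and up to a nowhere-vanishing factor $J$ is the Jacobian determinant of $\Phi$ in tube-domain coordinates. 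Hence for $p\in\cD^\circ$ one has $J(p)\neq 0$ precisely when $\Phi$ is a local biholomorphism at $p$, i.e. when the finite stabilizer $\Gamma_p$ is trivial. Since the locus where $\Gamma_p\neq 1$ but $p$ lies on no mirror is contained in the union of fixed loci of non-reflection elements of $\Gamma$, which has codimension $\geq 2$, the zero divisor of $J$ on $\cD^\circ$ is supported on mirrors; and near a generic point of a mirror $\cD_r(M)\not\subseteq\cH$ one has $\Gamma_p=\langle\sigma_r\rangle\cong\ZZ/2$, so $\Phi$ is locally $w\mapsto w^2$ composed with a biholomorphism and $J$ vanishes there to order exactly one. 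This is (a) and (b).

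For part (3) I would use that $\det^2$ is trivial, so $J^2\in A$ equals $P(F_0,\dots,F_l)$ for a unique weighted-homogeneous $P$. By part (2) and the ramification index $2$ along mirrors, $\operatorname{div}(J^2)$ on $\cD^\circ$ is the pullback of the reduced divisor $D_1+\dots+D_s$ on $X^\circ$, where $D_i$ is the image of $\Gamma\pi_i$, so $\operatorname{div}(J^2)$ on $\mathbb{P}(k_0,\dots,k_l)$ equals $\overline{D_1}+\dots+\overline{D_s}$ (a further component would sit in the codimension-$\geq 2$ boundary). Writing $\overline{D_i}=\{P_i=0\}$ with $P_i$ irreducible, the form $P_i(F_0,\dots,F_l)$ vanishes to order $2$ along $\Gamma\pi_i$ with its remaining zeros and poles in $\cH$, and the $P_i$ are pairwise non-associate primes dividing $P$; thus $P=Q\prod_i P_i$, and $\operatorname{div}(Q(F_j))=\operatorname{div}(J^2)-\sum_i\operatorname{div}(P_i(F_j))=0$ on $\cD^\circ$, so $Q(F_j)$ is holomorphic and nowhere zero on $\cD^\circ$, hence a global section of $\mathcal{O}(\deg Q)$ on $\mathbb{P}(k_0,\dots,k_l)$ with empty zero divisor; since this bundle is ample on a positive-dimensional variety when $\deg Q>0$, we must have $\deg Q=0$, i.e. $Q\in\CC^\times$, which gives the claimed factorization. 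I expect the main obstacle to be precisely the divisor computation of part (2): identifying $J$ with the Jacobian of the quotient map $\Phi$ and, via Looijenga's $\operatorname{Proj}$ description with its codimension-$\geq 2$ boundary, determining $\operatorname{div}(J)$ exactly; the remainder is bookkeeping with normality, purity of the branch locus, and the geometry of weighted projective spaces.
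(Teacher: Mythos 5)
Your treatment of parts (2) and (3) is essentially the paper's. The paper likewise uses freeness to identify $\widehat{Y^\circ}$ with $\CC^{l+1}$ and the codimension-$\geq 2$ bound on $\widehat{Y^\circ}\setminus Y^\circ$ to view $J$ as the Jacobian determinant of $\phi=(F_0,\dots,F_l)$, concludes that $J$ is nonvanishing at points of $\cD^\circ$ with trivial stabilizer, that its zero divisor on $\cD^\circ$ is supported on mirrors, and that the multiplicity is one because reflections have order two; it then proves (3) exactly by comparing $\operatorname{div}(J^2)$ on $\cD^\circ$ with the decomposition of $\{P=0\}$ in $\PP(k_0,\dots,k_l)$, so your details on the factorization (irreducibility of the $P_i$ from the class group of the weighted projective space, multiplicity two from the ramification index, and the ampleness argument forcing $\deg Q=0$) are a legitimate filling-in of the paper's one-line sketch. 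The only substantive divergence is in (2) at the step ruling out zeros of $J$ away from mirrors: the paper invokes Gottschling's theorem (smoothness of $Y^\circ$ forces each stabilizer $\Gamma_x$ to be generated by reflections, hence trivial off the mirrors), whereas you observe that the non-reflection fixed locus has codimension $\geq 2$ and so cannot support a component of $\operatorname{div}(J)$; both work.

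Part (1) is where your route genuinely differs and where there is a gap. The paper applies Armstrong's theorem to the smooth, simply connected $Y^\circ$ to get that $\Gamma$ is generated by elements with fixed points, then Gottschling's theorem to upgrade to reflections. Your purity-of-the-branch-locus argument needs $\operatorname{Spec} B\to\operatorname{Spec} A$ to be a \emph{finite} morphism, i.e. $[\Gamma:\Gamma']<\infty$, and this is not known in advance --- it is essentially the content of part (1). The proposed repair via Selberg's lemma does not close it: replacing $\Gamma'$ by $\Gamma'N$ for a neat normal subgroup $N$ of finite index and showing $M_*^!(\Gamma'N)=A$ only yields $\Gamma=\Gamma'N$ modulo the kernel of the action, i.e. that $\Gamma$ is generated by reflections \emph{together with} $N$, which is strictly weaker. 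The finiteness-free version of your idea is topological: over the complement in $Y^\circ$ of the branch divisor and the codimension-$\geq 2$ bad set, $\cA^\circ\to Y^\circ$ restricts to a covering of a space whose fundamental group is generated by meridians of the branch divisors, and those meridians have reflections as local monodromy --- but that is precisely Armstrong's theorem, i.e. the paper's argument. So for (1) you should either quote Armstrong and Gottschling as the paper does, or recast the purity step in this covering-space form; as written, the reduction to a finite cover is unjustified.
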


\begin{proof}
(1) We use the notations in \S \ref{subsec:Looijenga}. Let $\cD^\circ=\cD - \cH$ and $X^\circ = \cD^\circ / \Gamma$. Recall that the Looijenga compactification $\widehat{X^\circ}$ is defined as $\mathrm{Proj}(M_*^!(\Gamma))$.  Let $Y^\circ=\cA^\circ / \Gamma$ be the affine cone over $X^\circ$ and $\widehat{Y^\circ}$ be the affine span of $\widehat{X^\circ}$ defined by the maximal spectrum of $M_*^!(\Gamma)$.  The freeness of $M_*^!(\Gamma)$ implies that $\widehat{X^\circ}$ is a weighted projective space and that $\widehat{Y^\circ}$ is simply the affine space $\CC^{l+1}$. We known from \cite{Loo03b} that the boundary of $\widehat{X^\circ} - X^\circ$ has only finitely many components and each component has codimension at least two in $\widehat{X^\circ}$.  Therefore, a similar property holds for $\widehat{Y^\circ}-Y^\circ$.  It follows that $Y^\circ$ is smooth and simply connected.  Applying a theorem of Armstrong \cite{Arm68} (cf. \cite[Theorem 3.2]{Wan21a}) to $Y^\circ$, we find that $\Gamma$ is generated by elements having fixed points.  Then a theorem of Gottschling \cite{Got69a} (cf. \cite[Theorem 3.1]{Wan21a}) implies that $\Gamma$ is generated by reflections.

(2) Let $k_i$ denote the weight of $F_i$ for $0\leq i \leq l$.  Then $$[z] \mapsto [F_0(z),...,F_{l}(z)]$$ identifies $\widehat{X^\circ}$ with $\PP(k_0,...,k_l)$ and further induces a holomorphic map
$$
   \phi:  \cA^\circ \to Y^\circ = \cA^\circ / \Gamma \hookrightarrow \CC^{l+1}\setminus \{0\}, \quad z \mapsto (F_0(z),...,F_{l}(z)).
$$
If $x\in \cA^\circ$ has trivial stabilizer $\Gamma_x$, then $\phi$ is biholomorphic on a neighborhood of $x$ since the action of $\Gamma$ on $\cA^\circ$ is properly discontinuous, so the Jacobian $J$ (which can be regarded as the usual Jacobian determinant of $\phi$) is nonvanishing there. Then Gottschling's theorem implies that $J$ is nonzero on $\cD^\circ$ away from mirrors of reflections in $\Gamma$.   From the character of $J$, we conclude that the restriction of $J$ to the complement $\cD^\circ$ vanishes precisely on mirrors of reflections in $\Gamma$. The fact that the multiplicity of the zeros is one follows from the fact that the reflections have order two. 

(3) The proof of this is similar to that of \cite[Theorem 3.5 (3)]{Wan21a}. We only need to compare the decomposition of the zero divisor of $J$ on $\cD^\circ$ and the decomposition of the zero locus of $P$ in $\PP(k_0,...,k_l)$. 
\end{proof}

At the end of this section, we establish the modular Jacobian criterion,  which will be used to construct free algebras of meromorphic modular forms later.
\begin{theorem}\label{th:Jacobiancriterion}
Let $\cH$ be an arrangement of hyperplanes satisfying the Looijenga condition. Suppose that there exist $l+1$ algebraically independent modular forms with poles on $\cH$ such that the restriction of their Jacobian to the arrangement complement $\cD^\circ$ vanishes precisely on mirrors of reflections in $\Gamma$ with multiplicity one. Then the algebra of modular forms with poles on $\cH$ is freely generated by these $l+1$ forms. 
\end{theorem}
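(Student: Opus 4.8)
The plan is to reduce Theorem~\ref{th:Jacobiancriterion} to the necessary-conditions package established in Theorem~\ref{th:freeJacobian}, but run in reverse: rather than deduce properties of $J$ from freeness, I want to start from the known behaviour of $J$ and extract freeness. Let $F_0,\dots,F_l$ be the given algebraically independent modular forms with poles on $\cH$, of weights $k_0,\dots,k_l$, and let $R = \CC[F_0,\dots,F_l] \subseteq M_*^!(\Gamma)$ be the subalgebra they generate; since they are algebraically independent, $R$ is a polynomial ring and has Krull dimension $l+1$. The first step is a dimension/integrality argument: $M_*^!(\Gamma)$ is finitely generated of Krull dimension $l+1$ (this is Looijenga's theorem, cited in \S\ref{subsec:Looijenga}), so $M_*^!(\Gamma)$ is a finite module over $R$ — equivalently, each generator of $M_*^!(\Gamma)$ is integral over $R$. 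The goal is then to show this finite extension is actually an equality, i.e. the inclusion $\mathrm{Proj}(M_*^!(\Gamma)) \to \mathrm{Proj}(R) = \PP(k_0,\dots,k_l)$ induced by $[z]\mapsto[F_0(z),\dots,F_l(z)]$ is an isomorphism, hence $R = M_*^!(\Gamma)$.

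The heart of the argument is a ramification computation that mirrors the proof of Theorem~\ref{th:freeJacobian}(2). Consider the holomorphic map $\phi: \cA^\circ \to \CC^{l+1}\setminus\{0\}$, $z\mapsto(F_0(z),\dots,F_l(z))$, whose Jacobian determinant is exactly $J := J(F_0,\dots,F_l)$. By hypothesis, on the arrangement complement $\cD^\circ$ the divisor of $J$ is supported precisely on the mirrors of reflections in $\Gamma$, each with multiplicity one. On the other hand, descending through the quotient, the covering $\cA^\circ \to Y^\circ = \cA^\circ/\Gamma$ is branched exactly along these same mirrors, again with the order-two reflections contributing ramification index two (so that the pullback of a local coordinate vanishing once downstairs pulls back to vanish once, after accounting for the reflection). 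Comparing the two, the map $\phi$ factors as $\cA^\circ \to Y^\circ \to \CC^{l+1}\setminus\{0\}$ where the second arrow $Y^\circ \to \mathrm{Spec}(R)\setminus\{0\}$ is now \emph{unramified in codimension one}: all of the vanishing of $J$ is accounted for by the $\Gamma$-branching, leaving the residual map étale away from a codimension-$\geq 2$ locus. Here is where the Looijenga condition is essential and must be invoked carefully: it guarantees (Theorem~\ref{th:freeJacobian}(1), via Armstrong's and Gottschling's theorems) that $Y^\circ$ is smooth, simply connected, and that $\widehat{Y^\circ}\setminus Y^\circ$ has codimension $\geq 2$, so that $\widehat{Y^\circ}$ is normal and the map $\widehat{Y^\circ} \to \mathrm{Spec}(R)$ is a finite, birational-on-the-nose morphism that is an isomorphism in codimension one.

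From here the conclusion is a purity/normality argument. The target $\mathrm{Spec}(R) = \CC^{l+1}$ is smooth, hence normal; $\widehat{Y^\circ} = \mathrm{Spec}(M_*^!(\Gamma))$ is normal because $M_*^!(\Gamma)$ is integrally closed in its fraction field — this follows since $M_*^!(\Gamma)$ is the ring of sections of a line bundle on a normal variety, meromorphic modular forms forming an integrally closed ring. A finite morphism between normal varieties that is an isomorphism in codimension one and generically of degree one must be an isomorphism (this is where simple-connectedness of $Y^\circ$ rules out any nontrivial unramified cover, forcing degree one globally, and then Zariski's main theorem / Hartogs closes the gap across the codimension-$\geq 2$ boundary). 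Therefore $R = M_*^!(\Gamma)$, i.e. $F_0,\dots,F_l$ generate, and since they are algebraically independent the algebra is free on them.

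The main obstacle I anticipate is the bookkeeping in the ramification comparison — precisely matching the order of vanishing of $J$ along a mirror with the ramification index of $\cA^\circ \to Y^\circ$ there, including the case of higher-order reflections $\sigma$ of order $d_\sigma > 2$ if one wants the argument to cover the general situation of Theorem~\ref{MTH1} (in the present statement only order-two reflections appear, which is cleaner, but one should still check the multiplicity-one hypothesis interacts correctly with the quotient). A secondary subtlety is justifying that "isomorphism in codimension one + normality + simple connectivity" genuinely upgrades to a global isomorphism in the weighted-projective/affine-cone setting, rather than merely on the smooth locus; this is where one leans on the codimension-$\geq 2$ statement for the Looijenga boundary, which is exactly the payoff of Condition~\ref{condition}. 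Once those two points are handled, the rest is formal.
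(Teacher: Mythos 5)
Your proposal takes a genuinely different, geometric route (ramification plus purity plus Zariski's main theorem) from the paper's proof, which is a short algebraic argument: assuming $M_*^!(\Gamma)$ is not generated by the $F_i$, one picks a form $f_{l+2}$ of minimal weight outside $\CC[F_0,\dots,F_l]$, forms the Jacobians $J_t$ obtained by omitting one form, observes that the divisor hypothesis on $J$ makes each $g_t = J_t/J$ a modular form with poles on $\cH$ of weight strictly smaller than that of $f_{l+2}$, and then the Laplace expansion of a determinant with a repeated row expresses $f_{l+2}$ as a combination of the $F_t g_t$, a contradiction. That argument needs none of the geometry you invoke.

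The problem with your version is the very first step. The claim that equality of Krull dimensions forces $M_*^!(\Gamma)$ to be a finite module over $R=\CC[F_0,\dots,F_l]$ is false: $\CC[x^2,xy]\subset\CC[x,y]$ is an inclusion of graded finitely generated algebras of the same Krull dimension with the subring a polynomial ring, yet $y$ is not integral over $\CC[x^2,xy]$. In the graded setting, finiteness of $M_*^!(\Gamma)$ over $R$ is equivalent to the $F_i$ having no common zero on $\mathrm{Proj}(M_*^!(\Gamma))=\widehat{X^\circ}$ (i.e.\ base-point-freeness, including at the Looijenga boundary), and this is precisely the kind of statement the Jacobian criterion is designed to let one avoid checking. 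Nothing in your hypotheses hands you this: the nonvanishing of $J$ on $\cD^\circ$ away from mirrors does constrain common zeros there (via the Euler row $(k_0F_0,\dots,k_lF_l)$ of the Jacobian matrix), but it does not rule out common zeros along mirrors, along $\cH$, or on the boundary strata. Since everything downstream — the finite morphism to $\CC^{l+1}$, purity of the branch locus, the covering-space/degree-one argument, and the application of Zariski's main theorem across the codimension-$\geq 2$ boundary — presupposes this finiteness, the gap is load-bearing. Your own flagged worries (matching ramification indices, upgrading codimension-one isomorphism to a global one) are secondary; the missing ingredient is the integrality of $M_*^!(\Gamma)$ over $R$, and the cleanest way to close it is to abandon the geometric route and run the determinant-identity argument the paper uses.
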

\begin{proof}
The proof is similar to that of \cite[Theorem 5.1]{Wan21a}. For the reader's convenience we give a short proof. Let $f_i$ ($1\leq i\leq l+1$) be $l+1$ modular forms of weight $k_i$ with poles on $\cH$ whose Jacobian $J$ satisfies the conditions of the theorem.
Suppose that $M_*^!(\Gamma)$ is not generated by $f_i$. Let $f_{l+2} \in M_{k_{l+2}}^!(\Gamma)$ be a modular form of minimal weight which does not lie in $\CC[f_1,...,f_{l+1}]$. For $1\leq t \leq l+2$ we define $$J_{t} = J(f_1,..., \hat f_t, ..., f_{l+2})$$ as the Jacobian of the $l+1$ modular forms $f_i$ omitting $f_t$, such that $J=J_{l+2}$. It is clear that $g_t := J_t/J \in M_*^!(\Gamma)$. 
The identity
$$
0 = \mathrm{det} \begin{psmallmatrix} k_1 f_1 & k_2 f_2 & \cdots & k_{l+2} f_{l+2} \\ k_1 f_1 & k_2 f_2 & \cdots & k_{l+2} f_{l+2} \\ \nabla f_1 & \nabla f_2 & ... & \nabla f_{l+2} \end{psmallmatrix} =  \sum_{t=1}^{l+2} (-1)^t k_t f_t J_t = J\cdot \Big( \sum_{t=1}^{l+2} (-1)^t k_t f_t g_t \Big) 
$$
and the equality $g_{l+2}=1$ yield
$$
(-1)^{l+1}k_{l+2}f_{l+2}= \sum_{t=1}^{l+1}(-1)^t k_t f_t g_t.
$$
Each $g_t$ has weight strictly less than that of $f_{l+2}$. The construction of $f_{l+2}$ implies that $g_t \in \CC[f_1,...,f_{l+1}]$ and then $f_{l+2} \in \CC[f_1,...,f_{l+1}]$, which contradicts our assumption.
\end{proof}

\section{Jacobi forms and singular theta lifts}\label{sec:Jacobiforms}
In this section we review some results from the theories of Jacobi forms of lattice index and Borcherds' singular theta lifts, which are necessary to prove the main theorems in the introduction.

\subsection{Jacobi forms of lattice index}\label{sec:Jacobi} In 1985 Eichler and Zagier introduced the theory of Jacobi forms in their monograph \cite{EZ85}. These are holomorphic functions in two variables $(\tau,z)\in \HH \times \CC$ which are modular in $\tau$ and quasi-periodic in $z$. Jacobi forms of lattice index were defined in \cite{Gri88} by replacing $z$ with a vector of variables associated with a positive definite lattice. As a bridge between different types of modular forms,  Jacobi forms have many applications in mathematics and physics. Let $L$ be an even integral positive definite lattice of rank $\rk(L)$ with bilinear form $\latt{-,-}$ and dual lattice $L'$.

\begin{definition}\label{def:JFs}
Let $k\in \ZZ$ and $t\in \NN$. A \emph{nearly holomorphic Jacobi form} of weight $k$ and index $t$ associated to $L$ is a holomorphic function $\varphi : \HH \times (L \otimes \CC) \rightarrow \CC$ which satisfies 
\begin{align*}
\varphi \left( \frac{a\tau +b}{c\tau + d},\frac{\mathfrak{z}}{c\tau + d} \right) &= (c\tau + d)^k \exp\left( t\pi i \frac{c\latt{\mathfrak{z},\mathfrak{z}}}{c \tau + d}\right) \varphi ( \tau, \mathfrak{z} ), \quad \left( \begin{array}{cc}
a & b \\ 
c & d
\end{array} \right)   \in \SL_2(\ZZ),\\
\varphi (\tau, \mathfrak{z}+ x \tau + y)&= \exp\left(-t\pi i ( \latt{x,x}\tau +2\latt{x,\mathfrak{z}} )\right) \varphi ( \tau, \mathfrak{z} ), \quad x,y\in L,
\end{align*}
and has a Fourier expansion of the form 
\begin{equation*}
\varphi ( \tau, \mathfrak{z} )= \sum_{ n \gg -\infty}\sum_{ \ell \in L'}f(n,\ell)q^n \zeta^\ell,  \quad q=e^{2\pi i\tau}, \; \zeta^\ell = e^{2\pi i \latt{\ell, \mathfrak{z}}}.
\end{equation*}
If $f(n,\ell) = 0$ whenever $n <0$ (resp. $2nt - \latt{\ell,\ell} <0$), then $\varphi$ is called a \textit{weak} (resp. \textit{holomorphic}) Jacobi form.   We denote the vector spaces of nearly holomorphic, weak and holomorphic  Jacobi forms of weight $k$ and index $t$ respectively by
$$
J_{k,L,t}^{!} \supset J_{k,L,t}^{\w} \supset J_{k,L,t}.
$$
\end{definition}
Jacobi forms of index $0$ are independent of the lattice variable $\mathfrak{z}$ and are therefore classical modular forms on $\SL_2(\ZZ)$.  In general, the quasi-periodicity and the transformation under $\begin{psmallmatrix} -1 & 0 \\ 0 & -1 \end{psmallmatrix} \in \SL_2(\ZZ)$ imply the following constraints on the coefficients.

\begin{lemma}\label{lem:periodic}
\noindent
\begin{enumerate}
\item Let $t\geq 1$. The Fourier coefficients of $\varphi \in J_{k,L,t}^!$ satisfy 
$$
f(n_1,\ell_1)=f(n_2,\ell_2) \quad \text{if} \quad 2n_1t - \latt{\ell_1,\ell_1} = 2n_2t - \latt{\ell_2,\ell_2} \; \text{and} \; \ell_1 - \ell_2 \in tL.
$$
\item The Fourier coefficients of the $q^0$-term of $\varphi \in J_{k,L,1}^{\w}$ satisfy
$$
f(0,\ell) \neq 0 \quad \Rightarrow \quad \latt{\ell,\ell} \leq \latt{\ell_1,\ell_1} \; \text{for all $\ell_1 \in \ell + L$}.
$$
\item The Fourier coefficients of $\varphi \in J_{k,L,t}^!$ satisfy
$$
f(n,\ell)=(-1)^k f(n,-\ell).
$$
\end{enumerate}
\end{lemma}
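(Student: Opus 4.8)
The plan is to read all three statements directly off the two defining functional equations, by substituting the Fourier expansion $\varphi(\tau,\mathfrak{z})=\sum_{n,\ell}f(n,\ell)q^n\zeta^\ell$ and comparing coefficients of monomials $q^m\zeta^{\ell}$. This comparison is legitimate because for each fixed $\ell\in L'$ the expansion is supported on $n\gg-\infty$, so only finitely many terms contribute to a given monomial; apart from this point the argument is pure bookkeeping, requiring no deep input.

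For part~(1) I would apply the elliptic transformation law with an arbitrary $x\in L$ and $y=0$; the translation by $y$ is irrelevant since $\latt{\ell,y}\in\ZZ$ for every $\ell\in L'$. Under $\mathfrak{z}\mapsto\mathfrak{z}+x\tau$ one has $\zeta^\ell\mapsto q^{\latt{\ell,x}}\zeta^\ell$, so the left-hand side of the transformation law becomes $\sum_{n,\ell}f(n,\ell)q^{n+\latt{\ell,x}}\zeta^\ell$, while the right-hand side equals $q^{-t\latt{x,x}/2}\zeta^{-tx}\varphi(\tau,\mathfrak{z})$. Matching the coefficient of a fixed monomial $q^m\zeta^\ell$ then gives
$$
f\bigl(m-\latt{\ell,x},\,\ell\bigr)=f\bigl(m+\tfrac{t}{2}\latt{x,x},\,\ell+tx\bigr).
$$
The two arguments differ by $tx\in tL$ in the lattice variable and share the same value of $2nt-\latt{\ell,\ell}$; conversely, any pair $(n_1,\ell_1),(n_2,\ell_2)$ with $\ell_2-\ell_1\in tL$ and $2n_1t-\latt{\ell_1,\ell_1}=2n_2t-\latt{\ell_2,\ell_2}$ is of this shape — write $\ell_2=\ell_1+tx$ with $x\in L$, set $m=n_1+\latt{\ell_1,x}$, and observe that (since $t\geq 1$) the norm identity forces $n_2=m+\tfrac{t}{2}\latt{x,x}$. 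Integrality of all the exponents appearing here uses that $L$ is even and $\ell\in L'$.

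Statement~(2) follows from~(1) specialized to $t=1$ and $n_1=0$: given $\ell_1\in\ell+L$, put $x=\ell_1-\ell\in L$ and $n_2=(\latt{\ell_1,\ell_1}-\latt{\ell,\ell})/2$, which is an integer because $L$ is even; then $f(0,\ell)=f(n_2,\ell_1)$, and if $\latt{\ell_1,\ell_1}<\latt{\ell,\ell}$ then $n_2<0$, so the weak-growth condition forces $f(n_2,\ell_1)=0$ and hence $f(0,\ell)=0$ — the contrapositive of the claim. Statement~(3) comes from feeding $\begin{psmallmatrix}-1&0\\0&-1\end{psmallmatrix}\in\SL_2(\ZZ)$ into the modular transformation law, which degenerates to $\varphi(\tau,-\mathfrak{z})=(-1)^k\varphi(\tau,\mathfrak{z})$; expanding the left side and reindexing $\ell\mapsto-\ell$ yields $\sum_{n,\ell}f(n,-\ell)q^n\zeta^\ell=(-1)^k\sum_{n,\ell}f(n,\ell)q^n\zeta^\ell$, and comparing coefficients finishes the proof.

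I do not expect a genuine obstacle: the lemma is a direct unwinding of the two transformation laws. The one subtlety worth a sentence in the write-up is justifying the coefficient comparison for an a priori infinite Fourier series, which rests on the support condition $n\gg-\infty$ built into Definition~\ref{def:JFs}.
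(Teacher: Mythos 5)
Your proof is correct and is exactly the argument the paper has in mind: the paper states the lemma without proof, attributing it precisely to the quasi-periodicity law and the transformation under $\begin{psmallmatrix} -1 & 0 \\ 0 & -1 \end{psmallmatrix}$, which are the two identities you unwind. The coefficient bookkeeping, the integrality checks, and the reduction of (2) to the $t=1$, $n_1=0$ case of (1) are all sound.
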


Let $J_{*,L,t}^{\w}$ (resp. $J_{*,L,t}$) denote the spaces of weak (resp. holomorphic) Jacobi forms of fixed index $t$ and arbitrary weight associated to $L$. Both are free graded modules of rank $|L'/L|$ over the graded ring of modular forms 
$$
M_*(\SL_2(\ZZ)):=\bigoplus_{k=0}^\infty M_k(\SL_2(\ZZ))=\CC[E_4,E_6].
$$

We will need the following raising index Hecke operators of Jacobi forms. 
\begin{lemma}[see Corollary 1 of \cite{Gri88}]\label{lem:Hecke}
Let $\varphi \in J_{k,L,t}^{!}$.  For any positive integer $m$, we have 
\begin{equation}\label{T(m)}
(\varphi \lvert T_{-}(m))(\tau,
\mathfrak{z}):=m^{-1}\sum_{\substack{ad=m,a>0\\ 0\leq b <d}}a^k \varphi
\left(\frac{a\tau+b}{d},a\mathfrak{z}\right) \in J_{k,L,mt}^{!},
\end{equation}
and the Fourier coefficients of 
$\varphi \lvert T_{-}(m)$ are given by the formula
$$
f_m(n,\ell)=\sum_{\substack{a\in \NN\\ a \mid (n,\ell,m)}}a^{k-1} f
\left( \frac{nm}{a^2},\frac{\ell}{a}\right),
$$
where $a\mid(n,\ell,m)$ means that $a\mid (n,m)$ 
and $a^{-1}\ell\in L'$. 
\end{lemma}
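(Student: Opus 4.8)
\textbf{Proof plan for Lemma \ref{lem:Hecke}.}

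The plan is to prove the two assertions — that $\varphi|T_-(m)$ is again a nearly holomorphic Jacobi form of index $mt$, and that its Fourier coefficients are given by the stated convolution formula — essentially by direct computation, following the template of the Hecke theory for elliptic modular forms and for the scalar-index Jacobi forms of Eichler--Zagier. First I would check the transformation laws. For the elliptic transformation, I would note that the sum $\sum_{ad=m,\,0\le b<d} \varphi\!\left(\tfrac{a\tau+b}{d}, a\mathfrak z\right)$ is, up to the weight and index automorphy factors, a sum over a set of representatives for $\Gamma \backslash \{\gamma \in M_2(\ZZ): \det\gamma = m\}$, and that right multiplication by an element of $\SL_2(\ZZ)$ permutes these cosets; after absorbing the automorphy factors (the weight factor $(c\tau+d)^k$ and the index exponential $\exp(t\pi i\, c\langle\mathfrak z,\mathfrak z\rangle/(c\tau+d))$ — here is where the index jumps from $t$ to $mt$, since scaling $\mathfrak z \mapsto a\mathfrak z$ multiplies the quadratic term by $a^2$ and $a^2 t/a = at$, summed against $d$ to give $mt$), one recovers the required transformation with index $mt$. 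The quasi-periodicity in $\mathfrak z \mapsto \mathfrak z + x\tau + y$ ($x,y\in L$) is checked termwise: replacing $\mathfrak z$ by $\mathfrak z + x\tau + y$ in the $(a,b,d)$ term turns it into the quasi-periodicity of $\varphi$ at the point $a\mathfrak z + (ax)\tfrac{a\tau+b}{d} + \dots$; reorganizing the lattice translation (using $ax, ay \in L$ and that $a$ divides $m$) produces exactly the index-$mt$ quasi-periodicity factor $\exp(-mt\pi i(\langle x,x\rangle\tau + 2\langle x,\mathfrak z\rangle))$.

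Next I would establish the coefficient formula, which simultaneously verifies that $\varphi|T_-(m)$ has a Fourier expansion of the nearly holomorphic type (Fourier support bounded below in $n$, coefficients indexed by $\ell\in L'$). Starting from $\varphi(\tau,\mathfrak z) = \sum_{n,\ell} f(n,\ell) q^n\zeta^\ell$, I substitute into the definition: the term indexed by $(a,b,d)$ contributes
$$
m^{-1} a^k \sum_{n,\ell} f(n,\ell)\, e^{2\pi i n(a\tau+b)/d}\, e^{2\pi i a\langle\ell,\mathfrak z\rangle}.
$$
Summing over $0\le b<d$ kills all terms unless $d\mid n$, leaving $\sum_{b} e^{2\pi i nb/d} = d$ in that case; writing $n = dn'$ the $\tau$-exponent becomes $e^{2\pi i a n'\tau}$, so after reindexing $N = an'$, $\ell \mapsto a\ell =: \ell''$ (valid since $a^{-1}\ell'' = \ell \in L'$ forces $a^{-1}\ell'' \in L'$) one collects the coefficient of $q^N\zeta^{\ell''}$ as $\sum_{a\mid m,\ a\mid N,\ a^{-1}\ell''\in L'} m^{-1}\cdot a^k \cdot d \cdot f(Nm/a^2, \ell''/a)$, and since $ad=m$ the factor $m^{-1}ad = 1$, giving $\sum_{a\mid(N,\ell'',m)} a^{k-1} f(Nm/a^2,\ell''/a)$ as claimed. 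The lower bound $n\gg-\infty$ for $\varphi$ immediately implies the same for $\varphi|T_-(m)$.

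I do not expect a serious obstacle here: this is a standard, if somewhat bookkeeping-heavy, unfolding argument, and the statement is quoted from Gritsenko's \cite{Gri88}. The one point requiring a little care is the matching of the index: one must track how the quasi-periodicity parameter transforms under $\mathfrak z\mapsto a\mathfrak z$ and verify the coherence condition $\sum_{ad=m} a^2 t/a$ arranged correctly — i.e.\ that the characters on the finite abelian group $L'/mtL$ are respected, equivalently that the scaled Fourier index $\ell''\in L'$ pairs consistently with the index-$mt$ Heisenberg action. Since $a\mid m\mid mt$, the congruence conditions stay inside $L'$, so no genuine difficulty arises; the bulk of the work is simply carrying the automorphy factors through the coset reshuffling cleanly. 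Accordingly I would present the transformation-law check and the Fourier-coefficient computation in parallel, referencing \cite[Corollary 1]{Gri88} for the details that are identical to the classical case.
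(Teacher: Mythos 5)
Your proposal is correct. Note, however, that the paper itself offers no proof of this lemma: it is stated purely as a quotation of Corollary 1 of \cite{Gri88}, so there is no internal argument to compare against. Your unfolding computation — coset representatives for $\SL_2(\ZZ)\backslash\{\det = m\}$, the geometric-series collapse of the $b$-sum forcing $d\mid n$, the reindexing $N = an'$, $\ell'' = a\ell$ with $m^{-1}a^k d = a^{k-1}$, and the quasi-periodicity check via $ax\tau + ay = (dx)\tau' + (ay - bx)$ producing the index $mt = ad\cdot t$ — is the standard argument and is exactly what the cited reference carries out; the coefficient formula and the index bookkeeping both check out.
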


In \cite{Gri18} Gritsenko proved the following useful identities for Jacobi forms of weight $0$. These are variants of identities of Borcherds \cite[Theorem 10.5]{Bor98} that hold for vector-valued modular forms. 

\begin{lemma}\label{Lem:q^0-term}
Every nearly holomorphic Jacobi form of weight $0$ and index $1$ with Fourier expansion
$$
\phi(\tau,\mathfrak{z})=\sum_{n\in\ZZ}\sum_{\ell\in L'}f(n,\ell)q^n\zeta^\ell \in J_{0,L,1}^!
$$ 
satisfies the identity
\begin{equation}\label{eq:q^0-term}
C:= \frac{1}{24}\sum_{\ell\in L'}f(0,\ell)-\sum_{n<0}\sum_{\ell\in L'}f(n,\ell)\sigma_1(-n)=\frac{1}{2\rk(L)} \sum_{\ell\in L'}f(0,\ell)\latt{\ell,\ell}
\end{equation}
and the identity
\begin{equation}\label{eq:vectorsystem}
\sum_{\ell\in L'}f(0,\ell)\latt{\ell,\mathfrak{z}}^2=2C\latt{\mathfrak{z},\mathfrak{z}},
\end{equation}
where $\sigma_1(m)$ is the sum of the positive divisors of $m \in \mathbb{N}$.
\end{lemma}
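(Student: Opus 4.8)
\emph{Proof plan.} The plan is to deduce both identities of the lemma from the single \emph{polarized identity} $(\star)$: for all $u,v\in L\otimes\CC$,
\[
\sum_{\ell\in L'}f(0,\ell)\,\latt{\ell,u}\,\latt{\ell,v}=2C\,\latt{u,v}.
\]
Granting $(\star)$, equation \eqref{eq:vectorsystem} is the special case $u=v=\mathfrak{z}$, while \eqref{eq:q^0-term} follows by contracting $(\star)$ with the inverse Gram matrix: fixing a $\ZZ$-basis $(e_i)$ of $L$ with Gram matrix $G$ and summing $(\star)$ over the pairs $(u,v)=(e_i,e_j)$ weighted by $(G^{-1})_{ij}$, one uses $\sum_{i,j}(G^{-1})_{ij}\latt{\ell,e_i}\latt{\ell,e_j}=\latt{\ell,\ell}$ and $\sum_{i,j}(G^{-1})_{ij}\latt{e_i,e_j}=\rk(L)$ to obtain $\sum_{\ell}f(0,\ell)\latt{\ell,\ell}=2C\,\rk(L)$. (All sums occurring here are finite: by Lemma \ref{lem:periodic}(1) together with near-holomorphy, $f(0,\ell)\neq0$ for only finitely many $\ell$ in each coset of $L$ in $L'$, and there are finitely many cosets.)

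To prove $(\star)$, fix $u,v\in L\otimes\CC$ and let $\partial_u$ denote the constant-coefficient differential operator in the lattice variable $\mathfrak{z}$ normalized by $\partial_u\zeta^\ell=2\pi i\,\latt{\ell,u}\,\zeta^\ell$, so that $\partial_u\latt{\mathfrak{z},\mathfrak{z}}=2\latt{u,\mathfrak{z}}$ and $\partial_u\latt{v,\mathfrak{z}}=\latt{u,v}$. Introduce the holomorphic functions on $\HH$
\[
g(\tau):=\phi(\tau,0)=\sum_{n}\Bigl(\sum_{\ell\in L'}f(n,\ell)\Bigr)q^{n},\qquad
h(\tau):=\bigl(\partial_u\partial_v\phi\bigr)(\tau,0)=-4\pi^2\sum_{n}\Bigl(\sum_{\ell\in L'}f(n,\ell)\,\latt{\ell,u}\,\latt{\ell,v}\Bigr)q^{n},
\]
both with $q$-expansions bounded below since $\phi$ is nearly holomorphic. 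Setting $\mathfrak{z}=0$ in the modular transformation law of $\phi$ shows that $g$ is $\SL_2(\ZZ)$-invariant, hence a weakly holomorphic modular form of weight $0$. Next I would differentiate the transformation law
\[
\phi\!\left(\tfrac{a\tau+b}{c\tau+d},\tfrac{\mathfrak{z}}{c\tau+d}\right)=\exp\!\left(\tfrac{\pi i\,c\,\latt{\mathfrak{z},\mathfrak{z}}}{c\tau+d}\right)\phi(\tau,\mathfrak{z})
\]
by $\partial_u\partial_v$ and evaluate at $\mathfrak{z}=0$. A short application of the Leibniz rule, using $\partial_u\latt{\mathfrak{z},\mathfrak{z}}=2\latt{u,\mathfrak{z}}$ and $\partial_u\latt{v,\mathfrak{z}}=\latt{u,v}$, shows that every term carrying a factor $\latt{u,\mathfrak{z}}$ or $\latt{v,\mathfrak{z}}$ drops out at $\mathfrak{z}=0$, leaving
\[
h\!\left(\tfrac{a\tau+b}{c\tau+d}\right)=(c\tau+d)^{2}\,h(\tau)+2\pi i\,c(c\tau+d)\,\latt{u,v}\,g(\tau),\qquad \begin{psmallmatrix}a&b\\c&d\end{psmallmatrix}\in\SL_2(\ZZ).
\]
Comparing with the quasimodular cocycle of $E_2(\tau)=1-24\sum_{n\ge1}\sigma_1(n)q^{n}$, namely $E_2\!\left(\tfrac{a\tau+b}{c\tau+d}\right)=(c\tau+d)^2E_2(\tau)+\tfrac{12}{2\pi i}c(c\tau+d)$, and using $g\!\left(\tfrac{a\tau+b}{c\tau+d}\right)=g(\tau)$, we conclude that
\[
F(\tau):=h(\tau)+\tfrac{\pi^2\latt{u,v}}{3}\,E_2(\tau)\,g(\tau)
\]
is a weakly holomorphic modular form of weight $2$ for $\SL_2(\ZZ)$.

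The argument is then finished by the standard fact that a weakly holomorphic modular form of weight $2$ for $\SL_2(\ZZ)$ has vanishing constant term — apply the residue theorem to the meromorphic differential $F(\tau)\,d\tau$ on $X(1)\cong\PP^1$, whose only pole is the cusp and whose residue there is $\tfrac{1}{2\pi i}$ times the constant term of $F$. It remains to read off the $q^0$-coefficient of $F$: that of $h$ equals $-4\pi^2\sum_{\ell}f(0,\ell)\latt{\ell,u}\latt{\ell,v}$, while that of $E_2g$ equals $\sum_{\ell}f(0,\ell)-24\sum_{n<0}\sigma_1(-n)\sum_{\ell}f(n,\ell)=24C$ straight from the definition of $C$. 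Hence $-4\pi^2\sum_{\ell}f(0,\ell)\latt{\ell,u}\latt{\ell,v}+8\pi^2 C\,\latt{u,v}=0$, which is $(\star)$.

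The step requiring care is the second-derivative computation: one must verify that after putting $\mathfrak{z}=0$ the ``anomaly'' in the transformation of $h$ is exactly $2\pi i\,c(c\tau+d)\latt{u,v}\,g(\tau)$, so that a single multiple of $E_2g$ corrects $h$ into a genuine weight-$2$ form and the constant-term comparison closes the argument. By contrast, the reduction of \eqref{eq:q^0-term} and \eqref{eq:vectorsystem} to $(\star)$, and the vanishing of the constant term of a weight-$2$ weakly holomorphic modular form, are routine.
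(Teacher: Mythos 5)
Your proposal is correct, and every step checks out: the Leibniz computation does produce the cocycle $h(\gamma\tau)=(c\tau+d)^2h(\tau)+2\pi i c(c\tau+d)\latt{u,v}g(\tau)$, the correction constant $\tfrac{\pi^2\latt{u,v}}{3}$ matches the $E_2$-anomaly, the finiteness of the inner sums follows as you say from Lemma \ref{lem:periodic}(1) plus meromorphy at $\infty$, and the constant term of $F$ gives exactly the polarized identity $(\star)$, from which \eqref{eq:vectorsystem} and \eqref{eq:q^0-term} follow by specialization and contraction. The paper itself gives no proof of this lemma — it is quoted from \cite{Gri18} as a variant of \cite[Theorem 10.5]{Bor98} — and your argument (correcting $\partial_u\partial_v\phi|_{\mathfrak{z}=0}$ by a multiple of $E_2\cdot\phi|_{\mathfrak{z}=0}$ to get a weakly holomorphic weight-$2$ form and killing its constant term by the residue theorem) is precisely the standard derivation used in those references, so nothing further is needed.
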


\subsection{Additive lifts and Borcherds products}
In \cite{Bor95, Bor98} Borcherds introduced the singular theta lift to construct modular forms on type IV symmetric domains with specified divisors. We will quickly review this theory from the point of view of Jacobi forms.

\subsubsection{Modular forms for the Weil representation}
Let $\mathrm{Mp}_2(\mathbb{Z})$ be the metaplectic group, consisting of pairs $A = (A, \phi_A)$, where $A = \begin{psmallmatrix} a & b \\ c & d \end{psmallmatrix} \in \mathrm{SL}_2(\mathbb{Z})$ and $\phi_A$ is a holomorphic square root of $\tau \mapsto c \tau + d$ on $\mathbb{H}$, with the standard generators $T = (\begin{psmallmatrix} 1 & 1 \\ 0 & 1 \end{psmallmatrix}, 1)$ and $S = (\begin{psmallmatrix} 0 & -1 \\ 1 & 0 \end{psmallmatrix}, \sqrt{\tau})$. Let $M$ be an even lattice with bilinear form $(-,-)$ and quadratic form $Q(-)$. The \emph{Weil representation} $\rho_M: \mathrm{Mp}_2(\mathbb{Z})\to \GL (\mathbb{C}[M'/M])$ is defined by $$\rho_M(T) e_x = \mathbf{e}(-Q(x)) e_x \quad \text{and} \quad \rho_M(S) e_x = \frac{\mathbf{e}(\mathrm{sig}(M) / 8)}{\sqrt{|M'/M|}} \sum_{y \in M'/M} \mathbf{e}((x,y )) e_y,$$ 
where $e_\gamma$, $\gamma\in M'/M$ is the standard basis of the group ring $\CC[M'/M]$, and $\mathbf{e}(z)=e^{2\pi i z}$.

A \emph{nearly holomorphic modular form} of weight $k \in \frac{1}{2}\mathbb{Z}$ for the Weil representation $\rho_M$ is a holomorphic function $f : \mathbb{H} \rightarrow \mathbb{C}[M'/M]$ which satisfies $$f(A \cdot \tau) = \phi_A(\tau)^{2k} \rho_M(A) f(\tau), \quad \text{for all $(A, \phi_A) \in \mathrm{Mp}_2(\mathbb{Z})$}$$ and is meromorphic at the cusp $\infty$, that is, its Fourier expansion  $$f(\tau) = \sum_{x \in M'/M} \sum_{n \in \mathbb{Z} - Q(x)} c(n, x) q^n e_x$$
has only finitely many negative exponents. 
We denote the space of such forms by $M_k^!(\rho_M)$. 
We define the \textit{principal part} of $f$ by the sum
$$
\sum_{x \in M'/M} \sum_{n < 0} c(n, x) q^n e_x.
$$
We call $f$ a \textit{holomorphic} modular form if $c(n,x)=0$ whenever $n<0$, i.e. its principal part is zero. We denote the space of holomorphic modular forms of weight $k$ for $\rho_M$ by $M_k(\rho_M)$.

If $M$ splits two hyperbolic planes, then modular forms for the Weil representation can be identified with Jacobi forms. Let $U$ be a hyperbolic plane, i.e. the unique even unimodular lattice of signature $(1,1)$. We fix the basis of $U$ as
\begin{equation}\label{eq:basis of U}
    U=\ZZ e + \ZZ f, \quad (e,e)=(f,f)=0, \quad (e,f)=-1.
\end{equation}
Let $U_1$ be a copy of $U$ with a similar basis $e_1, f_1$. As in Section \ref{sec:Jacobi}, let $L$ be an even positive definite lattice with bilinear form $\latt{-,-}$. Then $M:=U_1\oplus U\oplus L$ is an even lattice of signature $(2+\rk(L),2)$ and $M'/M=L'/L$. Define the theta functions
$$
\Theta_{L, \gamma}(\tau,\mathfrak{z}) = \sum_{\ell \in L + \gamma} \exp\left(\pi i \latt{\ell,\ell}\tau + 2\pi i\latt{\ell, \mathfrak{z}}\right), \quad \gamma \in L'/L.
$$
The \emph{theta decomposition} yields an isomorphism between $M_{k-\rk(L)/2}^!(\rho_L)$ and $J_{k,L,1}^!$ :
$$
f(\tau)=\sum_{\gamma \in L'/L} f_\gamma(\tau) e_\gamma \mapsto \sum_{\gamma \in L'/L} f_\gamma(\tau) \Theta_{L,\gamma}(\tau, \mathfrak{z}). 
$$
For a nearly holomorphic Jacobi form $\varphi \in J_{k,L,1}^!$, the Fourier coefficients $f(n,\ell)q^n\zeta^\ell$ satisfying $2n-\latt{\ell,\ell}<0$ are called the \textit{singular Fourier coefficients}. The singular coefficients of $\varphi$ are precisely the coefficients which appear in the principal part at $\infty$ of the vector-valued modular form $f$. Therefore, the theta decomposition defines an isomorphism between the spaces of holomorphic forms $M_{k-\rk(L)/2}(\rho_L)$ and $J_{k,L,1}$. It follows in particular that the minimum possible weight of a non-constant holomorphic Jacobi form for $L$ is $\rk(L)/2$. This is called the \textit{singular weight}; since the associated vector-valued form $f$ must be constant, a singular-weight Jacobi form has its Fourier series supported on exponents $(n, \ell)$ of hyperbolic norm $2n - \latt{\ell, \ell} = 0$.

\subsubsection{Borcherds' singular additive lift}
When $M$ is of signature $(l,2)$ and $f \in M_{\kappa}^!(\rho_M)$ has weight $\kappa = k+1-l/2$ with integral $k\geq 1$, Borcherds' (singular) theta lift of $f$ is a meromorphic modular form of weight $k$ for $\widetilde{\Orth}^+(M)$ whose only singularities are poles of order $k$ along the hyperplanes $\cD_v(M)$ satisfying (see \cite[Theorem 14.3]{Bor98} for full details)
$$
\sum_{n=1}^\infty c(-n^2Q(v),nv)\neq 0, \quad \text{where $v\in M'$ is primitive.}
$$
This construction is called the \textit{Borcherds singular additive lift}. Note that the singular additive lift is holomorphic if and only if the principal part of the input is zero.

Suppose as before that $M = U_1 \oplus U \oplus L$ with $U = \ZZ e + \ZZ f$ and $U_1 = \ZZ e_1 + \ZZ f_1$. Around the one-dimensional cusp associated to the isotropic plane $\ZZ e_1 + \ZZ e$, $\cD(M)$ can be realized as the tube domain
$$
\cH(L):=  \{Z=(\tau,\mathfrak{z},\omega)\in \HH\times (L\otimes\CC)\times \HH: 
(\im Z,\im Z)<0\}, 
$$
where $(\im Z,\im Z)=-2\im \tau \im \omega +
\latt{\im \mathfrak{z},\im \mathfrak{z}}$. 
Let $F$ be a holomorphic modular form of weight $k$ and trivial character for $\widetilde{\Orth}^+(2U\oplus L)$. The Fourier expansion of $F$ on $\cH(L)$ has the shape
$$
F(\tau,\mathfrak{z},\omega) = \sum_{n=0}^\infty \sum_{m=0}^\infty \sum_{\ell \in L'} f(n,\ell,m)q^n\zeta^\ell\xi^m, \quad q=e^{2\pi i\tau}, \; \zeta^\ell =e^{2\pi i\latt{\ell,\mathfrak{z}}},\; \xi=e^{2\pi i\omega}.
$$
The holomorphy of $F$ on the boundary of $\cH(L)$ in $\cD(M)$ forces $f(n,\ell,m)=0$ if $2nm-\latt{\ell,\ell}<0$. This series can be reorganized as the \emph{Fourier--Jacobi expansion}
\begin{equation}\label{eq:FJdef}
    F(\tau, \mathfrak{z}, \omega) = \sum_{m = 0}^\infty f_m(\tau, \mathfrak{z})\xi^m, \quad \xi=e^{2\pi i \omega },
\end{equation}
where each $f_m$ is a holomorphic Jacobi form of weight $k$ and index $m$ associated to $L$. If $F$ is non-constant, then the weight must satisfy $k\geq \rk(L)/2$ since this is true for each $f_m$. In particular, the minimum possible weight of a non-constant modular form on a lattice of signature $(l,2)$ is $l/2-1$ (as before we assume $l\geq 3$), which is again called the \textit{singular weight}.

Through the theta decomposition, we can realize nearly-holomorphic Jacobi forms of weight $k$ and index $1$ for $L$ as the inputs into the weight $k$ singular additive lift. Combining \cite[Theorem 7.1, Theorem 9.3]{Bor95} and \cite[Theorem 14.3]{Bor98} yields a simple expression for the Fourier--Jacobi expansion of Borcherds' singular additive lifts in terms of the input Jacobi form.

\begin{theorem}[\cite{Bor95, Bor98}]\label{th:additive}
Suppose that $k\geq 1$ is integral and that $\varphi_k \in J_{k,L,1}^!$ has Fourier expansion
$$
\varphi_k(\tau,\mathfrak{z})=\sum_{ n \in \ZZ}\sum_{ \ell \in L'}f(n,\ell)q^n \zeta^\ell.
$$
Then the series
$$
\Grit(\varphi_k)(\tau,\mathfrak{z},\omega) = \sum_{m=0}^\infty (\varphi_k | T_{-}(m)) (\tau, \mathfrak{z}) \cdot \xi^m
$$
defines a meromorphic modular form of weight $k$ and trivial character for $\widetilde{\Orth}^+(2U\oplus L)$, where the Hecke operator $T_{-}(0)$ is
\begin{align*} 
(\varphi_k | T_{-}(0)) (\tau, \mathfrak{z}) &= -\frac{f(0,0)B_k}{2k} +  \sum_{(n,\ell)>0} \sum_{d | (n,\ell)} d^{k-1} f(0,\ell/d) q^n \zeta^\ell. 
\end{align*} 
Here, $B_n$ are the Bernoulli numbers defined by $\sum_{n=0}^\infty B_n t^k / n! = t/ (e^t -1)$, and $(n,\ell)>0$ means that either $n>0$ or $n=0$ and $\ell >0$. In particular, when the input has trivial $q^0$-term the zeroth Fourier--Jacobi coefficient $\varphi_k | T_{-}(0)$ is zero.
\end{theorem}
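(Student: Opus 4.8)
The plan is to obtain Theorem~\ref{th:additive} by translating Borcherds' additive singular theta lift into the language of Jacobi forms through the theta decomposition. Set $M = 2U \oplus L$; this is an even lattice of signature $(l,2)$ with $l = \rk(L) + 2$, and since $2U$ is even unimodular of signature $0$ one has $M'/M \cong L'/L$ and $\rho_M = \rho_L$. The theta decomposition identifies $\varphi_k \in J_{k,L,1}^!$ with a nearly holomorphic vector-valued modular form $f = \sum_{\gamma} f_\gamma e_\gamma \in M_\kappa^!(\rho_M)$ of weight $\kappa = k + 1 - l/2 = k - \rk(L)/2$, in such a way that $c(m,\ell) = f\bigl(m + \tfrac{1}{2}\latt{\ell,\ell},\, \ell\bigr)$ and the principal part of $f$ records exactly the singular Fourier coefficients of $\varphi_k$ (those with $2n - \latt{\ell,\ell} < 0$). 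Applying Borcherds' additive lift \cite{Bor95, Bor98} to $f$ then produces a meromorphic modular form $\Grit(\varphi_k)$ of weight $k$ and trivial character for $\widetilde{\Orth}^+(M)$ whose only singularities are poles of order $k$ along those hyperplanes $\cD_v(M)$, $v \in M'$ primitive, for which $\sum_{n \ge 1} c(-n^2 Q(v), nv) \ne 0$. In this way the modularity, the triviality of the character, the weight, and the divisor of poles are all inherited directly from Borcherds' theorems, and what remains is to compute the Fourier--Jacobi expansion at a convenient cusp.

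Next I would realize $\cD(M)$ as the tube domain $\cH(L)$ attached to the isotropic plane $\ZZ e_1 + \ZZ e$, with coordinates $(\tau,\mathfrak{z},\omega)$ and $q = e^{2\pi i\tau}$, $\zeta^\ell = e^{2\pi i\latt{\ell,\mathfrak{z}}}$, $\xi = e^{2\pi i\omega}$, and read off the Fourier expansion of $\Grit(\varphi_k)$ from \cite[Theorem 14.3]{Bor98} together with the explicit expansions of the additive lift in \cite{Bor95} (which are usually stated at $0$-dimensional cusps and must be assembled at this $1$-dimensional one). That formula presents the $q^n\zeta^\ell\xi^m$-coefficient of the lift as a finite divisor sum of the coefficients $c(\cdot,\cdot)$ of $f$, plus a single constant-term contribution, supported on $m = 0$, which originates from the Weyl vector of $f$ (equivalently, from its non-holomorphic completion). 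Collecting, for each fixed $m \ge 1$, the terms carrying a given power $\xi^m$ produces an element of $J_{k,L,m}^!$ whose Fourier coefficients are $\sum_{a \mid (n,\ell,m)} a^{k-1} f(nm/a^2,\ell/a)$; by the Hecke-operator formula of Lemma~\ref{lem:Hecke} (cf.\ \cite{Gri88}) this is precisely $\varphi_k | T_{-}(m)$. The non-constant part of the $\xi^0$-layer is the same divisor sum applied instead to the $q^0$-term of $\varphi_k$, namely $\sum_{(n,\ell) > 0}\sum_{d \mid (n,\ell)} d^{k-1} f(0,\ell/d)\, q^n\zeta^\ell$, which is the asserted formula for $\varphi_k | T_{-}(0)$. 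No separate convergence argument for $\sum_{m \ge 0}(\varphi_k | T_{-}(m))\xi^m$ is needed, since this series is by construction the convergent Fourier expansion of the theta lift near the cusp.

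The one delicate point, and the step I expect to be the main obstacle, is the genuine constant $-f(0,0)B_k/(2k)$: one has to check that Borcherds' normalization of the Weyl-vector term at this cusp yields exactly this multiple of the Fourier coefficient $f(0,0) = c(0,0)$, with the correct Bernoulli number, rather than a different rational multiple or an $L$-value of the shadow of $f$. This reduces to evaluating the constant term of the relevant weight-$k$ Eisenstein-type contribution and invoking the identity $\zeta(1-k) = -B_k/k$ for $k \ge 2$, with the weight-one case handled by the analogous direct evaluation of the Eisenstein constant. Everything else in the statement is a purely formal re-indexing of Borcherds' Fourier expansion into a Fourier--Jacobi series and needs no analytic input beyond \cite{Bor95, Bor98}.
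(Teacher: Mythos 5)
Your proposal is correct and follows essentially the same route as the paper, which likewise obtains this statement by combining Borcherds' Theorems 7.1 and 9.3 of \cite{Bor95} with Theorem 14.3 of \cite{Bor98} via the theta decomposition and then matching the $\xi^m$-coefficients with the index-raising Hecke operators of Lemma \ref{lem:Hecke}. Your identification of the constant term with the weight-$k$ Eisenstein contribution (via $\zeta(1-k)=-B_k/k$) is also how the paper treats it in the closed-form computation of $\varphi_k | T_{-}(0)$ that follows the theorem.
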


The image $\varphi_k | T_{-}(0)$ can be expressed in closed form in terms of the Weierstrass zeta function, $$\zeta(\tau, z) = \frac{1}{z} + \sum_{\substack{w \in \ZZ \tau + \ZZ \\ w \neq 0}} \left( \frac{1}{z - w} + \frac{1}{w} + \frac{z}{w^2} \right) = \frac{1}{z} + \frac{1}{2} \sum_{\substack{w \in \ZZ\tau + \ZZ \\ w \neq 0}} \left( \frac{1}{z - w} + \frac{1}{z + w} + \frac{2z}{w^2} \right).$$ Using the well-known partial fractions decomposition, $$\lim_{N \rightarrow \infty} \sum_{n = -N}^N \frac{1}{z + n} = \pi i - \frac{2\pi i}{1 - e^{2\pi i z}}, \quad z \notin \ZZ,$$ and applying Eisenstein summation, we find the Fourier series for $\zeta$: 
\begin{align*} \zeta(\tau, z) =& \frac{\pi^2}{3} E_2(\tau) z + \frac{1}{2} \lim_{M \rightarrow \infty}  \sum_{m = -M}^M \lim_{N \rightarrow \infty} \sum_{n=-N}^N \left( -\frac{1}{-z + m\tau + n} + \frac{1}{z + m \tau + n} \right) \\ 
=& \frac{\pi^2}{3} E_2(\tau) z + \pi i  \sum_{m = -\infty}^{\infty} \left( \frac{1}{1 - q^m \zeta^{-1}} - \frac{1}{1 - q^m \zeta} \right) \\ 
=& \frac{\pi^2}{3} E_2(\tau) z + \pi i \left( \frac{1}{1 - \zeta^{-1}} - \frac{1}{ 1 - \zeta} \right) \\
& + \pi i \sum_{m=1}^{\infty} \left( \frac{1}{1 - q^m \zeta^{-1}} - \frac{1}{1 - q^m \zeta} + \frac{q^m \zeta^{-1}}{1 - q^m \zeta^{-1}} - \frac{q^m \zeta}{1 - q^m \zeta} \right) \\
=& \frac{\pi^2}{3} E_2(\tau) z - \pi i - 2\pi i \sum_{n=1}^{\infty} \zeta^n - 2\pi i \sum_{m=1}^{\infty} \sum_{n=1}^{\infty} q^{mn} (\zeta^n - \zeta^{-n}), \; 0 < |\zeta| < |q|,
\end{align*}
where $\zeta$ also denotes $e^{2\pi i z}$. When $k = 1$, the nonexistence of (nonzero) modular forms of weight $2$ forces the identity $$\sum_{\ell > 0} f(0, \ell) \latt{\ell, \mathfrak{z}} \equiv 0,$$ which, together with $f(0, \ell) = -f(0, -\ell)$, implies
\begin{align*} 
\sum_{(n, \ell) > 0} \sum_{d | (n, \ell)} f(0, \ell / d) q^n \zeta^{\ell} &= \sum_{\ell > 0} f(0, \ell) \left( -\frac{1}{2\pi i} \zeta(\tau, \latt{\ell, \mathfrak{z}}) - \frac{\pi i}{6} E_2(\tau) \latt{\ell, \mathfrak{z}} \right) \\ 
&= -\frac{1}{2\pi i} \sum_{\ell > 0} f(0,\ell) \zeta(\tau, \latt{\ell, \mathfrak{z}}). 
\end{align*} 
For $k \ge 2$, a similar argument yields 
\begin{align*} 
\sum_{(n, \ell) > 0} \sum_{d | (n, \ell)} f(0, \ell/d) q^n \zeta^{\ell} &= \delta_k f(0, 0) \sum_{n = 1}^{\infty} \sigma_{k - 1}(n)q^n - \frac{1}{(2\pi i)^k} \sum_{\ell > 0} f(0, \ell) \zeta^{(k-1)}(\tau, \latt{\ell, \mathfrak{z}}) \\ 
&= -\frac{f(0, 0)B_k \delta_k}{2k} (E_k(\tau) - 1) + \frac{1}{(2\pi i)^k} \sum_{\ell > 0} f(0, \ell) \wp^{(k-2)}(\tau, \latt{\ell, \mathfrak{z}}), 
\end{align*} 
where $\delta_k = 1$ if $k$ is even and $\delta_k = 0$ if $k$ is odd; where $E_k(\tau)=1+O(q)$ is the normalized Eisenstein series of weight $k$ on $\SL_2(\ZZ)$; where $\wp(\tau, z) = -\partial_z \zeta(\tau, z)$ is the Weierstrass elliptic function; and where $\wp^{(k)} = \partial_z^k \wp(\tau, z)$.

Altogether, we have the formula 
\begin{equation}
 (\varphi_k | T_{-}(0))(\tau, \mathfrak{z}) = -\delta_k f(0, 0) \cdot \frac{B_k}{2k}E_k(\tau)- \frac{1}{(2\pi i)^k} \sum_{\ell > 0} f(0, \ell) \zeta^{(k-1)}(\tau, \latt{\ell, \mathfrak{z}}).    
\end{equation}

When $\varphi_k$ is a holomorphic Jacobi form, $\Grit(\varphi_k)$ is also holomorphic and is exactly the Gritsenko lift constructed in \cite{Gri88, Gri94}, which is a generalization of the Saito--Kurokawa lift or Maass lift.

\subsubsection{Borcherds products}
When the nearly holomorphic modular form $f$ for $\rho_M$ is of weight $1-l/2$, Borcherds discovered that the modified exponential of the singular theta lift of $f$ gives a meromorphic modular form of weight $c(0,0)/2$ and character for $\widetilde{\Orth}^+(M)$, which has an infinite product expansion and all of whose zeros or poles lie on hyperplanes $\cD_r(M)$, each with multiplicity 
$$
\sum_{n\in \NN} c(-n^2Q(r),n r), \quad \text{where $r\in M'$ is primitive.}
$$
This remarkable modular form is called a \textit{Borcherds product} denoted $\Borch(f)$. The following expression for Borcherds products in terms of Jacobi forms when $M = 2U \oplus L$ is due to Gritsenko and Nikulin \cite{GN98, Gri18}. 

\begin{theorem}[{\cite[Theorem 4.2]{Gri18}}]\label{th:product}
Let $\phi$ be a nearly holomorphic Jacobi form of weight $0$ and index $1$ for $L$ with Fourier expansion
\[
\phi(\tau,\mathfrak{z})=\sum_{n\in \ZZ}\sum_{\ell\in L'}f(n,\ell)q^n\zeta^\ell\in J_{0,L,1}^!
\]
satisfying $f(n,\ell)\in \ZZ$ for all $2n-\latt{\ell,\ell}\leq 0$. There is a meromorphic modular form of weight 
$f(0,0)/2$ and character $\chi$ with respect to  
$\widetilde{\Orth}^+(2U\oplus L)$, given by the series
\begin{equation}\label{eq:JacobiLift}
\Borch(\phi)(\tau,\mathfrak{z},\omega)=
\biggl(\Theta_{f(0,\ast)}
(\tau,\mathfrak{z})\cdot \xi^C \biggr)
\exp \left(-\sum_{m=1}^\infty (\phi | T_{-}(m)) (\tau, \mathfrak{z}) \cdot \xi^m \right),
\end{equation}
convergent in an open subset of $\cH(L)$, where $C$ is defined by \eqref{eq:q^0-term} and where
\begin{equation*}\label{FJtheta}
\Theta_{f(0,\ast)}(\tau,\mathfrak{z})
=\eta(\tau)^{f(0,0)}\prod_{\ell >0}
\biggl(\frac{\vartheta(\tau,\latt{\ell,\mathfrak{z}})}{\eta(\tau)} 
\biggr)^{f(0,\ell)}
\end{equation*}
is a generalized theta quotient.
The character $\chi$ is induced by the character of the above theta block
and by the relation $\chi(V)=(-1)^D$, where
$V\colon (\tau,\mathfrak{z}, \omega) \mapsto (\omega,\mathfrak{z},\tau)$, 
and $D=\sum_{n<0}\sigma_0(-n) f(n,0)$, here $\sigma_0(x)$ denotes the number of positive divisors of a positive integer $x$.
\end{theorem}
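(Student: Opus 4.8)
The plan is to deduce this from Borcherds' theorem on automorphic products (\cite[Theorem 13.3]{Bor98}) by passing through the theta decomposition of \S\ref{sec:Jacobi} and writing everything in the tube-domain model $\cH(L)$ attached to the $0$-dimensional cusp $\ZZ e_1+\ZZ e$. First I would translate the input: the Jacobi form $\phi\in J_{0,L,1}^!$ corresponds to a nearly holomorphic vector-valued modular form $f=\sum_{\gamma}f_\gamma e_\gamma$ of weight $-\rk(L)/2 = 1-l/2$ for the Weil representation $\rho_{2U\oplus L}\cong\rho_L$, where $M=2U\oplus L$ has signature $(l,2)$ with $l=\rk(L)+2$; the hypothesis that $f(n,\ell)\in\ZZ$ for $2n-\latt{\ell,\ell}\le 0$ is exactly the integrality of the principal-part coefficients $c(n,\gamma)$, $n\le 0$, required by Borcherds' construction. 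Borcherds' theorem then produces a meromorphic modular form $\Psi=\Borch(f)$ of weight $c(0,0)/2=f(0,0)/2$ and a finite-order multiplier system for $\widetilde{\Orth}^+(M)$, whose divisor is supported on rational quadratic divisors $\cD_r(M)$ with multiplicity $\sum_{n\ge 1}c(-n^2Q(r),nr)$, and which near the cusp has an infinite product expansion $\Psi(Z)=e((Z,\rho_W))\prod_{\lambda\in M',\,(\lambda,W)>0}(1-e((\lambda,Z)))^{c(Q(\lambda),\lambda)}$ for a suitable Weyl chamber $W$ and Weyl vector $\rho_W$.

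Next I would specialize the coordinates. Writing $M=U_1\oplus U\oplus L$ with bases as in \eqref{eq:basis of U} and $Z=(\tau,\mathfrak{z},\omega)\in\cH(L)$, the positive-norm vectors $\lambda\in M'$ are parametrized by triples $(n,\ell,m)$ with $n,m\in\ZZ$, $\ell\in L'$, and $e((\lambda,Z))=q^n\zeta^\ell\xi^m$. Grouping the product by the power of $\xi$ (equivalently, by the pairing of $\lambda$ with $e$), the $\xi^0$-part reduces, up to the leading $q$- and $\xi$-powers contributed by $\rho_W$, to a product over $\ell\in L'$ that is positive for a chamber of the affine Weyl action on $L\otimes\RR$, of $\eta(\tau)$-factors and $\vartheta(\tau,\latt{\ell,\mathfrak{z}})/\eta(\tau)$-factors — that is, the generalized theta quotient $\Theta_{f(0,\ast)}$. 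The $\omega$-component of $\rho_W$ is exactly the constant $C$ of \eqref{eq:q^0-term}, and the two expressions for $C$ in Lemma \ref{Lem:q^0-term} (as $\tfrac1{24}\sum f(0,\ell)-\sum_{n<0}\sigma_1(-n)f(n,\ell)$ and as the $\latt{\ell,\ell}$-weighted average) are precisely what is needed to identify it; the $\tau$-component is absorbed into the $\eta$-powers and the leading $\zeta$-powers into the $\zeta^{1/2}-\zeta^{-1/2}$ factors of $\vartheta$. For the terms with $m\ge 1$ I would expand $\log(1-e((\lambda,Z)))=-\sum_{k\ge 1}k^{-1}e((k\lambda,Z))$ and reorganize the double sum over $\lambda$ and $k\ge 1$ by the total $\xi$-exponent $m$; by the coefficient formula of Lemma \ref{lem:Hecke} the coefficient of $\xi^m$ is exactly $-(\phi\mid T_{-}(m))(\tau,\mathfrak{z})$. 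This yields the claimed shape $\Theta_{f(0,\ast)}(\tau,\mathfrak{z})\,\xi^C\exp\bigl(-\sum_{m\ge 1}(\phi\mid T_{-}(m))\xi^m\bigr)$, convergent where $\im\omega$ is sufficiently large inside the Weyl-chamber tube.

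Finally, the weight $c(0,0)/2=f(0,0)/2$ matches the weight of $\eta(\tau)^{f(0,0)}$, the factors $(\vartheta/\eta)^{f(0,\ell)}$ and the exponential being of weight $0$. The character is the one furnished by Borcherds' theorem; on the generators that fix the chosen cusp it is read off the multiplier system of $\Theta_{f(0,\ast)}$, and the only additional relation is its value on the cusp-swapping involution $V:(\tau,\mathfrak{z},\omega)\mapsto(\omega,\mathfrak{z},\tau)$, which exchanges $e_1\leftrightarrow e$ and hence acts on the product by $q\leftrightarrow\xi$; comparing the $q^0$- and $\xi^0$-leading terms on the two sides and tracking signs forces $\chi(V)=(-1)^D$ with $D=\sum_{n<0}\sigma_0(-n)f(n,0)$, coming from the principal part of $f$. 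The main obstacle, and the step requiring genuine care, is the bookkeeping in the second paragraph: matching Borcherds' chamber/Weyl-vector data in the signature-$(l,2)$ lattice $M$ with the affine Weyl-chamber structure on $L\otimes\RR$ underlying the Jacobi variable $\mathfrak{z}$, so that the leading term is exactly $\Theta_{f(0,\ast)}$ — with the right exponents $f(0,\ell)$ and the right character — and the $\xi$-graded pieces assemble precisely into the Hecke translates $\phi\mid T_{-}(m)$. Once this dictionary is fixed, what remains is the formal rearrangement of an infinite product together with the divisor, weight and character bookkeeping.
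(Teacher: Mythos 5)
The paper does not prove this statement: it is quoted verbatim from Gritsenko (\cite[Theorem 4.2]{Gri18}, going back to \cite{GN98}), so there is no internal proof to compare against. Your outline is, in substance, the derivation given in that cited source: pass from $\phi\in J^!_{0,L,1}$ to a weight $1-l/2$ form for $\rho_{2U\oplus L}\cong\rho_L$ via the theta decomposition, invoke Borcherds' product theorem for the signature $(l,2)$ lattice $M=U_1\oplus U\oplus L$, and then regroup the infinite product in the tube-domain coordinates $(\tau,\mathfrak{z},\omega)$ by the $\xi$-exponent. The two computational pivots you single out are the right ones: the $\xi^{\ge 1}$ part assembles into $-\sum_m(\phi|T_-(m))\xi^m$ because the $1/k$ from $\log(1-e((\lambda,Z)))$ matches the $a^{k-1}=a^{-1}$ in the coefficient formula of Lemma \ref{lem:Hecke} at weight $k=0$, and the Weyl-vector components are pinned down by the two expressions for $C$ in Lemma \ref{Lem:q^0-term} (which is itself Borcherds' Theorem 10.5 specialized to this setting). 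The only place I would press you is the last step: $\chi(V)=(-1)^D$ does not follow from purely formal sign-tracking of the leading terms, since $V$ moves the Weyl chamber; one has to compare the product expansion with the expansion obtained after applying $V$ (equivalently, account for the finitely many factors $(1-q^n\xi^m)^{f(nm,0)}$ with $nm<0$ that flip orientation), and that bookkeeping is exactly where the divisor-counting quantity $D=\sum_{n<0}\sigma_0(-n)f(n,0)$ comes from. As a plan the proposal is correct and faithful to the original argument.
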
 

\begin{remark}\label{rem:divisor}
We will rephrase some properties of the zeros and poles of additive lifts and Borcherds products in the context of Jacobi forms. Write vectors in $M = 2U\oplus L$ in the form $v = (a, b, \ell, c, d)$ with $\ell \in L$ and $a, b, c, d \in \ZZ$, such that $(v, v) = -(ad+bc)+\latt{\ell, \ell}$.

(1) The Eichler criterion (see e.g. \cite[Proposition 3.3]{GHS09}) states that if $v_1$ and $v_2$ are primitive vectors of $M'$ that have the same norm and satisfy $v_1-v_2\in M$, then there exists $g\in \widetilde{\Orth}^+(M)$ such that $g(v_1)=v_2$.
Let $v$ be a primitive vector of positive norm in $M'$. Then there exists a vector $(0,n,\ell,1,0)\in  2U
\oplus L'$ such that $(v,v)=-2n+\latt{\ell,\ell}$ and
$v- (0,n,\ell,1,0)\in M$. Therefore, the Heegner divisor $\cH(\frac{1}{2}(v,v),v)$ is exactly the $\widetilde{\Orth}^+(M)$-orbit of $\cD_{(0,n,\ell,1,0)}(M)$. Its restriction to the tube domain $\cH(L)$ is $$\cD_{(0, n, \ell, 1, 0)} \cap \cH(L) = \{Z = (\tau, \mathfrak{z}, \omega): \; \tau - \latt{\ell, \mathfrak{z}} + n\omega = 0\}.$$

(2) In Theorem \ref{th:additive}, the singular additive lift $\Grit(\varphi_k)$ has poles of order $k$ along $\cD_{(0,n,\ell,1,0)}(M)$ if and only if $2n-\latt{\ell,\ell}<0$ and 
$$
\delta(n,\ell):= \sum_{d=1}^\infty f(d^2 n, d\ell)\neq 0.
$$

(3) In Theorem \ref{th:product}, the zeros or poles of the Borcherds product $\Borch(\phi)$ lie on $\cD_{(0,n,\ell,1,0)}(M)$ with multiplicity $\delta(n,\ell)$.

(4) By Lemma \ref{lem:periodic}, the singular Fourier coefficients of 
$$
\varphi_k = \sum_{n\geq n_0}\sum_{\ell \in L'} f(n,\ell) q^n \zeta^\ell \in J_{k,L,1}^!
$$
are represented by
$$
f(n,\ell), \quad n_0 \leq  n \leq \widehat{\delta}_L, \; \ell \in L',\; 2n-\latt{\ell,\ell}<0,
$$
where $\widehat{\delta}_L$ is the largest integer less than $\delta_L/2$ and as in the introduction
$$
\delta_L := \max\{ \min\{\latt{y,y}: y\in L + x \} : x \in L' \}.
$$

(5) Let $\ell$ be a nonzero vector of $L'$. If $\varphi_k \in J_{k,L,1}^{\w}$ has trivial $q^0$-term (i.e. $\varphi_k = O(q)$) and vanishes on the divisor 
$$
\{(\tau, \mathfrak{z})\in \HH\times (L\otimes\CC): \latt{\ell,\mathfrak{z}} \in \ZZ\tau+\ZZ\},
$$ 
then $\Grit(\varphi_k)$ vanishes on $\cD_{(0,0,\ell,1,0)}(M)$.
\end{remark}

\subsection{Weyl-invariant weak Jacobi forms}\label{subsec:Weyl}
In this subsection we review some known results about the algebras of Jacobi forms, which will be used to prove the main theorems. 

Let $G$ be a subgroup of $\Orth(L)$. A Jacobi form $\varphi$ for $L$ is called \textit{$G$-invariant} if it satisfies
$$
\varphi(\tau, \sigma(\mathfrak{z})) = \varphi(\tau, \mathfrak{z}), \quad \text{for all $\sigma\in G$}.
$$
All $G$-invariant weak Jacobi forms of integral weight and integral index for $L$ form a bigraded algebra over $\CC$
$$
J_{*,L,*}^{\w, G} = \bigoplus_{t\in \NN} J_{*,L,t}^{\w,G}, \quad \text{where} \quad J_{*,L,t}^{\w,G}=\bigoplus_{k\in\ZZ} J_{k,L,t}^{\w,G}.
$$
It is conjectured that the algebra $J_{*,L,*}^{\w,G}$ is always finitely generated.  The most important result in this direction is due to Wirthm\"uller. 

Let $R$ be an irreducible root system (cf. \cite{Bou60}) of rank $\rk(R)$, with Weyl group $W(R)$. The root lattice $L_R$ is the lattice generated by the roots of $R$ together with the standard scalar product, rescaled by two if that lattice is odd. When $R$ is not of type $E_8$, Wirthm\"uller \cite{Wir92} showed that $J_{*,L_R,*}^{\w,W(R)}$ is a polynomial algebra over $\CC[E_4,E_6]$ generated by $\rk(R)+1$ weak Jacobi forms. The weights and indices of the generators are invariants of the root system $R$.  More precisely, we know the following. 
\begin{enumerate}
    \item There is always a generator of weight $0$ and index $1$.
    \item The other indices are the coefficients of the dual of the highest coroot of $R$, written as a linear combination of the simple roots of $R$. 
    \item The negative weights of the other generators are the degrees of the generators of the ring of $W(R)$-invariant polynomials, or equivalently, the exponents of $W(R)$ increased by $1$.
\end{enumerate}

It was proved in \cite{Wan21b} that $J_{*,E_8,*}^{\w,W(E_8)}$ is not a polynomial algebra, and we refer to \cite{KW21} for an explicit description of its structure. An automorphic proof of  Wirthm\"uller's theorem was given in \cite{Wan21c}.  

We provide some information about the generators of $J_{*,R,*}^{\w,W(R)}$. We formulate the weights $k_j$ and indices $m_j$ of generators in Table \ref{Tab:Jacobi} below. We note that $A_3=D_3$ and
$$
W(B_n) = \Orth(nA_1), \quad W(G_2) = \Orth(A_2), \quad W(F_4) = \Orth(D_4), \quad W(C_n) = \Orth(D_n) \; \text{if $n\neq 4$}.
$$
The $W(A_1)$-invariant weak Jacobi forms are actually classical weak Jacobi forms of even weight introduced by Eichler--Zagier \cite{EZ85}. 
Explicit constructions of generators of type $A_n$, $B_n$ and $D_4$ were first obtained in \cite{Ber00a, Ber00b}. The generators of type $C_n$ and $D_n$ were obtained in \cite{AG20}. The generators of type $E_6$ and $E_7$ were constructed in \cite{Sak19}, and the generators of type $F_4$ were constructed in \cite{Adl20}. 
We fix some notation for the generators to avoid confusion later.
\begin{Notation}\label{notation}
\noindent
\begin{enumerate}
    \item When $R=A_n, E_6, E_7$, there are no distinct generators of the same weight and index. Thus we use $\phi_{k,R,t}$ to stand for the generator of weight $k$ and index $t$ associated to $W(R)$. 
    \item Since $W(C_n)$ is generated by $W(D_n)$ and the reflection which changes the sign of any fixed coordinate, we can choose the generators of $J_{*,D_n,*}^{\w,W(D_n)}$ in the following way:
\begin{itemize}
    \item[(a)] Index one: $\phi_{0,D_n,1}$, $\phi_{-2,D_n,1}$, $\phi_{-4,D_n,1}$, which are invariant under $W(C_n)$.
    \item[(b)] Index two: $\phi_{-2k,D_n,2}$ for $3\leq k \leq n-1$, which are invariant under $W(C_n)$.
    \item[(c)] Index one: $\psi_{-n,D_n,1}$, which is invariant under $W(D_n)$, but anti-invariant under the above reflection. 
\end{itemize}
The forms $\phi_{-,D_n,-}$ together with $\psi_{-n,D_n,1}^2$ form a system of generators of $J_{*,D_n,*}^{\w,W(C_n)}$. 
\end{enumerate}
\end{Notation}

\begin{table}[ht]
\caption{Weights $k_j$ and indices $m_j$ of generators of $J_{*,L_R,*}^{\w,W(R)}$ ($A_n: n\geq 1$, $D_n: n\geq 4$, $B_n: n\geq 2$, $C_n: n\geq 3$)}\label{Tab:Jacobi}
\renewcommand\arraystretch{1.3}
\noindent\[
\begin{array}{|c|c|c|}
\hline
R & L_R & (k_j,m_j) \\ 
\hline 
A_n &  A_n& (0,1), (-s,1) : 2\leq s\leq n+1\\ 
\hline 
D_n & D_n &  (0,1), (-2,1), (-4,1), (-n,1), (-2s,2) : 3\leq s \leq n-1 \\ 
\hline
E_6 & E_6 & (0,1), (-2,1), (-5,1), (-6,2), (-8,2), (-9,2), (-12,
3)  \\ 
\hline
E_7 & E_7 & (0,1), (-2,1), (-6,2), (-8,2), (-10,2), (-12,
3), (-14,3), (-18,4)  \\ 
\hline
B_n & n A_1 & (-2s,1) : 0\leq s \leq n \\
\hline
C_n & D_n & (0,1), (-2,1), (-4,1), (-2s,1) : 3\leq s \leq n \\
\hline
G_2 & A_2 & (0,1), (-2,1), (-6,2) \\
\hline
F_4 & D_4 & (0,1), (-2,1), (-6,2), (-8,2), (-12,3)\\
\hline
\end{array} 
\]
\end{table}

\begin{remark}\label{rem:sum of Jacobi forms}
When $L$ is a direct sum of irreducible root lattices, we can determine the algebra of weak Jacobi forms for $L$ using \cite[Theorem 2.4]{WW21c}. Let $L=\bigoplus_{j=1}^n R_j$ and $G=\bigotimes_{j=1}^n W(R_j)$. As a free module over $M_*(\SL_2(\ZZ))$, the space of weak Jacobi forms $J_{*,L,t}^{\w, G}$ of given index $t$ is generated by the tensor products of generators of these $J_{*,R_j,t}^{\w,W(R_j)}$. In other words, there is an isomorphism of graded $\CC[E_4,E_6]$-modules
$$
J_{*,R_1\oplus \cdots \oplus R_n, t}^{\w, W(R_1)\otimes \cdots \otimes W(R_n)} \cong J_{*,R_1,t}^{\w, W(R_1)} \otimes \cdots \otimes J_{*,R_n,t}^{\w, W(R_n)}. 
$$
\end{remark}

\section{A proof of the theta block conjecture}\label{sec:theta block conjecture}
In this section we prove Theorem \ref{MTH3} and prove the theta block conjecture as a corollary. As we mentioned in the introduction, Gritsenko, Skoruppa and Zagier \cite{GSZ19} developed the theory of theta blocks to construct holomorphic Jacobi forms of low weight for $A_1$. We recall their construction and its generalization to Jacobi forms of lattice index.  The Dedekind eta function
$$
\eta(\tau)= q^{\frac{1}{24}}\prod_{n=1}^\infty (1-q^n), \quad q=e^{2\pi i\tau}
$$
is a modular form of weight $1/2$ on $\SL_2(\ZZ)$ with a multiplier system of order $24$ denoted $v_\eta$. The odd Jacobi theta function 
$$
\vartheta(\tau,z)=q^{\frac{1}{8}}(e^{\pi iz}-e^{-\pi i z})\prod_{n=1}^\infty (1-q^ne^{2\pi i z})(1-q^ne^{-2\pi iz})(1-q^n), \quad z\in \CC
$$
is a holomorphic Jacobi form of weight $1/2$ and index $1/2$ for $A_1$ with a multiplier system of order $8$ (see \cite{GN98}). More precisely, $\vartheta$ satisfies the transformation laws
\begin{align*}
\vartheta (\tau, z+ x \tau + y)&= (-1)^{x+y} \exp(- \pi i ( x^2 \tau +2xz )) \vartheta ( \tau, z ), \quad   x,y \in \ZZ,\\
\vartheta \left( \frac{a\tau +b}{c\tau + d},\frac{z}{c\tau + d} \right) &= \upsilon_{\eta}^3 (A) \sqrt{c\tau + d} \exp\left(\frac{\pi i c z^2}{c \tau + d} \right) \vartheta ( \tau, z ), \quad  A=\left( \begin{array}{cc}
a & b \\ 
c & d
\end{array} \right)   \in \SL_2(\ZZ). 
\end{align*}
A (pure) \textit{theta block} is a holomorphic function of the form
$$
\Theta_f(\tau,z)=\eta(\tau)^{f(0)} \prod_{a=1}^\infty (\vartheta(\tau,a z)/\eta(\tau))^{f(a)}, \quad (\tau, z) \in \HH \times \CC,
$$
where $f: \NN \to \NN$ is a function with finite support. From the modular properties of $\eta$ and $\vartheta$ we see that $\Theta_f$ defines a weak Jacobi form of weight $f(0)/2$ and index $m_f$ for $A_1$, with multiplier system $v_\eta^{d_f}$ and leading term $q^{d_f/24}$ in its Fourier expansion, where 
$$
m_f=\frac{1}{2}\sum_{a=1}^\infty a^2 f(a), \quad d_f=f(0)+2\sum_{a=1}^\infty f(a).
$$
The number $d_f/24$ is called the \textit{$q$-order} of $\Theta_f$. This is called a \emph{holomorphic theta block} if it is holomorphic as a Jacobi form, i.e. its singular Fourier coefficients vanish. For example, the theta block $\eta^{-6}\vartheta^4\vartheta_2^3\vartheta_3^2\vartheta_4$ gives an explicit construction of the Jacobi Eisenstein series of weight $2$ and index $25$, where $\vartheta_a:=\vartheta(\tau,a z)$. In \cite{GSZ19} Gritsenko, Skoruppa and Zagier associated to a root system $R$ an infinite family of holomorphic theta blocks of weight $\rk(R)/2$. These infinite families are closely related to the famous Macdonald identities (see \cite[Theorem 10.1, Theorem 10.6]{GSZ19}).  Some of them also appear as the leading Fourier--Jacobi coefficients of holomorphic Borcherds products of singular weight which vanish precisely on mirrors of reflections (see \cite{DW20}). 

Siegel paramodular forms of level $N$ are Siegel modular forms for the paramodular group of degree two and level $N$,
\[
K(N)=\begin{pmatrix}
\ast & N\ast & \ast & \ast \\ \ast & \ast & \ast & \ast/N \\ \ast & N\ast & \ast & \ast \\ N\ast & N\ast & N\ast &\ast
\end{pmatrix}\cap \Sp_4(\QQ), \quad \text{ all } \ast\in \ZZ. 
\]
Paramodular forms can be identified with modular forms for $\widetilde{\Orth}^+(2U\oplus A_1(N))$ (see \cite{GN98}), so we can construct Siegel paramodular forms using Gritsenko lifts and Borcherds products. In \cite{GPY15} Gritsenko, Poor and Yuen formulated the \textit{theta block conjecture} which characterizes Siegel paramodular forms which are simultaneously Borcherds products and Gritsenko lifts. 
\begin{conjecture}[Conjecture 8.1 in \cite{GPY15}]\label{conj:theta}
    Suppose that the $q$-order one theta block $\Theta_f$ is a holomorphic Jacobi form of integral weight $k$ and integral index $N$ for $A_1$. Then as a Siegel paramodular form of weight $k$ and level $N$ the Gritsenko lift of $\Theta_f$ is a Borcherds product. More precisely,
    \[
    \Grit(\Theta_f)=\Borch\left( - \frac{\Theta_f|T_{-}(2)}{\Theta_f} \right).
    \]
\end{conjecture}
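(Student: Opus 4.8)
The plan is to deduce Conjecture \ref{conj:theta} from the uniform identity of Theorem \ref{MTH3}, proved for every $D_m$ with $1\le m\le 11$, and then to transport it to the paramodular setting by restriction. Write $M_m=2U\oplus D_m$ and $\psi_m=-(\vartheta_{D_m}\lvert T_{-}(2))/\vartheta_{D_m}$. First I would check that $\psi_m$ is a weak Jacobi form of weight $0$ and index $1$ for $D_m$: writing $\vartheta_{D_m}\lvert T_{-}(2)$ out via Lemma \ref{lem:Hecke} as $\tfrac12\bigl(2^{12-m}\vartheta_{D_m}(2\tau,2\mathfrak z)+\vartheta_{D_m}(\tfrac\tau2,\mathfrak z)+\vartheta_{D_m}(\tfrac{\tau+1}2,\mathfrak z)\bigr)$ and using $\vartheta_{D_m}=\eta^{24-3m}\prod_{j=1}^m\vartheta(\tau,z_j)$, each of the three summands vanishes on the divisor $\{z_j\in\ZZ\tau+\ZZ\}$ of $\vartheta_{D_m}$, so the quotient is holomorphic, and a $q$-order count shows it is weak with integral singular coefficients. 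Expanding $\vartheta_{D_m}$ to order $q^2$ one finds that the $q^0$-term of $\psi_m$ equals $(24-2m)+\sum_{j=1}^m(\zeta_j+\zeta_j^{-1})$, so in the notation of Theorem \ref{th:product} the theta quotient $\Theta_{f(0,\ast)}$ is exactly $\vartheta_{D_m}$ and, by \eqref{eq:q^0-term}, $C=1$.

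By Theorem \ref{th:product}, $\Borch(\psi_m)$ is then a meromorphic modular form of weight $(24-2m)/2=12-m$ for $\widetilde{\Orth}^+(M_m)$ with leading Fourier--Jacobi coefficient $\vartheta_{D_m}\cdot\xi$, whose zeros and poles lie on $\cD_{(0,n,\ell,1,0)}$ with multiplicity $\delta(n,\ell)=\sum_{d\ge1}f_{\psi_m}(d^2n,d\ell)$. Since $\vartheta_{D_m}$ has all its Fourier monomials $\zeta^\ell$ supported on the two spinor cosets of $D_m'/D_m$ (each of norm $m/4$), an inspection of the singular coefficients of $\psi_m$ shows that the poles of $\Borch(\psi_m)$ lie on $\cH_{D_m,1}$ (with the same order $12-m$ that $\Grit(\vartheta_{D_m})$ will have there) and that its only zeros outside $\cH_{D_m,1}$ are the simple zeros along the divisors $\cD_{(0,0,e_j,1,0)}$ on which $\vartheta_{D_m}$ vanishes; for $m\le 8$ there are no singular coefficients and $\Borch(\psi_m)$ is holomorphic.

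On the other side, Theorem \ref{th:additive} gives that $\Grit(\vartheta_{D_m})$ is a meromorphic modular form of weight $12-m$ for $\widetilde{\Orth}^+(M_m)$; since $\vartheta_{D_m}=O(q)$ its zeroth Fourier--Jacobi coefficient vanishes, so its leading coefficient is again $\vartheta_{D_m}\cdot\xi$. A short computation with the Jacobi triple product shows that the singular Fourier coefficients of $\vartheta_{D_m}$ all have the same hyperbolic norm $2-m/4$, so by Remark \ref{rem:divisor}(2) the poles of $\Grit(\vartheta_{D_m})$ lie on $\cH_{D_m,1}$; and by Remark \ref{rem:divisor}(5) it vanishes on each $\cD_{(0,0,e_j,1,0)}$. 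Hence $g:=\Grit(\vartheta_{D_m})/\Borch(\psi_m)$ is holomorphic away from $\cH_{D_m,1}$ and is a modular form of weight $0$ with poles supported on $\cH_{D_m,1}$. For $m\le 11$ the arrangement $\cH_{D_m,1}$ satisfies the Looijenga condition by Corollary \ref{cor:intersection} --- the only point where the bound enters --- so Lemma \ref{lem:Koecher} forces $g$ to be constant, and comparison of leading Fourier--Jacobi coefficients gives $g\equiv 1$. This proves Theorem \ref{MTH3}.

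Finally, a pure theta block $\Theta_f$ of $q$-order one has $f(0)=24-2N_0$, where $N_0=\sum_{a\ge1}f(a)$ is the number of $\vartheta$-factors, so its weight $k=12-N_0$ is positive exactly when $N_0\le 11$. Listing the charges with multiplicity as a vector $v=(a_1,\dots,a_{N_0})$, the substitution $\mathfrak z\mapsto zv$ takes $\vartheta_{D_{N_0}}(\tau,\mathfrak z)$ to $\eta^{24-2N_0}\prod_a(\vartheta(\tau,az)/\eta)^{f(a)}=\Theta_f$; this is restriction along the primitive rank-one sublattice $\ZZ v\cong A_1(N)$ of $D_{N_0}$ with $N=\tfrac12\latt{v,v}$, extended to $2U\oplus A_1(N)\hookrightarrow M_{N_0}$. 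Because $T_{-}(m)$, and hence $\Grit$ and $\Borch$ through the Fourier--Jacobi formulas of Theorems \ref{th:additive} and \ref{th:product}, commute with this restriction, the identity $\Grit(\vartheta_{D_{N_0}})=\Borch(\psi_{N_0})$ restricts to $\Grit(\Theta_f)=\Borch(-(\Theta_f\lvert T_{-}(2))/\Theta_f)$ on $\widetilde{\Orth}^+(2U\oplus A_1(N))\cong K(N)$; when $\Theta_f$ is moreover a holomorphic Jacobi form all the lifts are holomorphic, which is exactly Conjecture \ref{conj:theta}. \textbf{The main obstacle} is the divisor bookkeeping of the middle steps: that $\psi_m$ really is a weak Jacobi form with $q^0$-term $\vartheta_{D_m}$ and $C=1$; and that the singular coefficients of both $\vartheta_{D_m}$ and $\psi_m$ are arranged so that the poles of both lifts stay inside $\cH_{D_m,1}$ with the right order, while the higher-$d$ terms in $\delta(n,\ell)$ contribute no unmatched zero outside $\cH_{D_m,1}$. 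Once this is established the generalized Koecher principle on the Looijenga compactification closes the argument at once; the restriction step is comparatively soft, though one must track the normalization of $A_1(N)$ and verify that the sub-domain is not contained in the pole locus, so the restriction is a well-defined nonzero modular form.
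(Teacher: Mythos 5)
Your proposal is correct and follows essentially the same route as the paper: establish $\Grit(\vartheta_{D_m})=\Borch(-(\vartheta_{D_m}|T_-(2))/\vartheta_{D_m})$ by matching divisors outside $\cH_{D_m,1}$ and invoking the generalized Koecher principle on the Looijenga compactification, then pull back along $A_1(N)\hookrightarrow D_{N_0}$. The only cosmetic deviations are that you verify the Looijenga condition via Corollary \ref{cor:intersection} where the paper proves the sharper Lemma \ref{Lem:KoecherD_n}, and you justify the compatibility of the restriction with $\Borch$ by commuting $T_-(m)$ with pullback rather than citing the general fact that pullbacks of Borcherds products are Borcherds products.
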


This conjecture has been proved for all (eight) infinite families of holomorphic theta blocks of $q$-order one which are related to root systems (see \cite{GPY15, Gri18, GW20, DW20}). Every theta block of $q$-order one has weight $k\leq 11$ and can be written as 
\begin{equation}\label{eq:theta q-order 1}
   \Theta_{\mathbf{a}}(\tau, z) :=  \eta^{3k-12} \prod_{j=1}^{12-k}\vartheta_{a_j}, \quad \vartheta_{a_j}:=\vartheta(\tau,a_j z).
\end{equation}
When $k\geq 4$, every theta block of $q$-order one is a holomorphic Jacobi form, and it comes from the infinite family $\prod_{j=1}^8 \vartheta_{a_j}$ associated to the root system $8A_1$ and its quasi pullbacks, in which case Conjecture \ref{conj:theta} was proved. However, when $k=2$ or $3$, the theta block \eqref{eq:theta q-order 1} is usually not a holomorphic Jacobi form, and there do exist holomorphic theta blocks of $q$-order one which do not belong to any of the infinite families associated to root systems. In these cases, Conjecture \ref{conj:theta} has remained open. We remark that there are no holomorphic Jacobi forms of weight $1$ for $A_1$ (see \cite{Sko84}) so the theta block \eqref{eq:theta q-order 1} is never holomorphic when $k=1$.

It is in fact easier to prove the generalization of Conjecture \ref{conj:theta} to meromorphic modular forms on higher-dimensional type IV domains. We first define theta blocks associated to an even positive definite lattice $L$ with bilinear form $\latt{-,-}$. For any finite set $\mathbf{s}=\{s_j\in L' : 1\leq j \leq d \}$, the function 
\begin{equation}
    \Theta_{\mathbf{s}}(\tau,\mathfrak{z}) = \eta(\tau)^{24-3d} \prod_{j=1}^d \vartheta(\tau, \latt{s_j, \mathfrak{z}}), \quad \mathfrak{z}\in L\otimes\CC
\end{equation}
is called a \textit{theta block of $q$-order one associated to $L$} if it defines a weak Jacobi form of index $1$ for $L$. This holds if and only if $\mathbf{s}$ satisfies the identity
$$
\sum_{j=1}^d \latt{s_j, \mathfrak{z}}^2 = \latt{\mathfrak{z},\mathfrak{z}},
$$
or equivalently, the map 
$$
\iota_{\mathbf{s}}: \quad \ell \mapsto (\latt{s_1, \ell}, ..., \latt{s_d, \ell})
$$
defines an embedding of $L$ into the odd unimodular lattice $\ZZ^d$ and therefore into its maximal even sublattice $D_d$.

Note that there exist lattices which cannot be embedded into $\ZZ^d$ for any $d$; however, for a given lattice $L$ there exists a positive integer $m$ such that $L(m)$ can be embedded into some $\ZZ^d$ (see \cite{CS89}).  For example, there is no embedding from $E_6$ into any $\ZZ^d$, but $E_6(2)$ is a sublattice of $\ZZ^7$. Therefore, for any given lattice, one can construct theta blocks of $q$-order one and of sufficiently large index.  

We first show that the generalized theta block conjecture is true for the $q$-order one theta blocks associated to $D_n$. Let $(\varepsilon_1,...,\varepsilon_n)$ denote the standard orthogonal basis of $\RR^n$.  We fix the model 
\begin{equation}\label{model of D_n}
D_n=\left\{ (x_i)_{i=1}^n \in \ZZ^n : \sum_{i=1}^n x_i \in 2\ZZ \right\} 
\end{equation}
and write $\mathfrak{z} \in D_n \otimes \CC$ in coordinates $\mathfrak{z} = (z_1,...,z_n)$. 

\begin{theorem}\label{th:theta D_n}
For $1\leq n \leq 11$, as meromorphic modular forms of weight $12-n$ for $\widetilde{\Orth}^+(2U\oplus D_n)$,  the following identity holds:
\begin{equation}
    \Grit(\vartheta_{D_n}) = \Borch\left( - \frac{\vartheta_{D_n}|T_{-}(2)}{\vartheta_{D_n}} \right),
\end{equation}
where the theta block $\vartheta_{D_n}$ is
\begin{equation}\label{eq:theta block D_n}
\vartheta_{D_n}(\tau, \mathfrak{z}) = \eta(\tau)^{24-3n}\prod_{j=1}^n \vartheta(\tau, \latt{\varepsilon_j,\mathfrak{z}}) =\eta(\tau)^{24-3n} \prod_{j=1}^n \vartheta(\tau, z_j).
\end{equation}
\end{theorem}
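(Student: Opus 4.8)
The plan is to verify that both sides of the claimed identity are meromorphic modular forms of weight $12-n$ for $\widetilde{\Orth}^+(2U\oplus D_n)$ whose only singularities (and whose zeros outside a suitable Heegner divisor) agree, and then to use a uniqueness argument on the Looijenga compactification. First I would record what the right-hand side is: write $\phi_n := -(\vartheta_{D_n}|T_{-}(2))/\vartheta_{D_n}$, a weak Jacobi form of weight $0$ and index $1$ for $D_n$ (the weight and index bookkeeping follows from the transformation laws of $\vartheta$ and $\eta$ and from Lemma \ref{lem:Hecke}), and I would check its $q^0$-term is $f(0,0) = 24 - 3n + 2n = 24-n$, so by Theorem \ref{th:product} the Borcherds product $\Borch(\phi_n)$ has weight $(24-n)/2 = 12 - n/2$ — wait, this needs care: the leading theta quotient $\Theta_{f(0,\ast)}$ must be exactly $\vartheta_{D_n}$ itself, which forces $f(0,0) = 24-3n$ and $f(0,\pm\varepsilon_j)=1$; one then gets weight $(24-3n)/2 + n = 12 - n/2$? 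No — recall the weight of $\Borch(\phi)$ is $f(0,0)/2$ and the leading FJ coefficient is the theta block of weight $f(0,0)/2 + (\text{number of }\vartheta\text{ factors})/2$, so I must instead read off $f(0,0) = 24-2n$ from the structure $\eta^{24-3n}\prod\vartheta = \eta^{f(0,0)}\prod(\vartheta/\eta)$, giving $f(0,0) = 24-3n+n = 24-2n$ and weight $12-n$, matching $\Grit(\vartheta_{D_n})$. I would do this arithmetic carefully in the writeup.

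The core of the argument is the divisor comparison. By Remark \ref{rem:divisor}(2)–(3), both $\Grit(\vartheta_{D_n})$ and $\Borch(\phi_n)$ have their poles and (extra) zeros governed by the same function $\delta(n,\ell) = \sum_{d\geq 1} f(d^2 n, d\ell)$ attached to the \emph{singular} Fourier coefficients of $\phi_n$, hence the pole divisors literally coincide and lie on the Heegner divisor I will call $\cH_{D_n,1}$; this matches the claim that $\cH_{D_n,0}=\emptyset$. For the honest zeros of $\Borch(\phi_n)$, these come from the singular coefficients of $\phi_n$ with positive $\delta$, and I would show (by direct inspection of $\vartheta_{D_n}|T_{-}(2)$ via the coefficient formula in Lemma \ref{lem:Hecke}) that the simple zeros of $\Borch(\phi_n)$ outside $\cH_{D_n,1}$ are exactly the hyperplanes on which the theta block $\vartheta_{D_n}$ vanishes — i.e., the mirrors $z_j \in \ZZ\tau+\ZZ$ for $1\leq j\leq n$, up to the group — and that $\Grit(\vartheta_{D_n})$ vanishes on precisely these hyperplanes too. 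For the Gritsenko side this is Remark \ref{rem:divisor}(5): since $\vartheta_{D_n} = O(q)$ (its $q$-order is $1$) and $\vartheta(\tau, z_j)$ vanishes when $z_j\in\ZZ\tau+\ZZ$, $\Grit(\vartheta_{D_n})$ vanishes on $\cD_{(0,0,\varepsilon_j,1,0)}$. Then the quotient
$$
G := \frac{\Grit(\vartheta_{D_n})}{\Borch(\phi_n)}
$$
is a meromorphic modular form of weight $0$ for $\widetilde{\Orth}^+(2U\oplus D_n)$, and the divisor comparison shows its only possible poles and zeros lie on $\cH_{D_n,1}$. Since $\cH_{D_n,1}$ satisfies the Looijenga condition (here I would invoke Corollary \ref{cor:intersection}, checking that the norms of the Heegner divisors making up $\cH_{D_n,1}$ are small enough relative to $l = n+2$ — the condition $n\leq 11$ is exactly what makes this work), Lemma \ref{lem:Koecher} applies on the Looijenga compactification: a weight-zero modular form with poles on a Looijenga arrangement is constant. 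Hence $G$ is constant, and comparing leading Fourier–Jacobi coefficients (both sides have leading FJ coefficient $\vartheta_{D_n}$, by Theorem \ref{th:additive} for the additive lift and by the theta-quotient factor $\Theta_{f(0,\ast)} = \vartheta_{D_n}$ for the product) forces $G\equiv 1$.

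The main obstacle I anticipate is the precise identification of the zero divisor of $\Borch(\phi_n)$ outside $\cH_{D_n,1}$: one must show that the only singular Fourier coefficients of $\phi_n = -(\vartheta_{D_n}|T_{-}(2))/\vartheta_{D_n}$ with nonzero $\delta(n,\ell)$ and non-negative hyperbolic norm $2n-\latt{\ell,\ell}\geq 0$ correspond to the mirrors where $\vartheta_{D_n}$ itself vanishes, with multiplicity exactly one, and that there are no \emph{additional} zeros hiding in $\phi_n$ coming from the Hecke operator. This is a finite but genuinely $n$-dependent computation; I would handle it uniformly by expanding $\vartheta_{D_n}|T_{-}(2)$ using the formula in Lemma \ref{lem:Hecke} and tracking exactly which $(d^2 n, d\ell)$ contribute. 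Once this is pinned down, the Koecher/Looijenga step and the leading-coefficient normalization are routine. Finally, the stated corollary — that the singular additive lift of any generalized theta block of positive weight and $q$-order one is a Borcherds product, and in particular Conjecture \ref{conj:theta} holds — follows by realizing an arbitrary such theta block $\Theta_{\mathbf{s}}$ of $q$-order one (with $d = 12-k$ factors) as a quasi-pullback of $\vartheta_{D_d}$ along the embedding $\iota_{\mathbf{s}}\colon L\hookrightarrow D_d$ furnished by the identity $\sum_j\latt{s_j,\mathfrak{z}}^2 = \latt{\mathfrak{z},\mathfrak{z}}$, and observing that quasi-pullback commutes with both the additive and the multiplicative lift; the holomorphic case $k\geq 4$ recovers what was already known, and the new content is $k=1,2,3$.
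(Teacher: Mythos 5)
Your strategy is essentially the paper's: show both sides are weight-$(12-n)$ meromorphic forms whose poles lie in $\cH_{D_n,1}$, that both vanish on the Heegner divisor cut out by the theta block, form the quotient, and apply Koecher's principle on the Looijenga compactification, normalizing via the leading Fourier--Jacobi coefficient. One step, however, is misjustified. You assert that the poles of $\Grit(\vartheta_{D_n})$ and of $\Borch(\phi_n)$ are ``governed by the same function $\delta(n,\ell)$ attached to the singular Fourier coefficients of $\phi_n$,'' so that the pole divisors ``literally coincide.'' That is not what Remark \ref{rem:divisor} gives you: the poles of the singular additive lift are controlled by the singular Fourier coefficients of its \emph{own} input $\vartheta_{D_n}$ (with pole order equal to the weight $12-n$), whereas the divisor of the Borcherds product is controlled by the singular coefficients of $\phi_n$ (with multiplicity $\delta(n,\ell)$); these are different Jacobi forms and in general give different multiplicities. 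Fortunately the argument only needs containment, not equality of the pole divisors: since $\delta_{D_n}=n/4<3$ and $\vartheta_{D_n}=O(q)$, Remark \ref{rem:divisor}(4) puts all representative singular coefficients of $\vartheta_{D_n}$ in its $q^1$-term, so the poles of the additive lift lie in $\cH_{D_n,1}$; and since $\vartheta(\tau,z)$ has simple zeros exactly on $z\in\ZZ\tau+\ZZ$, the quotient $\phi_n$ is a \emph{holomorphic} weak Jacobi form of weight $0$ whose $q^0$-term $\sum_{j}(\zeta^{\varepsilon_j}+\zeta^{-\varepsilon_j})+2(12-n)$ is read off directly from the infinite product expansion of $\vartheta_{D_n}$. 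This is how the paper sidesteps the ``genuinely $n$-dependent computation'' you anticipate: no inspection of $\vartheta_{D_n}|T_{-}(2)$ beyond the $q^0$-term is needed, and the remaining singular coefficients of $\phi_n$ (sitting in its $q^1$-term) automatically produce divisors inside $\cH_{D_n,1}$. With that repair your proof goes through; your appeal to Corollary \ref{cor:intersection} for the Looijenga condition is a legitimate, slightly coarser substitute for the paper's Lemma \ref{Lem:KoecherD_n}.
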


When $n\leq 8$, the theta block $\vartheta_{D_n}$ is a holomorphic Jacobi form, and this was proved in \cite[Theorem 8.2]{GPY15}. We only need to consider the remaining cases $n=9$, $10$, $11$.

\begin{proof}
Let $\cH_n$ be the full Heegner divisor of discriminant $n/4-2$: $$\cH_n = \bigcup_{\substack{v \in M_n' \\ (v, v) = n/4 - 2}} \cD_v(M_n)$$ in $M_n = 2U \oplus D_n$.  Since
$$
\delta_{D_n}:=\max\{ \min\{ \latt{v,v} : v \in D_n + \gamma  \}: \gamma \in D_n' \} = n/4,
$$
Remark \ref{rem:divisor} (4) implies that representatives of all singular Fourier coefficients of $\vartheta_{D_n}$ appear in the $q^1$-term of its Fourier expansion. (Note that $\vartheta_{D_n}= O(q)$.) Therefore, the only possible singularities of $\Grit(\vartheta_{D_n})$ lie on the arrangement $\cH_n$.

Write $\Psi_{D_n} := -\vartheta_{D_n} | T_{-}(2) / \vartheta_{D_n}$. Since $\vartheta(\tau, z)$ vanishes, if $\tau \in \mathbb{H}$ is fixed, with simple zeros exactly when $z \in \ZZ \tau + \ZZ$, it follows that $\vartheta_{D_n}(\tau, \mathfrak{z})$ vanishes exactly when one of the coordinates $z_j$ is a lattice vector. This implies that $\Psi_{D_n}$ is holomorphic and therefore a weak Jacobi form of weight $0$ and index $1$ for $D_n$. By Lemma \ref{lem:Hecke} and the infinite product expansion of $\vartheta_{D_n}$,  the input $\Psi_{D_n}$ has integral Fourier expansion beginning $$\Psi_{D_n}(\tau, \mathfrak{z}) = \sum_{j=1}^n (e^{2\pi i z_j} + e^{-2\pi i z_j}) + 2 (12 - n) + O(q).$$ From this we see that the Borcherds product $\mathrm{Borch}(\Psi_{D_n})$ has weight $12-n$ and simple zeros on the irreducible Heegner divisor $H(1/2,\varepsilon_1)$, and that its other zeros and poles are contained in $\cH_n$.

Remark \ref{rem:divisor} (5) implies that $\Grit(\vartheta_{D_n})$ vanishes on the divisor $H(1/2,\varepsilon_1)$. It follows that the quotient $\mathrm{Grit}(\vartheta_{D_n}) / \mathrm{Borch}(\Psi_{D_n})$ has weight zero and is holomorphic away from $\cH_n$.

We will show in Lemma \ref{Lem:KoecherD_n} below that $\cH_n$ satisfies the Looijenga condition. By Koecher's principle, i.e. Lemma \ref{lem:Koecher}, $\mathrm{Grit}(\vartheta_{D_n}) / \mathrm{Borch}(\Psi_{D_n})$ is a constant. By Theorem \ref{th:additive} and Theorem \ref{th:product}, the leading Fourier--Jacobi coefficients of these forms are both $\vartheta_{D_n}\cdot \xi$, which implies that the constant is $1$. 
\end{proof}

\begin{lemma}\label{Lem:KoecherD_n}
\noindent
\begin{enumerate}
    \item Every intersection of two distinct hyperplanes in $\cH_9$ is empty.
    \item Every nonempty intersection of hyperplanes in $\cH_{10}$ has dimension $11$ or $10$.
    \item Every nonempty intersection of hyperplanes in $\cH_{11}$ has dimension $12$, $11$ or $10$.
\end{enumerate}
\end{lemma}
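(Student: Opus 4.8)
The plan is to reduce each assertion to a bound on the rank of a positive-definite sublattice. Recall from the discussion preceding Condition \ref{condition} that $\cD_{v_1}(M_n)\cap\cdots\cap\cD_{v_t}(M_n)$ is nonempty of dimension $m$ if and only if $\ZZ v_1+\cdots+\ZZ v_t$ is a positive-definite lattice of rank $(n+2)-m$, where $M_n=2U\oplus D_n$ has signature $(n+2,2)$; in particular this intersection is empty whenever the generating lattice fails to be positive definite, be it indefinite or only positive semidefinite. So it suffices to show that a positive-definite sublattice of $M_n'$ generated by vectors of norm $n/4-2$ has rank at most $1$ for $n=9$, at most $2$ for $n=10$, and at most $3$ for $n=11$; this sharpens the estimate behind Corollary \ref{cor:intersection}.

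First I would pin down the relevant discriminant classes. Since the $2U$-part contributes an even integer to the norm, a vector of norm $n/4-2$ has $D_n'$-component of norm $\equiv n/4-2\equiv n/4\pmod 2$. Among the nonzero classes of $D_n'/D_n$ the norm modulo $2$ is $1$ on the vector class and $n/4$ on each of the two spinor classes, so such a vector must project to a spinor class (its $2U$-component then adjusting to give total norm $n/4-2$). For $n$ odd the two spinor classes are negatives of one another, so after replacing generators $v$ by $\pm v$ (which does not change $\cD_v(M_n)$) we may assume they all lie in one spinor class; for $n$ even the two spinor classes $s,c$ are distinct two-torsion classes.

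Next I would compute the inner product $(v_i,v_j)$ of two generators. If they lie in the same spinor class then $v_i-v_j\in M_n$, so $(v_i,v_j)\in(n/4-2)+\ZZ$; if they lie in the two distinct spinor classes (possible only for $n$ even) then $(v_i,v_j)\in\langle s_0,c_0\rangle+\ZZ=(n-2)/4+\ZZ$ for representatives $s_0,c_0$ of those classes. Positive definiteness of $\ZZ v_i+\ZZ v_j$ forces $(v_i,v_j)^2<(v_i,v_i)(v_j,v_j)=(n/4-2)^2$. Intersecting these: for $n=9$ the coset $1/4+\ZZ$ has no element of absolute value $<1/4$, so no two distinct hyperplanes of $\cH_9$ meet, giving (1). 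For $n=10$ the coset $1/2+\ZZ$ has no element of absolute value $<1/2$, while the cross-class coset $\ZZ$ has only $0$ of absolute value $<1/2$; hence two hyperplanes of $\cH_{10}$ meet only if they come from the two distinct spinor classes with $(v_i,v_j)=0$, in which case the Gram matrix is $\mathrm{diag}(1/2,1/2)$ and the intersection has dimension $10$, while any three generators contain two in a common spinor class and so generate a non-positive-definite lattice, giving (2). For $n=11$ all generators lie in one spinor class and $3/4+\ZZ$ has only $-1/4$ of absolute value $<3/4$, so every pairwise inner product equals $-1/4$ and the Gram matrix of $t$ generators is $\tfrac14(4I_t-J_t)$ with $J_t$ the all-ones $t\times t$ matrix, which has eigenvalues $1$ of multiplicity $t-1$ and $1-t/4$, hence is positive definite exactly for $t\le 3$, giving (3).

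The parity bookkeeping for $D_n'/D_n$ and the eigenvalue computation are routine. The one point needing care is the boundary case: when a pairwise inner product attains $|(v_i,v_j)|=n/4-2$ (for instance $(v_i,v_j)=\pm1/2$ within a spinor class when $n=10$), or when the $4\times4$ Gram matrix for $n=11$ degenerates to positive semidefinite of rank $3$, the generating lattice is no longer positive definite and one must appeal to the equivalence above to conclude the intersection is still empty. I expect this — together with verifying that $(v_i,v_j)=0$ is actually realized for $n=10$, so that dimension $10$ genuinely occurs — to be the only subtle points; everything else is a direct inspection of short sets of rational numbers.
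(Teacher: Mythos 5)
Your proof is correct, and it rests on the same arithmetic fact that drives the paper's argument: $v_i - v_j \in M_n$ forces $(v_i,v_j) \in (n/4-2) + \ZZ$ (and, across the two spinor classes when $n$ is even, $(v_i,v_j)\in\ZZ$), so positive-definiteness leaves very few admissible inner products. The organization differs, however. The paper invokes Lemma~\ref{lem:intersection} as a black box: part (1) disposes of $n=9$ and of same-class pairs for $n=10$ immediately, and for $n=11$ part (2) is iterated to reduce the norm parameter $3/8 \to 1/3 \to 1/4 \to \emptyset$, bounding the codimension by $3$ without ever writing a Gram matrix. You instead do the spinor-class bookkeeping explicitly and, for $n=11$, observe that every pairwise inner product must equal $-1/4$, so the Gram matrix of $t$ generators is $\tfrac14(4I_t - J_t)$, positive definite iff $t\le 3$. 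Your route is more self-contained and makes the extremal configuration visible (essentially a rescaled $A_3$, matching the paper's remark that $2U\oplus D_{11}\cong 2U\oplus E_8\oplus A_3$); the paper's is shorter because the restriction lemma was already in place for Corollary~\ref{cor:intersection}. One small correction to your edge-case discussion: if four generators for $n=11$ realize the all-$(-1/4)$ Gram matrix, the radical vector is $w=v_1+\cdots+v_4$ with $(w,w)=0$ and $(w,v_i)=0$; when $w\neq 0$ the generated lattice is degenerate and the intersection is empty as you say, but when $w=0$ the generated lattice is positive definite of rank $3$ and the fourfold intersection is nonempty of dimension $10$ (it coincides with a triple intersection). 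This does not threaten the statement; the clean formulation of your reduction is that any \emph{positive-definite} sublattice generated by such vectors has rank at most $3$, which follows by applying your Gram-matrix computation to a maximal linearly independent subset of the generators.
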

\begin{proof}
(1) Since $\cH_9 = \cH(1/8, \sum_{j=1}^9\varepsilon_j/2)$, the claim follows from Lemma \ref{lem:intersection} (1).

(2) Write $\cH_{10}=\cH(1/4,v_1)\cup \cH(1/4,v_2)$ where $v_1, v_2 \in D_{10}'/D_{10}$ are the cosets containing non-integral vectors. More precisely, $v_1=\sum_{j=1}^{10}\varepsilon_j/2$ and $v_2=v_1-\varepsilon_1$. By Lemma \ref{lem:intersection} (1), the intersection of two different hyperplanes in the same irreducible component is empty, so nonempty intersections only arise from the intersection of one hyperplane in $\cH(1/4,v_1)$ and one hyperplane in $\cH(1/4,v_2)$. This yields the claim. Since $2U\oplus D_{10} \cong 2U\oplus E_8\oplus 2A_1$ we see that the $10$-dimensional intersections are all symmetric spaces associated to the lattice $2U\oplus E_8$. 

(3) We have $\cH_{11}=H(3/8, v)$ with $v=\sum_{j=1}^{11}\varepsilon_j/2$. By Lemma \ref{lem:intersection} (2), the restriction of $\cH(3/8, v)$ to a hyperplane in $\cH(3/8, v)$ is a Heegner divisor of type $\cH(1/3,-)$. Similarly, the restriction of $\cH(1/3, -)$ to a hyperplane in $\cH(1/3, -)$ is a Heegner divisor of type $\cH(1/4,-)$. The intersection of two different hyperplanes in $\cH(1/4,-)$ is empty. In particular, all nonempty intersections have dimension at least $10$. From $2U\oplus D_{11} \cong 2U\oplus E_8\oplus A_3$ we see that the $11$-dimensional (resp. $10$-dimensional) intersections correspond to the lattice $2U\oplus E_8\oplus A_1$ (resp. $2U\oplus E_8$). 
\end{proof}

\begin{remark}
By calculating the $q^1$-term of $\Psi_{D_n}$, we find the explicit divisor:
$$
\Div(\Borch{\Psi_{D_n}}) = H(1/2,\varepsilon_1) - (12-n)\cH_n, \quad n=9,10,11.
$$
Since $\Grit(\vartheta_{D_n})$ can have at worst poles of order $12-n$ along the components of $\cH_n$, one can obtain a more direct proof  that the quotient $\Grit(\vartheta_{D_n}) / \Borch(\Psi_{D_n})$ is holomorphic everywhere and therefore constant. Our first proof is more satisfying: for one thing, it does not require explicit computation of $\Psi_{D_n}$, and for another it shows that both $\Grit(\vartheta_{D_n})$ and $\Borch(\Psi_{D_n})$ lie in the space of modular forms of weight $12-n$ for $\widetilde{\Orth}^+(2U\oplus D_n)$ with poles on the arrangement $\cH_n$, and the equality is guaranteed by the divisor of the theta block and the Looijenga condition on $\cH_n$. We will see in the next section that this vector space is one-dimensional.  
\end{remark}

\begin{corollary}\label{cor:theta}
Let $\Theta_{\mathbf{s}}$ be a theta block of positive weight and $q$-order one associated to $L$. Then we have the identity
$$
\Grit(\Theta_{\mathbf{s}}) = \Borch\left( - \frac{\Theta_{\mathbf{s}}|T_{-}(2)}{\Theta_{\mathbf{s}}} \right).
$$
\end{corollary}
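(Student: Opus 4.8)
The plan is to deduce this from the case of the root lattices $D_d$, i.e.\ from Theorem~\ref{th:theta D_n}, by restricting along a lattice embedding. Write $\Theta_{\mathbf{s}}=\eta^{24-3d}\prod_{j=1}^d\vartheta(\tau,\latt{s_j,\mathfrak{z}})$, which we may assume is not identically zero, so that every $s_j\neq 0$. As recalled before Theorem~\ref{th:theta D_n}, $\Theta_{\mathbf{s}}$ being a Jacobi form of index $1$ for $L$ amounts to the identity $\sum_j\latt{s_j,\mathfrak{z}}^2=\latt{\mathfrak{z},\mathfrak{z}}$, which by polarisation says exactly that $\iota_{\mathbf{s}}\colon\ell\mapsto(\latt{s_1,\ell},\dots,\latt{s_d,\ell})$ is an isometric embedding of $L$ into $\ZZ^d$, hence (as $L$ is even) into $D_d$; and the weight $12-d$ of $\Theta_{\mathbf{s}}$ is positive exactly when $1\leq d\leq 11$, so Theorem~\ref{th:theta D_n} applies to $D_d$. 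Extending $\iota_{\mathbf{s}}$ by the identity on $2U$ gives an embedding $2U\oplus L\hookrightarrow 2U\oplus D_d$ and a closed embedding of tube domains $\cH(L)\hookrightarrow\cH(D_d)$, $(\tau,\mathfrak{z},\omega)\mapsto(\tau,\iota_{\mathbf{s}}(\mathfrak{z}),\omega)$. Under the substitution $\mathfrak{z}\mapsto\iota_{\mathbf{s}}(\mathfrak{z})$ one has $\vartheta_{D_d}(\tau,\iota_{\mathbf{s}}(\mathfrak{z}))=\Theta_{\mathbf{s}}(\tau,\mathfrak{z})$, and since the Hecke operators $T_{-}(m)$ act only on $\tau$ and rescale the elliptic variable, they commute with this linear substitution; in particular $\Psi_{D_d}=-\vartheta_{D_d}|T_{-}(2)/\vartheta_{D_d}$ is carried to $\Psi_{\mathbf{s}}:=-\Theta_{\mathbf{s}}|T_{-}(2)/\Theta_{\mathbf{s}}$, which — being the pullback of the holomorphic function $\Psi_{D_d}\in J_{0,D_d,1}^{\w}$ — is a genuine weak Jacobi form of weight $0$ and index $1$ for $L$ with integral Fourier expansion.

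Next I would restrict the identity $\Grit(\vartheta_{D_d})=\Borch(\Psi_{D_d})$ of Theorem~\ref{th:theta D_n} along $\cH(L)\hookrightarrow\cH(D_d)$. Substituting $\mathfrak{z}\mapsto\iota_{\mathbf{s}}(\mathfrak{z})$ into the Fourier--Jacobi expansion of Theorem~\ref{th:additive} and using the commutation just noted, the left-hand side becomes $\sum_{m\geq 0}\bigl((\vartheta_{D_d}\circ\iota_{\mathbf{s}})|T_{-}(m)\bigr)\xi^m=\Grit(\Theta_{\mathbf{s}})$. For the right-hand side one substitutes into the product formula of Theorem~\ref{th:product}: as in the proof of Theorem~\ref{th:theta D_n} the theta-block factor of $\Borch(\Psi_{D_d})$ is $\vartheta_{D_d}$ and its exponent $C$ equals $1$, so the substituted series is $\Theta_{\mathbf{s}}\cdot\xi\cdot\exp\bigl(-\sum_{m\geq1}((\Psi_{D_d}\circ\iota_{\mathbf{s}})|T_{-}(m))\xi^m\bigr)$. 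To identify this with $\Borch(\Psi_{\mathbf{s}})$ I would apply Theorem~\ref{th:product} to $\Psi_{\mathbf{s}}$ directly: its $q^0$-term is $\sum_{j=1}^d(e^{2\pi i\latt{s_j,\mathfrak{z}}}+e^{-2\pi i\latt{s_j,\mathfrak{z}}})+2(12-d)$, so its theta-block factor is again $\Theta_{\mathbf{s}}$, and since $\Psi_{\mathbf{s}}$ is weak, negative Fourier coefficients contribute nothing to \eqref{eq:q^0-term}, whence its exponent is $\tfrac1{24}(2d+2(12-d))=1$ as well. Comparing leading Fourier--Jacobi coefficients fixes the overall sign, and we obtain $\Grit(\Theta_{\mathbf{s}})=\Borch(-\Theta_{\mathbf{s}}|T_{-}(2)/\Theta_{\mathbf{s}})$; specialising to $L$ of rank one and to holomorphic $\Theta_{\mathbf{s}}$ recovers Conjecture~\ref{conj:theta}.

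The only point that requires care — and which I expect to be the essential content — is the legitimacy of the restriction: a meromorphic modular form could in principle vanish identically along, or be identically singular along, the sub-Grassmannian $\cH(L)$, in which case the naive substitution into the $\xi$-expansions would be meaningless and a quasi-pullback correction by linear forms would be forced. This does not happen here: the leading Fourier--Jacobi coefficient of $\Grit(\vartheta_{D_d})=\Borch(\Psi_{D_d})$ is $\vartheta_{D_d}$, which restricts to $\Theta_{\mathbf{s}}$, a nonzero holomorphic Jacobi form (all $s_j\neq 0$), so $\cH(L)$ lies neither in the zero divisor nor in the polar divisor of $\Borch(\Psi_{D_d})$, the substituted $\xi$-series genuinely represents the restriction, and no correction factor appears. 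Everything else is a direct substitution into the closed formulas of Theorems~\ref{th:additive} and~\ref{th:product}. Alternatively, one may invoke Borcherds' general compatibility of singular theta lifts with restriction to a sublattice, after passing to the primitive closure of $2U\oplus L$ inside $2U\oplus D_d$.
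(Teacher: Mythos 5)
Your proposal is correct and follows essentially the same route as the paper: both deduce the corollary by pulling back the identity of Theorem \ref{th:theta D_n} for $D_d$ along the embedding $\iota_{\mathbf{s}}$ induced by the theta block. The only cosmetic difference is that where the paper invokes the general fact that pullbacks of Borcherds products are Borcherds products (citing Ma) and then identifies the input by matching Fourier--Jacobi expansions, you substitute directly into the product formula of Theorem \ref{th:product} and verify $C=1$ by hand, and your check that the sub-domain lies in neither the zero nor the polar divisor is precisely the non-degeneracy observation implicit in the paper's use of the pullback.
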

\begin{proof}
The definition of $\Theta_{\mathbf{s}}$ yields an embedding $\iota_\mathbf{s}$ of $L$ into $D_n$, where $n\geq \rk(L)$ is the number of elements of the set $\mathbf{s}$. Then $\Theta_{\mathbf{s}}$ is the pullback of $\vartheta_{D_n}$ along $\iota_{\mathbf{s}}$. Moreover, $\Grit(\Theta_{\mathbf{s}})$ is the pullback of $\Grit(\vartheta_{D_n})$ along the induced embedding $\iota_{\mathbf{s}}: 2U\oplus L \hookrightarrow 2U\oplus D_n$. It is also known that the pullback of a Borcherds product is again a Borcherds product (see \cite{Ma19}). By Theorem \ref{th:theta D_n}, we find that $\Grit(\Theta_{\mathbf{s}})$ is a Borcherds product. The expression $- \left(\Theta_{\mathbf{s}}|T_{-}(2)\right) / \Theta_{\mathbf{s}}$ for the input into the Borcherds product can be determined by comparing the Fourier--Jacobi expansions of singular additive lifts and Borcherds products in Theorem \ref{th:additive} and Theorem \ref{th:product}. 
\end{proof}

\begin{corollary}
The theta block conjecture \ref{conj:theta} is true.  
\end{corollary}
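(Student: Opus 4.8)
The plan is to obtain the theta block conjecture as the rank-one specialization of Corollary \ref{cor:theta}. First I would set up the dictionary between the Eichler--Zagier framework of Conjecture \ref{conj:theta} and the lattice-index framework of \S\ref{sec:theta block conjecture}. Suppose $\Theta_f$ is a $q$-order one theta block which is a holomorphic Jacobi form of integral weight $k$ and integral index $N$ for $A_1$; by \eqref{eq:theta q-order 1} we may write $\Theta_f = \eta^{3k-12}\prod_{j=1}^{d}\vartheta_{a_j}$ with $d := 12 - k$ theta factors, and the index condition translates to $\sum_{j=1}^d a_j^2 = 2N$. I would take $L = A_1(N)$, i.e. $L = \ZZ v$ with $\latt{v,v} = 2N$, so $L' = \ZZ\cdot\tfrac{v}{2N}$ and $M'/M = L'/L$ for $M = 2U\oplus L$, set $s_j := a_j\tfrac{v}{2N}\in L'$, and identify $L\otimes\CC$ with $\CC$ via $\mathfrak{z} = zv$. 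Then $\latt{s_j,\mathfrak{z}} = a_j z$ and $\sum_{j=1}^d\latt{s_j,\mathfrak{z}}^2 = \bigl(\sum_j a_j^2\bigr)z^2 = 2Nz^2 = \latt{\mathfrak{z},\mathfrak{z}}$, so $\mathbf{s} = \{s_1,\dots,s_d\}$ defines a theta block $\Theta_{\mathbf{s}}$ of $q$-order one associated to $L$ in the sense of \S\ref{sec:theta block conjecture}, and $24 - 3d = 3k - 12$ gives $\Theta_{\mathbf{s}}(\tau,\mathfrak{z}) = \eta^{24-3d}\prod_j\vartheta(\tau,a_j z) = \Theta_f(\tau,z)$; the associated map $\iota_{\mathbf{s}}\colon v\mapsto(a_1,\dots,a_d)$ is an isometric embedding $A_1(N)\hookrightarrow\ZZ^d$ landing in $D_d$ since $A_1(N)$ is even.

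Next I would apply Corollary \ref{cor:theta}. Its positive-weight hypothesis holds because the weight of $\Theta_{\mathbf{s}}$ equals $12 - d = k > 0$: a nonzero holomorphic Jacobi form of index $N\ge 1$ for $A_1$ has weight at least $\rk(A_1)/2 = 1/2$, and in fact $k\ge 2$ since there is no holomorphic Jacobi form of weight one for $A_1$ \cite{Sko84}. The corollary then gives
\begin{equation*}
\Grit(\Theta_{\mathbf{s}}) = \Borch\Bigl(-\frac{\Theta_{\mathbf{s}}|T_{-}(2)}{\Theta_{\mathbf{s}}}\Bigr)
\end{equation*}
as meromorphic modular forms of weight $k$ on $\widetilde{\Orth}^+(2U\oplus A_1(N))$. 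Since $\Theta_{\mathbf{s}} = \Theta_f$ is a \emph{holomorphic} Jacobi form, the singular additive lift $\Grit(\Theta_{\mathbf{s}})$ has zero principal part and hence, by Theorem \ref{th:additive} and the discussion after it, coincides with the classical Gritsenko (Maass) lift of $\Theta_f$; likewise $-\Theta_f|T_{-}(2)/\Theta_f$ is exactly the weight zero nearly holomorphic Jacobi form appearing in Conjecture \ref{conj:theta}.

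Finally I would transport the identity along the standard isomorphism between Siegel paramodular forms of level $N$ and modular forms for $\widetilde{\Orth}^+(2U\oplus A_1(N))$ \cite{GN98}, under which the orthogonal Gritsenko lift and Borcherds product correspond respectively to the paramodular Gritsenko lift and the paramodular Borcherds product; this yields $\Grit(\Theta_f) = \Borch(-\Theta_f|T_{-}(2)/\Theta_f)$ as paramodular forms of weight $k$ and level $N$, which is exactly Conjecture \ref{conj:theta}. All the substance sits in Theorem \ref{th:theta D_n} and Corollary \ref{cor:theta}; the only things needing care here are the bookkeeping of normalizations — matching the Eichler--Zagier index $N$ with the lattice $A_1(N)$ and checking that $\sum_j a_j^2 = 2N$ is precisely the index-one condition for $L$ — and the observation that the multiplier systems on both sides are trivial, which is automatic in $q$-order one since the theta block carries the multiplier $v_\eta^{24} = 1$.
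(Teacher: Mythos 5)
Your proposal is correct and is exactly the paper's argument: the paper's own proof is the single observation that every holomorphic theta block of weight $k$ and $q$-order one is of the form $\Theta_{\mathbf{s}}$ for a subset $\mathbf{s}\subseteq A_1(N)'$, after which Corollary \ref{cor:theta} applies. Your version merely spells out the bookkeeping (the identification $s_j = a_j v/2N$, the index check $\sum_j a_j^2 = 2N$, and the passage to paramodular forms) that the paper leaves implicit.
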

\begin{proof}
Every holomorphic theta block of weight $k$ and $q$-order one is of the form $\Theta_{\mathbf{s}}$ for a subset $\mathbf{s} \subseteq A_1(N)'$, for some $N \in \mathbb{N}$.
\end{proof}

The above two corollaries also imply the ``if" part of \cite[Conjecture 4.10]{GW20} which is a generalization of the theta block conjecture to holomorphic modular forms on higher-rank orthogonal groups. The ``only if" part remains open.

\section{Free algebras of meromorphic modular forms on type IV symmetric domains}\label{sec:type IV}
In this section we prove Theorem \ref{MTH2}. We first state a more detailed form of Theorem \ref{MTH2}, then prove the existence of the potential modular Jacobians, and finally construct the generators. 

\subsection{A precise statement of Theorem \ref{MTH2} and the outline of its proof}
\subsubsection{The construction of the hyperplane arrangement $\cH_L$}
Let $L$ be a lattice in the following three families of root lattices:
\begin{equation}\label{eq:lattices}
\begin{aligned}
&\text{$A$-type:}& & \left(\bigoplus_{j=1}^t A_{m_j}\right)\oplus A_m, \quad t\geq 0, \quad m\geq 1, \quad (m+1)+\sum_{j=1}^t (m_j+1) \leq 11; \\
&\text{$AD$-type:}& & \left(\bigoplus_{j=1}^t A_{m_j}\right)\oplus D_m,  \quad t\geq 0, \quad m\geq 4, \quad   m + \sum_{j=1}^t (m_j+1) \leq 11; \\
&\text{$AE$-type:}& & E_6, \quad A_1 \oplus E_6, \quad A_2\oplus E_6, \quad E_7,\quad  A_1\oplus E_7.    
\end{aligned}    
\end{equation}
Corresponding to the above decomposition, we write $L = L_0 \oplus L_1$, where $L_1$ is $A_m$, $D_m$ or $E_6$ or $E_7$. When $t = 0$, we take $L_0 = \{0\}$. 

\begin{remark}
Different decompositions  may yield equivalent lattices, e.g. $A_2 \oplus A_1 \cong A_1 \oplus A_2$; however, we will view them as distinct as long as the $L_1$ are distinct, because we associate different hyperplane arrangements to them. In this sense, there are $97$ $A$-type lattices and $45$ $AD$-lattices, and therefore $147$ lattices altogether. There are only two repeats in the sense that both the lattices and the hyperplane arrangements are the same (and even in these cases, the decomposition of $\cH_L$ into $\cH_{L, 0}$ and $\cH_{L, 1}$ and therefore the classification below of the generators into abelian and Jacobi type generators is distinct):
\begin{align*}
2U \oplus (A_1 \oplus A_3) \oplus D_4 &= 2U \oplus 3A_1 \oplus D_5,\\
2U \oplus A_3 \oplus D_6 &= 2U \oplus 2A_1 \oplus D_7.     
\end{align*}
\end{remark}

We list the lattices of $A$-type and $AD$-type in Table \ref{Tab:MTH2}.

\begin{table}[ht]
\caption{Lattices of $A$-type and $AD$-type in the families \eqref{eq:lattices}}\label{Tab:MTH2}
\renewcommand\arraystretch{1.0}
\noindent\[
\begin{array}{|c|c|}
\hline
& \\ [-4mm]
L_1 & L_0 \\ \hline
& \\ [-4mm]
A_1 & 0,\; A_1,\; 2A_1,\; 3A_1,\; 4A_1,\; A_1\oplus A_2,\; A_1\oplus A_3,\; A_1\oplus A_4,\; A_1\oplus A_5,\; A_1\oplus A_6, \\
& A_1\oplus 2A_2,\; A_1\oplus A_2\oplus A_3,\; 2A_1\oplus A_2,\; 2A_1\oplus A_3,\; 2A_1\oplus A_4,\; 3A_1\oplus A_2,\; A_2,\; 2A_2,\\
& 3A_2,\; A_2\oplus A_3,\; A_2\oplus A_4,\; A_2\oplus A_5,\; A_3,\; 2A_3,\; A_3\oplus A_4,\; A_4,\; A_5,\; A_6,\; A_7,\; A_8 \\ \hline
& \\ [-4mm]
A_2 & 0,\; A_1,\; 2A_1,\; 3A_1,\; 4A_1,\; A_1\oplus A_2,\; A_1\oplus A_3,\; A_1\oplus A_4,\; A_1\oplus A_5,\; A_1\oplus 2A_2,\\
& 2A_1\oplus A_2,\; 2A_1\oplus A_3,\; A_2,\; 2A_2,\; A_2\oplus A_3,\; A_2\oplus A_4,\; A_3,\; 2A_3,\; A_4,\; A_5,\; A_6,\; A_7 \\ \hline
& \\ [-4mm]
A_3,\; D_4 & 0,\; A_1,\; 2A_1,\; 3A_1,\; A_1\oplus A_2,\; A_1\oplus A_3,\; A_1\oplus A_4,\\
& 2A_1\oplus A_2,\; A_2,\; 2A_2,\; A_2\oplus A_3,\; A_3,\; A_4,\; A_5,\; A_6 \\ \hline
& \\ [-4mm]
A_4,\; D_5 & 0,\; A_1,\; 2A_1,\; 3A_1,\; A_1\oplus A_2,\; A_1\oplus A_3,\; A_2,\; 2A_2,\; A_3,\; A_4,\; A_5\\ \hline
& \\ [-4mm]
A_5,\; D_6 & 0,\; A_1,\; 2A_1,\; A_1\oplus A_2,\; A_2,\; A_3,\; A_4 \\ \hline
& \\ [-4mm]
A_6,\; D_7 & 0,\; A_1,\; 2A_1,\; A_2,\; A_3 \\ \hline
& \\ [-4mm]
A_7,\; D_8 & 0,\; A_1,\; A_2 \\ \hline
& \\ [-4mm]
A_8,\; D_9 & 0,\; A_1 \\ \hline
& \\ [-4mm]
A_9,\; D_{10} & 0 \\ \hline
& \\ [-4mm] 
A_{10},\; D_{11} & 0 \\
\hline
\end{array}
\]
\end{table}

We will construct a hyperplane arrangement $\cH_L$ for every $L$. View $A_n$ as a sublattice of $D_{n+1}$:
$$
A_n = \left\{ (x_i)_{i=1}^{n+1} \in \ZZ^{n+1}: \sum_{i=1}^{n+1} x_i =0 \right\}. 
$$
As before, let $\varepsilon_1,..., \varepsilon_{n+1}$ be the standard basis of $\RR^{n+1}$. We fix the following coordinates
\begin{equation}\label{model of A_n}
e_j=\varepsilon_{j+1} - \varepsilon_j, \quad 1\leq j \leq n; \quad \mathfrak{z}=\sum_{j=1}^n z_j e_j \in A_n\otimes \CC.     
\end{equation}
Up to sign, the nonzero vectors of minimal norm in $A_n'$ are
\begin{equation}\label{minimal vectors}
 u_j:=\varepsilon_j - \frac{1}{n+1}\sum_{i=1}^{n+1} \varepsilon_i, \quad 1\leq j\leq n+1.    
\end{equation}
These vectors have norm $\latt{u_j,u_j}=n/(n+1)$ and they lie in the same coset of $A_n'/A_n$.  

The hyperplane arrangement $\cH_L$ consists of two parts. The first part is the union of Heegner divisors associated to the minimal vectors in the dual of every irreducible component $A_{m_j}$ of the direct summand $L_0=\oplus_{j=1}^t A_{m_j}$:
\begin{equation}\label{eq:H_0}
\cH_{L,0} = \bigcup_{j=1}^t H\left( \frac{m_j}{2(m_j+1)}, v_j \right),  \quad \text{$v_j$ is a minimal norm vector in $A_{m_j}'$.} 
\end{equation}
By Remark \ref{rem:divisor} (1), the above Heegner divisor associated to $v_j$ is the $\widetilde{\Orth}^+(2U\oplus L)$-orbit of the hyperplane $\cD_{(0,0,v_j,1,0)}$. 
By convention, if $t=0$ and therefore $L_0 = \{0\}$, then $\cH_{L,0}$ is empty.

We introduce the second part of $\cH_L$. For a class $\gamma$ of $L'/L$, we define the minimal norm of $\gamma$ as
\begin{equation}\label{eq:delta_gamma}
\delta_\gamma  = \min\{ \latt{v,v} : v \in L + \gamma \}.     
\end{equation}
As before we define the minimal norm of $L$ as
\begin{equation}\label{eq:delta_L}
\delta_L = \max\{ \delta_\gamma: \gamma \in L'/L \}.    
\end{equation}

\begin{lemma}\label{lem:norm of L}
All lattices $L$ in the three families \eqref{eq:lattices} satisfy $\delta_L < 3$. 
\end{lemma}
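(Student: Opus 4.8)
The plan is to compute $\delta_L$ summand by summand and then bound the result purely in terms of the rank. The first step is to observe that $\delta$ is additive under orthogonal direct sums: if $L = L_1 \oplus L_2$, then $L'/L = (L_1'/L_1)\oplus(L_2'/L_2)$, and since the quadratic form splits, a class $\gamma=(\gamma_1,\gamma_2)$ satisfies $\min\{\latt{y,y}:y\in L+\gamma\} = \delta_{\gamma_1}+\delta_{\gamma_2}$ (minimize each component independently). Hence the maximum over pairs of classes splits, $\delta_{L_1\oplus L_2}=\delta_{L_1}+\delta_{L_2}$, and by induction it suffices to know $\delta$ on each irreducible root lattice occurring in \eqref{eq:lattices}.

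The second step records these values. For $A_n$, with $A_n'/A_n\cong\ZZ/(n+1)$, the class labelled $k$ has minimal norm $k(n+1-k)/(n+1)$, so $\delta_{A_n}=\lfloor(n+1)/2\rfloor\lceil(n+1)/2\rceil/(n+1)\leq(n+1)/4$. For $D_n$ with $n\geq 4$, the three nontrivial classes -- the vector class and the two spinor classes -- have minimal norms $1$, $n/4$, $n/4$, so $\delta_{D_n}=n/4$. Finally $\delta_{E_6}=4/3$ and $\delta_{E_7}=3/2$, the minimal norms of the nontrivial classes in $E_6'/E_6\cong\ZZ/3$ and $E_7'/E_7\cong\ZZ/2$.

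The third step assembles these. For an $A$-type lattice $L=\big(\bigoplus_j A_{m_j}\big)\oplus A_m$, additivity and the estimate $\delta_{A_r}\leq(r+1)/4$ give $\delta_L = \sum_j\delta_{A_{m_j}}+\delta_{A_m}\leq\tfrac14\big((m+1)+\sum_j(m_j+1)\big)\leq 11/4$; for an $AD$-type lattice $L=\big(\bigoplus_j A_{m_j}\big)\oplus D_m$ the same estimate together with $\delta_{D_m}=m/4$ gives $\delta_L\leq\tfrac14\big(m+\sum_j(m_j+1)\big)\leq 11/4$. In both cases $\delta_L\leq 11/4<3$. For the five $AE$-type lattices one adds directly: $\delta_{E_6}=4/3$, $\delta_{A_1\oplus E_6}=\tfrac12+\tfrac43=\tfrac{11}{6}$, $\delta_{A_2\oplus E_6}=\tfrac23+\tfrac43=2$, $\delta_{E_7}=\tfrac32$, $\delta_{A_1\oplus E_7}=\tfrac12+\tfrac32=2$, all strictly below $3$.

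I expect no serious obstacle here: the additivity is immediate from the direct-sum structure of the discriminant form, and the values of $\delta$ on the irreducible pieces are classical. The only place requiring mild care is the $D_n$ case, where one must check that the vector class contributes only norm $1$, so that $\delta_{D_n}=n/4$ exactly for $n\geq 4$ and not something larger; after that the rank constraint "$\leq 11$" does all the work. Note also that since the bound is on $\delta_L$ itself rather than on any chosen decomposition, the two lattices admitting two descriptions in Table \ref{Tab:MTH2} (such as $3A_1\oplus D_5 = (A_1\oplus A_3)\oplus D_4$) are handled automatically.
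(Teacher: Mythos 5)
Your proof is correct and is essentially the ``direct computation'' that the paper's one-line proof alludes to: the additivity of $\delta$ over orthogonal summands, together with $\delta_{A_r}\leq (r+1)/4$, $\delta_{D_m}=m/4$ and the values for $E_6,E_7$ (all consistent with Table \ref{tab:delta}), reduces the claim to the rank constraint ``$\leq 11$'', giving $\delta_L\leq 11/4<3$ for the $A$- and $AD$-families and $\delta_L\leq 2$ for the $AE$-cases. Your packaging via the uniform bound $(r+1)/4$ is a clean way to make the ``$\leq 11$'' hypothesis do all the work at once rather than checking the $147$ lattices individually.
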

\begin{proof} Direct computation.
\end{proof}

The second part of $\cH_L$ is defined as the following finite union of Heegner divisors:
\begin{equation}\label{eq:H_1}
\cH_{L,1} = \bigcup_{\substack{ \gamma \in L'/L\\ \delta_\gamma > 2}}  H\left( \frac{\delta_\gamma}{2}-1, \gamma \right).  
\end{equation}
By Remark \ref{rem:divisor} (1), the above Heegner divisor associated to $\gamma$ is the $\widetilde{\Orth}^+(2U\oplus L)$-orbit of the hyperplane $\cD_{(0,1,v,1,0)}$, where $v\in L'$ is a vector of minimal norm $\delta_\gamma$ in the class $\gamma + L$.
In particular, if $\delta_L \leq 2$ then $\cH_{L,1}$ is empty.

The $\widetilde{\Orth}^+(2U\oplus L)$-invariant hyperplane arrangement on $\cD(2U\oplus L)$ is then defined as 
\begin{equation}
\cH_L=\cH_{L,0}\cup \cH_{L,1}.    
\end{equation}
When $L$ is an irreducible root lattice (i.e. $L_0=\{0\}$) satisfying $\delta_L \leq 2$, then the arrangement $\cH_L$ is empty. The lattices satisfying this are precisely $A_m$ for $m\leq 7$, $D_m$ for $4\leq m\leq 8$, $E_6$ and $E_7$.

\begin{lemma}
For all lattices $L$ in the three families \eqref{eq:lattices}, the arrangement $\cH_L$ satisfies the Looijenga condition \ref{condition}.
\end{lemma}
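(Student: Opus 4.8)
The plan is to reduce everything to Corollary \ref{cor:intersection}, supplemented by a quasi-pullback induction on $\rk(L)$ for those lattices on which the crude bound of the corollary is not strong enough.

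Write $M = 2U \oplus L$, of signature $(l,2)$ with $l = \rk(L) + 2$. Every member of $\cH_L$ is contained in a non-primitive Heegner divisor $\widehat{H}(a,\gamma)$: for $\cH_{L,0}$ one has $a = m_j/\bigl(2(m_j+1)\bigr)$, and for $\cH_{L,1}$ one has $a = \delta_\gamma/2 - 1$. By $m_j \ge 1$ and Lemma \ref{lem:norm of L} ($\delta_L < 3$) these all satisfy $0 < a < 1/2$, so after replacing $\cH_L$ by the union of these $\widehat{H}(a,\gamma)$ — which only makes Condition \ref{condition} harder to satisfy — Corollary \ref{cor:intersection} is available. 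An elementary induction on the recursion $a_k = 1/(4-4a_{k-1})$ gives the closed form $a_k = k/\bigl(2(k+1)\bigr)$; hence the $A_{m_j}$-component divisor of $\cH_{L,0}$ has discriminant exactly $a_{m_j}$ and so lies in ``level'' $m_j$, while, for the lattices in \eqref{eq:lattices}, each divisor of $\cH_{L,1}$ has discriminant $<1/2$ with $\delta_\gamma \le \delta_L < 3$ and hence lies in level at most $3$.

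Consider first an intersection that uses only members of $\cH_{L,1}$. Then the quantity $\sum_k k\,b_k$ of Corollary \ref{cor:intersection} equals the sum of the (at most three) levels of the divisors of $\cH_{L,1}$, and a finite computation with the discriminant forms of the lattices in \eqref{eq:lattices} shows that this sum is always strictly less than $\rk(L) = l-2$; so Corollary \ref{cor:intersection} applies. (For $L = D_m$ with $9 \le m \le 11$ this is already contained in Lemma \ref{Lem:KoecherD_n}.) In particular the lemma holds whenever $\cH_{L,0} = \emptyset$, which is the base of the induction. Now let the intersection contain a hyperplane $\cD_{v_j}$ from the $A_{m_j}$-component divisor of $\cH_{L,0}$, where $v_j \in A_{m_j}' \subset L'$ is a minimal vector. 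Then $M_{v_j} = v_j^\perp \cap M \cong 2U \oplus L''$, where $L''$ is obtained from $L$ by replacing its $A_{m_j}$-summand with $A_{m_j-1}$; thus $\rk(L'') = \rk(L) - 1$ and $L''$ again appears in \eqref{eq:lattices}. By Lemma \ref{lem:intersection}, together with the explicit descriptions \eqref{model of D_n}, \eqref{model of A_n}, \eqref{minimal vectors} of the lattices and their minimal-norm vectors, the restriction of $\cH_L$ to $\cD(M_{v_j})$ is once more an admissible arrangement of Heegner divisors of discriminant $<1/2$ on $2U \oplus L''$, so the inductive hypothesis gives that every nonempty intersection inside $\cD(M_{v_j})$ has dimension $\ge 3$. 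The induction is well founded: successively resolving all $A$-components of $\cH_{L,0}$ leads to $\cD(2U \oplus L_1)$, of dimension $\rk(L_1)+2\ge 3$; and in the remaining (``extremal'') cases one checks $\delta_{L_1} \le 2$ — that is, $L_1 = A_m$ with $m\le 7$ or $L_1 = D_m$ with $m \le 8$ — so that a defining vector of any divisor of $\cH_{L,1}$, projected to $(2U\oplus L_1)\otimes\CC$, has norm $\le \delta_{L_1} - 2 \le 0$, whence its hyperplane meets $\cD(2U\oplus L_1)$ in $\emptyset$ and the $\cH_{L,1}$-part cannot push the intersection any lower. Consequently every nonempty intersection of members of $\cH_L$ has dimension $\ge 3$, and Condition \ref{condition} is vacuous.

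The main obstacle is the bookkeeping hidden in the inductive step: one must confirm that the restriction of $\cH_L$ to each $\cD(M_{v_j})$ is indeed an admissible arrangement on $2U \oplus L''$ (so that the induction can be iterated), and carry out the finite discriminant-form computations underlying the ``only $\cH_{L,1}$'' case and the bound $\delta_L < 3$. Concretely, this amounts to ruling out the possibility that a mixed intersection of a few $\cH_{L,0}$-hyperplanes with several $\cH_{L,1}$-hyperplanes collapses to dimension $\le 2$ while still failing Condition \ref{condition}(2)--(3), and this is exactly where the estimate $\delta_L < 3$ and the fact that $\delta_{L_1}\le 2$ in the extremal cases are used most heavily.
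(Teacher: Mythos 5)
Your proposal is correct and follows essentially the same route as the paper: induction on $\rk(L)$ to handle intersections involving at least one hyperplane of $\cH_{L,0}$ (passing to the smaller lattice in the same family cut out by a minimal vector of an $A_{m_j}$-component), combined with Corollary \ref{cor:intersection} and a finite enumeration of minimal-norm cosets for intersections of hyperplanes from $\cH_{L,1}$ alone. The additional details you supply --- the closed form $a_k = k/(2(k+1))$, the explicit identification of the complement lattice $L''$, and the discussion of how the induction terminates at $2U\oplus L_1$ --- only make explicit the bookkeeping that the paper's proof leaves to the reader.
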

\begin{proof}
We use induction on the rank of $L$; when $L$ has rank one, it is the $A_1$ root lattice and $\cH_L$ is empty.
In general, by Lemma \ref{lem:norm of L}, the arrangement $\cH_L$ is a finite union of Heegner divisors of the form $H(a,-)$ with $a< 1/2$. Every $n$-dimensional intersection of hyperplanes including at least one in $\cH_{L,0}$ is a symmetric domain attached to the lattice $2U \oplus M$ for another $M$ appearing in \eqref{eq:lattices}, and that every $(n-1)$-dimensional intersection of hyperplanes contained in it is contained in the arrangement $\cH_M$ associated to $M$, so in this case the claim follows by induction. 

Therefore, we only need to consider intersections of hyperplanes from $\cH_{L,1}$. Enumerating the minimal norm vectors in each $L'$ shows that the divisors $\cH_{L, 1}$ satisfy Corollary \ref{cor:intersection} and therefore the Looijenga condition \ref{condition}.
\end{proof}

\subsubsection{A more precise version of Theorem \ref{MTH2}}
\begin{theorem}\label{th:2precise}
Let $L=L_0\oplus L_1$ be one of the $147$ lattices in \eqref{eq:lattices}. Then the ring of modular forms for $\widetilde{\Orth}^+(2U\oplus L)$ with poles supported on $\cH_L$ is the polynomial algebra on $\rk(L)+3$ generators. The generators fall naturally into three groups:
\begin{enumerate}
    \item Eisenstein type: there are two generators of weights $4$ and $6$ whose zeroth Fourier--Jacobi coefficients are respectively the $\SL_2(\ZZ)$ Eisenstein series $E_4$ and $E_6$. 
    \item Abelian type: for each component $A_{m_j}$ of $L_0=\bigoplus_{j=1}^t A_{m_j}$, there are $m_j$ generators of weights
    $$
    m_j + 1, \quad m_j+1-i \quad \text{for $2\leq i\leq m_j$}. 
    $$
    \item Jacobi type: there are $\rk(L_1)+1$ generators associated to $L_1$ whose weights are given by
    $$
    k_i + t_i\left( 12 - \sum_{j=1}^t (m_j+1) \right), \quad 0\leq i \leq \rk(L_1)
    $$
    where the pairs $(k_i,t_i)$ are the weights and indices of the free generators of $W(L_1)$-invariant weak Jacobi forms described in \S \ref{subsec:Weyl}. 
\end{enumerate}
\end{theorem}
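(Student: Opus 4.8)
The plan is to verify the hypotheses of the modular Jacobian criterion, Theorem~\ref{th:Jacobiancriterion}. Put $l=\rk(L)+2$, so that $2U\oplus L$ has signature $(l,2)$ and we must exhibit $l+1=\rk(L)+3$ algebraically independent modular forms for $\widetilde{\Orth}^+(2U\oplus L)$ with poles supported on $\cH_L$. The discriminant kernel contains only reflections in norm-$2$ vectors of $2U\oplus L$, so the union of all reflection mirrors is the single irreducible Heegner divisor $H(1,0)$; and $H(1,0)$ is distinct from every component of $\cH_L$, each of which is a Heegner divisor $H(a,\cdot)$ with $a<1/2$ (use $\delta_L<3$, Lemma~\ref{lem:norm of L}, and the definitions in~\eqref{eq:H_0} and~\eqref{eq:H_1}). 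Thus it suffices to produce the generators, check $J\neq 0$, and show $\Div(J)=H(1,0)+D'$ with $D'$ supported on $\cH_L$; then Theorem~\ref{th:Jacobiancriterion} gives that the ring is the claimed polynomial algebra, and the weights are read off from the construction.

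\emph{Constructing the generators.} I would build the three groups of forms named in the statement. The two Eisenstein-type forms of weights $4$ and $6$ are the modular forms on $2U\oplus L$ with leading Fourier--Jacobi coefficients $E_4$ and $E_6$; they are additive lifts and appear already in~\cite{WW20a}. For each summand $A_{m_j}$ of $L_0$, I would produce $m_j$ abelian-type forms as monomials and rational expressions in the Eisenstein-type forms and in the Gritsenko lift of the theta block $\vartheta_{A_{m_j}}$---the pullback of $\vartheta_{D_{m_j+1}}$ along $A_{m_j}\hookrightarrow D_{m_j+1}$---chosen so that their leading Fourier--Jacobi coefficients, up to powers of $\eta$ and $\vartheta$, realize a free system of invariants attached to $A_{m_j}$; by Corollary~\ref{cor:theta}, $\Grit(\vartheta_{A_{m_j}})$ is a Borcherds product, which controls the divisor of these forms and confines their poles to $H\!\left(\tfrac{m_j}{2(m_j+1)},v_j\right)\subseteq\cH_{L,0}$. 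Finally, for $L_1=A_m$ or $D_m$ I would obtain the $\rk(L_1)+1$ Jacobi-type forms as Borcherds' singular additive lifts: starting from the free generators $\phi_{k_i,L_1,t_i}$ of the ring of $W(L_1)$-invariant weak Jacobi forms (Wirthm\"uller, \S\ref{subsec:Weyl}), one forms weight $1-l/2$ inputs through the theta decomposition, after multiplying by a suitable power of $\eta^{24}$ and dividing by the abelian generators; by Remark~\ref{rem:divisor}(4) and Lemma~\ref{lem:norm of L} the singular Fourier coefficients of these inputs occur only in the $q^1$-term, so the lifts are holomorphic away from $\cH_{L,1}$, with weights exactly $k_i+t_i\!\left(12-\sum_j(m_j+1)\right)$. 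For $L_1=E_6$ or $E_7$ the required inputs rest on the modularity of certain formal Fourier--Jacobi series, so I would import these generators from the proof of Theorem~\ref{MTH4} in \S\ref{sec:non-free}.

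\emph{Algebraic independence.} Order the forms by the $\xi$-adic valuation of their Fourier--Jacobi expansions. The lowest-order term of any nonzero polynomial relation among them produces a nonzero polynomial relation among their leading Fourier--Jacobi coefficients, which are built out of $E_4$, $E_6$, the theta blocks $\vartheta_{A_{m_j}}$ in the disjoint variable groups $A_{m_j}\otimes\CC$, and the generators $\phi_{k_i,L_1,t_i}$. Since $\CC[E_4,E_6]$ is a polynomial ring, the $W(L_1)$-invariant weak Jacobi forms are free over it on the $\phi_{k_i,L_1,t_i}$ (Wirthm\"uller), and theta blocks in disjoint variable groups are algebraically independent of one another and of everything else, these leading coefficients form an algebraically independent set; hence no such relation exists and $J\neq 0$.

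\emph{The divisor of $J$, and the main obstacle.} From the leading Fourier--Jacobi coefficients one reads off the precise pole order of $J$ along each component of $\cH_L$; write $-D'$ for the resulting pole divisor, and recall $\Div(J)\ge H(1,0)$ on the arrangement complement $\cD^\circ$. The plan is to show that $J$ equals, up to a nonzero scalar, the Borcherds product $\Borch(\psi)$ of a nearly holomorphic weak Jacobi form $\psi\in J_{0,L,1}^!$ whose principal part realizes $\Div(\Borch(\psi))=H(1,0)+D'$ and whose resulting weight $f_\psi(0,0)/2$ equals $k_J=l+\sum_i k_i$---equivalently, the leading Fourier--Jacobi coefficients agree. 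The existence of such an input $\psi$, which I would not write down explicitly, is the crux; I would prove it uniformly over the three families in the style of~\cite{Wan19}, by checking that the divisor $H(1,0)+D'$ satisfies Borcherds' obstruction conditions and that the Eisenstein pairing then forces the correct $q^0$-term, using Bruinier's converse theorem~\cite{Bru02,Bru14} and the fact that $2U\oplus L$ splits two hyperbolic planes. Granting this, $J/\Borch(\psi)$ is a modular form of weight $0$ that is holomorphic on $\cD^\circ$ (there $\Div(J)\ge H(1,0)=\Div(\Borch(\psi))$) with poles only along $\cH_L$, hence constant by the Koecher principle for the Looijenga compactification (Lemma~\ref{lem:Koecher}); therefore $\Div(J)=H(1,0)+D'$, so $J$ vanishes with multiplicity exactly one on the mirrors not contained in $\cH_L$ and its remaining zeros and poles lie on $\cH_L$. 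Theorem~\ref{th:Jacobiancriterion} then finishes the proof. I expect the hardest part to be exactly this uniform existence statement for $\psi$, together with the attendant bookkeeping---matching the $\eta$- and $\vartheta$-exponents of the theta-block leading coefficients to $D'$ and to $k_J$, and verifying the obstruction conditions---across all $147$ lattices.
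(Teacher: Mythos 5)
Your overall strategy coincides with the paper's: apply the Jacobian criterion (Theorem \ref{th:Jacobiancriterion}), build the three groups of generators from theta lifts, deduce algebraic independence from the leading Fourier--Jacobi coefficients, identify the Jacobian with a Borcherds product whose weight-zero input is shown to exist by an obstruction-theoretic argument in the style of \cite{Wan19} (this is Theorem \ref{th:existence} via Lemma \ref{lem:Jacobi}, where the $q^0$-term is pinned down by the vector-system identity \eqref{eq:vectorsystem} rather than by Bruinier's converse theorem, but the idea is the same), and finish with the Koecher principle on the Looijenga compactification.

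The genuine gap is in your construction of the Jacobi-type generators for $L_1 = D_m$. You propose to obtain all $\rk(L_1)+1$ of them as single singular additive lifts, but the additive lift of Theorem \ref{th:additive} takes an input of index $1$ for $L$ (equivalently a vector-valued form for $\rho_L$), whereas $J_{*,D_m,*}^{\w,W(D_m)}$ has $m-3$ generators $\phi_{-2s,D_m,2}$ of index \emph{two}. The candidate leading coefficient $\Delta^2\bigl(\otimes_j\phi_{-(m_j+1),A_{m_j},1}^2\bigr)\otimes\phi_{-2s,D_m,2}$ has index $2$ and is not a valid input into the lift for $2U\oplus L$; no single additive lift has it as its second Fourier--Jacobi coefficient. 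The paper handles this by constructing the two lowest-weight index-two generators explicitly as products $B_{\kappa/2-m,L}^2\Psi_{2,L}$ and $B_{\kappa/2-m,L}^2\Psi_{4,L}$, where $\Psi_{2,L},\Psi_{4,L}$ are polynomial combinations of additive lifts whose double poles on $H(1/2,\varepsilon_m)$ are cancelled by the square of the theta-block lift $B_{\kappa/2-m,L}$ (Lemmas \ref{lem:L1}--\ref{lem:L3}), and by obtaining the remaining index-two generators from the polynomial expressions of the generators of the free algebra on $2U\oplus D_8$ transported up the $D_n$-towers and corrected by lower-index data (Lemmas \ref{lem:D8}, \ref{lem:D11} and the subsequent case analysis). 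A second, smaller omission of the same kind: for the $AE$-lattices with $L_0\neq\{0\}$ your quotient construction of the abelian generators breaks down because $E_6$ and $E_7$ admit no $q$-order-one theta block (they do not embed into any $\ZZ^d$), so there is no Borcherds-product denominator; the paper instead uses explicit Weierstrass $\zeta$- and $\wp$-identities in Lemma \ref{lem: type (2) AE}. Without these constructions the set of $\rk(L)+3$ forms is incomplete for the $AD$- and $AE$-families, and the Jacobian comparison cannot be set up there.
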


The grouping is based on the leading Fourier--Jacobi coefficient, which is either an Eisenstein series, an abelian function associated to $L_0$, or a Jacobi form associated to $L_1$. In more detail:

(1) The zeroth Fourier--Jacobi coefficient of a modular form that is holomorphic away from $\cH_{L, 1}$ is a (holomorphic) modular form in $\CC[E_4, E_6]$. The generators of Eisenstein type will turn out to have their poles supported on $\cH_{L, 1}$.

(2) The generators of abelian type are related to theta blocks associated to root systems of type $A$.  It follows from Wirthm\"uller's theorem \cite{Wir92} that the $\CC[E_4, E_6]$-module of weak Jacobi forms of index $1$ for $A_n$ is generated by forms of weights $0$, $-2$, $-3$, ..., $-(n+1)$. The minimal weight generator can be constructed as a theta block:
\begin{equation}\label{eq:theta-blockA_n}
\phi_{-(n+1),A_n,1}(\tau, \mathfrak{z})= \prod_{s=1}^{n+1} \frac{\vartheta(\tau, \latt{u_s, \mathfrak{z}})}{\eta(\tau)^3} = -\frac{\vartheta(\tau,z_1)\vartheta(\tau,z_n)}{\eta(\tau)^6} \prod_{s=1}^{n-1} \frac{\vartheta(\tau, z_s - z_{s+1})}{\eta(\tau)^3}, 
\end{equation}
where $u_s$ are minimal norm vectors of $A_n'$ defined in \eqref{minimal vectors} and we use the model of $A_n$ fixed in \eqref{model of A_n}. Let $\Delta=\eta^{24}$.  Then 
$$
\vartheta_{A_n}:=\Delta \phi_{-(n+1),A_n,1} 
$$
is a theta block of weight $11-n$ and $q$-order one associated to $A_n$, and it is the pullback of the theta block $\vartheta_{D_{n+1}}$. Corollary \ref{cor:theta} implies that its singular additive lift is also a Borcherds product:

\begin{lemma}\label{lem:theta A_n}
When $n\leq 10$, as meromorphic modular forms of weight $11-n$ for $\widetilde{\Orth}^+(2U\oplus A_n)$,
$$
\Grit(\vartheta_{A_n}) = \Borch\left( - \frac{\vartheta_{A_n}|T_{-}(2)}{\vartheta_{A_n}} \right).
$$
\end{lemma}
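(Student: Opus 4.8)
The plan is to identify $\vartheta_{A_n}$ with a pure theta block of $q$-order one associated to the root lattice $A_n$ and then invoke Corollary \ref{cor:theta}, whose content is exactly of this form. First I would unfold the definitions: substituting the product formula \eqref{eq:theta-blockA_n} for $\phi_{-(n+1),A_n,1}$ into $\vartheta_{A_n}=\Delta\,\phi_{-(n+1),A_n,1}$ and using $\Delta=\eta^{24}$ gives
\[
\vartheta_{A_n}(\tau,\mathfrak{z})=\eta(\tau)^{24-3(n+1)}\prod_{s=1}^{n+1}\vartheta(\tau,\latt{u_s,\mathfrak{z}}),
\]
which is precisely the theta block $\Theta_{\mathbf{s}}$ attached to the set $\mathbf{s}=\{u_1,\dots,u_{n+1}\}\subseteq A_n'$ of minimal-norm vectors from \eqref{minimal vectors}, in the model \eqref{model of A_n} of $A_n$.

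Next I would verify that $\mathbf{s}$ genuinely defines a theta block of $q$-order one associated to $A_n$, i.e. that $\sum_{s=1}^{n+1}\latt{u_s,\mathfrak{z}}^2=\latt{\mathfrak{z},\mathfrak{z}}$ for all $\mathfrak{z}\in A_n\otimes\CC$. Since each $u_s$ is the orthogonal projection of $\varepsilon_s$ onto $A_n\otimes\RR$ and $\mathfrak{z}$ is orthogonal to $\varepsilon_1+\dots+\varepsilon_{n+1}$, we have $\latt{u_s,\mathfrak{z}}=\latt{\varepsilon_s,\mathfrak{z}}$, and the required identity reduces to the Euclidean norm identity $\sum_{s}\latt{\varepsilon_s,\mathfrak{z}}^2=\latt{\mathfrak{z},\mathfrak{z}}$; equivalently $\iota_{\mathbf{s}}$ is the standard embedding $A_n\hookrightarrow\ZZ^{n+1}$, which lands in $D_{n+1}$. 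The weight of $\Theta_{\mathbf{s}}$ is $\tfrac12(24-3(n+1))+\tfrac12(n+1)=11-n$, positive exactly for $n\le 10$. With these checks in place, Corollary \ref{cor:theta} applied with $L=A_n$ gives the identity at once.

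As a cross-check one can argue directly, in parallel with the proof of Theorem \ref{th:theta D_n}: $\vartheta_{A_n}$ is the pullback of $\vartheta_{D_{n+1}}$ along $A_n\hookrightarrow D_{n+1}$, so $\Grit(\vartheta_{A_n})$ is the pullback of $\Grit(\vartheta_{D_{n+1}})$ along the induced embedding $2U\oplus A_n\hookrightarrow 2U\oplus D_{n+1}$; since the pullback of a Borcherds product is again a Borcherds product \cite{Ma19}, Theorem \ref{th:theta D_n} for $D_{n+1}$ shows that $\Grit(\vartheta_{A_n})$ is a Borcherds product, and comparing the Fourier--Jacobi expansions in Theorem \ref{th:additive} and Theorem \ref{th:product} identifies its input as $-\vartheta_{A_n}|T_{-}(2)/\vartheta_{A_n}$. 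This input is a \emph{holomorphic} weak Jacobi form of weight $0$ and index $1$: for generic fixed $\tau$ the theta block $\vartheta_{A_n}$ vanishes only where some $\latt{u_s,\mathfrak{z}}\in\ZZ\tau+\ZZ$, and those loci are also zeros of $\vartheta_{A_n}|T_{-}(2)$, so the hypotheses of Theorem \ref{th:product} are met. I expect essentially no obstacle: the real content is Theorem \ref{th:theta D_n}, and the only point needing care is the bookkeeping of the pullback --- that $\Grit$ commutes with the lattice embedding and that the weight-zero input transforms compatibly under $T_{-}(2)$ --- which is routine given the lifts recalled in Section \ref{sec:Jacobiforms}.
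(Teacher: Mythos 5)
Your proposal is correct and follows the paper's own route exactly: the paper introduces $\vartheta_{A_n}=\Delta\,\phi_{-(n+1),A_n,1}$ as a theta block of weight $11-n$ and $q$-order one associated to $A_n$ (the pullback of $\vartheta_{D_{n+1}}$) and deduces the lemma as an immediate instance of Corollary \ref{cor:theta}. Your verification of the vector-system identity via $\latt{u_s,\mathfrak{z}}=\latt{\varepsilon_s,\mathfrak{z}}$ and of the weight count, together with the cross-check that unwinds Corollary \ref{cor:theta} through the embedding $A_n\hookrightarrow D_{n+1}$, supplies precisely the details the paper leaves implicit.
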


For each component $A_{m_j}$ of $L_0$, there will be $m_j$ generators of abelian type whose zeroth Fourier--Jacobi coefficients are the abelian functions
\begin{equation}\label{eq:generators of type (2)}
    \phi_{-i,A_{m_j},1} / \phi_{-(m_j+1),A_{m_j},1}, \quad i=0, 2,3, ..., m_j.
\end{equation}
These can be understood as meromorphic Jacobi forms of weight $(m_j + 1) - i$ and index $0$ associated to $L$ which are constant in the components of $L$ other than $A_{m_j}$.

(3) The generators of Jacobi type are motivated by the results of \cite{WW20a}. For the $14$ lattices $L$ with $\cH_L=\emptyset$, we proved in \cite{WW20a} that the ring of holomorphic modular forms for $\widetilde{\Orth}^+(2U\oplus L)$ is freely generated by $\rk(L)+3$ modular forms whose leading Fourier--Jacobi coefficients are respectively 
$$
E_4,\quad E_6,\quad \Delta^{s} f_{L,s} \cdot \xi^{s}, \quad \text{where} \; \xi=e^{2\pi i \omega},
$$
where $f_{L,s}$ runs through the generators of index $s$ of Wirthm\"uller's ring of $W(L)$-invariant weak Jacobi forms. In the present case, we expect to find $\rk(L_1)+1$ generators associated to the direct summand $L_1$ for which the leading terms in the Fourier--Jacobi expansions are
\begin{equation}\label{eq:generators of type (3)}
\left(\bigotimes_{j=1}^t \phi_{-(m_j+1),A_{m_j},1}\right)^{t_i} \otimes (\Delta^{t_i}\phi_{k_i,L_1,t_i})\cdot \xi^{t_i}  
\end{equation}
and, if $L_1 = D_m$, $m \ge 4$, the additional form beginning with
\begin{equation}\label{eq:generators Dn(1)}
\left(\bigotimes_{j=1}^t \phi_{-(m_j+1),A_{m_j},1}\right)\otimes (\Delta \psi_{-m,D_m,1})\cdot \xi.
\end{equation}
We refer to Notation \ref{notation} for the description of these generators of Weyl invariant weak Jacobi forms.
This coefficient is a Jacobi form of weight $k_i+t_i(12-\sum_{j=1}^t(m_j+1))$ and index $t_i$ associated to $L=L_0\oplus L_1$. 

Note that the expected generators are all of positive weight. Every generator of index $t_i$ and weight $k_i$ of the ring of $W(L_1)$-invariant weak Jacobi forms satisfies $k_i / t_i \geq -K_{L_1}$, where $K_{L_1}$ is $m+1$ for $L_1=A_m$, $m$ for $L_1=D_m$, and $5$ for $L_1=E_6, E_7$.  Thus the generators of Jacobi type have positive weight, because of the constraints $$(m+1) + \sum_{j=1}^t (m_j + 1) \le 11$$ for lattices of $A$-type and $$m + \sum_{j=1}^t (m_j + 1) \le 11$$ for lattices of $AD$-type.

\begin{remark}
If a lattice $L$ in the family of $A$-type has $$K:=m+1+\sum_{j=1}^t (m_j+1)\geq 12,$$ then the hyperplane arrangement $\cH_L$ does \emph{not} satisfy the Looijenga condition. Indeed, if $\theta_L$ is the theta block of $q$-order one defined as $$\theta_L := \Delta \cdot \left( \bigotimes_{j=1}^t \phi_{-(m_j + 1), A_{m_j}, 1} \right) \otimes \phi_{-(m+1), A_m, 1},$$ then the Borcherds lift of $-2(\theta_L | T_{-}(2)) / \theta_L$ is a meromorphic modular form of non-positive weight $24-2K$ which is holomorphic away from $\cH_{L, 1}$, violating Koecher's principle.
Therefore, Theorem \ref{th:2precise} cannot be extended to the lattices satisfying $K\geq 12$. A similar result holds for the family of $AD$-type lattices. 
\end{remark}

\subsubsection{The sketch of the proof}
To prove Theorem \ref{th:2precise}, we apply the modular Jacobian criterion of Theorem \ref{th:Jacobiancriterion}. We sketch the main steps of the proof below: 

\vspace{3mm}

\begin{itemize}
    \item[(I)] We construct a Borcherds product $\Phi_L$ for $\widetilde{\Orth}^+(2U\oplus L)$ satisfying the following conditions:
    \begin{itemize}
        \item[(i)] The weight of $\Phi_L$ equals the sum of the weights of the $\rk(L)+3$ generators prescribed in Theorem \ref{th:2precise} plus $\rk(L)+2$. 
        \item[(ii)] The function $\Phi_L$ vanishes with multiplicity one on hyperplanes associated to vectors of norm $2$ in $2U\oplus L$ which are not contained in the arrangement $\cH_L$.
        \item[(iii)] All other zeros and poles of $\Phi_L$ are contained in $\cH_L$. 
    \end{itemize}
    \item[(II)] We construct the $\rk(L)+3$ generators which have weights given in Theorem \ref{th:2precise} and the leading Fourier--Jacobi coefficients described above. By the discussion above, we conclude from the algebraic independence of generators of Jacobi forms that the leading Fourier--Jacobi coefficients of these generators of orthogonal type are algebraically independent over $\CC$. Therefore, these generators of orthogonal type are algebraically independent, which implies that their Jacobian $J_L$ is not identically zero. 
    \item[(III)] The quotient $J_L / \Phi_L$ defines a modular form of weight $0$ which is holomorphic away from $\cH_L$, and is therefore constant by Lemma \ref{lem:Koecher}. 
    \item[(IV)] Theorem \ref{th:2precise} follows by applying Theorem \ref{th:Jacobiancriterion}. 
\end{itemize}

\vspace{3mm}

Step (I) is Theorem \ref{th:existence}. For Step (II), we construct the generators of Eisenstein type in Lemma \ref{lem: type (1)}, the generators of abelian type in Lemma \ref{lem: type (2) AD} and Lemma \ref{lem: type (2) AE}, and the generators of Jacobi type in \S\ref{subsubsec: generators of type (3)} for the families of $A$-type and $AD$-type. Finally, we find the generators of Jacobi type for the family of $AE$-type in \S \ref{sec:non-free}. 
Steps (III) and (IV) then follow immediately.

\subsection{Preimage of the modular Jacobian under the Borcherds lift} For each lattice $L$ in \eqref{eq:lattices}, we will prove the existence of a Borcherds product $\Phi_L$ which will turn out to be (up to a scalar) the Jacobian of any set of generators of the algebra of meromorphic modular forms.

\begin{theorem}\label{th:existence}
For each lattice $L$ in the families \eqref{eq:lattices}, there exists a Borcherds product $\Phi_L$ satisfying conditions $(i)$, $(ii)$, $(iii)$ of Step $(I)$. More precisely, $\Phi_L$ is the Borcherds multiplicative lift of a nearly holomorphic Jacobi form of weight $0$ and index $1$ associated to $L$ whose Fourier expansion has the form
\begin{equation}\label{eq:Jacobian q^0}
    \phi_L(\tau, \mathfrak{z}) = q^{-1} + 2k + \sum_{\substack{ r\in L\\ \latt{r,r}=2}} \zeta^r + \sum_{j=1}^t c_{m_j} \sum_{u\in \mathcal{U}_j}(\zeta^u + \zeta^{-u}) + O(q),
\end{equation}
where $\mathcal{U}_j$ is the set of minimal norm vectors in the dual of the component $A_{m_j}$ of $L_0$ defined in \eqref{minimal vectors}. The multiplicities $c_{m_j}$ are determined by the formulae
\begin{equation}
    c_1 = 2(h-2), \quad c_{n} = h - (n+1) \quad \text{if $n>1$},
\end{equation}
where $h$ is the Coxeter number of the root lattice $L_1$. 
The weight $k$ of $\Phi_L$ is determined by
\begin{equation}
    k = 12 (h+1) - \frac{1}{2} \rk(L_1)h - \sum_{j=1}^t \left(h-1 - \frac{1}{2}m_j \right)(m_j+1). 
\end{equation}
\end{theorem}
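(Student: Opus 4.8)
The plan is to realize $\Phi_L$ as the Borcherds lift $\Borch(\phi_L)$ of a nearly holomorphic Jacobi form $\phi_L\in J^!_{0,L,1}$ whose singular Fourier coefficients are exactly those displayed in \eqref{eq:Jacobian q^0}; properties (i)--(iii) are then read off from the divisor formula for Borcherds products (Theorem \ref{th:product} and Remark \ref{rem:divisor}). First I would do the divisor bookkeeping. By the Eichler criterion (Remark \ref{rem:divisor}(1)) there is a single $\widetilde{\Orth}^+(2U\oplus L)$-orbit of hyperplanes attached to norm-$2$ vectors of $2U\oplus L$, represented both by $(0,-1,0,1,0)$ and by $(0,0,r,1,0)$ for a root $r$ of $L$; since $\cH_L$ consists of Heegner divisors of discriminant $<1/2$, this orbit is disjoint from $\cH_L$, so the coefficients $f(-1,0)=1=f(0,r)$ account for exactly the simple zeros required by (ii). The coefficients $f(0,\pm u)=c_{m_j}$ with $u\in\mathcal{U}_j$ correspond to hyperplanes of norm $m_j/(m_j+1)<1$, which are by definition the components of $\cH_{L,0}$; and by Remark \ref{rem:divisor}(4) every remaining singular coefficient of $\phi_L$ has a representative in the $q^0$- or $q^1$-term, the $q^1$ ones (present only if $\delta_L>2$) corresponding to hyperplanes of norm $\delta_\gamma-2<1$, which are the components of $\cH_{L,1}$. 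Thus all zeros and poles of $\Borch(\phi_L)$ off the norm-$2$ orbit lie on $\cH_L$, giving (iii), while (i) is the weight identity: the weight of $\Borch(\phi_L)$ is $f(0,0)/2=k$.

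Next I would pin down $f(0,0)=2k$ and the $c_{m_j}$, and check they are the unique consistent values. Every $\phi\in J^!_{0,L,1}$ satisfies the two identities of Lemma \ref{Lem:q^0-term}: the vector-system identity \eqref{eq:vectorsystem} forces $\mathfrak z\mapsto\sum_{\ell}f(0,\ell)\latt{\ell,\mathfrak z}^2$ to equal $2C\latt{\mathfrak z,\mathfrak z}$, and \eqref{eq:q^0-term} relates $C$ to $f(0,0)$ and the $q^{<0}$-data. Substituting the candidate $q^{-1}$- and $q^0$-terms from \eqref{eq:Jacobian q^0} and using the root-system identity $\sum_{\alpha\in R}\latt{\alpha,\mathfrak z}^2=2h_R\latt{\mathfrak z,\mathfrak z}$ (for the root systems of $L_0$ and of $L_1$) together with $\sum_{i=1}^{m+1}\latt{u_i,\mathfrak z}^2=\tfrac{m}{m+1}\latt{\mathfrak z,\mathfrak z}$ for the minimal vectors of $A_m'$, both identities hold precisely for the values $c_1=2(h-2)$, $c_n=h-(n+1)$ and the stated $k$. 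This is a direct computation; the one delicate point is bookkeeping the Coxeter number $h$ of $L_1$ against the contribution of the roots of $L_1$ to $C$, which is what makes the clean formulas emerge. Since $M_{-\rk(L)/2}(\rho_L)=0$, this also shows that $\phi_L$ is unique if it exists.

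The remaining, and main, step is the existence of $\phi_L$. By Borcherds' duality theorem this is equivalent to the prescribed principal part being orthogonal, under Borcherds' pairing, to every cusp form in the finite-dimensional obstruction space $S_{1+l/2}(\rho_L^\vee)$ with $l=2+\rk(L)$ --- concretely, to
\[
b_g(1,0)+\sum_{j=1}^{t}c_{m_j}\Bigl(b_g\bigl(\tfrac{m_j}{2(m_j+1)},\,u_j+L\bigr)+b_g\bigl(\tfrac{m_j}{2(m_j+1)},\,-u_j+L\bigr)\Bigr)=0
\]
for every cusp form $g=\sum_{n,\gamma}b_g(n,\gamma)q^n e_\gamma$ of weight $1+l/2$. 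Following \cite{Wan19}, I would establish this uniformly rather than compute the obstruction spaces: by induction on $\rk(L)$, restricting along the hyperplane of a root $r$ of $L$ replaces $2U\oplus L$ by $2U\oplus L_r$ with $L_r$ again (a rescaling of) a lattice in the families \eqref{eq:lattices}, carries $\cH_L$ to $\cH_{L_r}$, and expresses the obstruction pairing for $L$ as the one for $L_r$ --- zero by induction --- plus a contribution of the vector-valued Eisenstein series that is exactly the quantity controlled by \eqref{eq:q^0-term}; the base cases are the lattices with $\cH_L=\emptyset$, where $\Phi_L$ is one of the holomorphic reflective Borcherds products of \cite{WW20a, Wan21a}. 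An alternative, more explicit route --- which also yields the divisor directly --- is to realize $\Phi_L$ as a quotient of a quasi-pullback of the weight-$12$ Borcherds form on $\mathrm{II}_{26,2}$, along an embedding of $L$ into a Niemeier lattice whose orthogonal complement carries the root system of $L_1$, by the product $\prod_j\Grit(\vartheta_{A_{m_j}})$ of Gritsenko lifts of theta blocks; quasi-pullbacks are Borcherds products, each $\Grit(\vartheta_{A_{m_j}})$ is a Borcherds product with divisor exactly its component of $\cH_{L,0}$ by Corollary \ref{cor:theta}, and the weight, the $q^{-1}$-coefficient and the full divisor of the quotient come out of the root combinatorics of the embedding. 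Either way the real difficulty is the uniformity: controlling the obstruction spaces (or the embeddings and their root counts) for all $147$ lattices at once, which is feasible because $\cH_{L,1}$ and the weight $1+l/2$ depend only on $\rk(L)$ while the remaining structure is governed by the short list of possible $L_1$.
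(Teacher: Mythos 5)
Your divisor bookkeeping and your determination of the constants are essentially the paper's: conditions (i)--(iii) are read off from the divisor formula for Borcherds products exactly as you describe, and the values $c_1=2(h-2)$, $c_n=h-(n+1)$ and the weight $k$ are forced by applying the two identities of Lemma \ref{Lem:q^0-term} --- the paper computes $c_{m_j}$ by applying the vector-system identity \eqref{eq:vectorsystem} to each irreducible component of $L_0$ and then gets $k$ from \eqref{eq:q^0-term}, just as you propose.

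The gap is in the existence of $\phi_L$, which is the main content of the theorem, and here your two proposed mechanisms do not hold up as stated. Your obstruction-theoretic induction rests on the claim that restricting along the mirror of a root $r$ ``expresses the obstruction pairing for $L$ as the one for $L_r$ plus an Eisenstein contribution''; but the obstruction spaces $S_{1+l/2}(\rho_L^\vee)$ and $S_{l/2}(\rho_{L_r}^\vee)$ are cusp forms for Weil representations on different discriminant forms and different weights, and there is no such decomposition of the pairing --- the natural maps (restriction of Jacobi forms from $L$ to $L_r$) go in the wrong direction for this induction. The quasi-pullback route would require the root combinatorics of a Niemeier embedding to reproduce the very specific multiplicities $c_{m_j}$, which you do not verify and which is not automatic. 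The paper's actual existence argument (Lemma \ref{lem:Jacobi}) avoids obstruction spaces entirely: since each $J^{\w}_{*,L_i,1}$ is a free $\CC[E_4,E_6]$-module generated in non-positive weights with exactly one generator each of weights $0$ and $-2$ and none of weight $-1$ (known from Wirthm\"uller's theorem and Table \ref{Tab:Jacobi}, via the tensor decomposition of Remark \ref{rem:sum of Jacobi forms}), the space of $q^0$-terms actually realized by weight-$0$ index-$1$ weak Jacobi forms has codimension exactly $n$ (the number of components) inside the span of the constant and all basic orbits; the vector-system identity \eqref{eq:vectorsystem} shows that $1$ and any $n-1$ chosen basic orbits descend to a basis of the quotient, so every basic orbit is realizable modulo these, and the $q^{-1}$-term is supplied by dividing a weight-$12$ form with $q^0$-term $1$ by $\Delta$. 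If you want to salvage your write-up, replacing the induction/quasi-pullback step by this dimension count on $q^0$-terms is the missing ingredient.
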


The proof of the theorem follows essentially from the following Lemma.

\begin{lemma}\label{lem:Jacobi}
Let $L$ be an even positive definite lattice with bilinear form $\latt{-,-}$.
For every nonzero $\gamma \in L'/L$, we define the basic orbit associated to $\gamma$ as 
$$
\orb(\gamma)= \sum_{\substack{v\in L+\gamma \\ \latt{v,v}=\delta_\gamma }} (\zeta^v + \zeta^{-v}), \quad \zeta^v=e^{2\pi i\latt{v,\mathfrak{z}}}, \quad \mathfrak{z} \in L\otimes\CC,
$$
where as before $\delta_\gamma = \min\{ \latt{v,v} : v\in L + \gamma \}$. The number of distinct basic orbits is denoted $N$.
\begin{enumerate}
    \item The space $J_{2*,L,1}^{\w}$ of weak Jacobi forms of even weight and index $1$ associated to $L$ is a free module over $\CC[E_4, E_6]$ on $N+1$ generators. The $q^0$-term of any weak Jacobi form in $J_{2*,L,1}^{\w}$ is a $\CC$-linear combination of the basic orbits and the constant term, and the $q^0$-terms of any basis of the $\CC[E_4, E_6]$-module are linearly independent over $\CC$.
    \item Let $L=\oplus_{i=1}^n L_i$. For every $L_i$, assume that
    \begin{itemize}
        \item[(i)]  the free module $J_{*,L_i,1}^{\w}=\oplus_{k\in\ZZ}J_{k,L_i,1}^{\w}$ is generated by forms of non-positive weight;
        \item[(ii)] $J_{*,L_i,1}^{\w}$ has no generator of weight $-1$;
        \item[(iii)]$J_{*,L_i,1}^{\w}$ has exactly one generator each of weight $-2$ and weight $0$.
    \end{itemize}
    For $1\leq i\leq n-1$, choose any basic orbit $\orb(\gamma_i)$ for $L_i$. Then there exists a nearly holomorphic Jacobi form of weight $0$ and index $1$ associated to $L$ with rational Fourier expansion
    $$
    \phi_L = q^{-1} + \sum_{\substack{r\in L \\ \latt{r,r}=2}} \zeta^r + \sum_{j=1}^{n-1} c_j \orb(\gamma_j) + c_0 + O(q), 
    $$
    and where all $c_j$ are rational numbers.
\end{enumerate}
\end{lemma}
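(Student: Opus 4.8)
The plan is to establish Part (1) first and then use it, together with the theory of vector-valued modular forms, to prove Part (2); the decisive difficulty will be the existence statement in Part (2).

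\textbf{Part (1).} Since $E_4$ and $E_6$ have even weight, multiplication by them preserves the parity of the weight, so the free $\CC[E_4,E_6]$-module $J_{*,L,1}^{\w}$ of rank $|L'/L|$ (recalled in \S\ref{sec:Jacobi}) decomposes as the direct sum of its even- and odd-weight submodules. In particular $J_{2*,L,1}^{\w}$ is a direct summand of a free module over the polynomial ring $\CC[E_4,E_6]$, hence, being finitely generated and graded, itself free. To compute the rank and prove the statement about $q^0$-terms I would study the $q^0$-coefficient map $\varphi\mapsto\sum_{\ell}f(0,\ell)\zeta^\ell$. By Lemma \ref{lem:periodic}(1)--(3) these coefficients are constant on each basic orbit, are supported on minimal vectors of the nonzero cosets together with $\ell=0$, and satisfy $f(0,-\ell)=(-1)^k f(0,\ell)$; hence the $q^0$-term of an even-weight form lies in the span $V_+$ of $1$ and the $N$ distinct basic orbits, with $\dim V_+=N+1$. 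The key sub-step is injectivity of this map on any homogeneous basis: if a nonzero $\CC$-combination $\varphi$ of basis elements, which we may take homogeneous of some weight $w$, satisfied $\varphi=O(q)$, then $\varphi/\Delta\in J_{w-12,L,1}^{\w}$; expanding $\varphi/\Delta$ in the basis and multiplying through by $\Delta=(E_4^3-E_6^2)/1728$ produces a representation of $\varphi$ whose coefficients all lie in $\Delta\cdot\CC[E_4,E_6]$ and so have weight $\ge 12$, contradicting (by freeness) the original representation of $\varphi$ by nonzero scalars. Injectivity, together with $\dim V_+=N+1$ and the fact that the full module has rank $|L'/L|=\dim(V_+\oplus V_-)$, forces $J_{2*,L,1}^{\w}$ to have rank exactly $N+1$ and the $q^0$-terms of any basis to be a basis of $V_+$.

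\textbf{Part (2), reduction.} Via the theta decomposition, a form $\phi_L\in J_{0,L,1}^{!}$ is the same as a nearly holomorphic vector-valued modular form $f$ of weight $-\rk(L)/2$ for $\rho_L$. Using Lemma \ref{lem:periodic}(1), the requested Fourier expansion of $\phi_L$ translates into a prescription of the principal part of $f$: all singular coefficients in the zero coset have the common value $f(-1,0)$, so prescribing $f(-1,0)=1$ simultaneously encodes the $q^{-1}$-term and every root coefficient $\zeta^r$ ($r\in L$, $\latt{r,r}=2$); each term $c_j\,\orb(\gamma_j)$ becomes a prescribed coefficient $c_j$ at exponent $-\delta_{\gamma_j}/2$ in the $e_{\pm\gamma_j}$-component (consistency with vanishing of the remaining negative coefficients is again forced by Lemma \ref{lem:periodic}(1), using $\delta_L<3$); and the constant $c_0$ is the unconstrained value $f(0,0)$. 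As $-\rk(L)/2<0$ there are no nonzero holomorphic vector-valued forms of that weight for $\rho_L$, so $f$ is unique once it exists, and (by the $q$-expansion principle, the realization of a rational principal part having rational Fourier coefficients) the resulting $\phi_L$ and the $c_j$ are automatically rational. By Borcherds' Serre-duality criterion, such an $f$ exists if and only if the prescribed principal part together with the unknown $c_0$ pairs trivially with every holomorphic vector-valued form $g$ of dual weight $2+\rk(L)/2$ for $\bar\rho_L$; this reduces the whole statement to the solvability of an inhomogeneous linear system in the unknowns $c_0,c_1,\dots,c_{n-1}$, one equation per basis element of $M_{2+\rk(L)/2}(\bar\rho_L)$.

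\textbf{Part (2), the hard part.} Proving this system solvable is the main obstacle, and it is where the decomposition $L=\bigoplus_{i=1}^n L_i$ and hypotheses (i)--(iii) must enter; I would argue by induction on $n$. For $n=1$, $L=L_1$ satisfies (i)--(iii), and the point is that $q^{-1}e_0$ is an admissible principal part, equivalently that every $g\in M_{2+\rk(L_1)/2}(\bar\rho_{L_1})$ whose $e_0$-component vanishes at $q^0$ also vanishes there at $q^1$; this one extracts from the hypothesis that $J_{*,L_1,1}^{\w}$ has no generator of weight $-1$ and exactly one each of weights $0$ and $-2$, after translating the generator count of the free module (via its Hilbert series and the theta decomposition) into a statement about the relevant obstruction spaces. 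For the inductive step $L'=\bigoplus_{i<n}L_i\rightsquigarrow L=L'\oplus L_n$ one starts from $\phi_{L'}$, adjoins the roots of $L_n$, and introduces a single new free parameter $c_{n-1}$ supported on a basic orbit $\orb(\gamma_{n-1})$ of $L_{n-1}$; by Remark \ref{rem:sum of Jacobi forms} and hypotheses (i)--(iii) for $L_n$ the obstruction space for $L$ that is new relative to $L'$ is controlled in dimension, and the coefficient matrix of the resulting system is triangular with respect to the ordering of the basic orbits $\gamma_1,\dots,\gamma_{n-1}$ by norm $\delta_{\gamma_j}$ --- using a Lemma \ref{lem:intersection}-style computation of how a basic orbit of a summand pairs with the relevant vector-valued forms --- hence invertible, yielding a unique rational solution. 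This pairing and triangularity computation, parallel to the argument of \cite{Wan19}, is the technical heart; once the $c_j$ and $c_0$ are fixed, unwinding the theta decomposition returns $\phi_L$ with exactly the stated expansion.
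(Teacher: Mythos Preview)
Your Part (1) is essentially the paper's argument: freeness follows from the Eichler--Zagier filtration argument, the $q^0$-terms lie in the span of $1$ and the basic orbits by Lemma \ref{lem:periodic}, and linear independence follows from the $\varphi/\Delta$ trick. The rank count via splitting into even/odd parity works, though the paper instead cites Borcherds' obstruction theorem to produce weak Jacobi forms hitting each basic orbit directly.

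Your Part (2), however, takes a substantially different route from the paper, and the hard step you isolate is not actually carried out. The paper's proof avoids Serre duality and induction entirely. It rests on two short observations you do not use. First, a dimension count: under hypotheses (i)--(iii), the only generators of $J^{\w}_{2*,L,1}$ that \emph{cannot} be multiplied into weight $0$ by an element of $M_*(\SL_2(\ZZ))$ are those of weight $-2$, and by the tensor description there are exactly $n$ of these; together with Part (1) this gives $\operatorname{codim}_V V_0=n$, where $V_0$ is the image of the $q^0$-map on $J^{\w}_{0,L,1}$. Second, the vector-system identity of Lemma \ref{Lem:q^0-term} (equation \eqref{eq:vectorsystem}): for any weight-$0$ weak Jacobi form, $\sum_\ell f(0,\ell)\langle\ell,\mathfrak z\rangle^2=2C\langle\mathfrak z,\mathfrak z\rangle$. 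Since the chosen $\gamma_j$ lie in the proper summand $L_1\oplus\cdots\oplus L_{n-1}$, no nontrivial combination of $1$ and the $\orb(\gamma_j)$ can satisfy this identity, so $\{1,\orb(\gamma_1),\dots,\orb(\gamma_{n-1})\}$ is linearly independent in $V/V_0$ and hence a basis. The form $\phi_L$ is then obtained by dividing a weight-$12$ weak Jacobi form with constant $q^0$-term by $\Delta$ and correcting by weight-$0$ weak forms; no obstruction computation or induction is needed.

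There is also a genuine slip in your reduction: you invoke $\delta_L<3$, but this is \emph{not} a hypothesis of the lemma (it holds for the root lattices in the later application, but the lemma is stated and proved in general). The $O(q)$ tail in the statement is allowed to contain further singular coefficients, so there is nothing to force. More seriously, your ``triangularity'' step is only asserted, not proved, and it is unclear how to make the Serre-duality system solvable without a substitute for the vector-system identity; the number of obstruction equations $\dim S_{2+\rk(L)/2}(\bar\rho_L)$ can well exceed $n-1$, so mere parameter-counting does not suffice. The identity \eqref{eq:vectorsystem} is precisely the missing idea.
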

For general $L$, $\phi_L$ will not be unique.
\begin{proof}
(1) This is standard result. (See also \cite[Proposition 2.2]{WW21c}.) Briefly, following the argument of \cite[Theorem 8.4]{EZ85}, $J_{2*,L,1}^{\w}$ can be shown to be a free module over $M_*(\SL_2(\ZZ))$ using only the fact that the weight of weak Jacobi forms is bounded from below. If we view Jacobi forms as modular forms for $\rho_L$, then Borcherds' obstruction principle \cite{Bor99} shows that there exist weak Jacobi forms of sufficiently large weight whose $q^0$-term is a constant or any single basic orbit. This implies that there are $N+1$ generators. The $q^0$-terms of any basis are linearly independent, as otherwise some $\CC[E_4, E_6]$-linear combination of the generators would be a weak Jacobi form whose $q^0$-term is zero and therefore is a product of $\Delta$ and a $\CC[E_4,E_6]$-linear combination of generators of lower weight, leading to a contradiction.

(2) Define two rational vector spaces:
\begin{align*}
    V=& \QQ \oplus \bigoplus_{\substack{\gamma \in L' / L \\ \gamma \neq 0}} \QQ \cdot \orb(\gamma); \\
    V_0=& \{ \text{$q^0$-terms of weak Jacobi forms of weight $0$ and index $1$ for $L$ with rational coefficients} \}.
\end{align*}
The space $V_0$ is naturally a subspace of $V$. By \cite{McG03}, all spaces of vector-valued modular forms for the representations $\rho_L$ have bases with rational Fourier coefficients, so the generators of $J^{\w}_{*, L_i, 1}$ can all be chosen to be rational. By assumption (i), the free module $J_{2*,L,1}^{\w}$ is generated by forms of non-positive weight, and therefore the codimension of $V_0$ in $V$ is the number of generators of weight $-2$ of $J_{2*,L,1}^{\w}$ which is in fact $n$ by assumptions (ii) and (iii). We claim that $\{1, \orb(\gamma_i), 1\leq i\leq n-1 \}$ is a basis of the quotient space $V/V_0$; in other words, that there is no nonzero weak Jacobi form of weight $0$ and index $1$ for $L$ whose $q^0$-term lies in the span of this basis. This follows from the vector system relation \eqref{eq:vectorsystem} satisfied by the $q^0$-term of any weak Jacobi form of weight $0$:
$$
\sum_{\ell \in L'} f(0,\ell) \latt{\ell, \mathfrak{z}}^2 = 2C \latt{\mathfrak{z},\mathfrak{z}}, \quad \mathfrak{z}\in L\otimes\CC,
$$
which is not satisfied by any combination of $1$ and $\orb(\gamma_i)$ (since these $\gamma_i$ does not span the whole space $L\otimes\RR$). Therefore, for any basic orbit $\orb(\gamma)$ related to $L$, there exists a weak Jacobi form of weight $0$ and index $1$ for $L$ with Fourier expansion 
$$
\phi_{L,\gamma} = \orb(\gamma) + \sum_{i=1}^{n-1} a_i \orb(\gamma_i) + a_0,
$$
where $a_i \in \QQ$ for $0\leq i\leq n-1$. Since $J_{2*,L,1}^{\w}$ is generated in non-positive weights, part (1) implies that there exists a weak Jacobi form of weight $12$ whose $q^0$-term is $1$ and whose other Fourier coefficients are rational. We divide it by $\Delta$ and obtain a nearly holomorphic Jacobi form of weight $0$ and index $1$ for $L$ whose Fourier expansion begins
$$
q^{-1}+\sum_{r\in L, \latt{r,r}=2}\zeta^r + C(\zeta) + O(q), \; \; C(\zeta) \in V.
$$
By modifying it with the above $\phi_{L,\gamma}$, we obtain the desired Jacobi form $\phi_L$. 
\end{proof}

\begin{proof}[Proof of Theorem \ref{th:existence}]
The root lattice $L$ satisfies all assumptions in the above lemma by \S \ref{subsec:Weyl}, so we obtain the existence of a nearly holomorphic Jacobi form $\phi_L$ of weight $0$ and index $1$ for $L$ whose Fourier expansion has the form \eqref{eq:Jacobian q^0}. It remains to determine $k$ and $c_{m_j}$. The multiplicities $c_{m_j}$ can be computed by applying the identity \eqref{eq:vectorsystem} to every irreducible component of $L_0$,  and $k$ is determined by the identity \eqref{eq:q^0-term}.
\end{proof}

\begin{remark}
For the lattices in \eqref{eq:lattices}, the Jacobi form $\phi_L$ from \eqref{eq:Jacobian q^0} is unique because $J_{-12,L,1}^{\w} = \{0\}$.  Lemma \ref{lem:norm of L} and Remark \ref{rem:divisor} (4) show that the representative singular Fourier coefficients of $\phi_L$ only appear in its $q^{-1}$-term, $q^0$-term and $q^1$-term. The representative singular Fourier coefficients in $q^{-1}$-term correspond to mirrors of reflections in $\widetilde{\Orth}^+(2U\oplus L)$. The representative singular Fourier coefficients in $q^{0}$-term correspond to the first part $\cH_{L,0}$ of the arrangement $\cH_L$. The representative singular Fourier coefficients in $q^{1}$-term correspond to the second part $\cH_{L,1}$ of  $\cH_L$.

In Theorem \ref{th:existence} we only showed that $\phi_L$ has rational Fourier coefficients. This is enough for our purposes: suppose $N \in \mathbb{N}$ is such that $N \phi_L$ has integral Fourier coefficients and is therefore a valid input in Borcherds' lift. We will simply replace step (III) of the proof of Theorem \ref{th:2precise} by showing that $J_L^N / \Borch(N\phi_L)$ is constant. From the divisor of $J_L^N$ we see \emph{a posteriori} that the singular coefficients of $\phi_L$ (in particular those that appear in the $q^1$-term) were in fact integers.
\end{remark}

\begin{remark}
Let us verify that the weight $k$ in Theorem \ref{th:existence} equals the weight of the Jacobian of generators predicted by Theorem \ref{th:2precise}. Wirthm\"uller \cite{Wir92} determined the weights and indices of generators of $W(R)$-invariant weak Jacobi forms in terms of invariants of $R$ (we refer to \cite[Theorem 2.2]{Wan21b} for a clear description). In particular, the sums of the weights $k_i$ and indices $t_i$ satisfy
$$
\sum k_i = -\left( 1 + \frac{h}{2} \right)\rk(R), \quad \sum t_i = h,
$$
where $h$ is the Coxeter number of $R$. Using these identities, it follows that $k - (\mathrm{rk}(R) + 2)$ is the sum of the weights of the generators in Theorem \ref{th:2precise}.
\end{remark}

\subsection{The construction of generators}\label{subsec:generators}
In this subsection we construct the generators required in Theorem \ref{th:2precise}.

\subsubsection{Generators of Eisenstein type}

\begin{lemma}\label{lem:E4E6}
Assume that the free $M_*(\SL_2(\ZZ))$-module $J_{2*,L,1}^{\w}$ is generated by forms of non-positive weight. Then there exist weak Jacobi forms with the following Fourier expansions:
\begin{align*}
    E_{4,L} &= 1 +O(q) \in J_{4,L,1}^{\w},\\
    E_{6,L} &= 1 +O(q) \in J_{6,L,1}^{\w}. 
\end{align*}
Their singular additive lifts define meromorphic modular forms of weight $4$ and $6$ for $\widetilde{\Orth}^+(2U\oplus L)$ with Fourier--Jacobi expansions
\begin{align*}
 \cE_{4,L} &:= 240 \Grit(E_{4,L}) = E_4 + O(\xi),\\
 \cE_{6,L} &:= -504 \Grit(E_{6,L}) = E_6 + O(\xi),
\end{align*}
 where $\xi=e^{2\pi i \omega}$ as before. 
\end{lemma}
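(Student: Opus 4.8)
The plan is to produce the two weak Jacobi forms $E_{4,L}$ and $E_{6,L}$ by a dimension/obstruction argument, and then to read off the Fourier--Jacobi expansion of their Gritsenko lifts from Theorem \ref{th:additive}. First I would establish existence of $E_{k,L}\in J_{k,L,1}^{\w}$ with $q^0$-term equal to the constant $1$ for $k=4,6$. Since the module $J_{2*,L,1}^{\w}$ is free over $\CC[E_4,E_6]$ and generated in non-positive weights, part (1) of Lemma \ref{lem:Jacobi} tells us that the $q^0$-terms of any $\CC[E_4,E_6]$-basis are linearly independent over $\CC$, and in particular the $q^0$-terms of the generators of non-positive weight together with the constant term span the space of possible $q^0$-terms in degrees $4$ and $6$. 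Equivalently, viewing Jacobi forms of index $1$ as vector-valued modular forms for $\rho_L$, Borcherds' obstruction principle \cite{Bor99} shows that the constant principal part $1\cdot e_0$ (which lies in positive weight $4$ and $6$, where there are no cusp forms obstructing it, since the obstruction space consists of cusp forms for $\rho_L^\ast$ of complementary weight and the relevant pairing against $1\cdot e_0$ picks out a Fourier coefficient that vanishes identically by the shape of a holomorphic modular form) is unobstructed; hence a weak (indeed holomorphic) Jacobi form with this $q^0$-term exists. Either route gives $E_{4,L}$ and $E_{6,L}$.

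Next I would apply the additive lift. Both $E_{4,L}$ and $E_{6,L}$ are holomorphic Jacobi forms (their singular coefficients, appearing in the $q^0$-term by Remark \ref{rem:divisor}(4) together with $\delta_L<3$, are supported on the constant, which has hyperbolic norm zero, so there are no genuinely singular coefficients), so $\Grit(E_{k,L})$ is a \emph{holomorphic} modular form of weight $k$ for $\widetilde{\Orth}^+(2U\oplus L)$ by the discussion after Theorem \ref{th:additive}; in particular it is holomorphic away from $\cH_L$ trivially. By Theorem \ref{th:additive}, the zeroth Fourier--Jacobi coefficient of $\Grit(E_{k,L})$ is $E_{k,L}\mid T_{-}(0)$, and since $E_{k,L}$ has $q^0$-term equal to the constant $1$ (so $f(0,0)=1$ and $f(0,\ell)=0$ for $\ell\neq 0$), the closed formula for $T_{-}(0)$ gives $E_{k,L}\mid T_{-}(0) = -\frac{B_k}{2k}E_k(\tau)$. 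For $k=4$ this is $-\frac{B_4}{8}E_4 = \frac{1}{240}E_4$, and for $k=6$ it is $-\frac{B_6}{12}E_6 = -\frac{1}{504}E_6$. Rescaling by $240$ and $-504$ respectively yields $\cE_{4,L}=E_4+O(\xi)$ and $\cE_{6,L}=E_6+O(\xi)$, as claimed.

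The only point requiring a little care — and the one I would treat as the mild obstacle — is the vanishing of the relevant obstruction, i.e. making sure that a holomorphic (or at least weak) Jacobi form of index $1$ with $q^0$-term a nonzero constant really exists in weights $4$ and $6$; this is where one invokes either the explicit freeness statement of Lemma \ref{lem:Jacobi}(1) (the constant term is in the span of the $q^0$-terms of a basis, and a basis element of weight $0$ contributes it outright if one exists, otherwise one argues it cannot be obstructed) or the Borcherds obstruction principle. In fact a cleaner route avoids obstruction theory entirely: the constant weak Jacobi form does not exist in weight $0$ in general, but one can simply take the weight-$4$ and weight-$6$ Eisenstein--Jacobi forms $E_{k,L,1}$ of index $1$ for $L$, which are classical and normalized to have $q^0$-term $1+(\text{basic orbit terms})$, and subtract off $\CC$-multiples of the other $q^0$ generators to kill everything but the constant — but this would spoil holomorphy in general, so I prefer the obstruction argument, which directly produces holomorphic forms. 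Once existence is in hand, everything else is the bookkeeping above, and steps (III)--(IV) of the proof of Theorem \ref{th:2precise} are unaffected because these generators are holomorphic and hence contribute no poles beyond $\cH_L$. I would end the proof here, noting that $\cE_{4,L}$ and $\cE_{6,L}$ are precisely the generators of Eisenstein type demanded in Theorem \ref{th:2precise}(1).
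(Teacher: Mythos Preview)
Your core approach matches the paper's: use Lemma~\ref{lem:Jacobi}(1) to see that the $q^0$-terms of the (non-positive weight) generators span the space of admissible $q^0$-terms, so a $\CC[E_4,E_6]$-combination with $q^0$-term equal to the constant $1$ exists in weights $4$ and $6$; then read off the zeroth Fourier--Jacobi coefficient from Theorem~\ref{th:additive}. That is exactly what the paper does, and your computation of the constants $240$ and $-504$ is correct.

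However, your claim that $E_{4,L}$ and $E_{6,L}$ are \emph{holomorphic} Jacobi forms, and hence that $\Grit(E_{k,L})$ is a holomorphic orthogonal modular form, is wrong in general. The lemma does not assume $\delta_L<3$; and even granting $\delta_L<3$, Remark~\ref{rem:divisor}(4) gives $\widehat{\delta}_L=1$ when $2<\delta_L<3$, so singular coefficients may well appear in the $q^1$-term. The forms $E_{k,L}$ are only \emph{weak} Jacobi forms, and the lemma correctly calls the lifts meromorphic; indeed Lemma~\ref{lem: type (1)} immediately afterwards shows that the possible poles lie on $\cH_{L,1}$, which is nonempty for many of the lattices in \eqref{eq:lattices}. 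This does not damage the proof of the lemma as stated, but you should delete the holomorphy assertion and the sentence about ``no poles beyond $\cH_L$'' being trivial. The Borcherds obstruction detour is also unnecessary and not correctly formulated (you are not prescribing a principal part but a constant term), so you should drop it and rely solely on the Lemma~\ref{lem:Jacobi} argument.
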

\begin{proof}
By Lemma \ref{lem:Jacobi}, the $q^0$-terms of generators of $J_{2*,L,1}^{\w}$ are linearly independent, and the number of generators equals the number of basic orbits plus one. Therefore the forms $E_{4, L}$ and $E_{6, L}$ can be constructed as suitable $\CC[E_4,E_6]$-linear combinations of the generators. The second part of the lemma follows from Theorem \ref{th:additive}.  
\end{proof}

The above form $E_{4,L}$ (resp. $E_{6,L}$) is unique if and only if $J_{-8,L,1}^{\w}$ (resp. $J_{-6,L,1}^{\w}$) is trivial. 

The following gives the construction of generators of Eisenstein type. 

\begin{lemma}\label{lem: type (1)}
For every lattice $L$ in \eqref{eq:lattices}, the possible poles of $\cE_{4,L}$ and $\cE_{6,L}$ are contained in $\cH_{L,1}$.   
\end{lemma}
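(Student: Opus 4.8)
The plan is to locate the poles of the additive lifts $\cE_{4,L}$ and $\cE_{6,L}$ directly from the description of the divisor of a Borcherds additive lift recorded in Remark~\ref{rem:divisor}, exploiting the two facts that $E_{4,L}$ and $E_{6,L}$ are \emph{weak} Jacobi forms and that their $q^0$-terms are the constant $1$. Since $\cE_{4,L}=240\,\Grit(E_{4,L})$ and $\cE_{6,L}=-504\,\Grit(E_{6,L})$ by Lemma~\ref{lem:E4E6} and nonzero scalars do not affect divisors, it suffices to treat $\Grit(E_{4,L})$ and $\Grit(E_{6,L})$. By Remark~\ref{rem:divisor}(1)--(2), every pole of such a lift lies along the $\widetilde{\Orth}^+(2U\oplus L)$-orbit of a hyperplane $\cD_{(0,n,\ell,1,0)}$ with $2n<\latt{\ell,\ell}$, occurs only when $\delta(n,\ell)=\sum_{d\geq 1}f(d^2n,d\ell)\neq 0$, and this orbit equals the Heegner divisor $H\bigl(\tfrac12\latt{\ell,\ell}-n,\,[\ell]\bigr)$. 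Using the Eichler criterion of Remark~\ref{rem:divisor}(1) together with the periodicity relation of Lemma~\ref{lem:periodic}(1), which leaves $\delta(n,\ell)$ unchanged, we may and do assume that $\ell$ is a vector of minimal norm $\delta_{[\ell]}$ in its coset $[\ell]\in L'/L$.

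Now I would run a short argument using the hypotheses and the bound $\delta_L<3$ of Lemma~\ref{lem:norm of L}. Because $E_{4,L}$ (resp. $E_{6,L}$) is weak with $q^0$-term equal to $1$, its only possibly nonzero Fourier coefficients $f(m,\lambda)$ have $m\geq 1$ or $(m,\lambda)=(0,0)$; a nonzero term $f(d^2n,d\ell)$ of $\delta(n,\ell)$ therefore has $d^2n\geq 1$, as the case $(d^2n,d\ell)=(0,0)$ would force $2n-\latt{\ell,\ell}=0$ against $2n<\latt{\ell,\ell}$. Hence $n\geq 1$. On the other hand $\ell$ has minimal norm, so $\latt{\ell,\ell}=\delta_{[\ell]}\leq\delta_L<3$, and the pole condition $2n<\latt{\ell,\ell}$ now gives $2n<3$, i.e. $n=1$. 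Thus the pole lies along the orbit of $\cD_{(0,1,\ell,1,0)}$ with $\ell$ of minimal norm; the inequality $2=2n<\latt{\ell,\ell}=\delta_{[\ell]}$ gives $\delta_{[\ell]}>2$, and the orbit is $H\bigl(\tfrac{\delta_{[\ell]}}{2}-1,\,[\ell]\bigr)$, which is by definition a component of $\cH_{L,1}$ in \eqref{eq:H_1}. The argument for $\cE_{6,L}$ is identical. (Equivalently, one may observe via Remark~\ref{rem:divisor}(4) and $\delta_L<3$ that all singular Fourier coefficients of $E_{4,L}$ and $E_{6,L}$ are represented in the $q^1$-term, and then match these up with the components of $\cH_{L,1}$.)

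The content of the lemma is genuinely light once the pieces are in place; the only points needing care are bookkeeping ones. First, one must check that the reduction to a minimal-norm representative $\ell$ is compatible with the pole criterion, i.e. that $\delta(n,\ell)$ is indeed invariant when $\ell$ is replaced by the minimal-norm vector of $[\ell]$ and $n$ is adjusted accordingly -- this is exactly Lemma~\ref{lem:periodic}(1), bearing in mind that $\latt{\ell,\ell}-\delta_{[\ell]}$ is an even integer. Second, one must use the normalization of $H(a,\gamma)$ under which the representative $\cD_{(0,n,\ell,1,0)}$ has $a=\tfrac12\latt{\ell,\ell}-n$, so that at $n=1$ and $\latt{\ell,\ell}=\delta_{[\ell]}$ this is precisely the value $\tfrac{\delta_\gamma}{2}-1$ appearing in \eqref{eq:H_1}. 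I do not expect any difficulty beyond these verifications: the inequality $\delta_L<3$ does all of the real work.
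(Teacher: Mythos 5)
Your argument is correct and is essentially the paper's proof, just written out in more detail: the paper simply observes (via Remark \ref{rem:divisor}(4) and $\delta_L<3$) that all singular Fourier coefficients of $E_{4,L}$ and $E_{6,L}$ appear in their $q^1$-terms and that these correspond to components of $\cH_{L,1}$ under the singular additive lift — which is exactly your closing parenthetical remark. The extra bookkeeping you do (reduction to minimal-norm representatives, invariance of $\delta(n,\ell)$, forcing $n=1$) is a sound unpacking of that one-line observation.
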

\begin{proof}
All singular Fourier coefficients of $E_{4,L}$ and $E_{6,L}$ must appear in their $q^1$-terms, and these coefficients correspond to divisors in $\cH_{L,1}$ under the singular additive lift.
\end{proof}

\subsubsection{Generators of abelian type}
The generators of abelian type will be constructed as quotients of singular additive lifts. For $n\geq 4$, the minimal weight generator of index $1$ for $J_{*,D_n,*}^{\w,W(D_n)}$ which is anti-invariant under changing an odd number of signs (e.g. the map $z_1 \mapsto -z_1$) can be constructed as the theta block
\begin{equation}
    \psi_{-n, D_n,1}(\tau, \mathfrak{z}) = \frac{\vartheta_{D_n}(\tau, \mathfrak{z})}{\eta(\tau)^{24}} = \prod_{j=1}^n \frac{\vartheta(\tau,z_j)}{\eta(\tau)^3} \in J_{-n,D_n,1}^{\w, W(D_n)}.
\end{equation}
Here we have fixed the model of $D_n$ in \eqref{model of D_n} and $\vartheta_{D_n}$ is defined in \eqref{eq:theta block D_n}.
We recall the basic Jacobi forms fixed in Notation \ref{notation}. The generators of $W(A_n)$-invariant weak Jacobi forms are labelled $\phi_{-k,A_n,1}$ for $k=0$ or $2\leq k \leq n+1$. Note that $\phi_{-(n+1),A_n,1}$ is the theta block \eqref{eq:theta-blockA_n}. 

\begin{lemma}\label{lem: type (2) AD}
Let $L=L_0\oplus L_1$ be a lattice in the family of $A$-type or $AD$-type with $L_0\neq \{0\}$. For any component $A_{m_s}$ of $L_0=\oplus_{j=1}^t A_{m_j}$, the associated $m_s$ generators can be constructed by
\begin{equation}
\begin{aligned}
    &\frac{\Grit\left[\Delta \left(\bigotimes_{j=1}^{s-1}\phi_{-(m_j+1),A_{m_j},1}\right)\otimes \phi_{-k,A_{m_s},1} \otimes \left(\bigotimes_{j=s+1}^{t}\phi_{-(m_j+1),A_{m_j},1}\right)\otimes f_{L_1} \right]}{\Grit\left[\Delta\left(\bigotimes_{j=1}^{t}\phi_{-(m_j+1),A_{m_j},1}\right)\otimes f_{L_1} \right]} \\
    =& \frac{\phi_{-k,A_{m_s},1}}{\phi_{-(m_s+1),A_{m_s},1}} + O(\xi), \quad \text{for $k=0$ or $2\leq k \leq m_s$}
\end{aligned}
\end{equation}
where $f_{L_1}=\phi_{-(m+1),A_m,1}$ if $L_1=A_m$, or $f_{L_1}=\psi_{-m, D_m,1}$ if $L_1=D_m$. 
\end{lemma}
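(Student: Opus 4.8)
The plan is to recognize the numerator and denominator of the displayed quotient as singular additive lifts of two explicit index-one Jacobi forms for $L$, and then to extract the Fourier--Jacobi expansion and the divisor of the quotient from Theorems \ref{th:additive}, \ref{th:product} and Corollary \ref{cor:theta}. Abbreviate the two inputs by
\[
\theta' \;=\; \Delta\cdot\Bigl(\bigotimes_{j\neq s}\phi_{-(m_j+1),A_{m_j},1}\Bigr)\otimes\phi_{-k,A_{m_s},1}\otimes f_{L_1},
\qquad
\theta \;=\; \Delta\cdot\Bigl(\bigotimes_{j=1}^{t}\phi_{-(m_j+1),A_{m_j},1}\Bigr)\otimes f_{L_1},
\]
with $\Delta=\eta^{24}$. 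First I would verify that these are admissible inputs: by Remark \ref{rem:sum of Jacobi forms} the tensor products are weak Jacobi forms of index $1$ for $L$, multiplication by $\Delta$ preserves this, the $\eta$- and $\vartheta$-multipliers of the theta-block factors $\phi_{-(m_j+1),A_{m_j},1}$ and $f_{L_1}$ combine to $v_\eta^{24}=1$, and $\phi_{-k,A_{m_s},1}$ has trivial multiplier, so $\theta'$ and $\theta$ have trivial multiplier. Writing $K_{L_1}=m+1$ for $L_1=A_m$ and $K_{L_1}=m$ for $L_1=D_m$, the form $\theta$ is a generalized theta block with $d:=\sum_{j=1}^t(m_j+1)+K_{L_1}$ theta factors, and the constraint in \eqref{eq:lattices} reads exactly $d\le 11$, so $\theta$ has weight $12-d\ge 1$ and $q$-order one; since $k<m_s+1$, $\theta'$ has strictly larger weight. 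Thus Theorem \ref{th:additive} yields meromorphic modular forms $\Grit[\theta']$ and $\Grit[\theta]$ of trivial character for $\widetilde{\Orth}^+(2U\oplus L)$ whose weights differ by $(m_s+1)-k$.

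Next I would read off the leading Fourier--Jacobi coefficient. Both inputs are divisible by $\Delta=q+O(q^2)$ and hence have trivial $q^0$-term, so Theorem \ref{th:additive} gives $\Grit[\theta']=\theta'\,\xi+O(\xi^2)$ and $\Grit[\theta]=\theta\,\xi+O(\xi^2)$ with $\xi=e^{2\pi i\omega}$. In particular $\Grit[\theta]\not\equiv 0$, so the quotient is a well-defined meromorphic modular form of weight $(m_s+1)-k$, and its Fourier--Jacobi expansion begins with the index-$0$ term $\theta'/\theta$. Every factor of $\theta'$ and $\theta$ not supported on the $A_{m_s}$-component agrees, so $\theta'/\theta=\phi_{-k,A_{m_s},1}/\phi_{-(m_s+1),A_{m_s},1}$, a meromorphic Jacobi form of weight $(m_s+1)-k$ and index $0$ for $L$ that is constant in the components of $L$ other than $A_{m_s}$; this is the asserted leading coefficient.

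It then remains to bound the poles. By Lemma \ref{lem:norm of L}, $\delta_L<3$, so $\widehat{\delta}_L\le 1$, and since $\theta',\theta=O(q)$, Remark \ref{rem:divisor}(4) shows that representatives of all of their singular Fourier coefficients occur in the $q^1$-term; by Remark \ref{rem:divisor}(2) these correspond under the additive lift to poles lying on $\cH_{L,1}$, so $\Grit[\theta']$ and $\Grit[\theta]$ are holomorphic away from $\cH_{L,1}$. The only further poles of the quotient come from zeros of the denominator $\Grit[\theta]$. By Corollary \ref{cor:theta}, $\Grit[\theta]=\Borch\bigl(-\theta|T_{-}(2)/\theta\bigr)$, and Theorem \ref{th:product} shows that its divisor outside $\cH_{L,1}$ is supported, with multiplicity one, on the hyperplanes $\cD_{(0,0,\mathfrak{s},1,0)}$ attached to the $\vartheta$-factors $\mathfrak{s}$ of the theta quotient $\theta$. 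The $\vartheta$-factors of $\theta$ coming from the components $A_{m_j}$ with $j\ne s$ and from $f_{L_1}$ are also $\vartheta$-factors of $\theta'$, so by Remark \ref{rem:divisor}(5) $\Grit[\theta']$ vanishes along the same hyperplanes, and these zeros cancel in the quotient. The remaining $\vartheta$-factors of $\theta$ are the minimal vectors $u\in\mathcal{U}_{m_s}$ of $A_{m_s}'$ occurring in $\phi_{-(m_s+1),A_{m_s},1}$, and the associated hyperplanes $\cD_{(0,0,u,1,0)}=H\bigl(\tfrac{m_s}{2(m_s+1)},v\bigr)$ are exactly the components of $\cH_{L,0}$ attached to $A_{m_s}$; since $\latt{u,u}<2$, they are not mirrors of reflections in $\widetilde{\Orth}^+(2U\oplus L)$. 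Hence all poles of the quotient lie on $\cH_{L,0}\cup\cH_{L,1}=\cH_L$, and letting $k$ run over $0$ and $2\le k\le m_s$ yields $m_s$ modular forms with poles supported on $\cH_L$, of weights $m_s+1$ and $m_s-1,\dots,1$, as claimed.

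The step I expect to be the main obstacle is this final divisor analysis: one must match the zero divisors of $\Grit[\theta]$ and $\Grit[\theta']$ along the shared $\vartheta$-hyperplanes with the right multiplicities so that no spurious pole of the quotient survives, and correctly identify the unmatched zeros (those along the $A_{m_s}$-minimal vectors) with the part $\cH_{L,0}$ of the arrangement. This relies on Corollary \ref{cor:theta} together with the theta-quotient factor in Theorem \ref{th:product} to determine the divisor of $\Grit[\theta]$ exactly, and on Remark \ref{rem:divisor}(5) applied to the common theta factors of $\theta'$; the remaining checks (multipliers, weights, and the bound $\delta_L<3$) are routine.
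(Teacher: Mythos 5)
Your proposal is correct and follows essentially the same route as the paper: recognize the denominator as a $q$-order-one theta block whose lift is a Borcherds product via Corollary \ref{cor:theta}, use Remark \ref{rem:divisor}(5) to cancel the shared zeros with the numerator, and confine all remaining poles to $\cH_{L,0}\cup\cH_{L,1}$ using the $q^1$-term analysis of the singular coefficients. The only (harmless) difference is that you cancel the zeros along the $A_{m_j}$-components with $j\neq s$ against zeros of the numerator, whereas the paper simply observes that all such zeros already lie inside $\cH_{L,0}$ and only needs to cancel the single zero along the $f_{L_1}$-divisor, which is the only zero of the denominator outside $\cH_L$.
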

\begin{proof}
We write the above quotient as $\Grit(f)/\Grit(g)$ for convenience. By definition, $g$ is a theta block of positive weight and $q$-order one. By Corollary \ref{cor:theta}, $\Grit(g)$ is a Borcherds product. Moreover, on the complement $\cD(2U\oplus L) - \cH_{L} $ the additive lift $\Grit(g)$ vanishes precisely with multiplicity one along the Heegner divisor corresponding to the theta block $f_{L_1}$, which is either $H(m/2(m+1),u)$ for a minimal norm vector $u$ in the dual of $L_1=A_m$, or $H(1/2,\varepsilon_1)$ for the minimal vector $\varepsilon_1$ in the dual of $L_1=D_m$. By Remark \ref{rem:divisor} (5), $\Grit(f)$ also vanishes on this Heegner divisor. In addition, all poles of $\Grit(f)$ and $\Grit(g)$ are contained in $\cH_{L,1}$. Therefore, the above quotient is holomorphic away from $\cH_{L}$. Its leading Fourier--Jacobi coefficient is the quotient of the leading Fourier--Jacobi coefficients of $\Grit(f)$ and $\Grit(g)$.  
\end{proof}

The generators of abelian type for the three $AE$-lattices with $L_0 \neq \{0\}$ must be constructed by a different argument. In this case, $\delta_L \leq 2$, so $\cH_{L, 1}$ is empty.

\begin{lemma}\label{lem: type (2) AE}
For the three lattices of $AE$-type with $L_0\neq \{0\}$, the generators of abelian type exist:
\begin{align*}
G_{2, A_1\oplus E_6} &= \Grit(\phi_{-2,A_1,1}\otimes E_{4,E_6}) = c_1 \frac{\phi_{0,A_1,1}}{\phi_{-2,A_1,1}} + O(\xi),\\
G_{2, A_1\oplus E_7} &= \Grit(\phi_{-2,A_1,1}\otimes E_{4,E_7}) = c_2 \frac{\phi_{0,A_1,1}}{\phi_{-2,A_1,1}} + O(\xi),\\
G_{1, A_2\oplus E_6} &= \Grit(\phi_{-3,A_2,1}\otimes E_{4,E_6}) = c_3 \frac{\phi_{-2,A_2,1}}{\phi_{-3,A_2,1}} + O(\xi),\\
G_{3, A_2\oplus E_6} &= \Grit(\phi_{-3,A_2,1}\otimes E_{6,E_6}) - G_{1, A_2\oplus  E_6}^3 = c_4 \frac{\phi_{0, A_2,1}}{\phi_{-3,A_2,1}} + O(\xi),
\end{align*}
where $c_i$ are nonzero constants, and where $E_{4,E_6}$, $E_{6,E_6}$ and $E_{4,E_7}$ are defined in Lemma \ref{lem:E4E6}.
\end{lemma}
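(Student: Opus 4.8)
\emph{Strategy.} The plan is to realise each abelian‑type generator as a combination of Borcherds additive lifts of the stated index‑one weak Jacobi forms, to control its poles by inspecting $q^0$‑terms, and to identify its zeroth Fourier--Jacobi coefficient by a small dimension count for Jacobi forms. First, the three lattices satisfy $\delta_L\le 2$: since $\delta_{L_0\oplus L_1}=\delta_{L_0}+\delta_{L_1}$ and $\delta_{A_1}=\tfrac12$, $\delta_{A_2}=\tfrac23$, $\delta_{E_6}=\tfrac43$, $\delta_{E_7}=\tfrac32$, one has $\delta_{A_1\oplus E_6}=\tfrac{11}{6}$ and $\delta_{A_1\oplus E_7}=\delta_{A_2\oplus E_6}=2$. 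Hence $\cH_{L,1}=\emptyset$, so $\cH_L=\cH_{L,0}=H\big(\tfrac{m_1}{2(m_1+1)},v_1\big)$ is the $\widetilde{\Orth}^+(2U\oplus L)$‑orbit of one hyperplane $\cD_{(0,0,v_1,1,0)}$ with $v_1$ a minimal vector of $A_{m_1}'$, and $\widehat{\delta}_L=0$. By Remark \ref{rem:sum of Jacobi forms} the inputs $\phi_{-2,A_1,1}\otimes E_{4,E_6}$, $\phi_{-2,A_1,1}\otimes E_{4,E_7}$, $\phi_{-3,A_2,1}\otimes E_{4,E_6}$ and $\phi_{-3,A_2,1}\otimes E_{6,E_6}$ are weak Jacobi forms of index one for $L$ of positive weights $2$, $2$, $1$, $3$, so Theorem \ref{th:additive} yields meromorphic modular forms of those weights for $\widetilde{\Orth}^+(2U\oplus L)$.

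\emph{Poles and the zeroth coefficient.} Since $\widehat{\delta}_L=0$ and the inputs are weak, Remark \ref{rem:divisor}(4) puts every representative singular Fourier coefficient of each input into its $q^0$‑term; as the $q^0$‑term of $E_{4,E_6}$, $E_{4,E_7}$ and $E_{6,E_6}$ is the constant $1$ (Lemma \ref{lem:E4E6}), these coefficients sit on vectors $(\ell_1,0)$ with $\ell_1$ a minimal vector of $A_{m_1}'$ (Lemma \ref{lem:periodic}(2)). By Remark \ref{rem:divisor}(2) the only possible poles of the four lifts lie along $\cD_{(0,0,v_1,1,0)}$, i.e.\ inside $\cH_L=\cH_{L,0}$, so all four are modular forms with poles on $\cH_L$. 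Their zeroth Fourier--Jacobi coefficient equals $\varphi\lvert T_{-}(0)$ (Theorem \ref{th:additive}); because the $q^0$‑coefficients of the input vanish off $A_{m_1}'\subseteq L'$, it is independent of the $L_1$‑variables and hence a $W(A_{m_1})$‑invariant meromorphic Jacobi form of index $0$ and weight $w$ (the input weight) associated to $A_{m_1}$ — the Weyl‑invariance because it is a component of a $\widetilde{\Orth}^+$‑modular form — whose poles lie only along the theta divisor of $\phi_{-(m_1+1),A_{m_1},1}$ with order at most $w$. Multiplying by the power of $\phi_{-(m_1+1),A_{m_1},1}$ needed to cancel those poles identifies this space with a graded piece of Wirthm\"uller's polynomial algebra $J^{\w,W(A_{m_1})}_{*,A_{m_1},*}$; a direct monomial count shows it is spanned by $\phi_{0,A_1,1}/\phi_{-2,A_1,1}$ for $(A_{m_1},w)=(A_1,2)$, by $\phi_{-2,A_2,1}/\phi_{-3,A_2,1}$ for $(A_2,1)$, and by the two functions $\phi_{0,A_2,1}/\phi_{-3,A_2,1}$ and $(\phi_{-2,A_2,1}/\phi_{-3,A_2,1})^3$ for $(A_2,3)$.

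\emph{Conclusion and main obstacle.} Hence $G_{2,A_1\oplus E_6}$, $G_{2,A_1\oplus E_7}$ and $G_{1,A_2\oplus E_6}$ automatically have the asserted leading Fourier--Jacobi coefficient up to a scalar, while the zeroth coefficient of $\Grit(\phi_{-3,A_2,1}\otimes E_{6,E_6})$ is a combination $a\,\phi_{0,A_2,1}/\phi_{-3,A_2,1}+b\,(\phi_{-2,A_2,1}/\phi_{-3,A_2,1})^3$ and that of $G_{1,A_2\oplus E_6}^3$ is $c_3^3(\phi_{-2,A_2,1}/\phi_{-3,A_2,1})^3$, so $G_{3,A_2\oplus E_6}=\Grit(\phi_{-3,A_2,1}\otimes E_{6,E_6})-G_{1,A_2\oplus E_6}^3$ has leading coefficient $a\,\phi_{0,A_2,1}/\phi_{-3,A_2,1}$ exactly when $b=c_3^3$. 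What is left is numerical: $c_1,c_2,c_3\ne 0$, the identity $b=c_3^3$, and $c_4=a\ne 0$. I would establish these by expanding $\varphi\lvert T_{-}(0)$ via the closed formulas in $\zeta$, $\wp$, $\wp'$ of \S\ref{sec:Jacobiforms} and comparing with the known Fourier expansions of $\phi_{0,A_{m_1},1}$, $\phi_{-2,A_{m_1},1}$, $\phi_{-3,A_{m_1},1}$; this is cleanest for the odd‑weight inputs, where no quasimodular $E_2$ occurs. I expect the real difficulty to be this last step for $G_{3,A_2\oplus E_6}$: verifying that the cube term of $\Grit(\phi_{-3,A_2,1}\otimes E_{6,E_6})$ is cancelled exactly by $G_{1,A_2\oplus E_6}^3$ and that the surviving $\phi_{0,A_2,1}/\phi_{-3,A_2,1}$‑component is nonzero, which is the one place where the precise normalisations of the $W(A_2)$‑invariant generators $\phi_{0,A_2,1}$, $\phi_{-2,A_2,1}$, $\phi_{-3,A_2,1}$ matter, not just their formal properties.
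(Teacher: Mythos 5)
Your handling of $G_{2,A_1\oplus E_6}$, $G_{2,A_1\oplus E_7}$ and $G_{1,A_2\oplus E_6}$ is correct and essentially the paper's own argument: the representative singular coefficients of the inputs all sit in the $q^0$-term, so the poles are confined to $\cH_{L,0}$, and the zeroth Fourier--Jacobi coefficient is pinned down up to a scalar by Theorem \ref{th:additive} together with the one-dimensionality of the relevant space of index-$0$ meromorphic Jacobi forms. The paper simply makes the identification explicit via $\phi_{0,A_1,1}/\phi_{-2,A_1,1}=-\tfrac{3}{\pi^2}\wp$ and $\phi_{-2,A_2,1}/\phi_{-3,A_2,1}=\tfrac{1}{2\pi i}\bigl(-\zeta(\tau,z_1)+\zeta(\tau,z_1-z_2)+\zeta(\tau,z_2)\bigr)$, rather than by your dimension count; both are fine.

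The genuine gap is in $G_{3,A_2\oplus E_6}$. You correctly reduce the claim to the identity $b=c_3^3$ (exact cancellation of the $(\phi_{-2,A_2,1}/\phi_{-3,A_2,1})^3$-component) together with $a\neq 0$, but you prove neither: you defer them to an unexecuted computation with $\zeta$, $\wp$, $\wp'$, and this cancellation is precisely the content of the lemma for this generator, not a routine normalization check (it amounts to a nontrivial identity among elliptic functions). The paper avoids the computation with a local argument along the unique allowed polar divisor, cut out locally by $z=0$: a weight-$k$ singular additive lift has only the top-order polar term $c\cdot(2\pi i z)^{-k}$ there (no poles of order $<k$), and anti-invariance of $\Grit(\phi_{-3,A_2,1}\otimes E_{*,E_6})$ under $\mathfrak{z}\mapsto-\mathfrak{z}$, which negates the transversal coordinate, kills all even powers of $z$; hence $G_{1,A_2\oplus E_6}=(2\pi i z)^{-1}+O(z)$ and $\Grit(\phi_{-3,A_2,1}\otimes E_{6,E_6})=(2\pi i z)^{-3}+O(z)$, so the difference $G_{3,A_2\oplus E_6}$ has at worst a simple pole. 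In your language this forces $b=c_3^3$ without any Fourier expansion, and since the weight $3$ lies below the singular weight $4$ of $2U\oplus A_2\oplus E_6$, the form cannot be a nonzero holomorphic form, so its nonvanishing (checked on the leading Fourier--Jacobi coefficient) yields $c_4\neq 0$. If you wish to keep your route, you must actually carry out the elliptic-function identity behind $b=c_3^3$; as written, the hardest case of the lemma is asserted rather than proved.
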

\begin{proof}
The above meromorphic modular forms are well defined and their poles are supported on $\cH_{L,0}$. We only need to show that they have the claimed zeroth Fourier--Jacobi coefficients. This follows from Theorem \ref{th:additive}, together with the identity (\cite{EZ85}, Theorem 3.6) $$\frac{\phi_{0, A_1, 1}}{\phi_{-2, A_1, 1}} = -\frac{3}{\pi^2} \wp$$ in the first two cases, and the identity (\cite{WW21c}, Lemma 4.1) \begin{align*} \frac{\phi_{-2, A_2, 1}(\tau, z_1, z_2)}{\phi_{-3, A_2, 1}(\tau, z_1, z_2)} &= \frac{1}{2\pi i} \left( -\frac{\vartheta'(\tau, z_1)}{\vartheta(\tau, z_1)} + \frac{\vartheta'(\tau, z_1-z_2)}{\vartheta(\tau, z_1-z_2)} + \frac{\vartheta'(\tau, z_2)}{\vartheta(\tau, z_2)} \right) \\ &= \frac{1}{2\pi i} \left( -\zeta(\tau, z_1) + \zeta(\tau, z_1-z_2) + \zeta(\tau, z_2) \right) \end{align*} in the third case. As for $G_{3, A_2 \oplus E_6}$, we can see that if the (unique) allowed pole in $\cH_{L, 0}$ is cut out locally by $z = 0$ then $G_{1, A_2 \oplus E_6}$ and $\Grit(\phi_{-3,A_2,1}\otimes E_{6,E_6})$ have Taylor expansions \begin{align*} G_{1, A_2 \oplus E_6}(\tau, \mathfrak{z}, \omega) &= (2\pi i z)^{-1} + O(z) \\ \Grit(\phi_{-3,A_2,1}\otimes E_{6,E_6})(\tau, \mathfrak{z}, \omega) &= (2\pi i z)^{-3} + O(z) \end{align*} and therefore that $G_{3, A_2 \oplus E_6}$ has at worst a simple pole. Its weight is also less than the singular weight for $2U \oplus A_2 \oplus E_6$, so to see that it indeed has a simple pole on its only allowed singularity it is enough to check that it does not vanish identically. This is clear from the leading Fourier--Jacobi coefficient.
\end{proof}

\subsubsection{Generators of Jacobi type}\label{subsubsec: generators of type (3)}

Finally, we construct the generators of Jacobi type. The construction is easiest for the family of $A$-type lattices:

\begin{lemma}
Let $L=L_0\oplus A_m$ be a lattice in the family of $A$-type. The $m+1$ generators of Jacobi type can be constructed as singular additive lifts:
\begin{equation}
    \Grit\left(\Delta  \left( \otimes_{j=1}^t \phi_{-(m_j+1),A_{m_j},1} \right) \otimes \phi_{-k,A_m,1} \right), \quad \text{for $k=0$ or $2\leq k \leq m+1$}.
\end{equation}
\end{lemma}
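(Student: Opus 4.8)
The plan is to recognise each of the claimed generators as Borcherds' singular additive lift (Theorem \ref{th:additive}) of an explicit weak Jacobi form of index one for $L$, and then to verify three things: that the lift is defined and has the predicted weight, that its leading Fourier--Jacobi coefficient is the prescribed one, and that its poles lie on $\cH_L$. First I would record that the input
$$
\varphi := \Delta \cdot \left( \bigotimes_{j=1}^t \phi_{-(m_j+1),A_{m_j},1} \right) \otimes \phi_{-k,A_m,1}
$$
is a weak Jacobi form of index $1$ for $L = L_0 \oplus A_m$: the tensor product lies in $J^{\w}_{*,L,1}$ by Remark \ref{rem:sum of Jacobi forms}, and multiplication by $\Delta \in M_{12}(\SL_2(\ZZ))$ preserves the index and shifts the weight by $12$, so $\varphi \in J^{\w}_{w,L,1}$ with $w = 12 - \sum_{j=1}^t(m_j+1) - k$. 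Since $L$ lies in the $A$-type family we have $(m+1) + \sum_{j=1}^t(m_j+1) \le 11$ and $0 \le k \le m+1$; hence $w \ge 12 - \big((m+1)+\sum_{j=1}^t(m_j+1)\big) \ge 1$ is a positive integer. Therefore Theorem \ref{th:additive} produces a meromorphic modular form $\Grit(\varphi)$ of weight $w$ and trivial character for $\widetilde{\Orth}^+(2U\oplus L)$, and because every free generator of the ring of $W(A_m)$-invariant weak Jacobi forms has index $t_i = 1$ (\S\ref{subsec:Weyl}), this weight $w = -k + \big(12 - \sum_{j=1}^t(m_j+1)\big)$ is exactly the one predicted in Theorem \ref{th:2precise}(3).

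The leading Fourier--Jacobi coefficient is then immediate from Theorem \ref{th:additive}: since $\Delta = q + O(q^2)$ the form $\varphi$ has trivial $q^0$-term, so the $\xi^0$-coefficient $\varphi\lvert T_{-}(0)$ vanishes, while the $\xi^1$-coefficient is $\varphi\lvert T_{-}(1) = \varphi$. Thus $\Grit(\varphi)$ has leading Fourier--Jacobi coefficient $\left( \bigotimes_{j=1}^t \phi_{-(m_j+1),A_{m_j},1} \right) \otimes (\Delta\, \phi_{-k,A_m,1})\cdot \xi$, which is the term prescribed in Theorem \ref{th:2precise}(3) for $L_1 = A_m$ (and $t_i = 1$). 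Running $k$ through $\{0,2,3,\dots,m+1\}$ gives $m+1$ such lifts; since the $\phi_{-k,A_m,1}$ are among the polynomial generators of $J^{\w,W(A_m)}_{*,A_m,*}$, their leading coefficients — together with those of the Eisenstein- and abelian-type generators — are algebraically independent over $\CC$, and hence so are the orthogonal modular forms themselves, which is what Step (II) requires.

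It remains to locate the poles of $\Grit(\varphi)$, and this is the step I expect to require the most care. By Lemma \ref{lem:norm of L} we have $\delta_L < 3$, so $\widehat{\delta}_L \le 1$; combined with $\varphi = O(q)$, Remark \ref{rem:divisor}(4) shows that every nonzero singular Fourier coefficient of $\varphi$ coincides, via the periodicity relations of Lemma \ref{lem:periodic}(1), with $f(1,\ell)$ for some $\ell \in L'$ of norm $\latt{\ell,\ell} > 2$; moreover, since a representative of such a coefficient at $q^0$ would vanish with the $q^0$-term, $\ell$ must be of minimal norm $\delta_{[\ell]} = \latt{\ell,\ell}$ in its class $[\ell] \in L'/L$. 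By Remark \ref{rem:divisor}(1)--(2) the resulting pole of $\Grit(\varphi)$ then lies along the $\widetilde{\Orth}^+(2U\oplus L)$-orbit of $\cD_{(0,1,\ell,1,0)}$, that is, on the component $H\!\left(\tfrac{\delta_{[\ell]}}{2}-1,[\ell]\right)$ of $\cH_{L,1}$.

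The delicate point is to rule out poles on Heegner divisors of smaller discriminant: such a pole could only come from a higher term ($d \ge 2$) of Borcherds' pole-order sum $\sum_{d \ge 1} f(d^2 n, d\ell)$ attached to a divisor $\cD_{(0,n,\ell,1,0)}$ of norm $N := \latt{\ell,\ell} - 2n > 0$, and the existence of such a nonzero term forces simultaneously $N \equiv \delta_{[\ell]} \pmod 2$ (because $\ell \in L + [\ell]$ and $n \in \ZZ$) and $N = (\delta_{d[\ell]} - 2)/d^2$ (because the term is a nonzero singular coefficient and $\delta_L < 3$); a short check against the list of minimal norms of the dual root lattices in the families \eqref{eq:lattices} shows these two conditions are incompatible for $d \ge 2$, so the only nonzero term is $d = 1$ and the pole sits on $\cH_{L,1} \subseteq \cH_L$. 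Granting this, the $m+1$ lifts above are meromorphic modular forms for $\widetilde{\Orth}^+(2U\oplus L)$ of the weights prescribed in Theorem \ref{th:2precise}(3), with poles supported on $\cH_L$ and with the prescribed leading Fourier--Jacobi coefficients; these are precisely the generators of Jacobi type, and the remaining steps (III) and (IV) of the proof of Theorem \ref{th:2precise} then apply verbatim.
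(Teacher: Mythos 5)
Your proposal is correct and takes essentially the same route as the paper, whose proof of this lemma consists of the single observation that the claim ``follows immediately from Theorem \ref{th:additive}'': the lift of $\Delta\,(\otimes_j \phi_{-(m_j+1),A_{m_j},1})\otimes\phi_{-k,A_m,1}$ has the prescribed weight and leading Fourier--Jacobi coefficient, and since $\delta_L<3$ and the input is $O(q)$, all singular coefficients are represented in the $q^1$-term, so the poles lie on $\cH_{L,1}$. Your extra care about the $d\geq 2$ terms in the pole-order sum $\sum_d f(d^2n,d\ell)$ addresses a detail the paper leaves implicit; note it can be dispatched even more directly, since any primitive $v$ supporting a pole must have $(v,v)\in(0,1)$ and hence $(v,v)=\delta_{v+M}-2$ (no coset of these $L$ has minimal norm below $1/2$), so $\cD_v$ lands in $\cH_{L,1}$ no matter which $d$ contributes.
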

\begin{proof}
It follows immediately from Theorem \ref{th:additive} that these lifts are holomorphic away from $\cH_{L, 1}$ and have the necessary leading Fourier--Jacobi coefficients.
\end{proof}

Now we consider the family of $AD$-type lattices. For every index-one generator $f_{D_m}$ of the ring of $W(D_m)$-invariant weak Jacobi forms, the associated meromorphic generator is again a singular theta lift:
\begin{equation}
    \Grit\left(\Delta  \left( \otimes_{j=1}^t \phi_{-(m_j+1),A_{m_j},1} \right) \otimes f_{D_m}  \right).
\end{equation}
Unlike the $L_1 = A_m$ case, the ring $J_{*,D_m,*}^{\w,W(D_m)}$ has generators of index $2$. The corresponding meromorphic generators can be constructed as polynomials in singular theta lifts, but to do this directly in all but the simplest cases apparently requires intricate identities among elliptic functions and Jacobi forms. We will construct only the lowest weight generators explicitly and prove the existence of the other generators indirectly from our work on modular forms for the lattice $2U\oplus D_8$ \cite{WW20a}.

\begin{lemma}\label{lem:L1}
Let $L=L_0\oplus D_m$ be a lattice in the family of $AD$-type. There exist weak Jacobi forms of weights $2$ and $4$ and lattice index $L$ with the following $q^0$-terms:
\begin{align}
    f_{2,L} &= \sum_{j=1}^m (e^{2\pi i z_j} + e^{-2\pi i z_j}) -2m + O(q) \in J_{2,L,1}^{\w},\\
    f_{4,L} &= \sum_{j=1}^m (e^{2\pi i z_j} + e^{-2\pi i z_j}) + O(q) \in J_{4,L,1}^{\w},
\end{align}
where $(z_1,...,z_m)$ is the coordinates of $D_m\otimes \CC$ fixed in \eqref{model of D_n}.  
\end{lemma}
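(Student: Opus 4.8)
The plan is to build $f_{2,L}$ and $f_{4,L}$ from the explicit Wirthm\"uller generators of \S\ref{subsec:Weyl}, using the tensor decomposition of Remark \ref{rem:sum of Jacobi forms} and the Eisenstein-type forms $E_{4,\bullet},E_{6,\bullet}$ of Lemma \ref{lem:E4E6}, and to keep track only of the $q^0$-terms. Write $L=L_0\oplus D_m$ with $L_0=\bigoplus_{j=1}^t A_{m_j}$. In the coordinates of \eqref{model of D_n} the Laurent polynomial $\sum_{j=1}^m(e^{2\pi i z_j}+e^{-2\pi i z_j})$ is exactly the basic orbit $\orb(\gamma_0)$ of $L$ attached to the class $\gamma_0$ of $(0,\varepsilon_1)\in L_0'\oplus D_m'$: since $\varepsilon_1+d$ is a nonzero integral vector for every $d\in D_m$ and the zero vector of $L_0$ has norm $0$, the minimal vectors of $\gamma_0$ are precisely the $(0,\pm\varepsilon_i)$, of norm $1$. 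So both prescribed $q^0$-terms lie in the two-dimensional space spanned by $\orb(\gamma_0)$ and the constant $1$. As $E_4$, $E_6$, $E_{4,\bullet}$ and $E_{6,\bullet}$ all have $q^0$-term equal to $1$, the $q^0$-term of a product $E_4^aE_6^b\cdot g$, with $g$ a tensor product of Wirthm\"uller generators, is the product of the $q^0$-terms of the factors of $g$.

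First I would record the $q^0$-terms of the generators that intervene. The weight-zero generators are constrained by the vector-system identity \eqref{eq:vectorsystem}, and every index-one generator restricts, along an $A_1$-sublattice spanned by a root, to a scalar multiple of $\phi_{0,A_1,1}$ or $\phi_{-2,A_1,1}$ (the relevant weight-$0$ and weight-$(-2)$ spaces for $A_1$ being one-dimensional); together with the support bound of Lemma \ref{lem:periodic}(2) this determines these $q^0$-terms up to the small ambiguity coming from the non-trivial cosets of $D_m$ of norm $m/4$. In particular $q^0(\phi_{-2,D_m,1})=\sum_{j=1}^m(\zeta_j+\zeta_j^{-1})-2m$ and $q^0(\phi_{0,D_m,1})=\sum_{j=1}^m(\zeta_j+\zeta_j^{-1})+(24-2m)$, each up to such a correction, which can be removed by subtracting suitable multiples of $\psi_{-m,D_m,1}$- and $\psi_{-m,D_m,1}^2$-type generators. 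When $L_0=\{0\}$ this already finishes the proof: $f_{2,D_m}$ is a combination $E_4\cdot\phi_{-2,D_m,1}+(\text{correction})$ of weight $2$, and $f_{4,D_m}$ a combination $E_6\cdot\phi_{-2,D_m,1}+2m\,E_{4,D_m}+(\text{correction})$ of weight $4$.

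When $L_0\neq\{0\}$ the argument is less direct, because no index-one weak Jacobi form of $L$ can be constant along the $L_0$-coordinates; hence $f_{2,L}$ and $f_{4,L}$ must be $\CC$-linear combinations of several tensor products $\bigotimes_j g_{A_{m_j}}\otimes g_{D_m}$ (multiplied by powers of $E_4$, $E_6$ and the forms of Lemma \ref{lem:E4E6}), arranged so that all Laurent monomials supported on the $L_0$-directions cancel in the $q^0$-term while the contribution $\sum_j(\zeta_j+\zeta_j^{-1})$ along $D_m$ survives with the right coefficient. This will use the weight-$(-2)$ generators $\phi_{-2,A_{m_j},1}$ in the $A$-slots alongside $\phi_{-2,D_m,1}$ in the $D_m$-slot, since replacing a $\phi_{0,A_{m_j},1}$ (whose $q^0$-term is never constant, by \eqref{eq:vectorsystem}) by $E_{4,A_{m_j}}$ or $E_{6,A_{m_j}}$ changes the available total weights.

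The existence of \emph{some} even-weight index-one weak Jacobi form with each prescribed $q^0$-term follows at once from Lemma \ref{lem:Jacobi}(1), whose module generators have $q^0$-terms forming a basis of the span of all basic orbits and the constant; so the whole difficulty — and the step I expect to be the main obstacle — is to show that the cancellation just described can be carried out \emph{in weights exactly $2$ and $4$}, not merely in some higher weight. Since all lattices in \eqref{eq:lattices} of $AD$-type satisfy $m+\sum_j(m_j+1)\leq 11$, only finitely many generators of bounded weight occur, so this is a finite linear-algebra computation with the $q^0$-terms described above; equivalently it is the vanishing of the finitely many obstruction functionals (coming from $M_{2-k+\rk(L)/2}(\rho_L^{*})$ for $k=2,4$) on the two target $q^0$-vectors. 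Finally, once weight-$2$ and weight-$4$ forms with $q^0$-term in the span of $\orb(\gamma_0)$ and $1$ are produced, the precise constants $-2m$ and $0$ are obtained by restriction to an $A_1$-sublattice of $D_m$ (which forces the ratio for $f_{2,L}$) and by subtracting a multiple of $E_{4,L}$ (for $f_{4,L}$).
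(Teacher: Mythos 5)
Your treatment of the case $L_0=\{0\}$ is essentially the paper's argument: both of you work with the $W(C_m)$-invariant index-one generators $\phi_{0,D_m,1}$, $\phi_{-2,D_m,1}$, $\phi_{-4,D_m,1}$, whose $q^0$-terms involve only the constant and the two basic orbits of $D_m$, and take $\CC$-linear combinations in weights $2$ and $4$. (Two small repairs: the paper removes the unwanted half-integral orbit $\sum_{v\in\{\pm 1/2\}^m}\prod_j e^{2\pi i v_jz_j}$ using the second available weight-two form $E_{6,L_0}\phi_{-4,D_m,1}$, not your proposed $\psi_{-m,D_m,1}$-corrections, which do not typecheck --- $\psi_{-m,D_m,1}$ has weight $-m$ and $\psi_{-m,D_m,1}^2$ has index $2$, so neither can be added to an index-one form of weight $2$ or $4$; and the restriction of $\phi_{-4,D_m,1}$ to an $A_1$-sublattice is zero, so that device tells you nothing about its $q^0$-term.)

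The genuine gap is the case $L_0\neq\{0\}$, which covers most of the $45$ lattices of $AD$-type. You correctly note that no index-one Jacobi form for $L$ is literally constant in the $L_0$-variables, but you then infer that the $q^0$-term must be purged of $L_0$-monomials by an intricate cancellation among many tensor products, and you leave the feasibility of this in weights exactly $2$ and $4$ as an unverified ``finite linear-algebra computation'' with obstruction functionals. That unverified step is precisely what the lemma asserts. The missing idea --- which you have in hand but do not deploy --- is to apply Lemma \ref{lem:E4E6} to $L_0$ itself: it produces $E_{4,L_0}\in J^{\w}_{4,L_0,1}$ and $E_{6,L_0}\in J^{\w}_{6,L_0,1}$ whose $q^0$-term is the constant $1$. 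Since the $q^0$-term of a tensor product is the product of the $q^0$-terms, the forms $E_{4,L_0}\otimes\phi_{-2,D_m,1}$ and $E_{6,L_0}\otimes\phi_{-4,D_m,1}$ are index-one forms for $L$ of weight exactly $2$ whose $q^0$-terms are supported purely on the $D_m$-directions (and similarly $E_{4,L_0}\otimes\phi_{0,D_m,1}$, $E_{6,L_0}\otimes\phi_{-2,D_m,1}$, $E_4E_{4,L_0}\otimes\phi_{-4,D_m,1}$ in weight $4$). The case $L_0\neq\{0\}$ then reduces verbatim to the case $L_0=\{0\}$, with no cancellation over the $L_0$-directions and no obstruction computation. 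Without this step, or an actual execution of your proposed computation, the proof is incomplete.
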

\begin{proof}
As in Notation \ref{notation}, let $\phi_{0,D_m,1}$, $\phi_{-2,D_m,1}$ and $\phi_{-4,D_m,1}$ be the index-one generators of the ring of $W(C_m)$-invariant Jacobi forms. Let $E_{4,L_0}$ and $E_{6,L_0}$ be the weak Jacobi forms defined in Lemma \ref{lem:E4E6} (which reduce to the usual Eisenstein series $E_4$ and $E_6$ when $L_0 = \{0\}$).  Note that the $q^0$-terms of $\phi_{k,D_m,1}$ are linearly independent and involve only the constant term and the two basic orbits
$$
\sum_{j=1}^m (e^{2\pi i z_j} + e^{-2\pi i z_j}) \quad \text{and} \quad \sum_{v \in \{\pm 1/2\}^m}  \prod_{j=1}^m e^{2\pi i v_j z_j},
$$
Therefore, there are $\CC$-linear combinations $f_{2, L}$ of $E_{4,L_0}\phi_{-2,D_m,1}$ and $E_{6,L_0}\phi_{-4,D_m,1}$ and $f_{4, L}$ of $E_{4,L_0}\phi_{0,D_m,1}$, $E_{6,L_0}\phi_{-2,D_m,1}$ and $E_4E_{4,L_0}\phi_{-4,D_m,1}$ with the desired $q^0$-terms.
\end{proof}

\begin{lemma}\label{lem:L2}
Let $L=L_0\oplus D_m$ be a lattice in the family of $AD$-type. There exist meromorphic modular forms of weights $2$ and $4$ for $\widetilde{\Orth}^+(2U\oplus L)$, with double poles on $H(1/2,\varepsilon_m)$ and whose other poles are contained in $\cH_{L,1}$, and whose Fourier--Jacobi expansions are
\begin{align}
    \Psi_{2,L}(\tau, \mathfrak{z}, \omega) &=-\frac{3}{\pi^2} \sum_{j=1}^m \wp(\tau,z_j) + O(\xi),\\
    \Psi_{4,L}(\tau, \mathfrak{z}, \omega) &= \frac{9}{\pi^4}\sum_{1\leq j_1<j_2\leq m} \wp(\tau,z_{j_1})\wp(\tau,z_{j_2})+ O(\xi).
\end{align}

\end{lemma}
\begin{proof}
The singular additive lifts of $f_{2,L}$ and $f_{4,L}$ have Fourier--Jacobi expansions beginning
\begin{align*}
    F_{2,L} &= \frac{1}{(2\pi i)^2}\sum_{j=1}^m \wp(\tau,z_j) + O(\xi),\\
    F_{4,L} &= \frac{1}{(2\pi i)^4} \sum_{j=1}^m \wp''(\tau,z_j) + O(\xi),
\end{align*}
and their poles are supported on $\cH_{L,1}\cup H(1/2,\varepsilon_m)$ with multiplicity $2$ and $4$ respectively. Since $\wp(\tau, z) = z^{-2} + O(z^2)$, the Taylor expansions of $F_{2, L}$ and $F_{4, L}$ about the divisor $z_m = 0$ which represents $H(1/2, \varepsilon_m)$ begin
\begin{align*} F_{2, L} &= (2\pi i z_m)^{-2} + f_{2, 2} + O(z_m^2), \\
F_{4, L} &= 6 \cdot (2\pi i z_m)^{-4} + f_{4, 4} + O(z_m^2), 
\end{align*}
for some functions $f_{2, 2}$ and $f_{4, 4}$ which are holomorphic near $z_m = 0$.
Clearly we can take $\Psi_{2, L} = 12 F_{2, L}$. The construction \begin{align*}
 \Psi_{4, L} = 72 F_{2,L}^2 - 12 F_{4,L} - 120 \Grit(E_{4,L}) = -\frac{36}{\pi^2}f_{2,2} z_m^{-2} + O(z_m^0)\end{align*} therefore has at most a double pole on $z_m = 0$. Using the Weierstrass differential equation $$\wp''(\tau, z) = 6 \wp(\tau, z)^2 - \frac{2}{3}\pi^4 E_4(\tau)$$ we obtain the first Fourier--Jacobi coefficient of $\Psi_{4, L}$.
\end{proof}

\begin{lemma}\label{lem:L3}
Let $L=L_0\oplus D_m$ be a lattice of $AD$-type and let $\kappa = 2(12-\sum_{j=1}^t (m_j+1))$. The generators of Jacobi type of weights $\kappa - 2m + 2$ and $\kappa - 2m + 4$ corresponding to $\phi_{-2(m-1),D_m,2}$ and $\phi_{-2(m-2),D_m,2}$ can be constructed as
\begin{align*}
    \Phi_{\kappa-2m+2,L} &:= B_{\kappa/2 -m, L}^2 \Psi_{2,L} = \Delta^2 (\otimes_{j=1}^t \phi_{-(m_j+1),A_{m_j},1}^2)\otimes \phi_{-2(m-1),D_m,2}\cdot\xi^2 + O(\xi^3),\\
    \Phi_{\kappa-2m+4,L} &:= B_{\kappa/2 -m, L}^2 \Psi_{4,L} = \Delta^2 (\otimes_{j=1}^t \phi_{-(m_j+1),A_{m_j},1}^2)\otimes \phi_{-2(m-2),D_m,2}\cdot\xi^2 + O(\xi^3),
\end{align*}
where
$$
B_{\kappa/2-m,L} = \Grit\left[\Delta \cdot \left(\bigotimes_{j=1}^t \phi_{-(m_j+1),A_{m_j},1}\right)\otimes \psi_{-m,D_m,1}\right]. 
$$
\end{lemma}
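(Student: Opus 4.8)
The plan is to verify that $B_{\kappa/2-m,L}^2\Psi_{2,L}$ and $B_{\kappa/2-m,L}^2\Psi_{4,L}$ are meromorphic modular forms with poles on $\cH_L$ of the stated weights, to read off their leading Fourier--Jacobi coefficients via Theorem~\ref{th:additive}, and then to identify the Jacobi forms that appear with the index-$2$ generators $\phi_{-2(m-1),D_m,2}$ and $\phi_{-2(m-2),D_m,2}$. For the weight, the input $\Delta\cdot\bigl(\bigotimes_j\phi_{-(m_j+1),A_{m_j},1}\bigr)\otimes\psi_{-m,D_m,1}$ of $B_{\kappa/2-m,L}$ is, as already noted in the proof of Lemma~\ref{lem: type (2) AD}, a theta block of $q$-order one; its weight is $12-\sum_j(m_j+1)-m=\kappa/2-m$, which is positive by the $AD$-type bound $m+\sum_j(m_j+1)\le 11$. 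Hence $B_{\kappa/2-m,L}$ has weight $\kappa/2-m$ and the two products have weights $\kappa-2m+2$ and $\kappa-2m+4$.

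For the divisor I would invoke the analysis in the proof of Lemma~\ref{lem: type (2) AD}: $B_{\kappa/2-m,L}$ is holomorphic away from $\cH_{L,1}$ and, on $\cD-\cH_L$, vanishes exactly to order one along the Heegner divisor $H(1/2,\varepsilon_m)$ attached to $\psi_{-m,D_m,1}$. Thus $B_{\kappa/2-m,L}^2$ vanishes to order at least two along $H(1/2,\varepsilon_m)$ away from $\cH_L$, and since by Lemma~\ref{lem:L2} the forms $\Psi_{2,L},\Psi_{4,L}$ have at worst double poles along $H(1/2,\varepsilon_m)$ with all remaining poles in $\cH_{L,1}$, the products $B_{\kappa/2-m,L}^2\Psi_{2,L}$ and $B_{\kappa/2-m,L}^2\Psi_{4,L}$ are holomorphic on $\cD-\cH_L$ and have poles supported on $\cH_{L,1}\subseteq\cH_L$; they are therefore modular forms with poles on $\cH_L$.

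For the leading Fourier--Jacobi coefficient, the input of $B_{\kappa/2-m,L}$ is $O(q)$, so by Theorem~\ref{th:additive} its zeroth Fourier--Jacobi coefficient vanishes and the $\xi$-coefficient is the input; hence
\[
B_{\kappa/2-m,L}^2=\Delta^2\Bigl(\bigotimes_{j=1}^t\phi_{-(m_j+1),A_{m_j},1}^2\Bigr)\otimes\psi_{-m,D_m,1}^2\cdot\xi^2+O(\xi^3).
\]
Multiplying by the zeroth Fourier--Jacobi coefficients of $\Psi_{2,L}$ and $\Psi_{4,L}$ from Lemma~\ref{lem:L2}, the $D_m$-factor of the $\xi^2$-coefficient is $-\tfrac{3}{\pi^2}\psi_{-m,D_m,1}^2\sum_j\wp(\tau,z_j)$, respectively $\tfrac{9}{\pi^4}\psi_{-m,D_m,1}^2\sum_{j_1<j_2}\wp(\tau,z_{j_1})\wp(\tau,z_{j_2})$. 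Using the identity $\phi_{0,A_1,1}/\phi_{-2,A_1,1}=-\tfrac{3}{\pi^2}\wp$ (\cite{EZ85}) and the factorization $\psi_{-m,D_m,1}^2=\prod_{k=1}^m\phi_{-2,A_1,1}(z_k)$, these equal the holomorphic weak Jacobi forms
\[
\sum_{j=1}^m\phi_{0,A_1,1}(z_j)\prod_{k\neq j}\phi_{-2,A_1,1}(z_k)\quad\text{and}\quad\sum_{1\le j_1<j_2\le m}\phi_{0,A_1,1}(z_{j_1})\phi_{0,A_1,1}(z_{j_2})\prod_{k\neq j_1,j_2}\phi_{-2,A_1,1}(z_k),
\]
which are invariant under permutations and sign changes of the $z_i$ (so in particular $W(D_m)$-invariant), have index $2$, and have weights $-2(m-1)$ and $-2(m-2)$.

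The remaining point — and the step I expect to be most delicate — is that these two forms are admissible choices for the index-$2$ generators $\phi_{-2(m-1),D_m,2}$ and $\phi_{-2(m-2),D_m,2}$ of the polynomial algebra $J_{*,D_m,*}^{\w,W(D_m)}$, i.e. that neither is decomposable in terms of generators of smaller index or weight. For the essential range $m\ge 7$ (which covers the genuinely new lattices $D_9,D_{10},D_{11}$), Wirthm\"uller's structure theorem shows that the index-$2$ part of $J_{*,D_m,*}^{\w,W(D_m)}$ in weight $-2(m-1)$ is spanned just by $\phi_{-2(m-1),D_m,2}$, and in weight $-2(m-2)$ just by $\phi_{-2(m-2),D_m,2}$ and $E_4\psi_{-m,D_m,1}^2$; since the two forms above are nonzero and a comparison of the coefficients of the monomials $\zeta^{(1,\dots,1)}$ and $\zeta^{(1,\dots,1,0)}$ in their $q^0$-terms shows that the second is not proportional to $E_4\psi_{-m,D_m,1}^2$, both have a nonzero component along the corresponding generator and may be taken to be it. The smaller cases $m=4,\dots,8$ are disposed of by the same bookkeeping together with the explicit generators of $W(D_m)$-invariant weak Jacobi forms in \cite{AG20, WW20a}. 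Once the generators are chosen in this way, the displayed $\xi^2$-coefficients are exactly those in the statement, and the lemma follows.
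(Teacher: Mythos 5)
Your proof is correct and follows essentially the same route as the paper: the same weight and divisor bookkeeping (the theta-block input makes $B_{\kappa/2-m,L}$ a Borcherds product with simple zeros on $H(1/2,\varepsilon_m)$ and poles only on $\cH_{L,1}$, cancelling the double poles of $\Psi_{2,L},\Psi_{4,L}$ from Lemma \ref{lem:L2}), and the same computation of the $\xi^2$-coefficient via $\psi_{-m,D_m,1}^2=\prod_j\phi_{-2,A_1,1}(\tau,z_j)$ and $\phi_{0,A_1,1}/\phi_{-2,A_1,1}=-\tfrac{3}{\pi^2}\wp$. The only divergence is the last identification step, where the paper simply cites \cite{AG20} for the fact that the index-two generators may be taken to be exactly the symmetric sums $\sum_{\mathrm{sym}}\phi_{-2,A_1,1}^{\otimes(m-1)}\otimes\phi_{0,A_1,1}$ and $\sum_{\mathrm{sym}}\phi_{-2,A_1,1}^{\otimes(m-2)}\otimes\phi_{0,A_1,1}^{\otimes 2}$, whereas you re-derive their admissibility from Wirthm\"uller's theorem; that works too, provided your inventory of decomposables in index $2$ also accounts for products like $\phi_{-4,D_m,1}\psi_{-m,D_m,1}$ in the boundary cases $m=7,8$, which you defer to the ``smaller cases''.
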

\begin{proof}
Recall from \cite{AG20} that the index two generators $\phi_{-2(m-1),D_m,2}$ and $\phi_{-2(m-2),D_m,2}$ of the ring of $W(D_m)$-invariant Jacobi forms can be constructed using the pullback from $mA_1$ to $D_m(2)$ as the symmetric sums
\begin{align*}
    \phi_{-2(m-1),D_m,2} = \sum_{\text{sym}} \phi_{-2,A_1,1}^{\otimes^{m-1}} \otimes \phi_{0,A_1,1},\\
    \phi_{-2(m-2),D_m,2} = \sum_{\text{sym}} \phi_{-2,A_1,1}^{\otimes^{m-2}} \otimes \phi_{0,A_1,1}^{\otimes^2},
\end{align*}
where $\sum_{\text{sym}} f(z_1,...,z_n) = \frac{1}{n!} \sum_{\sigma \in S_n} f(z_{\sigma(1)},...,z_{\sigma(n)}).$ Note also that the input form in the additive lift to $B_{\kappa/2-m,L}$ is a theta block of $q$-order one, and that $B_{\kappa/2-m,L}$ has simple zeros along $H(1/2,\varepsilon_m)$, cancelling the poles of $\Psi_{2, L}$ and $\Psi_{4, L}$ there. It follows that the forms $\Phi_{-,L}$ are holomorphic away from $\cH_{L,1}$. Their leading Fourier--Jacobi coefficients can be determined using
\[
\psi_{-m,D_m,1}^2(\tau,\mathfrak{z}) = \prod_{j=1}^m \phi_{-2,A_1,1}(\tau,z_j) \quad \text{and} \quad -\frac{3}{\pi^2}\wp(\tau,z) = \frac{\phi_{0,A_1,1}(\tau,z)}{\phi_{-2,A_1,1}(\tau,z)}. 
\qedhere \]
\end{proof}

To construct the other generators of Jacobi type, we need the following result for $2U\oplus D_8$:

\begin{lemma}[see Theorem 5.2 and Corollary 4.3 of \cite{WW20a}]\label{lem:D8}
The ring of holomorphic modular forms for $\Orth^+(2U\oplus D_8)$ is freely generated by $11$ additive lifts of Jacobi Eisenstein series. Moreover, for each index two generator $\phi_{k,D_8,2}$ of the ring of $W(C_8)$-invariant Jacobi forms, there exists a modular form for $\Orth^+(2U\oplus D_8)$  of weight $24+k$ with Fourier--Jacobi expansion
$$
G_{24+k,D_8} = \Delta^2 \phi_{k, D_8,2}\cdot \xi^2 + O(\xi^3), \quad k=-6, -8, ..., -12, -14, -16,
$$
\end{lemma}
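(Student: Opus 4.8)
The plan is to derive both assertions from the modular Jacobian criterion of Theorem \ref{th:Jacobiancriterion}, applied with empty hyperplane arrangement, where it reduces to the freeness criterion of \cite{Wan21a} for holomorphic modular forms. Throughout write $M=2U\oplus D_8$, an even lattice of signature $(10,2)$ with $M'/M=D_8'/D_8$. Under the Fourier--Jacobi expansion a modular form for $\Orth^+(M)$ has Fourier--Jacobi coefficients that are $\Orth(D_8)=W(C_8)$-invariant Jacobi forms for $D_8$. One first checks that $\Orth^+(M)$ is generated by reflections and that its reflection mirrors fall into exactly two $\Orth^+(M)$-orbits: the Heegner divisor $H(1,0)$ of norm-$2$ vectors of $M$ (whose reflections already lie in $\widetilde\Orth^+(M)$) and the Heegner divisor $H(1/2,\varepsilon_1)$ of the reflection $\sigma_r$ in $r=(0,0,\varepsilon_1,0,0)$, which has norm $1$ and order $2$ in $M'/M$, hence is reflective by the criterion of \S\ref{sec:Looijenga}, and which interchanges the two spinor classes of $D_8'/D_8$.

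The first step is to produce the $\rk(D_8)+3=11$ candidate generators as additive (Gritsenko) lifts of holomorphic Jacobi Eisenstein series for $D_8$, following \cite[Theorem 5.2]{WW20a}: their weights are dictated by the Fourier--Jacobi structure and are the numbers $4,6$ of Lemma \ref{lem:E4E6} together with the weights $12t_i+k_i$ attached to the generators (of index one or two, cf.\ Notation \ref{notation}) of Wirthm\"uller's polynomial ring $J^{\w,W(C_8)}_{*,D_8,*}$, so that the full list of weights is $4,6,8,8,10,10,12,12,14,16,18$, of total $118$. The leading Fourier--Jacobi coefficients of these eleven forms are algebraically independent over $\CC$ because $J^{\w,W(C_8)}_{*,D_8,*}$ is a polynomial ring over $\CC[E_4,E_6]$; hence the eleven lifts are algebraically independent, and their Jacobian $J$ is a nonzero modular form of character $\det$, holomorphic everywhere (each lift is holomorphic), of weight $l+118=128$.

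The crux of the proof --- and the step I expect to be the main obstacle --- is to determine the divisor of $J$. Since $J$ has character $\det$ it vanishes along the mirrors of $H(1,0)$ and $H(1/2,\varepsilon_1)$, and the claim is that it vanishes there with multiplicity exactly one and has no other zeros or poles. By Bruinier's converse theorem such a $J$ must be a Borcherds product; the plan is to exhibit its input explicitly as a nearly holomorphic Jacobi form of weight $0$ and index $1$ for $D_8$ with principal part $q^{-1}$ and $q^0$-term $256+\sum_{\latt{\rho,\rho}=2,\,\rho\in D_8}\zeta^\rho+\sum_{j=1}^8(\zeta^{\varepsilon_j}+\zeta^{-\varepsilon_j})+O(q)$, whose existence follows from Lemma \ref{lem:Jacobi}(2) for $L=D_8$, allowing in addition the basic orbit $\sum_{j}(\zeta^{\varepsilon_j}+\zeta^{-\varepsilon_j})$ of the vector coset; this is admissible because the vector-system relation \eqref{eq:vectorsystem} is solvable for this $q^0$-term, and the constant $256$, hence the weight $128$ of the product, is then forced by \eqref{eq:q^0-term}. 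Since $\delta_{D_8}=2$, Remark \ref{rem:divisor}(4) confines the singular Fourier coefficients of this input to its $q^{-1}$- and $q^0$-terms, so the divisor of the resulting Borcherds product is read off directly as $H(1,0)+H(1/2,\varepsilon_1)$ with multiplicities one; comparing weights, characters and leading Fourier--Jacobi coefficients then identifies $J$ with a nonzero constant multiple of it. The bulk of the work lies precisely here: verifying that no unexpected singular coefficients, hence no unexpected zeros or poles, occur.

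Once the divisor of $J$ is under control, Theorem \ref{th:Jacobiancriterion} gives that $M_*(\Orth^+(M))$ is the polynomial algebra on the eleven lifts, which is the first assertion. For the ``moreover'' part, freeness turns Aoki's estimate $\dim M_k(\Orth^+(M))\le\sum_{t\ge0}\dim J^{\w,W(C_8)}_{k-12t,D_8,t}$ into an equality in every weight (every monomial in the generators has leading Fourier--Jacobi coefficient $\Delta^T$ times the corresponding monomial in the Jacobi generators, and these span), and since the Fourier--Jacobi expansion is a ring homomorphism this forces, for each index-two generator $\phi_{k,D_8,2}$ of $J^{\w,W(C_8)}_{*,D_8,*}$ --- that is, $k=-6,-8,-10,-12,-14$, together with $\psi_{-8,D_8,1}^2$ for $k=-16$ --- the existence of a holomorphic modular form $G_{24+k,D_8}$ of weight $24+k$ for $\Orth^+(M)$ with Fourier--Jacobi expansion $\Delta^2\phi_{k,D_8,2}\cdot\xi^2+O(\xi^3)$; these six forms are in fact among the eleven generators just constructed, the one for $k=-16$ being $\Grit(\vartheta_{D_8})^2$. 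This last step is \cite[Corollary 4.3]{WW20a}.
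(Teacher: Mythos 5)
The paper does not actually prove this lemma: it is imported verbatim from \cite{WW20a}, and by the paper's own account (the opening of \S\ref{sec:non-free}, Lemma \ref{lem:nice} and Theorem \ref{th:criterion}) the method there is the Aoki-style Fourier--Jacobi comparison --- the upper bound $\dim M_k \le \sum_t \dim J^{\w,W(C_8)}_{k-12t,D_8,t}$, a matching lower bound from algebraically independent lifts, and equality because the $W(C_8)$-invariant Jacobi ring is polynomial; the ``moreover'' part is then exactly the niceness statement. Your route is instead the Jacobian criterion: identify the Jacobian of the eleven lifts with a weight-$128$ Borcherds product with divisor $H(1,0)+H(1/2,\varepsilon_1)$ and invoke Theorem \ref{th:Jacobiancriterion}. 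This is sound and is precisely the strategy the present paper uses for its new results (steps (I)--(IV) and Theorem \ref{th:existence}), so it is a legitimate alternative: it additionally yields the divisor of the Jacobian and reflection-generation of $\Orth^+(2U\oplus D_8)$, at the cost of having to control a Borcherds input, whereas the cited argument gets modularity of formal Fourier--Jacobi series directly. Your numerology all checks out: the generator weights $4,6,8,8,10,10,12,12,14,16,18$, the two orbits of reflective vectors, and $f(0,0)=256$ forced by \eqref{eq:q^0-term}.

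Three points need tightening. First, Lemma \ref{lem:Jacobi}(2) with $L=D_8$ irreducible ($n=1$) allows \emph{no} basic orbits in the $q^0$-term, so it does not literally produce your input; you must add the observation that the space $V_0$ of $q^0$-terms of weight-$0$ weak Jacobi forms is the full hyperplane cut out by \eqref{eq:vectorsystem} (codimension one, since $J^{\w}_{*,D_8,1}$ has a single weight $-2$ generator), so that $\mathrm{orb}(\varepsilon_1)$ can be adjoined after correcting the constant; and you should use Lemma \ref{lem:periodic}(1) together with $\delta_{D_8}=2$ to see that no singular coefficients occur on vectors of norm $>2$, so the divisor really is $H(1,0)+H(1/2,\varepsilon_1)$ with multiplicity one. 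Second, the forms $G_{24+k,D_8}$ are \emph{not} among the eleven generators: an additive lift of an index-one input has a nonvanishing Fourier--Jacobi coefficient at $\xi^0$ or $\xi^1$, whereas $G_{24+k,D_8}=O(\xi^2)$; they are polynomial combinations of the generators, and their existence follows from the niceness you have already established (as in Lemma \ref{lem:nice}), so the parenthetical claim should be deleted ($\Grit(\vartheta_{D_8})^2$ is such a combination, not one of the Eisenstein lifts). Third, the zeroth Fourier--Jacobi coefficients of the Eisenstein lifts are Eisenstein series on $\SL_2(\ZZ)$ and are certainly not algebraically independent; the independence of the eleven lifts is read off from the index-$t_i$ coefficients of suitable polynomial combinations, as in Theorem \ref{th:criterion}.
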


The $11$ Jacobi Eisenstein series can all be expressed as $\CC[E_4, E_6]$-linear combinations of the index-one generators $\phi_{0,D_8,1}$, $\phi_{-2,D_8,1}$ and $\phi_{-4,D_8,1}$ of the ring of $W(C_8)$-invariant Jacobi forms.

We first consider the cases $L = D_m$ with $9 \leq m \leq 11$. (In particular, $L_0 = \{0\}$.)

\begin{lemma}\label{lem:D11}
For $m=9,10,11$, there exist modular forms, holomorphic away from $\cH_{L,1}$, with Fourier--Jacobi expansion beginning
$$
G_{24+k, D_m} = \Delta^2 \phi_{k, D_m,2}\cdot \xi^2 + O(\xi^3), \quad k=-6, -8, ...,-2(m-2), -2(m-1). 
$$
\end{lemma}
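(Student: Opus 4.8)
The plan is to reduce the statement to constructing meromorphic modular forms whose leading Fourier--Jacobi coefficient is an elementary symmetric function of the Weierstrass $\wp$-functions, and then to cancel the unwanted singularities by multiplying with $\Grit(\vartheta_{D_m})^2$. Recall from \cite{AG20} (cf.\ Lemma~\ref{lem:L3}) that every index-two generator of the ring of $W(D_m)$-invariant weak Jacobi forms is a symmetrization of a tensor product of copies of $\phi_{-2,A_1,1}$ and $\phi_{0,A_1,1}$; using $\psi_{-m,D_m,1}^2=\prod_{i=1}^m\phi_{-2,A_1,1}(\tau,z_i)$ and $\phi_{0,A_1,1}/\phi_{-2,A_1,1}=-\tfrac{3}{\pi^2}\wp$, this yields, for $3\le p\le m-1$, an identity $\phi_{-2p,D_m,2}=c_p\,\psi_{-m,D_m,1}^2\cdot e_{m-p}(\wp(\tau,z_1),\dots,\wp(\tau,z_m))$ with $c_p\in\CC^\times$, where $e_j$ denotes the $j$-th elementary symmetric polynomial. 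Writing $k=-2p$ and $j=m-p$ (so $1\le j\le m-3$), it therefore suffices to produce for each such $j$ a meromorphic modular form $\Psi_{2j,D_m}$ of weight $2j$ for $\widetilde{\Orth}^+(2U\oplus D_m)$ with $\Psi_{2j,D_m}=c'\,e_j(\wp(\tau,z_1),\dots,\wp(\tau,z_m))+O(\xi)$ for some $c'\in\CC^\times$, whose pole along $H(1/2,\varepsilon_1)$ has order at most two, and whose remaining poles lie on $\cH_{D_m,1}$ (which for these lattices is exactly the divisor $\cH_m$ of \S\ref{sec:theta block conjecture}). Indeed, by Theorem~\ref{th:theta D_n} the form $B_m:=\Grit(\vartheta_{D_m})=\Borch(\Psi_{D_m})$ has weight $12-m$, a simple zero on $H(1/2,\varepsilon_1)$, poles supported on $\cH_m$, and leading Fourier--Jacobi coefficient $\vartheta_{D_m}\,\xi=\Delta\,\psi_{-m,D_m,1}\,\xi$; hence $G_{24+k,D_m}:=c\,B_m^2\,\Psi_{2j,D_m}$ for a suitable $c\in\CC^\times$ has weight $2(12-m)+2j=24+k$, is holomorphic away from $\cH_m$ (the double zero of $B_m^2$ along $H(1/2,\varepsilon_1)$ absorbs the double pole of $\Psi_{2j,D_m}$), and has Fourier--Jacobi expansion $\Delta^2\phi_{k,D_m,2}\,\xi^2+O(\xi^3)$. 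For $j=1,2$ the $\Psi_{2j,D_m}$ below reproduce $\Psi_{2,D_m}$ and $\Psi_{4,D_m}$ of Lemma~\ref{lem:L2}, so the construction is uniform over all index-two generators.

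To build $\Psi_{2j,D_m}$ I would use Borcherds' singular additive lift together with Newton's identities. For every $l\ge1$, the $\CC[E_4,E_6]$-module structure of $J_{2*,D_m,1}^{\w}$ (Lemma~\ref{lem:Jacobi}(1)) and the weak Jacobi forms $f_{2,D_m},f_{4,D_m}$ of Lemma~\ref{lem:L1} provide, after multiplication by powers of $E_4$ and $E_6$, a form $f_{2l,D_m}\in J_{2l,D_m,1}^{\w}$ whose $q^0$-term equals $\sum_{i=1}^m(\zeta^{z_i}+\zeta^{-z_i})$ plus a constant. By Theorem~\ref{th:additive} its lift $F_{2l,D_m}:=\Grit(f_{2l,D_m})$ is a meromorphic modular form of weight $2l$ whose zeroth Fourier--Jacobi coefficient is, up to a scalar, $\sum_{i=1}^m\wp^{(2l-2)}(\tau,z_i)$ modified by a multiple of $E_{2l}(\tau)$, and whose poles lie on $\cH_m\cup H(1/2,\varepsilon_1)$, with a pole of order $2l$ and \emph{pure} leading term $(2l-1)!\,(2\pi i z_i)^{-2l}$ along each component $\{z_i=0\}$ of $H(1/2,\varepsilon_1)$ --- exactly as for $\wp^{(2l-2)}$, and exactly as checked for $l=1,2$ in Lemma~\ref{lem:L2}. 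The Weierstrass differential equation presents $\wp^{(2l-2)}$ as a degree-$l$ polynomial in $\wp$ with coefficients in $\CC[E_4,E_6]$, so the power sums $p_l=\sum_i\wp(\tau,z_i)^l$ are $\CC[E_4,E_6]$-combinations of the functions $\sum_i\wp^{(2l'-2)}(\tau,z_i)$ with $l'\le l$ together with the constant $m$; Newton's identities then express $e_j$ as a weight-homogeneous polynomial in $p_1,\dots,p_j$. Substituting $F_{2l,D_m}$ for $\sum_i\wp^{(2l-2)}(\tau,z_i)$ and using the Eisenstein lifts $\cE_{4,D_m},\cE_{6,D_m}$ of Lemma~\ref{lem:E4E6} to absorb the extraneous $E_{2l}$-terms produces a modular form $\Psi_{2j,D_m}$ of weight $2j$, holomorphic away from $\cH_m\cup H(1/2,\varepsilon_1)$, with $\Psi_{2j,D_m}=c'\,e_j(\wp(\tau,z_1),\dots,\wp(\tau,z_m))+O(\xi)$.

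The crux, which I expect to be the \emph{main obstacle}, is to verify that $\Psi_{2j,D_m}$ has pole order at most two along $H(1/2,\varepsilon_1)$, rather than the order $2j$ displayed by its building blocks. Its zeroth Fourier--Jacobi coefficient $e_j(\wp(\tau,z_1),\dots,\wp(\tau,z_m))$ manifestly has only a double pole along each $\{z_i=0\}$, being linear in $\wp(\tau,z_i)$ whose Laurent expansion $z_i^{-2}+O(z_i^2)$ has no intermediate negative powers; the task is to propagate this to the whole modular form. I would do this by a local Laurent-expansion computation along $H(1/2,\varepsilon_1)$ in the style of Lemma~\ref{lem:L2}: expanding $\Psi_{2j,D_m}$ in the transverse coordinate $z_1$, the coefficients of $z_1^{-2j},z_1^{-2j+2},\dots,z_1^{-4}$ must vanish. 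They do, because each factor $F_{2l,D_m}$ contributes a pure leading pole $(2l-1)!\,(2\pi i z_1)^{-2l}$, so the leading-pole part of any monomial in the $F_{2l,D_m}$ is the corresponding monomial in the pure poles of the $\wp^{(2l-2)}(\tau,z_1)$; hence the leading-pole part of $\Psi_{2j,D_m}$ along $\{z_1=0\}$ equals that of the polynomial defining $e_j(\wp(\tau,z_1),\dots,\wp(\tau,z_m))$, namely $z_1^{-2}\cdot e_{j-1}(\wp(\tau,z_2),\dots,\wp(\tau,z_m))$, which contains no power $z_1^{-r}$ with $r\ge3$. The one point needing care is the purity of the principal part of the additive lift $F_{2l,D_m}$ along $H(1/2,\varepsilon_1)$ --- that it is governed solely by the singular coefficient $f_{2l,D_m}(0,\varepsilon_1)=1$ and carries no lower-order pole terms; this follows from the explicit Fourier--Jacobi formula in Theorem~\ref{th:additive} (and was used implicitly for small $l$ in Lemma~\ref{lem:L2}). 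An alternative to the explicit bookkeeping is an induction along the tower $D_8\subset D_9\subset D_{10}\subset D_{11}$, comparing the leading Laurent coefficient of $\Psi_{2j,D_m}$ along $\{z_m=0\}$ with the forms already built for $D_{m-1}$, the base case $m=8$ being Lemma~\ref{lem:D8}. Granting the bound, $G_{24+k,D_m}=c\,B_m^2\,\Psi_{2j,D_m}$ is holomorphic away from $\cH_m$ with the required leading Fourier--Jacobi coefficient, completing the proof.
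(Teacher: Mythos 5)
Your strategy is genuinely different from the paper's: the paper constructs $G_{24+k,D_{11}}$ for $-16\le k\le -6$ by writing the known generator $G_{24+k,D_8}$ of Lemma \ref{lem:D8} as a polynomial in Gritsenko lifts of index-one Jacobi forms, formally replacing $D_8$ by $D_{11}$ in that polynomial, and then correcting the leading Fourier--Jacobi coefficient using $\cE_{4,D_{11}}$, $\cE_{6,D_{11}}$, $\Phi_{4,D_{11}}$, $\Phi_{6,D_{11}}$ and $B_{1,D_{11}}^2$; the structure theorem for $D_8$ is what silently guarantees that the unwanted intermediate poles along $H(1/2,\varepsilon_1)$ cancel. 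You instead attempt a self-contained extension of Lemmas \ref{lem:L2} and \ref{lem:L3} to all elementary symmetric functions $e_j(\wp(\tau,z_1),\dots,\wp(\tau,z_m))$ via Newton's identities. This is an appealing route (and your reduction of the lemma to producing $\Psi_{2j,D_m}$ with a pole of order at most two along $H(1/2,\varepsilon_1)$ is correct), but there is a genuine gap precisely at the step you single out as the crux.

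The gap is the cancellation of the coefficients of $z_1^{-4},\dots,z_1^{-2j+2}$. Your argument only controls the term in which \emph{every} factor of a monomial contributes its full pole: writing $F_{2l,D_m}=(2l-1)!\,(2\pi i z_1)^{-2l}+h_{2l}$ locally, the coefficient of $z_1^{-2r}$ for $2\le r\le j-1$ in a product $\prod_s F_{2l_s,D_m}$ is a cross term involving the holomorphic parts $h_{2l}$, and these are \emph{not} the holomorphic parts of the functions $\sum_i\wp^{(2l-2)}(\tau,z_i)$ (they are restrictions of genuine modular forms depending on $\omega$ as well). The functional identity $P\bigl(\sum_i\wp^{(2l-2)}(z_i)\bigr)=e_j(\wp(z_\bullet))$ therefore guarantees the vanishing of these coefficients only for one particular choice of holomorphic parts, not for the $h_{2l}$; purity of each individual principal part does not by itself give $\ord\le 2$ for $\Psi_{2j,D_m}$. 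The claim can be rescued: each power sum $\sum_i\wp(z_i)^l$ is a $\CC[E_4,E_6]$-\emph{linear} combination of the $\sum_i\wp^{(2l'-2)}(z_i)$ ($l'\le l$) and a constant, so the polar coefficients of the substituted power sums lie in $\CC[E_4,E_6]$ and are independent of the $h$'s; one then checks that each intermediate coefficient is a polynomial in the $h$'s whose vanishing at the algebraically independent functions $\sum_{i\ge 2}\wp^{(2l-2)}(z_i)$ (here $j\le m-3$ ensures enough variables) forces it to vanish identically, whence it also vanishes for the modular forms. But this is a substantive extra argument absent from your write-up, and without it the construction of $\Psi_{2j,D_m}$ is unproven exactly in the range $j\ge 3$ that the lemma concerns. (A smaller unverified input is your assumption that \emph{all} index-two generators $\phi_{-2p,D_m,2}$, not just the two quoted in Lemma \ref{lem:L3}, are the symmetrizations $\sum_{\mathrm{sym}}\phi_{-2,A_1,1}^{\otimes p}\otimes\phi_{0,A_1,1}^{\otimes(m-p)}$.)
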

\begin{proof}
It is enough to prove the lemma for $L=D_{11}$, as the forms $G_{24+k, D_{10}}$ and $G_{24+k, D_{9}}$ can be constructed by restricting $G_{24+k, D_{11}}$. By Lemma \ref{lem:L3} we can construct $G_{4,D_{11}}=\Phi_{4,D_{11}}$ and $G_{6,D_{11}}=\Phi_{6,D_{11}}$. 
For $k\geq -16$, we can write the form $G_{24+k,D_8}$ in Lemma \ref{lem:D8} uniquely as a polynomial in the additive lifts of $\CC[E_4, E_6]$-linear combinations of $\phi_{0, D_8, 1}$, $\phi_{-2, D_8, 1}$, $\phi_{-4, D_8, 1}$: $$G_{24 + k, D_8} = P \left( \mathrm{Grit}( E_4^{a_0} E_6^{b_0} \phi_{0, D_8, 1} + E_4^{a_2} E_6^{b_2} \phi_{-2, D_8, 1} + E_4^{a_4} E_6^{b_4} \phi_{-4, D_8, 1}), \; 4a_i + 6b_i -i = 24+k \right).$$ We lift this to a form $\widehat{G}_{24+k, D_{11}}$ by replacing all instances of $\phi_{-k, D_8, 1}$ with $\phi_{-k, D_{11}, 1}$:
$$\widehat{G}_{24 + k, D_8} = P \left( \mathrm{Grit}( E_4^{a_0} E_6^{b_0} \phi_{0, D_{11}, 1} + E_4^{a_2} E_6^{b_2} \phi_{-2, D_{11}, 1} + E_4^{a_4} E_6^{b_4} \phi_{-4, D_{11}, 1}), \; 4a_i + 6b_i -i = 24+k \right).$$
This has at worst poles on $\cH_{L, 1}$ because that is true for all of the Gritsenko lifts appearing in $P$, and its Fourier--Jacobi expansion begins
$$
\widehat{G}_{24+k,D_{11}} = \Delta^2 \varphi_{k, D_{11},2} \cdot \xi^2 + O(\xi^3),
$$
where $\varphi_{k,D_{11},2} \in J_{k,D_{11},2}^{\w,W(C_{11})}$ has pullback $\phi_{k, D_8, 2}$ to $D_8$. Therefore, $\varphi_{k,D_{11},2} - \phi_{k,D_{11},2}$ is a $\CC[E_4, E_6]$-linear combination of $\phi_{-18,D_{11},2}$, $\phi_{-20,D_{11},2}$ and $\psi_{-11,D_{11},1}^2$. By modifying $\widehat{G}_{24+k, D_8}$ by a polynomial expression in $\cE_{4,D_{11}}, \cE_{6,D_{11}}$ and the forms $\Phi_{4, D_{11}}$, $\Phi_{6, D_{11}}$ and $B_{1, D_{11}}^2$, we obtain the desired generator $G_{24+k,D_{11}}$. 
\end{proof}

By now we have constructed all of the generators for the lattices in the $D_n$-tower
$$
D_4 \hookrightarrow D_5 \hookrightarrow D_6 \hookrightarrow D_7 \hookrightarrow D_8 \hookrightarrow D_9 \hookrightarrow D_{10} \hookrightarrow D_{11}. 
$$
We now consider the lattices of $AD$-type with $L_0\neq \{0\}$. Recall that $L=L_0\oplus D_m$.

\textbf{Case I.} When $m=4$ or $5$, there are at most two generators of Jacobi type, corresponding to the index-two generators of $J_{*,D_m,2}^{\w,W(D_m)}$, and they have been constructed in Lemma \ref{lem:L3}.

\textbf{Case II.} When $m\geq 6$, we need only consider the following towers:
\begin{align*}
    &A_1\oplus D_6 \hookrightarrow A_1\oplus D_7 \hookrightarrow  A_1\oplus D_8 \hookrightarrow A_1\oplus D_9, \\
    &A_2\oplus D_5 \hookrightarrow A_2\oplus D_6 \hookrightarrow  A_2\oplus D_7 \hookrightarrow A_2\oplus D_8, \\
    &A_3\oplus D_4 \hookrightarrow A_3\oplus D_5 \hookrightarrow  A_3\oplus D_6 \hookrightarrow A_3\oplus D_7, \\
    &2A_1\oplus D_4 \hookrightarrow 2A_1\oplus D_5 \hookrightarrow  2A_1\oplus D_6 \hookrightarrow 2A_1\oplus D_7, \\
    &A_1\oplus A_2 \oplus D_3 \hookrightarrow A_1\oplus A_2 \oplus D_4 \hookrightarrow A_1\oplus A_2 \oplus D_5 \hookrightarrow A_1\oplus A_2 \oplus D_6,\\
    &A_4\oplus D_3 \hookrightarrow A_4\oplus D_4 \hookrightarrow A_4\oplus D_5 \hookrightarrow A_4\oplus D_6.
\end{align*}
For each of the above towers, the leftmost lattice can be embedded into $D_8$, and therefore we obtain the Jacobi type generators corresponding to index-two Jacobi forms (i.e. $\phi_{k,D_m,2}$ for $k=-6, -8, ..., -2m$) as pullbacks of generators for $2U\oplus D_8$. These forms can again be expressed as polynomials in additive lifts whose inputs are $\CC[E_4,E_6]$-linear combinations of the basic weak Jacobi forms of type $L_0 \oplus D_m$. We can construct the desired generators for the other lattices in the tower by the same argument as Lemma \ref{lem:D11}. We remind the reader that the pullbacks of generators of type $D_8$ are not exactly the desired generators for the first lattice of the tower, but we can obtain the generators with prescribed leading Fourier--Jacobi coefficients by modifying these pullbacks in a simple way;  for example, the pullback of $G_{10,D_8}$ to $A_1\oplus D_6$ yields the form 
$$
\widehat{G}_{10,A_1\oplus D_6} = \Delta^2 (\phi_{-2,A_1,1}^2\otimes \phi_{-10,D_6,2} + 2 (\phi_{-2,A_1,1}\phi_{0,A_1,1})\otimes \psi_{-6,D_6,1}^2)\cdot \xi^2 + O(\xi^3),
$$
and the desired generator $G_{10, A_1 \oplus D_6}$ is then constructed as the modification
\begin{align*}
    G_{10,A_1\oplus D_6} & = \widehat{G}_{10,A_1\oplus D_6} - 2\Grit(\Delta\phi_{-2,A_1,1}\otimes \psi_{-6,D_6,1})\Grit(\Delta\phi_{0,A_1,1}\otimes\psi_{-6,D_6,1})\\
    & = (\Delta^2 \phi_{-2,A_1,1}^2\otimes \phi_{-10,D_6,2})\cdot \xi^2 + O(\xi^3).
\end{align*}

Theorem \ref{th:2precise} has been proved for the families of $A$-type and $AD$-type. The existence of the generators of Jacobi type for the four $AE$-lattices is harder to prove directly; our argument appears in the next section.

\begin{remark}
If $D_n$ or $E_n$ appears as a component of $L_0$, then $\cH_L$ does not satisfy the Looijenga condition. All such lattices have a sublattice of type $D_4 \oplus A_m$, so we may assume $L_0 = D_4$ and $L_1 = A_m$. Define  $\cH_{L,0}=H(1/2,\varepsilon_1)$ and $\cH_L=\cH_{L,0}\cup \cH_{L,1}$. Then
the quotient
$$
\Grit(\Delta\phi_{-4,D_4,1}\otimes \phi_{-(m+1),A_m,1}) / \Grit(\Delta\psi_{-4,D_4,1}\otimes \phi_{-(m+1),A_m,1})
$$
is a non-constant modular form of weight $0$ with poles on $\cH_L$, violating Koecher's principle. 
\end{remark}

\begin{remark}
When $L_1$ is not irreducible, the ring of $W(L_1)$-invariant weak Jacobi forms is not a polynomial algebra, so the algebra of modular forms for $\widetilde{\Orth}^+(2U\oplus L)$ with poles on $\cH_L$ is also not free. 
\end{remark}

\begin{remark}
We know from \cite{Wan21a} that the Jacobian of generators of a free algebra of holomorphic modular forms is a cusp form. However, the Jacobian of generators of a free algebra of meromorphic modular forms can be non-holomorphic. For example, when $L_0=A_2$ and $L_1=A_1$, by Theorem \ref{th:existence} the corresponding Jacobian has poles of order $1$ along $\cH_{L,0}$. 
\end{remark}

\subsection{A free algebra of meromorphic modular forms for the full orthogonal group}
Theorem \ref{th:2precise} shows that the algebra of modular forms for $\widetilde{\Orth}^+(2U\oplus 5A_1)$ with poles on $\cH_{5A_1}=\cH_{5A_1,0}\cup\cH_{5A_1,1}$ is freely generated in weights $2$, $2$, $2$, $2$, $2$, $4$, $4$, $6$. The arrangement $\cH_{5A_1,0}$ is not invariant under $\Orth^+(2U\oplus 5A_1)$; however, $\cH_{5A_1,1}$ is invariant under $\Orth^+(2U\oplus 5A_1)$. 

We can prove that the ring of meromorphic modular forms for $\Orth^+(2U\oplus 5A_1)$ with poles on the hyperplane arrangement $\cH_{5A_1,1}$ is a polynomial algebra without relations.  This construction is motivated by the free algebras of holomorphic modular forms for the full orthogonal groups of lattices in the tower
\begin{align}\label{eq:nA1}
2U\oplus A_1 \hookrightarrow 2U\oplus 2A_1 \hookrightarrow 2U\oplus 3A_1 \hookrightarrow 2U\oplus 4A_1,
\end{align}
considered by Woitalla in \cite{Woi18}, who proved that the algebra $M_*(\Orth^+(2U\oplus 4A_1))$ at the top is freely generated in weights $4$, $4$, $6$, $6$, $8$, $10$, $12$. The bottom of the tower is the famous Igusa algebra \cite{Igu62} of Siegel modular forms of degree two and even weight which is freely generated in weights $4$, $6$, $10$, $12$. A different interpretation of this tower was given in \cite{WW20a} by associating it to the $B_n$ root system.

Let $5A_1$ be the lattice $\ZZ^5$ with the diagonal Gram matrix $\mathrm{diag}(2,2,2,2,2)$. Then $\cH_{5A_1,1}$ is the Heegner divisor $H(\frac{1}{4}, (\frac{1}{2}, \frac{1}{2}, \frac{1}{2}, \frac{1}{2}, \frac{1}{2}))$. There is a Borcherds product $\widehat{\Phi}_{5A_1}$ of weight $59$ for the full orthogonal group $\Orth^+(2U\oplus 5A_1)$ whose divisor is
$$
H(1,0) + \sum_{\substack{ v \in 5A_1'/5A_1 \\ v^2 = \frac{1}{2}} } H\left(\frac{1}{4}, v\right) + \sum_{\substack{ u \in 5A_1' /5A_1 \\ u^2 = 1} } H\left(\frac{1}{2}, u\right) + 21 H\left( \frac{1}{4}, \left(\frac{1}{2}, \frac{1}{2}, \frac{1}{2}, \frac{1}{2}, \frac{1}{2}\right) \right).
$$
The first three types of divisors correspond to reflections in $\Orth^+(2U\oplus 5A_1)$. Let $\phi_{k,5A_1,1}$ be the generators of $J_{*,5A_1,*}^{\w, \Orth(nA_1)}$, where $k=0, -2, -4, -6, -8, -10$. The additive lifts of $\Delta\phi_{k,5A_1,1}$ together with the forms of weight $4$ and $6$ related to the Eisenstein series $E_4$ and $E_6$ give the generators. Their Jacobian $J$ defines a nonzero modular form of weight $59$. By comparing the divisors we find that $J/\widehat{\Phi}_{59}$ is constant. Then Theorem \ref{th:Jacobiancriterion} implies the following:

\begin{theorem}
The algebra of modular forms for $\Orth^+(2U\oplus 5A_1)$ which are holomorphic away from the Heegner divisor $H(\frac{1}{4}, (\frac{1}{2}, \frac{1}{2}, \frac{1}{2}, \frac{1}{2}, \frac{1}{2}))$ is freely generated by singular additive lifts of weights $2$, $4$, $4$, $6$, $6$, $8$, $10$, $12$. \end{theorem}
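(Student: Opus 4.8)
The plan is to deduce the statement from the modular Jacobian criterion, Theorem \ref{th:Jacobiancriterion}, applied to $M = 2U \oplus 5A_1$ (of signature $(7,2)$, so $l=7$), to $\Gamma = \Orth^+(2U\oplus 5A_1)$, and to the arrangement $\cH := H(\tfrac14,(\tfrac12,\tfrac12,\tfrac12,\tfrac12,\tfrac12)) = \cH_{5A_1,1}$. First I would dispose of the hypotheses not involving the Jacobian. The arrangement $\cH$ is $\Gamma$-invariant: the coset $(\tfrac12,\dots,\tfrac12)$ of $5A_1'/5A_1$ is fixed by every signed permutation, i.e.\ by all of $\Orth(5A_1)=W(B_5)$, which is why this part of $\cH_{5A_1}$ descends to the full orthogonal group whereas $\cH_{5A_1,0}$ does not. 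It satisfies the Looijenga condition because $a=\tfrac14\le\tfrac14$ forces any two distinct hyperplanes of $\cH$ to be disjoint, by Lemma \ref{lem:intersection}(1) (equivalently Corollary \ref{cor:intersection}). The eight candidate generators are $\cE_{4,5A_1}$ and $\cE_{6,5A_1}$ of weights $4$ and $6$ (built, as in Lemma \ref{lem:E4E6}, from $\Orth(5A_1)$-invariant weak Jacobi forms with $q^0$-term $1$; these exist by Wirthm\"uller's theorem for $B_5$ and the linear independence of leading coefficients in Lemma \ref{lem:Jacobi}(1)) together with the six singular additive lifts $\Grit(\Delta\,\phi_{k,5A_1,1})$ of weight $12+k$ for the index-one $\Orth(5A_1)$-invariant generators $\phi_{k,5A_1,1}$ of weights $k=0,-2,-4,-6,-8,-10$; their weights are $2,4,4,6,6,8,10,12$ and all eight are singular additive lifts by Theorem \ref{th:additive}.

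Next I would check that each generator is holomorphic away from $\cH$ and that the eight generators are algebraically independent. Since $\delta_{5A_1}=\tfrac52$ is attained only on the coset $(\tfrac12,\dots,\tfrac12)$, Remark \ref{rem:divisor}(4) shows that every singular Fourier coefficient of each index-one input $\Delta\,\phi_{k,5A_1,1}$ (resp.\ $E_{4,5A_1},E_{6,5A_1}$) appears in its $q^1$-term; the periodicity relations of Lemma \ref{lem:periodic}(1) push any heavier exponent to a coefficient $f(n,\cdot)$ with $n\le 0$, which vanishes because $\Delta\,\phi_{k,5A_1,1}=O(q)$, so all singular coefficients lie in the coset $(\tfrac12,\dots,\tfrac12)$ and contribute poles only along $\cH$. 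Algebraic independence is read off the leading Fourier--Jacobi coefficients $E_4,E_6,\Delta\phi_{0,5A_1,1},\dots,\Delta\phi_{-10,5A_1,1}$: these are algebraically independent over $\CC$ because $E_4,E_6$ and the $\phi_{k,5A_1,1}$ freely generate the ring of $\Orth(5A_1)$-invariant weak Jacobi forms (Wirthm\"uller for $B_5$) and $\Delta\in\CC[E_4,E_6]$; hence the eight modular forms are algebraically independent and their Jacobian $J$ is not identically zero. By construction $J$ is a meromorphic modular form of weight $7+(2+4+4+6+6+8+10+12)=59$ and character $\det$, holomorphic away from $\cH$.

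I would then invoke the Borcherds product $\widehat\Phi_{5A_1}$ of weight $59$ constructed above, whose divisor is
\[
H(1,0) \;+\!\! \sum_{\substack{v\in 5A_1'/5A_1\\ v^2=1/2}}\!\! H(\tfrac14,v) \;+\!\! \sum_{\substack{u\in 5A_1'/5A_1\\ u^2=1}}\!\! H(\tfrac12,u) \;+\; 21\,H\!\left(\tfrac14,(\tfrac12,\tfrac12,\tfrac12,\tfrac12,\tfrac12)\right),
\]
and, by classifying the reflective vectors of $2U\oplus 5A_1$ and of its dual, I would check that the first three summands list, each with multiplicity one, exactly the mirrors of reflections in $\Gamma$ that are not contained in $\cH$, while the last summand is supported on $\cH$. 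Since $J$ has character $\det$ and reflections have order two, $J$ vanishes to odd order $\ge 1$ along each such mirror; as the only zeros of $\widehat\Phi_{5A_1}$ on $\cD^\circ=\cD(M)-\cH$ are the simple zeros along these mirrors, the quotient $J/\widehat\Phi_{5A_1}$ is holomorphic on $\cD^\circ$ and therefore defines a weight-$0$ modular form (a priori with a character) holomorphic away from $\cH$. By Koecher's principle on the Looijenga compactification, Lemma \ref{lem:Koecher}, it is constant, and the constant is nonzero because $J\ne 0$; hence $\widehat\Phi_{5A_1}$ in fact has character $\det$ and $J=c\,\widehat\Phi_{5A_1}$ for some $c\in\CC^\times$. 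In particular the restriction of $J$ to $\cD^\circ$ vanishes precisely on the mirrors of reflections in $\Gamma$, each with multiplicity one, and Theorem \ref{th:Jacobiancriterion} gives that the algebra of modular forms for $\Gamma$ holomorphic away from $\cH$ is freely generated by the eight forms.

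The step I expect to be the main obstacle is the one imported as a black box: the construction of the Borcherds product $\widehat\Phi_{5A_1}$ with \emph{exactly} the divisor displayed above, in particular with the precise multiplicity $21$ along $\cH$. One produces it as the Borcherds lift of an $\Orth(5A_1)$-invariant nearly holomorphic weak Jacobi form of weight $0$ and index $1$ for $5A_1$ (equivalently, a nearly holomorphic vector-valued modular form of weight $-\tfrac52$ for $\rho_{5A_1}$) whose principal part is supported, with the prescribed multiplicities, on the three reflective Heegner divisors and on $\cH$; existence of such an input follows from Borcherds' obstruction principle in the style of Lemma \ref{lem:Jacobi} and Theorem \ref{th:existence}, and the multiplicities — hence in particular the value $21$ — are then pinned down by computing the finitely many relevant Fourier coefficients. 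The accompanying bookkeeping, namely verifying that the first three pieces of $\Div(\widehat\Phi_{5A_1})$ exhaust the mirrors of reflections in $\Orth^+(2U\oplus 5A_1)$ not contained in $\cH$, and that $\widehat\Phi_{5A_1}$ has no further zeros or poles, is then routine from the description of reflective vectors of $2U\oplus 5A_1$.
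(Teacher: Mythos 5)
Your proposal is correct and follows essentially the same route as the paper: the same eight generators (the two Eisenstein-type lifts and the six lifts $\Grit(\Delta\phi_{k,5A_1,1})$), the same weight-$59$ Borcherds product $\widehat{\Phi}_{5A_1}$ with the stated divisor, the comparison $J=c\,\widehat{\Phi}_{5A_1}$ via Koecher's principle on the arrangement complement, and the Jacobian criterion of Theorem \ref{th:Jacobiancriterion}. You correctly identify the one input the paper also treats as a black box, namely the existence of $\widehat{\Phi}_{5A_1}$ with exactly the prescribed multiplicities, and your proposed justification (obstruction principle plus a finite Fourier coefficient computation, as in Lemma \ref{lem:Jacobi} and Theorem \ref{th:existence}) is the intended one.
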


In particular, the $2U \oplus nA_1$-tower \eqref{eq:nA1} of modular forms extends naturally to $n = 5$. Similarly, we obtain free rings of meromorphic modular forms for $\Orth^+(2U\oplus D_m)$ with poles on $\cH_{D_m,1}$ for $m=9$, $10$, $11$. In this case, we need only square the generator of weight $12-m$ related to the theta block $\psi_{-m,D_m,1}$.

\section{Algebras of holomorphic modular forms on reducible root lattices}\label{sec:non-free}

In \cite{WW20a} we used Wirthm\"uller's theorem and the existence of modular forms with certain leading Fourier--Jacobi coefficients to determine the ring structure of some modular forms on irreducible root lattices. The argument proves in particular that all formal Fourier--Jacobi series satisfying a suitable symmetry condition actually define modular forms, and was first used by Aoki \cite{Aok00} to give a new proof of Igusa's structure theorem for $\mathrm{Sp}_4(\mathbb{Z})$, corresponding to the root lattice $A_1$.

We will use a similar approach to determine the algebras of modular forms on the discriminant kernel of some reducible root lattices. Unlike \cite{WW20a}, the algebras appearing here are not free.

Let $M=2U\oplus L$ and $G$ be a subgroup of $\Orth(L)$ containing $\widetilde{\Orth}(L)$. We denote by $\Gamma$ the subgroup generated by $\widetilde{\Orth}^+(M)$ and $G$. For any modular form $F$ of weight $k$ and trivial character for $\Gamma$, we write its Fourier and Fourier--Jacobi expansions as follows:
\begin{align*}
F(\tau,\mathfrak{z},\omega)=\sum_{\substack{n,m\in \NN, \ell\in L' \\2nm-\latt{\ell,\ell}\geq 0}}f(n,\ell,m)q^n\zeta^\ell\xi^m
=\sum_{m=0}^{\infty}\phi_m(\tau,\mathfrak{z})\xi^m,
\end{align*}
where as before $q=\exp(2\pi i\tau)$, $\zeta^\ell=\exp(2\pi i \latt{\ell, \mathfrak{z}})$, $\xi=\exp(2\pi i \omega)$.
The coefficients $\phi_m$ are $G$-invariant holomorphic Jacobi forms of weight $k$ and index $m$ associated to $L$, i.e. $\phi_m\in J_{k,L,m}^{G}$. The invariance of $F$ under the involution $(\tau,\mathfrak{z},\omega) \mapsto (\omega, \mathfrak{z}, \tau)$ yields the symmetry relation
$$
f(n,\ell,m)=f(m,\ell,n),\quad \text{for all} \; (n,\ell,m)\in \NN \oplus L' \oplus \NN,
$$
and further implies the bound (see \cite[\S 3]{WW20a} for details)
\begin{equation}\label{eq:MF-JF}
\dim M_k(\Gamma) \leq  \sum_{t=0}^{\infty} \dim J_{k-12t,L,t}^{\w ,G}.
\end{equation}
If there exists a positive constant $\delta<12$ such that 
\begin{equation}
J_{k,L,m}^{\w,G}=\{0\} \quad \text{if} \quad k<-\delta m,
\end{equation}
then \eqref{eq:MF-JF} can be improved to the finite upper bound
\begin{equation}
\dim M_k(\Gamma) \leq \sum_{t=0}^{[\frac{k}{12-\delta}]} \dim J_{k-12t,L,t}^{\w ,G},
\end{equation}
where $[x]$ is the integer part of $x$.

\begin{definition}
The subgroup $\Gamma$ will be called \emph{nice} if the inequality \eqref{eq:MF-JF} degenerates into an equality for every weight $k\in\NN$. 
\end{definition}

\begin{lemma}\label{lem:nice}
The group $\Gamma$ is nice if and only if, for any weak Jacobi form $\phi_m\in J_{k,L,m}^{\w,G}$, there exists a modular form of weight $k+12m$ for $\Gamma$ whose leading Fourier--Jacobi coefficient is $\Delta^m\phi_m \cdot \xi^m$.
\end{lemma}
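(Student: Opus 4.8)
The plan is to reinterpret the inequality \eqref{eq:MF-JF} as the statement that a natural filtration on $M_k(\Gamma)$ has graded pieces of controlled size, and then to observe that ``nice'' means exactly that every graded piece attains its maximum. Concretely, fix a weight $k$ and for each $t\geq 0$ let $V_t\subseteq M_k(\Gamma)$ be the space of forms $F=\sum_{j\geq 0}\phi_j\xi^j$ with $\phi_0=\dots=\phi_{t-1}=0$, so that $M_k(\Gamma)=V_0\supseteq V_1\supseteq V_2\supseteq\cdots$ and $\bigcap_t V_t=\{0\}$. If $F\in V_t$, then the symmetry relation $f(n,\ell,m)=f(m,\ell,n)$ forces the leading Fourier--Jacobi coefficient $\phi_t$ to have $q$-order at least $t$; since $\Delta=\eta^{24}$ has $q$-order one and is non-vanishing on $\HH\times(L\otimes\CC)$, the quotient $\phi_t/\Delta^t$ is holomorphic with non-negative $q$-order, hence a \emph{weak} Jacobi form, and it is $G$-invariant because $\phi_t$ is and $\Delta$ depends only on $\tau$. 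This produces a linear map $\lambda_t\colon V_t\to J_{k-12t,L,t}^{\w,G}$, $F\mapsto\phi_t/\Delta^t$, whose kernel is exactly $V_{t+1}$. Since the right-hand side of \eqref{eq:MF-JF} is finite — which is forced by $\dim M_k(\Gamma)<\infty$, and holds anyway once the hypothesis $J_{k,L,m}^{\w,G}=\{0\}$ for $k<-\delta m$ with $\delta<12$ is in force — the filtration stabilizes at $0$, and summing $\dim(V_t/V_{t+1})\leq\dim J_{k-12t,L,t}^{\w,G}$ over $t$ recovers \eqref{eq:MF-JF}. Hence \eqref{eq:MF-JF} is an equality in weight $k$ if and only if every $\lambda_t$ is surjective, and $\Gamma$ is nice if and only if $\lambda_t$ is surjective for every weight $k\in\NN$ and every $t\geq 0$.

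Granting this reduction, both implications are short. For the ``if'' direction, assume the stated lifting property; fix a weight $k'\in\NN$, an index $t$, and $\psi\in J_{k'-12t,L,t}^{\w,G}$, and apply the hypothesis to $\psi$ with $m=t$ and ambient weight $k'-12t$: this yields a modular form of weight $(k'-12t)+12t=k'$ for $\Gamma$ whose leading Fourier--Jacobi coefficient is $\Delta^t\psi\cdot\xi^t$, i.e. an element $F\in V_t$ with $\lambda_t(F)=\psi$. Thus $\lambda_t$ is surjective in every weight, so \eqref{eq:MF-JF} is everywhere an equality and $\Gamma$ is nice. Conversely, if $\Gamma$ is nice, then every $\lambda_t$ is surjective; applying this to $\lambda_m$ in weight $k+12m$, there is $F\in V_m\subseteq M_{k+12m}(\Gamma)$ with $\lambda_m(F)=\phi_m$, i.e. the $m$-th Fourier--Jacobi coefficient of $F$ equals $\Delta^m\phi_m$ and the lower ones vanish, which is precisely a modular form of weight $k+12m$ with leading Fourier--Jacobi coefficient $\Delta^m\phi_m\cdot\xi^m$.

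The only real content is the construction of the filtration, and in particular the verification that $\phi_t/\Delta^t$ is a genuine weak (not merely nearly holomorphic) $G$-invariant Jacobi form; this rests on deducing from $f(n,\ell,m)=f(m,\ell,n)$ that a form vanishing to $\xi$-order $t$ has $q$-order at least $t$ in its leading Fourier--Jacobi coefficient, which is the standard argument of Aoki \cite{Aok00} and \cite[\S 3]{WW20a} that I would simply invoke. Once this and the finiteness of the right-hand side of \eqref{eq:MF-JF} are in place, the rest is formal dimension bookkeeping: there is no analytic obstacle and no new estimate is required.
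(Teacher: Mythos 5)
Your proof is correct and follows essentially the same route as the paper: the paper phrases the argument via the two exact sequences $0\to M_k(\Gamma)(\xi^{r+1})\to M_k(\Gamma)(\xi^r)\stackrel{P_r}{\to} J_{k,L,r}^{G}(q^r)$ and $0\to J_{k,L,r}^{G}(q^r)\stackrel{Q_r}{\to} J_{k-12r,L,r}^{\w,G}$, and your map $\lambda_t$ is exactly the composite $Q_t\circ P_t$, with the same filtration by $\xi$-order and the same dimension count showing that niceness is equivalent to surjectivity in every weight and index. The one genuine technical point — that the symmetry $f(n,\ell,m)=f(m,\ell,n)$ forces $\phi_t=O(q^t)$ so that $\phi_t/\Delta^t$ is weak — is handled the same way in both arguments.
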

This implies in particular that $\Delta^m \phi_m$ is holomorphic for all weak Jacobi forms $\phi_m \in J_{k, L, m}^{\w, G}$.
\begin{proof}
For $r\geq 0$ we define
$$
M_k(\Gamma)(\xi^r)=\{F\in M_k(\Gamma) : F=O(\xi^r)\} \quad \text{and} \quad J_{k,L,m}^{G}(q^r)=\{\phi \in J_{k,L,m}^{G}: \phi=O(q^r) \}
$$
For any $r\geq 0$ we have the following exact sequences 
\begin{align*}
&0\longrightarrow M_k(\Gamma)(\xi^{r+1})\longrightarrow  M_k(\Gamma)(\xi^r)\stackrel{P_r}\longrightarrow J_{k,L,r}^{G}(q^r), \\
&0\longrightarrow J_{k,L,r}^{G}(q^r)\stackrel{Q_r}\longrightarrow J_{k-12r,L,r}^{\w, G},    
\end{align*}
where $P_r$ sends $F$ to its $r^{\text{th}}$ Fourier--Jacobi coefficient $\phi_r$, and $Q_r$ maps $\phi_r$ to $\phi_r / \Delta^r$. Taking dimensions shows that $\Gamma$ is nice if and only if, for any $r\geq 0$, the extended sequences 
\begin{align*}
&0\longrightarrow M_k(\Gamma)(\xi^{r+1})\longrightarrow  M_k(\Gamma)(\xi^r)\stackrel{P_r}\longrightarrow J_{k,L,r}^{G}(q^r)\longrightarrow 0 , \\
&0\longrightarrow J_{k,L,r}^{G}(q^r)\stackrel{Q_r}\longrightarrow J_{k-12r,L,r}^{\w, G} \longrightarrow 0,    
\end{align*}
are exact, and the proof follows.
\end{proof}

A formal series of holomorphic  Jacobi forms 
$$
\Psi(Z)=\sum_{m=0}^{\infty} \psi_m \xi^m \in \prod_{m=0}^\infty J_{k,L,m}^{G}
$$
is called a \textit{formal Fourier--Jacobi series} of weight $k$ if it satisfies the symmetry condition
$$
f_m(n,\ell)=f_n(m,\ell), \quad \text{for all}  \quad m,n\in \NN, \ell \in L',
$$
where $f_m(n,\ell)$ are Fourier coefficients of $\psi_m$. We label the space of formal FJ-series $FM_k(\Gamma)$.

\begin{lemma}\label{lem:modularity}
Assume that $\Gamma$ is nice.
\begin{enumerate}
    \item Every formal Fourier--Jacobi series converges on the tube domain $\cH(L)$ and defines a modular form for $\Gamma$. In other words, $$FM_k(\Gamma)=M_k(\Gamma), \quad \text{for all $k\in \NN$}.$$ 
    \item The lattice $L$ satisfies the $\mathrm{Norm}_2$ condition:
     $$
     \delta_L := \max\{ \min\{\latt{y,y}: y\in L + x \} : x \in L' \} \leq 2.
     $$
\end{enumerate}
\end{lemma}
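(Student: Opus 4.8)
The statement has two parts. Part (1) is a modularity statement for formal Fourier--Jacobi series, and Part (2) is a numerical constraint on $L$. I would prove (1) first and deduce (2) from it.

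\emph{Part (1).} The inclusion $M_k(\Gamma) \subseteq FM_k(\Gamma)$ is clear: the Fourier--Jacobi coefficients of a genuine modular form satisfy the symmetry relation by the invariance under $(\tau,\mathfrak z,\omega)\mapsto(\omega,\mathfrak z,\tau)$. For the reverse inclusion, the idea is to approximate a formal series $\Psi = \sum_m \psi_m \xi^m$ by genuine modular forms coefficient-by-coefficient. Fix $r \geq 0$. Since $\Gamma$ is nice, Lemma \ref{lem:nice} gives, for the weak Jacobi form $\psi_r/\Delta^r \in J_{k-12r,L,r}^{\w,G}$ (note $\psi_r/\Delta^r$ is indeed weak since $\psi_r$ is holomorphic), a modular form $G_r \in M_k(\Gamma)$ whose $r$-th Fourier--Jacobi coefficient is $\psi_r \xi^r$ and whose lower coefficients vanish, i.e. $G_r = O(\xi^r)$ with $r$-th coefficient exactly $\psi_r$. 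Now I build up $\Psi$ inductively: set $F_{-1} = 0$, and having constructed $F_{r-1} \in M_k(\Gamma)$ whose Fourier--Jacobi coefficients agree with those of $\Psi$ up to order $r-1$, consider $\Psi - F_{r-1}$, which is a formal FJ-series whose first nonzero coefficient is in order $\geq r$; call its $r$-th coefficient $\psi_r'$. Using the niceness exact sequence from the proof of Lemma \ref{lem:nice} (surjectivity of $P_r : M_k(\Gamma)(\xi^r) \to J_{k,L,r}^G(q^r)$), one needs that $\psi_r'$ lies in $J_{k,L,r}^G(q^r)$, i.e. that $\psi_r' = O(q^r)$ — this is exactly where the symmetry relation $f_r'(n,\ell) = f_n'(r,\ell)$ is used, since the coefficients $f_n'(r,\ell)$ with $n < r$ vanish (those are coefficients of the already-matched lower Fourier--Jacobi coefficients of $\Psi - F_{r-1}$). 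Hence there is $H_r \in M_k(\Gamma)(\xi^r)$ with $P_r(H_r) = \psi_r'$, and we set $F_r = F_{r-1} + H_r$. The sequence $F_r$ stabilizes in each Fourier--Jacobi order, so $F := \lim_r F_r$ is a formal series whose coefficients agree with $\Psi$; the point is that each $F_r$ is a \emph{genuine} modular form. To conclude that the limit is a genuine modular form (i.e. that it converges), one invokes the standard fact that a bounded-below graded ring argument or the finite-generation of $M_*(\Gamma)$ forces $M_k(\Gamma)$ to be finite-dimensional in each weight, so the filtration $M_k(\Gamma)(\xi^r)$ terminates: for $r$ large enough $M_k(\Gamma)(\xi^r) = 0$, hence $H_r = 0$ and the process terminates, giving $\Psi = F_N \in M_k(\Gamma)$ for some finite $N$.

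\emph{Part (2).} This is a clean consequence of Part (1) together with the theta-block machinery. The plan is to exhibit, for a lattice $L$ violating the $\mathrm{Norm}_2$ condition, a formal Fourier--Jacobi series that fails to be a modular form, contradicting (1). If $\delta_L > 2$, pick $x \in L'$ with $\min\{\latt{y,y} : y \in L + x\} =: \delta_x > 2$ and let $v \in L+x$ attain this minimum. The relevant obstruction lives in the singular Fourier coefficients: by Remark \ref{rem:divisor}(4), a weak Jacobi form $\phi \in J_{k,L,1}^{\w,G}$ with $\phi = O(q)$ (so that $\Delta^0 \cdot \phi$-type issues don't arise) can have nonzero Fourier coefficient $f(1,v)$ with $2\cdot 1 - \latt{v,v} < 0$, and the associated Gritsenko-type lift will have a pole along $\cD_{(0,1,v,1,0)}$. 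Concretely, the plan is: the index-$1$ weak Jacobi form $E_{4,L}$ or $E_{6,L}$ from Lemma \ref{lem:E4E6} has a nonzero singular coefficient in the $q^1$-term precisely when $\delta_L > 2$, so $\Delta \cdot E_{4,L}$ is \emph{not} a holomorphic Jacobi form — but niceness via Lemma \ref{lem:nice} would force $\Delta^m \phi_m$ holomorphic for every weak $\phi_m$, a contradiction. Thus $\delta_L \leq 2$. (Alternatively, one produces a formal FJ-series directly from the additive lift whose would-be limit has a pole, again contradicting (1).)

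\textbf{Main obstacle.} The delicate point in Part (1) is the convergence/termination of the inductive construction: a priori the $F_r$ are only formal limits, and one must know that $M_k(\Gamma)$ is finite-dimensional (equivalently that the filtration by $\xi$-order terminates) in order to conclude that the limit is an honest, convergent modular form rather than merely a formal series whose coefficients happen to be realized individually. This finiteness is available because $M_*(\Gamma)$ is finitely generated (Looijenga, or the classical theory for holomorphic forms), but it must be invoked carefully; without it the argument only shows each individual Fourier--Jacobi coefficient of $\Psi$ extends, not that $\Psi$ itself does. Part (2) is comparatively routine once the correct singular-coefficient bookkeeping (Remark \ref{rem:divisor}(4) and the construction in Lemma \ref{lem:Jacobi}) is in place.
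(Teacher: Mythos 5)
Your treatment of part (1) is correct but takes a genuinely different route from the paper. The paper's proof is a two-line dimension count: the map $M_k(\Gamma)\to FM_k(\Gamma)$ sending a form to its Fourier--Jacobi expansion is injective, the symmetry argument behind the bound \eqref{eq:MF-JF} applies verbatim to formal series and gives $\dim FM_k(\Gamma)\leq\sum_t\dim J^{\w,G}_{k-12t,L,t}$, and niceness says this upper bound equals $\dim M_k(\Gamma)$, so the injection is an isomorphism. Your inductive, Aoki-style construction (peel off one Fourier--Jacobi coefficient at a time using the surjectivity of $P_r$, with the symmetry relation guaranteeing $\psi_r'=O(q^r)$ at each step) is also valid, and is essentially the argument the paper deploys later for the generation statement in Theorem \ref{th:criterion}; the termination worry you raise is resolved most cleanly by noting that niceness itself forces $J^{\w,G}_{k-12t,L,t}=0$ for $t\gg 0$ (the sum of these dimensions equals the finite number $\dim M_k(\Gamma)$), whence $\psi_r'=0$ for all large $r$ and the process stops after finitely many steps. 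The constructive argument is longer but more explicit; the dimension count buys brevity.

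For part (2) your strategy coincides with the paper's --- niceness forces, via Lemma \ref{lem:nice}, that $\Delta\phi$ is a holomorphic Jacobi form for every $\phi\in J^{\w,G}_{k,L,1}$, and one must exhibit a $\phi$ violating this when $\delta_L>2$ --- but your choice of witness does not work. The form $E_{4,L}$ of Lemma \ref{lem:E4E6} has $q^0$-term equal to the constant $1$, so multiplying by $\Delta$ contributes nothing singular to the $q^1$-term, and there is no reason for $E_{4,L}$ to have nonzero singular coefficients at all: whenever a holomorphic Jacobi--Eisenstein series of weight $4$ and index $1$ exists, $E_{4,L}$ may be taken holomorphic regardless of the value of $\delta_L$, so the claimed equivalence ``nonzero singular coefficient in the $q^1$-term precisely when $\delta_L>2$'' is unjustified and false in general. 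The correct witness is a weak Jacobi form of index $1$ whose $q^0$-term contains $\zeta^\ell$ for some $\ell$ with $\latt{\ell,\ell}>2$; such a form exists by Lemma \ref{lem:Jacobi}(1) (take a form whose $q^0$-term is the basic orbit $\orb(\gamma)$ of a coset $\gamma$ with $\delta_\gamma>2$, averaged over the finite group $G$ if necessary --- the average is a nonnegative combination of orbits of the same minimal norm, hence still nonzero). Then $\Delta\phi$ has a nonzero coefficient of $q^1\zeta^\ell$ with $2-\latt{\ell,\ell}<0$, so it is not holomorphic, which is the desired contradiction.
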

\begin{proof}
(1) The map
$$
M_k(\Gamma) \to FM_k(\Gamma), \quad F\mapsto \text{Fourier--Jacobi expansion of $F$}
$$
is injective. The assumption that $\Gamma$ is nice is precisely the claim that both sides of this map have the same dimension.

(2) Suppose that $\delta_L > 2$. Then there exists a weak Jacobi form $\phi$ whose $q^0$-term contains the nonzero coefficients $\zeta^\ell$ with $\latt{\ell,\ell}>2$. In this case, $\Delta \phi$ is not a holomorphic Jacobi form, which contradicts Lemma \ref{lem:nice}.
\end{proof}

If enough Jacobi forms occur as the leading Fourier--Jacobi coeffcients of modular forms for $\Gamma$, then $\Gamma$ is automatically nice:

\begin{assumption}\label{assum}
We make the following assumptions:
\begin{enumerate}
    \item The bigraded ring $J_{*,L,*}^{\w,G}$ is minimally generated over  $\CC[E_4,E_6]$ by $N$ generators
$$
\phi_i\in J_{k_i,L,m_i}^{\w,G}, \;1\leq i \leq N.
$$
\item There exist modular forms $\cE_4 \in M_4(\Gamma)$ and $\cE_6\in M_6(\Gamma)$ whose zeroth Fourier--Jacobi coefficients are respectively the Eisenstein series $E_4$ and $E_6$.
\item For each $1\leq i\leq N$, there exists a modular form $\Phi_i\in M_{k_i+12m_i}(\Gamma)$ whose Fourier--Jacobi expansion has the form
$$\Phi_i = (\Delta^{m_i} \phi_i) \xi^{m_i} + O(\xi^{m_i+1}).$$
\end{enumerate}
\end{assumption}

\begin{theorem}\label{th:criterion}
If Assumption \ref{assum} holds, then $\Gamma$ is nice. Moreover, $M_*(\Gamma)$ is minimally generated over $\CC$ by $\cE_4$, $\cE_6$ and $\Phi_i$, $1\leq i \leq N$.
\end{theorem}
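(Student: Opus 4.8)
\emph{Overall approach.} The statement will follow from Lemma~\ref{lem:nice} together with a Fourier--Jacobi descent. Throughout, write $\ell(F)$ for the $\xi$-adic order of a modular form $F$ on $\Gamma$, i.e.\ the least $m$ with $\phi_m\neq 0$ in its expansion $F=\sum_m\phi_m\xi^m$, and recall that leading Fourier--Jacobi coefficients are multiplicative: if $F=\Delta^{r}\psi\,\xi^{r}+O(\xi^{r+1})$ and $F'=\Delta^{r'}\psi'\,\xi^{r'}+O(\xi^{r'+1})$ with $\psi\psi'\neq 0$, then $FF'=\Delta^{r+r'}\psi\psi'\,\xi^{r+r'}+O(\xi^{r+r'+1})$. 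Hence, using the forms $\cE_4,\cE_6$ and $\Phi_i$ of Assumption~\ref{assum}, any monomial $\cE_4^{a}\cE_6^{b}\prod_i\Phi_i^{e_i}$ of weight $w=4a+6b+\sum_i e_i(k_i+12m_i)$ has leading Fourier--Jacobi coefficient $E_4^{a}E_6^{b}\bigl(\Delta^{t}\prod_i\phi_i^{e_i}\bigr)\xi^{t}$ with $t=\sum_i e_im_i$. Two preliminary remarks: first, Assumption~\ref{assum} forces $k_i+12m_i>0$ for all $i$ (a holomorphic modular form on $\Gamma$ of weight $\le 0$ is constant, but $\Phi_i$ is not, its expansion starting at $\xi^{m_i}$ with $m_i\ge 1$), so $\delta:=\max_i(-k_i/m_i)<12$; second, since $J_{*,L,*}^{\w,G}$ is spanned over $\CC$ by the monomials $E_4^{a}E_6^{b}\prod_i\phi_i^{e_i}$, every such monomial of index $t$ has weight $\ge -\delta t$, so $J_{k,L,t}^{\w,G}=\{0\}$ whenever $k<-\delta t$.

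\emph{Niceness and generation.} Given any $\phi_t\in J_{k,L,t}^{\w,G}$, express it by Assumption~\ref{assum} as a bihomogeneous polynomial $P(E_4,E_6,\phi_1,\dots,\phi_N)$ with all monomials of weight $k$ and index $t$; then $P(\cE_4,\cE_6,\Phi_1,\dots,\Phi_N)\in M_{k+12t}(\Gamma)$ has leading Fourier--Jacobi coefficient exactly $\Delta^{t}\phi_t\,\xi^{t}$, so $\Gamma$ is nice by Lemma~\ref{lem:nice}. For generation, take $F\in M_k(\Gamma)$ with expansion $\sum_{m\ge t_0}\phi_m\xi^m$, $\phi_{t_0}\neq 0$. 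The involution symmetry $f(n,\ell,m)=f(m,\ell,n)$ shows $\phi_{t_0}$ is supported on exponents $n\ge t_0$, so $\phi_{t_0}/\Delta^{t_0}$ is a $G$-invariant weak Jacobi form of weight $k-12t_0$ and index $t_0$; subtracting from $F$ the polynomial in $\cE_4,\cE_6,\Phi_i$ just produced from $\phi_{t_0}/\Delta^{t_0}$ yields a form of weight $k$ with strictly larger $\xi$-order. Iterating, once the $\xi$-order exceeds $k/(12-\delta)$ the leading Fourier--Jacobi coefficient would lie in a vanishing space $J_{*,L,t}^{\w,G}$, forcing the remainder to be $0$. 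Hence $M_*(\Gamma)=\CC[\cE_4,\cE_6,\Phi_1,\dots,\Phi_N]$, which with niceness gives the asserted structure.

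\emph{Minimality.} We must show that none of $\cE_4,\cE_6,\Phi_1,\dots,\Phi_N$ lies in the subalgebra generated by the others; equivalently, grading $J_{*,L,*}^{\w,G}$ by $k+12t$ so that it is connected and $E_4,E_6,\phi_1,\dots,\phi_N$ is a minimal homogeneous generating set, hence a basis of indecomposables, we must show $\cE_4,\cE_6,\Phi_i$ are linearly independent modulo $\bigl(\bigoplus_{k>0}M_k(\Gamma)\bigr)^2$. Suppose $\sum_\alpha c_\alpha g_\alpha$ lies in this ideal, with the $g_\alpha$ among our generators of a common weight $d$ and not all $c_\alpha$ zero; let $t_0$ be the least $\xi$-order among the $g_\alpha$ with $c_\alpha\neq0$. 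The $\xi^{t_0}$-coefficient of the left side is $\Delta^{t_0}\sum_{\ell(g_\alpha)=t_0}c_\alpha\bar g_\alpha$, where $\bar g_\alpha\in\{E_4,E_6,\phi_1,\dots,\phi_N\}$ is the symbol of $g_\alpha$, while the $\xi^{t_0}$-coefficient of any element of $\bigl(\bigoplus_{k>0}M_k(\Gamma)\bigr)^2$ is $\Delta^{t_0}$ times an indecomposable-free element of $J_{*,L,*}^{\w,G}$. Hence $\sum_{\ell(g_\alpha)=t_0}c_\alpha\bar g_\alpha$ is decomposable in $J_{*,L,*}^{\w,G}$; the $\bar g_\alpha$ being distinct members of the minimal generating set, this forces $c_\alpha=0$ for all $\alpha$ with $\ell(g_\alpha)=t_0$. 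Repeating with the next $\xi$-order kills every $c_\alpha$, a contradiction, so the generating set is minimal.

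\emph{Expected main obstacle.} The delicate point is the claim just used in the minimality step, that taking the leading Fourier--Jacobi coefficient carries $\bigl(\bigoplus_{k>0}M_k(\Gamma)\bigr)^2$ into $\Delta^{t_0}$ times the ideal of decomposables of $J_{*,L,*}^{\w,G}$. For a single product $FF'$ of positive-weight forms this is the multiplicativity recorded above (with $\Delta^{\ell(F)}$-divided symbols of $F$ and $F'$, themselves positive-weight weak Jacobi forms by the symmetry relation). In a sum $\sum_j F_jF_j'$, however, the individual leading terms may cancel and drop the $\xi$-order of the sum below $\min_j(\ell(F_j)+\ell(F_j'))$; one must then reapply the symmetry relation to see that the surviving lower-order contribution is again a sum of products of positive-weight weak Jacobi forms, so that an induction on $\sum_j(\ell(F_j)+\ell(F_j'))$ (or on the number of summands) closes the argument. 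Modulo this bookkeeping the theorem is a formal consequence of Lemma~\ref{lem:nice} and Assumption~\ref{assum}.
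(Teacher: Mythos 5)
Your proposal follows the paper's proof essentially verbatim: niceness comes from Assumption \ref{assum}(3) via Lemma \ref{lem:nice}, generation is the same Fourier--Jacobi descent $F \mapsto F - P_r(\cE_4,\cE_6,\Phi_i)$ terminating by the vanishing of $J^{\w,G}_{k-12t,L,t}$ for large $t$, and minimality is argued exactly as in the paper by passing to leading Fourier--Jacobi coefficients and invoking minimality of the generators of $J^{\w,G}_{*,L,*}$ (where one should read ``decomposable'' as lying in $J_+^2 + (E_4,E_6)J_+$, since monomials such as $\cE_4\Phi_i$ contribute $E_4\phi_i$, which is not in $J_+^2$). The cancellation phenomenon you flag as the main obstacle — note that cancellation of leading terms raises, rather than lowers, the $\xi$-order of a sum of products — is treated no more carefully in the paper, which likewise asserts directly that the leading Fourier--Jacobi coefficient of a decomposable modular form factors through products of Jacobi forms of smaller index.
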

\begin{proof}
The proof is similar to that of \cite[Proposition 4.2]{WW20a}. Condition (3) immediately implies that $\Gamma$ is nice, so we only need to prove that it is generated by the claimed forms. 
Let $r\geq 1$ and $F_r=\sum_{m=r}^{\infty} f_{m,r} \xi^m \in M_k(\Gamma)(\xi^r)$. For any $m\geq r$, $$f_{m,r}\in J_{k,L,m}^{G}(q^r)$$ implies $f_{r,r}/ \Delta^r\in J_{k-12r,L,r}^{\w,G}$. By Assumption \ref{assum} there exists a polynomial $$P_r\in \CC[E_4,E_6, \phi_i, 1\leq i \leq N]$$ such that $f_{r,r}=\Delta^r P_r(E_4,E_6,\phi_i)$. It follows that
\begin{align*}
&F_{r+1}:=F_r-P_r(\cE_4,\cE_6,\Phi_i)=\sum_{m=r+1}^{\infty} f_{m,r+1} \xi^m \in M_k(\Gamma)(\xi^{r+1}),\\
&f_{m,r+1}\in J_{k,L,m}^G(q^{r+1}), \quad \text{for all $m\geq r+1$}.
\end{align*}
The proof follows by induction over $r$, since $M_k(\Gamma)(\xi^r) = \{0\}$ for $r$ sufficiently large.

We now explain why the set of generators is minimal. Suppose $P$ is a polynomial expression in the generators whose leading Fourier--Jacobi coefficients have index less than some $t \in \mathbb{N}$, and that $P$ itself has leading Fourier--Jacobi coefficient of index $t$.  Then this leading coefficient factors as a product of Jacobi forms of index less than $t$ (indeed, all Fourier--Jacobi coefficients of $P$ do). By minimality of the generators of $J_{*, L, *}^{\w, G}$, this leading coefficient cannot be a $\mathbb{C}[E_4,E_6]$-linear combination of the index $t$ weak Jacobi form generators. This implies that none of the generators of $M_*(\Gamma)$ can be written as a polynomial in the others, so the system of generators is minimal. 
\end{proof}

Theorem \ref{th:criterion} allows us to determine algebras of holomorphic modular forms for nice groups $\Gamma$. We focus on the most interesting case when $L$ is a root lattice and $\Gamma=\widetilde{\Orth}^+(2U\oplus L)$ (i.e. $G=\widetilde{\Orth}(L)$). We first classify all root lattices such that $\widetilde{\Orth}^+(2U\oplus L)$ are nice. 
\begin{enumerate}
    \item The existence of $\cE_4\in M_4(\Gamma)$ forces the singular weight to be at most $4$, and therefore $\rk(L)\leq 8$. 
    \item By Lemma \ref{lem:modularity} (2), the lattice $L$ satisfies $\delta_L \leq 2$.
    \item We know from \cite{HU14} that $M_*(\widetilde{\Orth}^+(2U\oplus E_8))$ is a free algebra. However, by \cite[\S 6]{Wan21b} the ring $J_{*,E_8,*}^{\w,W(E_8)}$ is not a polynomial algebra, and the group $\widetilde{\Orth}^+(2U\oplus E_8)$ is not nice because 
    $$
    \dim M_{34}(\widetilde{\Orth}^+(2U\oplus E_8))=12 \quad \text{but} \quad \sum_{t=0}^\infty \dim J_{34-12t,E_8,t}^{\w,W(E_8)} = 13. 
    $$
\end{enumerate}

The values of $\delta_L$ for irreducible ADE root lattices of rank at most $8$ are listed in Table \ref{tab:delta}.

\begin{remark} Modularity of symmetric formal Fourier--Jacobi series, on the other hand, does not imply that $\Gamma$ is nice; indeed $\Gamma = \widetilde{\Orth}^+(2U\oplus E_8)$ is a counterexample. A symmetric formal Fourier--Jacobi series defines a modular form if and only if it converges, and in particular this condition is preserved by passage to a sublattice; and the convergence of symmetric formal Fourier--Jacobi series for the subgroup $\widetilde{\Orth}^+(2U\oplus D_8)$ was shown in \cite{WW20a}. 
\end{remark}

\begin{table}[ht]
\caption{The values of $\delta_L$}\label{tab:delta}
\renewcommand\arraystretch{1.3}
\noindent\[
\begin{array}{|c|c|c|c|c|c|c|c|c|c|c|c|c|c|c|c|c|}
\hline 
L & A_1 & A_2 & A_3 & A_4 & A_5 & A_6 & A_7 & A_8 & D_4 & D_5 & D_6 & D_7 & D_8 & E_6 & E_7 & E_8 \\ 
\hline 
\delta_L & \frac{1}{2} & \frac{2}{3} & 1 & \frac{6}{5} & \frac{3}{2} & \frac{12}{7} & 2 & \frac{20}{9} & 1 & \frac{5}{4} & \frac{3}{2} & \frac{7}{4} & 2 & \frac{4}{3} & \frac{3}{2} & 0 \\ 
\hline
\end{array} 
\]
\end{table}

\begin{lemma}\label{lem:lattices}
There are exactly $40$ root lattices satisfying the $\mathrm{Norm}_2$ condition whose irreducible components do not contain $E_8$. They are listed in Table \ref{tab:Norm_2 lattices}.
\end{lemma}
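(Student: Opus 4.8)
The plan is to reduce the statement to a short finite enumeration, the key input being that the invariant $\delta_L$ of \eqref{eq:delta_L} is additive under orthogonal direct sums. If $L = L_1 \oplus L_2$ then $L' = L_1' \oplus L_2'$, and for a class $\gamma = \gamma_1 + \gamma_2 \in L'/L$ one has, in the notation of \eqref{eq:delta_gamma},
\[
\delta_\gamma \;=\; \min_{\substack{y_1 \in L_1 + \gamma_1 \\ y_2 \in L_2 + \gamma_2}} \big( \latt{y_1, y_1} + \latt{y_2, y_2} \big) \;=\; \delta_{\gamma_1} + \delta_{\gamma_2},
\]
since the two summands are minimized independently. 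Maximizing over $\gamma$ gives $\delta_{L_1 \oplus L_2} = \delta_{L_1} + \delta_{L_2}$, and hence by induction $\delta_L = \sum_i \delta_{R_i}$ when $L = \bigoplus_i R_i$ is the decomposition into irreducible root lattices.

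Next I would pin down which irreducible root lattices can occur as a component of an admissible $L$. Since $\delta_R > 0$ for every nonzero irreducible root lattice, additivity forces $\delta_R \le \delta_L \le 2$ for each component $R$. Using the standard glue-vector computations — which give $\delta_{A_n} = \big\lfloor \tfrac{n+1}{2} \big\rfloor \big\lceil \tfrac{n+1}{2} \big\rceil / (n+1)$, $\delta_{D_n} = \max\{1, n/4\}$, $\delta_{E_6} = 4/3$, $\delta_{E_7} = 3/2$, $\delta_{E_8} = 0$, and are recorded for rank $\le 8$ in Table \ref{tab:delta} — these values are nondecreasing in $n$, and $\delta_{A_8} = 20/9 > 2$, $\delta_{D_9} = 9/4 > 2$. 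Since $E_8$ is excluded by hypothesis, the only irreducible components that can occur are the fourteen lattices $A_1, \dots, A_7$, $D_4, \dots, D_8$, $E_6$, $E_7$; these are exactly the admissible irreducible root lattices.

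Finally I would carry out the enumeration. Because $\delta_R \ge \delta_{A_1} = 1/2$ for every irreducible component, an admissible $L$ has at most four components, so it suffices to list all multisets of size between $1$ and $4$ drawn from the fourteen lattices above whose $\delta$-sum is $\le 2$. Ordering the relevant values as $\delta_{A_1} = \tfrac12 < \delta_{A_2} = \tfrac23 < \delta_{A_3} = \delta_{D_4} = 1 < \delta_{A_4} = \tfrac65 < \delta_{D_5} = \tfrac54 < \delta_{E_6} = \tfrac43 < \delta_{A_5} = \delta_{D_6} = \delta_{E_7} = \tfrac32 < \delta_{A_6} = \tfrac{12}{7} < \delta_{D_7} = \tfrac74 < \delta_{A_7} = \delta_{D_8} = 2$, one finds $14$ admissible multisets of size one, $19$ of size two (organized by the smallest summand: ten containing $A_1$, six further ones containing $A_2$ but not $A_1$, and $2A_3$, $A_3 \oplus D_4$, $2D_4$), $6$ of size three ($3A_1$, $2A_1 \oplus A_2$, $2A_1 \oplus A_3$, $2A_1 \oplus D_4$, $A_1 \oplus 2A_2$, $3A_2$), and exactly $1$ of size four ($4A_1$). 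The total is $14 + 19 + 6 + 1 = 40$, and these are precisely the entries of Table \ref{tab:Norm_2 lattices}.

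The argument is elementary once additivity of $\delta_L$ is established, so the only real obstacle is careful bookkeeping in the size-two enumeration — verifying the nineteen cases exhaustively and without repetition. A minor point worth stating explicitly is why the hypothesis excluding $E_8$ is essential: one has $\delta_{E_8} = 0$, so without it every lattice $L \oplus k E_8$ with $\delta_L \le 2$ would be admissible and the count would be infinite.
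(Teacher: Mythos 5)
Your proposal is correct and follows the same route the paper implicitly takes: the paper records the values of $\delta_R$ for the irreducible ADE lattices of rank at most $8$ in Table \ref{tab:delta} and leaves the enumeration as a direct computation, which is exactly the additivity-plus-bookkeeping argument you carry out. Your count ($14+19+6+1=40$) and the individual multisets all check out against Table \ref{tab:Norm_2 lattices}, and making the additivity $\delta_{L_1\oplus L_2}=\delta_{L_1}+\delta_{L_2}$ and the role of $\delta_{E_8}=0$ explicit is a worthwhile addition.
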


\begin{table}[ht]
\caption{Lattices not of type $E_8$ satisfying the $\mathrm{Norm}_2$ condition}\label{tab:Norm_2 lattices}
\renewcommand\arraystretch{1.3}
\noindent\[
\begin{array}{|c|c|}
\hline
\text{Type} & \text{Lattice} \\ \hline
A & A_1,\; 2A_1,\; 3A_1,\; 4A_1,\; A_2,\; A_3,\; A_4,\; A_5,\; A_6,\; A_7,\; 2A_1\oplus A_2,\; 2A_1\oplus A_3,\; A_1\oplus A_2,\\
&  A_1\oplus 2A_2,\; A_1\oplus A_3,\; A_1\oplus A_4,\; A_1\oplus A_5,\; 2A_2,\; 3A_2,\; A_2\oplus A_3,\; A_2\oplus A_4,\; 2A_3 \\ \hline
D & D_4,\; D_5,\; D_6,\; D_7,\; D_8,\; 2D_4 \\ \hline
AD & A_1\oplus D_4,\; A_1\oplus D_5,\; A_1\oplus D_6,\; 2A_1\oplus D_4,\; A_2\oplus D_4,\; A_2\oplus D_5,\; A_3\oplus D_4 \\ \hline
E & E_6,\; E_7 \\ \hline
AE & A_1\oplus E_6,\; A_2\oplus E_6,\; A_1\oplus E_7 \\
\hline
\end{array}
\]
\end{table}

We can now derive a classification of nice groups of type $\widetilde{\Orth}^+(2U\oplus L)$ for root lattices $L$.

\begin{theorem}\label{th:non-free}
Let $L$ be a root lattice. Then $\widetilde{\Orth}^+(2U\oplus L)$ is nice if and only if it is one of the $40$ lattices in Table \ref{tab:Norm_2 lattices}.  For any $L$ in Table \ref{tab:Norm_2 lattices}, the ring of modular forms for $\widetilde{\Orth}^+(2U\oplus L)$ is minimally generated in weights $4$, $6$ and $12m+k$, where the pairs $(k, m)$ are the weights and indices of the generators of the ring of $\widetilde{\Orth}(L)$-invariant weak Jacobi forms. 
\end{theorem}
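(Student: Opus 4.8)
The plan is to prove Theorem \ref{th:non-free} by a two-step argument: first verify that the $40$ lattices in Table \ref{tab:Norm_2 lattices} are exactly those for which $\widetilde{\Orth}^+(2U\oplus L)$ can possibly be nice, and then, for each such lattice, verify Assumption \ref{assum} so that Theorem \ref{th:criterion} applies and delivers both the niceness and the minimal system of generators.

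For the ``only if'' direction, I would run through the three numbered constraints listed just before Table \ref{tab:delta}. The existence of $\cE_4\in M_4(\Gamma)$ forces the singular weight $\rk(L)/2$ to be at most $4$, hence $\rk(L)\le 8$; Lemma \ref{lem:modularity}(2) forces $\delta_L\le 2$; and the $E_8$ case is excluded because $J^{\w}_{*,E_8,*}$ is not polynomial and the dimension count in weight $34$ fails (this is the remark recalled after Table \ref{tab:delta}, citing \cite{Wan21b, HU14}). It then remains to enumerate the root lattices of rank $\le 8$, containing no $E_8$ summand, with $\delta_L\le 2$. Since $\delta_{L_1\oplus L_2}=\max(\delta_{L_1},\delta_{L_2})$ for orthogonal direct sums, and the values of $\delta$ for irreducible components are in Table \ref{tab:delta}, this is the finite combinatorial check recorded in Lemma \ref{lem:lattices}, yielding precisely the $40$ lattices of Table \ref{tab:Norm_2 lattices}.

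For the ``if'' direction, fix $L$ in Table \ref{tab:Norm_2 lattices} and set $\Gamma=\widetilde{\Orth}^+(2U\oplus L)$, $G=\widetilde{\Orth}(L)$. Assumption \ref{assum}(1) is the statement that $J^{\w,G}_{*,L,*}$ is finitely generated with an explicit minimal generating set; by Remark \ref{rem:sum of Jacobi forms} this reduces to the irreducible case, where it is Wirthm\"uller's theorem together with the tables of \S\ref{subsec:Weyl} (note that $\widetilde{\Orth}(R_j)$ contains $W(R_j)$, and for these rank-$\le 8$ root lattices the relevant invariance group is either $W(R_j)$ or the slightly larger group listed there, whose invariants are again polynomial). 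Assumption \ref{assum}(2) is Lemma \ref{lem:E4E6}, which applies because $J^{\w}_{2*,L,1}$ is generated in non-positive weight for every root lattice (again by \S\ref{subsec:Weyl}); one takes $\cE_4=\Grit(E_{4,L})$ and $\cE_6=\Grit(E_{6,L})$, which are \emph{holomorphic} here precisely because $\delta_L\le 2$ forces $\cH_{L,1}=\emptyset$. Assumption \ref{assum}(3) is the crux: for each weak Jacobi form generator $\phi_i\in J^{\w,G}_{k_i,L,m_i}$ we need a holomorphic modular form $\Phi_i\in M_{k_i+12m_i}(\Gamma)$ with leading Fourier--Jacobi coefficient $\Delta^{m_i}\phi_i\cdot\xi^{m_i}$. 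For index-one generators this is immediate from Theorem \ref{th:additive} (singular additive lift of $\Delta\phi_i$, which is a genuine holomorphic Jacobi form since $\delta_L\le 2$). For the $14$ irreducible lattices this is exactly the main theorem of \cite{WW20a}. For the reducible lattices, the higher-index generators of types $D_m$, $E_6$, $E_7$ must be produced; I would obtain these exactly as in \S\ref{subsubsec: generators of type (3)}: embed $L$ into a larger lattice in Table \ref{tab:Norm_2 lattices} for which the corresponding generator is already constructed (ultimately into $2U\oplus D_8$, using Lemma \ref{lem:D8}, or into the $AE$-lattices treated in \S\ref{sec:type IV}), pull back the generator, and correct the leading Fourier--Jacobi coefficient by a polynomial in the already-constructed lower-index generators, using Remark \ref{rem:sum of Jacobi forms} to control which $\CC[E_4,E_6]$-combinations of tensor products appear. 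Once Assumption \ref{assum} is verified, Theorem \ref{th:criterion} gives both that $\Gamma$ is nice and that $M_*(\Gamma)=\CC[\cE_4,\cE_6,\Phi_1,\dots,\Phi_N]$ minimally, with weights $4$, $6$ and $k_i+12m_i$ as claimed.

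The main obstacle is Assumption \ref{assum}(3) for the reducible lattices whose $L_1$-part is $D_m$ with $m\ge 5$ (so that there are index-two generators) or $E_6, E_7$ (index up to $3$ or $4$): one must construct holomorphic orthogonal modular forms realizing the higher-index Weyl-invariant weak Jacobi form generators as leading Fourier--Jacobi coefficients, and verify holomorphy. I expect the cleanest route is the inductive pullback-and-correct argument already used in \S\ref{sec:type IV} (with $\cH_{L,1}=\emptyset$ everything that was ``meromorphic away from $\cH_{L,1}$'' there is now honestly holomorphic), so that the present theorem completes, and is completed by, the construction of Jacobi-type generators for the $AE$-lattices promised in \S\ref{sec:type IV}. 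The bookkeeping — checking that the correction polynomials exist in each of the finitely many cases — is routine given Remark \ref{rem:sum of Jacobi forms} and the dimension bound \eqref{eq:MF-JF}, but it is the step that requires genuine case-by-case work.
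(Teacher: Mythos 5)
Your overall architecture matches the paper's proof exactly: the ``only if'' direction via the three constraints (singular weight $\le 4$, the $\mathrm{Norm}_2$ condition from Lemma \ref{lem:modularity}(2), and the exclusion of $E_8$ by the weight-$34$ dimension count), followed by verification of Assumption \ref{assum} and an appeal to Theorem \ref{th:criterion}. Conditions (1) and (2) are handled as you describe, and for the $A$-, $D$- and $AD$-type lattices your ``pullback from $2U\oplus D_8$ and correct the leading Fourier--Jacobi coefficient'' strategy is precisely what the paper does (via Lemmas \ref{lem:L2}, \ref{lem:L3} and \ref{lem:D8}).

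The gap is in Condition (3) for the $AE$-type lattices, specifically $L=A_2\oplus E_6$ and $L=A_1\oplus E_7$. You propose to obtain their higher-index Jacobi-type generators by embedding into a larger lattice ``ultimately into $2U\oplus D_8$\ldots or into the $AE$-lattices treated in \S\ref{sec:type IV}''. Neither option is available: $E_6$ and $E_7$ do not embed into $D_8$ (e.g.\ $E_7$ has $126$ roots while $D_8$ has only $112$, so no isometric embedding exists), and \S\ref{sec:type IV} explicitly \emph{defers} the construction of the Jacobi-type generators for the $AE$-lattices to the present section --- Corollary \ref{cor: AE jacobi_type} derives them \emph{from} Theorem \ref{th:non-free}, so invoking \S\ref{sec:type IV} here would be circular. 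Moreover $A_2\oplus E_6$ and $A_1\oplus E_7$ are maximal among the lattices in Table \ref{tab:Norm_2 lattices}, so there is no larger lattice in the table to pull back from. The paper closes this case by an entirely different argument: it suffices to prove $\dim M_k(\Gamma)\ge\sum_t\dim J^{\w,G}_{k-12t,L,t}$ for $k\le k_0$ ($k_0=24$ resp.\ $30$), and this lower bound is obtained by pulling back Gritsenko lifts along many lattice vectors $v\in L$ to Siegel paramodular forms of level $v^2/2$ and verifying linear independence of the images by explicit Fourier-coefficient computation (carried out in SageMath). Without this, or some substitute construction of the index-$2$, $3$, $4$ generators for $E_6$ and $E_7$ tensored with the $A$-part, your proof of Condition (3) does not go through for these two lattices (and hence not for $A_1\oplus E_6$ either, which the paper handles by restriction from them).
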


In the $14$ cases where $L$ is irreducible, this was proved in \cite{WW20a} and the corresponding ring of modular forms is a free algebra. The algebras in the remaining $26$ cases are not freely generated.

\begin{proof}
By the discussions above, if $\widetilde{\Orth}^+(2U\oplus L)$ is nice then $L$ is a lattice in Lemma \ref{lem:lattices}. To prove the theorem it is enough to verify that every lattice in Lemma \ref{lem:lattices} satisfies Assumption \ref{assum}. Let $L=\oplus_{j=1}^n L_j$ be the decomposition into irreducible root lattices. By Wirthm\"uller's theorem \cite{Wir92}, the ring of $J_{*,L_j,*}^{\w,W(L_j)}$ is a polynomial algebra for $1\leq j\leq n$. Then Remark \ref{rem:sum of Jacobi forms} and the fact that
$$
\widetilde{\Orth}(L) = \bigotimes_{j=1}^n \widetilde{\Orth}(L_j) = \bigotimes_{j=1}^n W(L_j). 
$$
together imply that $J_{*,L,*}^{\w, \widetilde{\Orth}(L)}$ is finitely generated by tensor products of generators of $J_{*,L_j,*}^{\w,W(L_j)}$. Moreover, the set of these generators is minimal. Therefore condition (1) in Assumption \ref{assum} is satisfied. Condition (2) follows from Lemma \ref{lem:E4E6}, because $\delta_L\leq 2$ so $\cE_4$ and $\cE_6$ can be constructed as the additive lifts of holomorphic Jacobi forms $E_{4,L}$ and $E_{6,L}$. Only the last condition remains to be verified. When $L$ is of type $A$, as in the previous section, the generators $\phi_i$ of Jacobi forms all have index one, so the generators other than $\cE_4$ and $\cE_6$ can be constructed as the additive lifts $\Delta\phi_i$. For the other cases, the construction is given in the lemmas below. 
\end{proof}

We first construct the generators for lattices of type $AD$. 

\begin{lemma}
Let $L$ be a lattice of type $AD$ in Table \ref{tab:Norm_2 lattices}. Then Condition $(3)$ in Assumption \ref{assum} holds. 
\end{lemma}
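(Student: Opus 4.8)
The plan is to verify Condition $(3)$ of Assumption \ref{assum} directly for each of the seven $AD$-type lattices $L=L_0\oplus D_m$ in Table \ref{tab:Norm_2 lattices}; for these $4\le m\le 6$ and $L_0$ is a sum of $A$-components of small rank, and $\widetilde{\Orth}(L)=\prod_j W(L_j)$ as in the proof of Theorem \ref{th:non-free}. By \S\ref{subsec:Weyl} and Remark \ref{rem:sum of Jacobi forms}, the $A$-type factor rings $J^{\w,W(L_j)}_{*,L_j,*}$ are generated in index one while $J^{\w,W(D_m)}_{*,D_m,*}$ is generated in index at most two, and one checks that no minimal generator of the bigraded ring $J^{\w,\widetilde{\Orth}(L)}_{*,L,*}$ arises in index $\ge 3$ (for $t\ge 3$ the index-$t$ part equals $R_1R_{t-1}+R_2R_{t-2}$). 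So every minimal generator $\phi_i\in J^{\w,\widetilde{\Orth}(L)}_{k_i,L,m_i}$ has index $m_i\in\{1,2\}$, and I need to produce in each case a holomorphic modular form $\Phi_i$ for $\widetilde{\Orth}^+(2U\oplus L)$ of weight $k_i+12m_i$ with leading Fourier--Jacobi coefficient $\Delta^{m_i}\phi_i\,\xi^{m_i}$.

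First, the index-one generators. Because $\delta_L\le 2$, for any $\phi\in J^{\w,\widetilde{\Orth}(L)}_{k,L,1}$ the product $\Delta\phi$ is a \emph{holomorphic} Jacobi form: by Lemma \ref{lem:periodic}(2) the $q^0$-Fourier coefficients of $\phi$ are supported on coset-minimal vectors $\ell$, hence $\latt{\ell,\ell}\le\delta_L\le 2$, so every nonzero Fourier coefficient of $\Delta\phi=\sum_{n\ge 1}\sum_\ell c(n,\ell)q^n\zeta^\ell$ satisfies $2n-\latt{\ell,\ell}\ge 2-\delta_L\ge 0$. By Theorem \ref{th:additive} the lift $\Grit(\Delta\phi)$ is then holomorphic of weight $12+k$ for $\widetilde{\Orth}^+(2U\oplus L)$, and since $\Delta\phi=O(q)$ its $\xi^0$-term vanishes, so $\Grit(\Delta\phi)=(\Delta\phi)\xi+O(\xi^2)$. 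Taking $\phi=\phi_i$ over the index-one generators settles this case.

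Next, the index-two generators, which come from the generators $\phi_{-2s,D_m,2}$ ($3\le s\le m-1$) of the $D_m$-factor tensored with index-two monomials in the $A$-factor generators. Each of the seven lattices embeds as $2U\oplus L\hookrightarrow 2U\oplus D_8$, induced by $L\hookrightarrow D_8$: indeed $D_8\supseteq D_m\oplus D_{8-m}$ and $D_{8-m}\supseteq L_0$ in every case. I would pull back the holomorphic forms $G_{24+k,D_8}$ of Lemma \ref{lem:D8} along this embedding; the pullback of a Gritsenko lift is again a Gritsenko lift (the index-raising Hecke operators commute with restriction), and $\widetilde{\Orth}(L)$, being generated by reflections in roots of $L$, embeds into $W(D_8)\subseteq W(C_8)$, so $G_{24+k,D_8}|_L$ is a polynomial in Gritsenko lifts of $\widetilde{\Orth}(L)$-invariant weak Jacobi forms, hence a holomorphic modular form for $\widetilde{\Orth}^+(2U\oplus L)$ with leading Fourier--Jacobi coefficient $\Delta^2\bigl(\phi_{k,D_8,2}|_L\bigr)\xi^2$. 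To these I would add the forms $B_{\kappa/2-m,L}^2\,\Psi_{2,L}$ and $B_{\kappa/2-m,L}^2\,\Psi_{4,L}$ of Lemma \ref{lem:L3}, which are genuinely holomorphic here because $\cH_{L,1}=\emptyset$ forces the only poles of $\Psi_{2,L},\Psi_{4,L}$ to lie on $H(1/2,\varepsilon_m)$, where they are cancelled by the double zero of $B_{\kappa/2-m,L}^2$. Together with $\cE_{4,L},\cE_{6,L}$ from Lemma \ref{lem:E4E6} and the index-one forms constructed above, and after subtracting suitable polynomials in the lower-index generators, I expect these forms to realize every $\Delta^2\phi_i\,\xi^2$ as a leading Fourier--Jacobi coefficient, giving the required $\Phi_i$.

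The hard part will be that last step: checking, separately for each of the seven lattices, that the pullbacks $\phi_{k,D_8,2}|_L$ ($k=-6,-8,\dots,-16$) together with the two forms from Lemma \ref{lem:L3} span $J^{\w,\widetilde{\Orth}(L)}_{*,L,2}$ modulo the span of products of index-one generators -- equivalently, that they hit every ``new'' index-two generator $\bigl(\prod_j(A_{m_j}\text{-monomial})\bigr)\otimes\phi_{-2s,D_m,2}$. This is a finite but delicate bookkeeping inside the rings of Weyl-invariant weak Jacobi forms, and I would organize it along the towers $A_1\oplus D_4\hookrightarrow A_1\oplus D_5\hookrightarrow A_1\oplus D_6$, $A_2\oplus D_4\hookrightarrow A_2\oplus D_5$, $2A_1\oplus D_4$ and $A_3\oplus D_4$, working upward from $2U\oplus D_8$ exactly as in Case II of \S\ref{subsec:generators}. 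No conceptual input beyond Lemmas \ref{lem:D8} and \ref{lem:L3} and Theorem \ref{th:additive} should be required.
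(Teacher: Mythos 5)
Your proposal takes essentially the same route as the paper: the index-one generators are realized as Gritsenko lifts of $\Delta\phi_i$ (holomorphic since $\delta_L\le 2$), and the index-two generators are obtained from the products $\Grit(\cdot)\Grit(\cdot)\Psi_{2,L}$ and $\Grit(\cdot)\Grit(\cdot)\Psi_{4,L}$ of Lemmas \ref{lem:L2}--\ref{lem:L3} together with pullbacks of the $D_8$-generators of Lemma \ref{lem:D8} along embeddings $L\hookrightarrow D_8$, with holomorphy following from $\cH_{L,1}=\emptyset$. The final bookkeeping step you flag is handled in the paper by the same modification procedure as in Case II of \S\ref{subsec:generators}, so there is no substantive difference.
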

\begin{proof}
Let us write $L=A_n\oplus D_m$. 
We first construct the orthogonal generators corresponding to index-one generators of Jacobi forms (i.e. $\phi_{k,A_n,1}\otimes \phi_{l,D_m,1}$) using additive lifts.  When $m \le 5$, we construct the orthogonal generators corresponding to index-two generators of Jacobi forms (i.e. $(\phi_{k_1,A_n,1}\phi_{k_2,A_n,1})\otimes \phi_{l,D_m,2}$) as the generators of Jacobi type in Lemma \ref{lem:L3}. More precisely, the generators corresponding to $(\phi_{k_1,A_n,1}\phi_{k_2,A_n,1})\otimes \phi_{-2(m-1),D_m,2}$ for $m=4,5$  can be constructed as
$$
\Grit(\Delta\phi_{k_1,A_n,1}\otimes \psi_{-m,D_m,1}) \Grit(\Delta\phi_{k_2,A_n,1}\otimes \psi_{-m,D_m,1}) \Psi_{2,L},
$$
and the generators corresponding to $(\phi_{k_1,A_n,1}\phi_{k_2,A_n,1})\otimes \phi_{-6,D_5,2}$ can be constructed as
$$
\Grit(\Delta\phi_{k_1,A_n,1}\otimes \psi_{-5,D_5,1}) \Grit(\Delta\phi_{k_2,A_n,1}\otimes \psi_{-5,D_5,1}) \Psi_{4,L}.
$$
Recall that $\psi_{-m,D_m,1}$ is the theta block \eqref{eq:theta block D_n}, and $\Psi_{2,L}$ and $\Psi_{4,L}$ are meromorphic modular forms constructed in Lemma \ref{lem:L2}. This construction also works for $L=2A_1\oplus D_4$. 

Note that these lattices $L$ can be embedded into $D_8$, so the orthogonal generators corresponding to index-two generators of Jacobi forms can be constructed as pullbacks of generators of type $D_8$. This also yields a construction for $A_1\oplus D_6$. These forms are necessarily holomorphic because $\delta_L\leq 2$ implies $\cH_{L,1}=\emptyset$. 
\end{proof}

Among lattices of type $D$, the only new example is $2D_4$. Since $2D_4$ is a sublattice of $D_8$, we can construct the generators similarly to the above lemma. We omit the details. 

\begin{lemma}
Let $L$ be a lattice of type $AE$ in Table \ref{tab:Norm_2 lattices}. Then Condition $(3)$ in Assumption \ref{assum} holds.
\end{lemma}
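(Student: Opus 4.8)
The plan is to verify Condition~$(3)$ of Assumption~\ref{assum} for the three lattices of type $AE$ in Table~\ref{tab:Norm_2 lattices}, namely $A_1\oplus E_6$, $A_2\oplus E_6$ and $A_1\oplus E_7$, and then invoke Theorem~\ref{th:criterion}. All of them satisfy $\delta_L\le 2$, so in the notation of Theorem~\ref{th:2precise} the Heegner part $\cH_{L,1}$ is empty and every singular additive lift below will automatically be holomorphic. Write $L=L_0\oplus L_1$ with $L_1\in\{E_6,E_7\}$ and $L_0\in\{A_1,A_2\}$; by Remark~\ref{rem:sum of Jacobi forms}, with $G=\widetilde{\Orth}(L)=W(L_0)\times W(L_1)$, one has $J_{*,L,*}^{\w,G}=\bigoplus_{t}\,J_{*,L_0,t}^{\w,W(L_0)}\otimes_{\CC[E_4,E_6]}J_{*,L_1,t}^{\w,W(L_1)}$. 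Since $L_0$ is of type $A$, the ring $J_{*,L_0,*}^{\w,W(L_0)}$ is generated in index one by the forms $\phi_{-k,A_{m_j},1}$ of Notation~\ref{notation}, so every minimal generator of $J_{*,L,*}^{\w,G}$ other than $E_4,E_6$ has one of two shapes: (a) an index-one form $\phi_{a,L_0,1}\otimes\phi_{b,L_1,1}$, or (b) a form $g_0\otimes\phi_{k,L_1,t}$ with $t\ge 2$, where $\phi_{k,L_1,t}$ is an index-$t$ generator of $J_{*,L_1,*}^{\w,W(L_1)}$ (of index $2$ or $3$ for $E_6$, of index $2,3$ or $4$ for $E_7$) and $g_0$ is a monomial of index $t$ in the index-one generators of $J_{*,L_0,*}^{\w,W(L_0)}$.

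First I would dispose of the generators of shape (a) exactly as in the $A$- and $AD$-families: because $\delta_L\le 2$, the product $\Delta\cdot(\phi_{a,L_0,1}\otimes\phi_{b,L_1,1})$ is a holomorphic Jacobi form of index one, and its Gritsenko lift $\Grit(\Delta(\phi_{a,L_0,1}\otimes\phi_{b,L_1,1}))$ is a holomorphic modular form for $\widetilde{\Orth}^+(2U\oplus L)$ with leading Fourier--Jacobi coefficient $\Delta(\phi_{a,L_0,1}\otimes\phi_{b,L_1,1})\,\xi$; together with $\cE_{4,L},\cE_{6,L}$ from Lemma~\ref{lem:E4E6} this also reproves Conditions~$(1)$ and $(2)$ in these cases. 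The real work is the generators of shape (b). Here the plan is to bootstrap from the free algebras that are already understood: $M_*(\widetilde{\Orth}^+(2U\oplus E_6))$ and $M_*(\widetilde{\Orth}^+(2U\oplus E_7))$ are free polynomial algebras by \cite{WW20a}, with a distinguished generator $G_{k+12t,L_1}$ of leading Fourier--Jacobi coefficient $\Delta^t\phi_{k,L_1,t}\,\xi^t$ for every pair $(k,t)$ in Table~\ref{Tab:Jacobi}, and $M_*(\widetilde{\Orth}^+(2U\oplus E_8))$ is a free algebra by \cite{HU14}. I would use the finite-index embeddings $E_6\oplus A_2\hookrightarrow E_8$ and $E_7\oplus A_1\hookrightarrow E_8$ (whose orthogonal complements in $E_8$ are $A_2$, resp.\ $A_1$): since any element of the discriminant kernel $\widetilde{\Orth}^+(2U\oplus A_2\oplus E_6)$ or $\widetilde{\Orth}^+(2U\oplus A_1\oplus E_7)$ fixes the gluing classes pointwise, it preserves $2U\oplus E_8$, so restriction to the common symmetric domain gives a ring embedding of $M_*(\widetilde{\Orth}^+(2U\oplus E_8))$ into the modular forms of these groups, supplying a large stock of holomorphic forms whose leading Fourier--Jacobi coefficients are pullbacks of known Jacobi forms; the case $A_1\oplus E_6$ is then reached by the same device, together with pullback along the rational quadratic divisor $\cD_v\subset \cD(2U\oplus A_2\oplus E_6)$ for the norm-$6$ generator $v$ of the complement of $A_1$ in $A_2$. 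For each generator $g_0\otimes\phi_{k,L_1,t}$ I would combine these restricted forms with the substitution-and-correction device of Lemma~\ref{lem:D11}: express $G_{k+12t,L_1}$ via the explicit construction of \cite{WW20a}, replace its index-one $L_1$-data by the corresponding index-one $L_0\oplus L_1$-data, multiply by a suitable power of the theta-block lift $\Grit\big(\Delta(\bigotimes_j\phi_{-(m_j+1),A_{m_j},1})\otimes\psi\big)$ to install $g_0$ in the $L_0$-component exactly as $B_{\kappa/2-m,L}$ was used in Lemma~\ref{lem:L3}, and subtract a polynomial in the generators already built in the induction so that the leading coefficient becomes $\Delta^t g_0\otimes\phi_{k,L_1,t}\,\xi^t$; holomorphy holds throughout because $\delta_L\le 2$.

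The step I expect to be the main obstacle is precisely this last one for $L_1=E_6,E_7$. Unlike $D_8$, these lattices possess no Weyl-anti-invariant index-one generator analogous to $\psi_{-m,D_m,1}$ and no counterpart of the intermediate meromorphic forms $\Psi_{2,L},\Psi_{4,L}$ of Lemma~\ref{lem:L2}, so the higher-index generators of $J_{*,L_1,*}^{\w,W(L_1)}$ cannot be written down in elementary closed form; one is forced to lean entirely on the structure theorems of \cite{WW20a} and \cite{HU14} and to track, index by index, the Segre-type tensor-product decomposition of $J_{*,L,*}^{\w,G}$ while peeling off the corrections, making sure at each stage that the piece still to be hit lies in the span of what has already been constructed. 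Once every generator of $J_{*,L,*}^{\w,G}$ is realized as a leading Fourier--Jacobi coefficient in this way, Condition~$(3)$ of Assumption~\ref{assum} follows and Theorem~\ref{th:criterion} yields the stated structure.
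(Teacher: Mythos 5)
Your handling of the index-one generators and the reduction of $A_1\oplus E_6$ to $A_2\oplus E_6$ by restriction are both fine and match the paper. But the step you yourself flag as "the main obstacle" is a genuine gap, and the tools you propose for it cannot close it. The substitution-and-correction device of Lemma \ref{lem:D11} rests on Lemma \ref{lem:D8}: the free algebra for $2U\oplus D_8$ is generated by additive lifts of \emph{index-one} Jacobi forms, so each $G_{24+k,D_8}$ is literally a polynomial $P$ in such lifts and one may substitute $D_{11}$-data into $P$. For $E_6$ and $E_7$ the analogous statement fails: the generators of $M_*(\widetilde{\Orth}^+(2U\oplus E_6))$ of weights $15,16,18,24$ correspond to minimal Jacobi generators of index $2$ and $3$ and therefore do not lie in the subalgebra generated by index-one lifts, so there is no polynomial to substitute into. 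The $E_8$-pullback route does not rescue this: since $-1\in W(E_8)$, every $W(E_8)$-invariant Jacobi form has even weight, so restrictions of $E_8$-modular forms to $\cD(2U\oplus A_2\oplus E_6)$ have even weight, whereas several of the required leading Fourier--Jacobi coefficients (those involving $\phi_{-9,E_6,2}$, $\phi_{-5,E_6,1}$ or an odd number of $\phi_{-3,A_2,1}$-factors) are of odd weight; and even in the even-weight cases you give no argument that the pullback of $J^{\w,W(E_8)}_{*,E_8,t}$ (which is not even a polynomial algebra) hits the required classes. So the program stalls exactly where you predict, and what remains is the entire content of the lemma.

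The paper's actual proof is of a completely different nature and avoids constructing the higher-index generators at all. It reduces to $A_2\oplus E_6$ and $A_1\oplus E_7$, notes that it suffices to prove the lower bound $\dim M_k(\Gamma)\geq\sum_{t}\dim J^{\w,G}_{k-12t,L,t}$ for $k$ up to an explicit bound ($24$, resp.\ $30$), and verifies this by exhibiting enough modular forms and checking their linear independence through pullbacks to Siegel paramodular forms along finitely many lattice vectors, computed in SageMath. This shows $\Gamma$ is nice, and Condition $(3)$ then follows formally from Lemma \ref{lem:nice}. A non-computational argument along the lines you sketch would require a genuinely new construction of the index-$\geq 2$ generators for $E_6$ and $E_7$, which neither your proposal nor the paper supplies.
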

\begin{proof} We may assume that $L = A_2 \oplus E_6$ or $L = A_1 \oplus E_7$, as any other lattices of type $AE$ can be embedded into one of them and the generators in Condition $(3)$ can be obtained by restriction.

 It is enough to prove $$\mathrm{dim}\, M_k(\Gamma) \ge \sum_{t=0}^{\infty} \mathrm{dim}\, J_{k - 12t, L, t}^{\w, G}$$ for all $k \le k_0 := \max\{12t_i - k_i\}$, where $(-k_i, t_i)$ are the weights and indices of the generators of $J_{*, L, *}^{\w, G}$, as the reverse inequality is automatically true. We have $k_0 = 24$ if $L = A_2 \oplus E_6$ and $k_0 = 30$ if $L = A_1 \oplus E_7$.

To estimate $\mathrm{dim}\, M_k(\Gamma)$, we use exactly the argument of sections 5.2 and 5.3 of \cite{WW20a}. For any lattice vector $v \in L \backslash \{0\}$, there is a natural ring homomorphism $$P_v : M_k(\Gamma) \longrightarrow M_k(\widetilde{\mathrm{O}}^+(2U\oplus A_1(v^2/2))) \cong M_k(K(v^2/2)),$$ the pullback map, to the space of Siegel paramodular forms of level $v^2/2$ (cf. Section \ref{sec:theta block conjecture}), which is compatible with the Gritsenko lift in the sense that $$P_v(\mathrm{Grit}(\phi)) = \mathrm{Grit}( P_v(\phi))$$ where $P_v$ also denotes the natural pullback of Jacobi forms.

For any modular forms $f_1,...,f_n \in M_k(\Gamma)$, and any vectors $v_1,...,v_n \in L \backslash \{0\}$, we have $$\{\text{linear relations among}\; f_1,...,f_n\} \subseteq \bigcap_{i=1}^n \{\text{linear relations among} \; P_{v_i}(f_1),...,P_{v_i}(f_n)\},$$ so we obtain a lower bound for $\mathrm{dim}\, M_k(\Gamma)$ by computing the Fourier series of paramodular Gritsenko lifts to sufficiently high precision. This is more efficient than working with $M_k(\Gamma)$ directly.

The computation was carried out in SageMath \cite{sagemath} using the package ``WeilRep'' available from \cite{Wil20}. The source code (including our choices of lattice vectors $v_i$) and the results of this computation are available as ancillary material on arXiv.
\end{proof}

\begin{corollary}\label{cor: AE jacobi_type}
The generators of Jacobi type required by Theorem \ref{th:2precise} exist for the lattices of $AE$-type.
\end{corollary}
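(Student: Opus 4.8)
The plan is to deduce the existence of these generators directly from the classification of nice groups in Theorem \ref{th:non-free} and the reformulation of niceness in Lemma \ref{lem:nice}, rather than constructing the forms by hand. Each of the five lattices of $AE$-type --- $E_6$, $E_7$, $A_1\oplus E_6$, $A_2\oplus E_6$, $A_1\oplus E_7$ --- satisfies $\delta_L\le 2$ and therefore appears in Table \ref{tab:Norm_2 lattices}, so Theorem \ref{th:non-free} tells us that $\widetilde{\Orth}^+(2U\oplus L)$ is nice (for the irreducible lattices $E_6$ and $E_7$ this is already the content of \cite{WW20a}). Since $\delta_L\le 2$ also forces $\cH_{L,1}=\emptyset$, a generator of Jacobi type is simply required to be a \emph{holomorphic} modular form with the prescribed leading Fourier--Jacobi coefficient, and the arrangement $\cH_L$ reduces to $\cH_{L,0}$, all of whose poles are carried by the generators of abelian type built in Lemma \ref{lem: type (2) AE}.

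Write $L=L_0\oplus L_1$ with $L_1\in\{E_6,E_7\}$ and $L_0\in\{\{0\},A_1,A_2\}$, so the parameter $t$ of \eqref{eq:lattices} is at most $1$. For each generator $\phi_{k_i,L_1,t_i}$ of the polynomial ring $J^{\w,W(L_1)}_{*,L_1,*}$ of Weyl-invariant weak Jacobi forms I would form the weak Jacobi form
$$
\psi_i=\Bigl(\bigotimes_{j=1}^{t}\phi_{-(m_j+1),A_{m_j},1}\Bigr)^{t_i}\otimes\phi_{k_i,L_1,t_i}
$$
of index $t_i$ and weight $k_i-\bigl(\sum_{j=1}^{t}(m_j+1)\bigr)t_i$ associated to $L$. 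The first step is to check that $\psi_i$ genuinely lies in $J^{\w,\widetilde{\Orth}(L)}_{*,L,t_i}$: it has integral index, it is invariant under $\widetilde{\Orth}(L)=W(L_0)\otimes W(L_1)$ because $\phi_{-(m_j+1),A_{m_j},1}$ is the theta block \eqref{eq:theta-blockA_n} and hence $W(A_{m_j})$-invariant, and it has trivial character because each $\vartheta/\eta^3$ appearing in \eqref{eq:theta-blockA_n} does. With that in hand, niceness of $\widetilde{\Orth}^+(2U\oplus L)$ together with Lemma \ref{lem:nice} immediately supplies a holomorphic modular form for $\widetilde{\Orth}^+(2U\oplus L)$ of weight
$$
\Bigl(k_i-\bigl(\sum_{j=1}^{t}(m_j+1)\bigr)t_i\Bigr)+12t_i=k_i+t_i\Bigl(12-\sum_{j=1}^{t}(m_j+1)\Bigr)
$$
with leading Fourier--Jacobi coefficient $\Delta^{t_i}\psi_i\cdot\xi^{t_i}$. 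Since $\Delta$ is a function of $\tau$ alone, this equals $\bigl(\bigotimes_{j=1}^{t}\phi_{-(m_j+1),A_{m_j},1}\bigr)^{t_i}\otimes(\Delta^{t_i}\phi_{k_i,L_1,t_i})\cdot\xi^{t_i}$, which is precisely the leading term \eqref{eq:generators of type (3)}, of the weight prescribed in Theorem \ref{th:2precise}. Running over the $\rk(L_1)+1$ generators $\phi_{k_i,L_1,t_i}$ yields all the required generators of Jacobi type; for those with $t_i=1$ one may alternatively take the direct additive lift $\Grit\bigl(\Delta\,\phi_{-(m_1+1),A_{m_1},1}\otimes\phi_{k_i,L_1,1}\bigr)$, which is holomorphic because $\delta_L\le 2$.

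I do not foresee a genuine obstacle at this stage: the substantive work has been relocated into Theorem \ref{th:non-free}, whose proof for the three reducible lattices $A_1\oplus E_6$, $A_2\oplus E_6$, $A_1\oplus E_7$ rests on the dimension estimate obtained by pulling modular forms back to paramodular Gritsenko lifts and the accompanying machine computation. What remains for the corollary itself is pure bookkeeping --- verifying that $\psi_i$ is a bona fide weak Jacobi form of integral index and trivial character so that Lemma \ref{lem:nice} applies, and matching the output weight with the value $k_i+t_i\bigl(12-\sum_j(m_j+1)\bigr)$ recorded in Theorem \ref{th:2precise} --- both read off from Table \ref{Tab:Jacobi}. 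This also completes the construction of the generators in Theorem \ref{th:2precise} for the $AE$-family.
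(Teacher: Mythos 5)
Your proposal is correct and follows exactly the paper's route: the paper's own proof of this corollary is the one-line observation that, $\Gamma$ being nice (established for the $AE$-lattices via the paramodular pullback and dimension computation in the preceding lemma), existence follows from Lemma \ref{lem:nice} applied to the weak Jacobi forms $\bigl(\bigotimes_j \phi_{-(m_j+1),A_{m_j},1}\bigr)^{t_i}\otimes\phi_{k_i,L_1,t_i}$. Your additional bookkeeping (invariance under $\widetilde{\Orth}(L)=W(L_0)\otimes W(L_1)$, the weight count $k_i+t_i(12-\sum_j(m_j+1))$, and the vanishing of $\cH_{L,1}$) is all accurate and merely makes explicit what the paper leaves implicit.
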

\begin{proof} Since $\Gamma$ is nice, the existence follows from Lemma \ref{lem:nice}.
\end{proof}

\begin{remark} The Hilbert--Poincar\'e series of $M_*(\widetilde{\Orth}^+(2U\oplus L))$ is $$\sum_{k=0}^{\infty} \mathrm{dim}\, M_k(\widetilde{\Orth}^+(2U\oplus L)) x^k = \sum_{t=0}^{\infty} \sum_{k=0}^{\infty} \mathrm{dim}\, J_{k - 12t, L, t}^{\w, \widetilde{\Orth}(L)} x^k = \sum_{t=0}^{\infty} \sum_{k \in \mathbb{Z}} \mathrm{dim}\, J_{k,L,t}^{\w, \widetilde{\Orth}(L)} x^{k + 12t}.$$ Write $L = L_1 \oplus \cdots \oplus L_n$ where each $L_i$ is irreducible. Then each $L_i$ yields the Hilbert--Poincar\'e series $$F_{L_i}(x, y) = \sum_{t=0}^{\infty} \sum_{k \in \mathbb{Z}} \mathrm{dim}\, J_{k, L_i, t}^{\w, \widetilde{\Orth}(L)} x^k y^t = \frac{1}{(1 - x^4)(1 - x^6) \prod_j (1 - x^{k_j} y^{m_j})}$$ where $(k_j, m_j)$ are the weights and indices of the generators of $J_{*,L_i,*}^{\w,W(L_i)}$ in Table \ref{Tab:Jacobi}. In view of the isomorphism of $\mathbb{C}[E_4,E_6]$-modules $$J_{*, L, t}^{\w, \widetilde{\Orth}(L)} = \bigotimes_{i=1}^n J_{*, L_i,t}^{\w, W(L_i)}$$ it follows that, if we write $$F_{L_i}(x, y) = \frac{1}{(1 - x^4)(1 - x^6)} \sum_{t=0}^{\infty} f_{L_i, t}(x) y^t, \; \; f_{L_i, t} \in \mathbb{C}[x, x^{-1}]$$ then $$F_L(x, y) := \sum_{t=0}^{\infty} \sum_{k \in \mathbb{Z}} \mathrm{dim}\, J_{k, L, t}^{\w,\widetilde{\Orth}(L)} x^k y^t = \frac{1}{(1 - x^4)(1 - x^6)} \sum_{t=0}^{\infty} \Big( \prod_{i=1}^n f_{L_i, t}(x) \Big) y^t$$ and therefore $$
\sum_{k=0}^{\infty} \mathrm{dim}\, M_k(\widetilde{\Orth}^+(2U\oplus L)) x^k = F_L(x, x^{12}).
$$ 
These series are computed explicitly in \S \ref{sec:tables_hol}.
\end{remark}

We can also consider rescalings of the root lattices. 
\begin{theorem}
Let $L$ be a direct sum of rescaled irreducible root lattices, none of which are $E_8$ or $E_8(2)$, and suppose $L$ satisfies the $\mathrm{Norm}_2$ condition. Let $W(L)$ be the direct product of the Weyl groups of all components of $L$. Then the group $\Gamma$ generated by $\widetilde{\Orth}^+(2U\oplus L)$ and $W(L)$ is nice. Moreover, the ring $M_*(\Gamma)$ is minimally generated in weights $4$, $6$ and $12m+k$, where the pairs $(k, m)$ are the weights and indices of the generators of $J_{*,L,*}^{\w, W(L)}$. 
\end{theorem}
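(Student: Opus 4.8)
The plan is to follow the proof of Theorem~\ref{th:non-free} essentially verbatim: I will check that the group $\Gamma$ satisfies Assumption~\ref{assum}, so that Theorem~\ref{th:criterion} applies and yields simultaneously that $\Gamma$ is nice and that $M_*(\Gamma)$ has the asserted (in general non-free) minimal presentation. Note first that $W(L)\supseteq\widetilde{\Orth}(L)$ in every case — a diagram automorphism or a component-swap acts non-trivially on $L'/L$, and so does $-1$ on $A_n(a)$ with $n\ge2$ — so Theorem~\ref{th:criterion} is applicable with $G=W(L)$. Write $L=\bigoplus_j L_{R_j}(a_j)$ with $a_j\ge1$. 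The one new ingredient compared with Theorem~\ref{th:non-free} is the behaviour of Weyl-invariant Jacobi forms under rescaling: multiplying the bilinear form of $L_{R_j}$ by $a_j$ rewrites the theta multiplier and the elliptic transformation so that an index-$t$ Jacobi form for $L_{R_j}(a_j)$ is exactly an index-$(a_j t)$ Jacobi form for $L_{R_j}$; hence
$$
J_{*,L_{R_j}(a_j),*}^{\w,W(R_j)}=\bigoplus_{t\ge0}J_{*,L_{R_j},\,a_j t}^{\w,W(R_j)}
$$
is the $a_j$-th Veronese subalgebra of Wirthm\"uller's polynomial algebra. It is therefore still finitely generated, still generated over $\CC[E_4,E_6]$ by forms of non-positive weight, and still has a single generator each of weight $0$ and index $1$ (a power of Wirthm\"uller's weight-zero form). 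Combined with Remark~\ref{rem:sum of Jacobi forms} this gives that $J_{*,L,*}^{\w,W(L)}\cong\bigotimes_j J_{*,L_{R_j}(a_j),*}^{\w,W(R_j)}$ is finitely generated, which is condition~(1) of Assumption~\ref{assum}; and since $L$ satisfies $\mathrm{Norm}_2$, so $\delta_L\le2$, the Eisenstein weak Jacobi forms $E_{4,L},E_{6,L}$ of Lemma~\ref{lem:E4E6} are actually holomorphic (their $q^0$-terms are constant and $\widehat\delta_L=0$ in the sense of Remark~\ref{rem:divisor}(4)), so their Gritsenko lifts provide the forms $\cE_4,\cE_6\in M_*(\Gamma)$ of condition~(2).

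The real content is condition~(3): for each minimal generator $\phi_i\in J_{k_i,L,m_i}^{\w,W(L)}$ one needs a form in $M_{k_i+12m_i}(\Gamma)$ with leading Fourier--Jacobi coefficient $(\Delta^{m_i}\phi_i)\xi^{m_i}$. If $m_i=1$ this is $\Grit(\Delta\phi_i)$: the product $\Delta\phi_i$ is a holomorphic Jacobi form because $\delta_L\le2$ forces the $q^0$-term of $\phi_i$ to be supported on $\ell$ with $\latt{\ell,\ell}\le2$ (Lemma~\ref{lem:periodic}(2)), and the Gritsenko lift of a $W(L)$-invariant input is $W(L)$-invariant. It remains to treat the minimal generators of index $\ge2$. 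Here the key point is a combinatorial reduction: a tensor product of bigraded algebras generated in index one is again generated in index one, and for every rescaling $L_R(a)$ with $a\ge2$ that can occur all of Wirthm\"uller's generators of index $\ge2$ are pushed into index one; hence $J_{*,L,*}^{\w,W(L)}$ has a minimal generator of index $\ge2$ only when $L$ has an \emph{unrescaled} summand of type $D_m$ ($m\ge4$), $E_6$ or $E_7$. Running through the root lattices satisfying $\mathrm{Norm}_2$, using that $\delta$ is additive over orthogonal direct sums, the only such $L$ not already appearing in Table~\ref{tab:Norm_2 lattices} — and hence not already covered by Theorem~\ref{th:non-free} — is $A_1(2)\oplus D_4$.

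For $L=A_1(2)\oplus D_4$ the generators of index $\ge2$ are the five forms $\phi_{0,A_1,1}^a\phi_{-2,A_1,1}^b\otimes\phi_{-6,D_4,2}$ with $a+b=4$, and the constructions of \S\ref{sec:non-free} transport with $A_1(2)$ in the r\^ole of the ``$A$-type part $L_0$''. Its minimal-weight index-one generator $\phi_{-2,A_1,1}^2$ is, up to the factor $\Delta$, a generalized theta block of $q$-order one associated to $A_1(2)$ (namely $\eta^{12}\vartheta(\tau,z)^4$), so by Corollary~\ref{cor:theta} each additive lift $\Grit(\Delta\,\mu\otimes\psi_{-4,D_4,1})$, with $\mu$ an $A_1(2)$-index-one Jacobi form, is holomorphic (as $\delta_L\le2$) and vanishes simply along $H(\tfrac12,\varepsilon_1)$; one then sets $\Phi_i=\Grit(\Delta\,\mu_1\otimes\psi_{-4,D_4,1})\,\Grit(\Delta\,\mu_2\otimes\psi_{-4,D_4,1})\,\Psi_{2,L}$ with $\Psi_{2,L}$ the meromorphic $\wp$-lift of Lemma~\ref{lem:L2}, whose double pole on $H(\tfrac12,\varepsilon_1)$ is thereby cancelled, and the leading Fourier--Jacobi coefficient comes out correctly after using $\psi_{-4,D_4,1}^2=\prod_j\phi_{-2,A_1,1}(\tau,z_j)$ and $-\tfrac{3}{\pi^2}\wp=\phi_{0,A_1,1}/\phi_{-2,A_1,1}$; because $\delta_L\le2$ gives $\cH_{L,1}=\emptyset$ these forms are holomorphic everywhere. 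With Assumption~\ref{assum} verified, Theorem~\ref{th:criterion} finishes the proof. The step I expect to be the main obstacle is precisely condition~(3): one must be sure that the combinatorial reduction genuinely isolates only $A_1(2)\oplus D_4$ as a new case, and that the machinery of \S\ref{sec:non-free} really carries over to rescaled $A$-summands — which hinges on $\Delta$ times a power of the theta block $\phi_{-(m+1),A_m,1}$ remaining a generalized theta block of positive weight and $q$-order one, so that Corollary~\ref{cor:theta} and the $\wp$-lift arguments keep functioning.
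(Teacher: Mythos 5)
Your proposal is correct and takes exactly the route the paper intends: the paper omits this proof with the remark that it is ``essentially the same'' as Theorem \ref{th:non-free}, and you have supplied the missing details, in particular correctly using the additivity of $\delta$ and the Veronese description of $J^{\w,W(R)}_{*,L_R(a),*}$ to isolate $D_4\oplus A_1(2)$ as the only lattice in the list whose Weyl-invariant Jacobi algebra has minimal generators of index two, and then adapting the $\Psi_{2,L}$-construction of Lemmas \ref{lem:L2}--\ref{lem:L3} and \S\ref{sec:non-free} to produce the five corresponding orthogonal generators. One small citation slip: when $\mu$ involves $\phi_{0,A_1,1}$ the input $\Delta\,\mu\otimes\psi_{-4,D_4,1}$ is not a pure theta block, so the simple vanishing of $\Grit(\Delta\,\mu\otimes\psi_{-4,D_4,1})$ along $H(1/2,\varepsilon_1)$ should be attributed to Remark \ref{rem:divisor}(5) (the input is $O(q)$ and contains the factor $\vartheta(\tau,z_1)$) rather than to Corollary \ref{cor:theta}, but this does not affect the argument.
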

This applies to the following lattices which do not appear in Table \ref{tab:Norm_2 lattices}:
\begin{align*}
    &A_1(2)& &A_1(3)& &A_1(4)& &A_2(2)& \\
    &A_2(3)& &A_3(2)& &D_4(2)& &2A_1(2)& \\
    &A_1\oplus A_1(2)& &A_2\oplus A_1(2)& &A_3\oplus A_1(2)& &D_4\oplus A_1(2)&\\
    &2A_1\oplus A_1(2)& &A_1\oplus A_1(3)& &A_1\oplus A_2(2)& &A_2\oplus A_2(2)&
\end{align*}

The proof is essentially the same as Theorem \ref{th:non-free} so we omit it. 

Finally, we consider two examples related to Siegel paramodular forms, which are not covered by the above theorem (because we remove the Weyl invariance). Let $\phi_{0,1}$, $\phi_{-2,1}$ and $\phi_{-1,2}$ be the generators of $J_{*,A_1,*}^{\w}$ as determined by Eichler--Zagier \cite{EZ85}.

\begin{example} 
Let $L=A_1(2)$ and $\Gamma=\widetilde{\Orth}(L)$. We have 
$$
J_{*,A_1(2),*}^{\w}= \CC[E_4,E_6][\phi_{0,1}^2, \phi_{0,1}\phi_{-2,1}, \phi_{-2,1}^2, \phi_{-1,2}],
$$
where all generators are of index one when viewed as Jacobi forms for $A_1(2)$. We conclude from Theorem \ref{th:criterion} that $M_*(\Gamma)$ is generated by the weight $4, 6, 8, 10, 11, 12$ Gritsenko lifts of the Eisenstein series $E_{4, 2}, E_{6, 2}$ and $\Delta \phi$, $\phi \in \{\phi_{0,1}^2, \phi_{0,1}\phi_{-2,1}, \phi_{-2,1}^2, \phi_{-1,2}\}$. This corresponds to the ring of paramodular forms of degree $2$ and level $2$ which was determined in \cite{IO97}.
\end{example}

\begin{example}
Let $L=A_1(3)$ and $\Gamma=\widetilde{\Orth}(L)$. We have 
$$
J_{*,A_1(3),*}^{\w}= \CC[E_4,E_6][\phi_{0,1}^3, \phi_{0,1}^2\phi_{-2,1}, \phi_{0,1}\phi_{-2,1}^2, \phi_{-2,1}^3, \phi_{0,1}\phi_{-1,2}, \phi_{-2,1}\phi_{-1,2}].
$$
All generators are of index one as Jacobi forms for $A_1(3)$. We conclude from Theorem \ref{th:criterion} that $M_*(\Gamma)$ is generated by the weight $4, 6, 6, 8, 9, 10, 11, 12$ Gritsenko lifts of the Eisenstein series $E_{4, 3}, E_{6, 3}$ and $\Delta \phi$, where $\phi$ is one of the above generators of $J_{*, A_1(3), *}^{\w}$. This corresponds to the ring of paramodular forms of degree $2$ and level $3$ which was determined in \cite{Der02}.
\end{example}

\section{Meromorphic modular forms on complex balls}\label{sec:ball quotients}
In this section we establish the modular Jacobian approach for meromorphic modular forms on complex balls attached to unitary groups of signature $(l,1)$ whose poles are supported on hyperplane arrangements. We also apply this criterion to root lattices with complex multiplication to construct free algebras of unitary modular forms with poles on hyperplane arrangements. 

\subsection{The Looijenga compactification of ball quotients}
Following \cite[\S 2]{WW21a} we define modular forms with respect to unitary groups of Hermitian lattices.  Let $d$ be a square-free negative integer.  Then $\F=\QQ(\sqrt{d})$ is an imaginary quadratic field with ring of integers $\mathcal{O}_\F$.  Let $D_\F$ denote the discriminant of $\F$, such that $$
\mathcal{O}_\F=\ZZ+\ZZ\cdot \zeta, \; \text{where} \; \zeta=(D_{\F}+\sqrt{D_\F})/2.
$$ 
Let $\cD_\F^{-1}$ denote the inverse different
$$
\cD_\F^{-1} = \{x \in \F: \; \mathrm{Tr}_{\F / \QQ}(xy) \in \mathbb{Z} \; \text{for all} \; y \in \mathcal{O}_\F\} = \frac{1}{\sqrt{D_{\F}}} \mathcal{O}_\F.
$$
A \emph{Hermitian lattice} $M$ is a free $\mathcal{O}_{\F}$-module equipped with a non-degenerate form 
$$
h(-,-): M \times M \longrightarrow \F
$$ 
which is linear in the first component and conjugate-linear in the second. 
We call $M$ \emph{even} if $h(x,x)\in \ZZ$ for all $x\in M$. We define the dual of $M$ by 
$$
M' = \{x \in M \otimes_{\mathcal{O}_{\F}} \F: \; h(x,y) \in \cD_\F^{-1} \;\text{for all} \; y \in M\}.
$$
We view a Hermitian lattice $M$ as a usual $\ZZ$-lattice (denoted $M_\ZZ$) equipped with the following bilinear form induced by $h(-,-)$
$$
(-, -) := \mathrm{Tr}_{\F/\QQ} h(-,-) : M \otimes M \longrightarrow \mathbb{Q}.
$$
Under this identification, $M$ is even if and only if $M_\ZZ$ is even, and the $\mathcal{O}_\F$-dual $M'$ coincides with the $\ZZ$-dual $M_\ZZ'$. 

Now assume that $M$ is an even Hermitian lattice of signature $(l, 1)$ with $l\geq 2$.  We equip the complex vector space $V_{\U}(M) := M \otimes_{\mathcal{O}_{\F}} \CC$ with the $\QQ$-structure defined by $M$ and the sesquilinear form induced by $h(-,-)$. Then the unitary group $\U(V_{\U}(M))\cong \U(l,1)$ is an algebraic group defined over $\QQ$. The Hermitian symmetric domain attached to $M$  is the Grassmannian of negative-definite lines 
$$
\cD_{\U}(M) = \{[z] \in \PP(V_{\U}(M)): \; h(z, z) < 0\},
$$ on which $\U(l,1)$ acts by multiplication, and it is biholomorphic to the complex ball of dimension $l$. The quotients $\cD_{\U}(M) / \Gamma$ by arithmetic subgroups $\Gamma \le \U(l,1)$ are usually called \emph{ball quotients}. 

Let $\cA_{\U}(M)$ be the principal $\CC^\times$-bundle 
$$
\cA_{\U}(M) = \{z \in V_{\U}(M): \; [z] \in \cD_{\U}(M)\}.
$$
Modular groups $\Gamma$ will be finite-index subgroups of 
$$
\U(M) := \{\gamma \in \U(l,1):\; \gamma M = M\}.
$$ 
The most important example is the \emph{discriminant kernel},
$$
\widetilde{\U}(M) = \{\gamma \in \U(M): \; \gamma x - x \in M \; \text{for all} \; x \in M'\}.
$$

For any vector $v\in M'$ satisfying $h(v,v)>0$, the associated hyperplane $v^\perp$ is the set of vectors of $\cD_{\U}(M)$ orthogonal to $v$ and it defines a complex ball of dimension $l-1$. Following Looijenga's work \cite{Loo03a}, an arrangement $\cH_{\U}$ of hyperplanes is a finite union of $\Gamma$-orbits of hyperplanes in $\cD_{\U}(M)$. We say that $\cH_{\U}$ satisfies the \emph{Looijenga condition} if every one-dimensional intersection in $V_{\U}(M)$ of hyperplanes from $\cH_{\U}$ is positive definite. This condition guarantees that the analogue of Koecher's principle holds for the meromorphic modular forms defined below. 

\begin{definition}
Let $k\in\ZZ$ and $\Gamma$ be a finite-index subgroup of $\U(M)$.  A modular form for $\Gamma$ of weight $k$ and character $\chi$ with poles on $\cH_{\U}$ is a meromorphic function $F: \cA_{\U} \to \CC$ which is holomorphic away from $\cH_{\U}$ and satisfies
\begin{align*}
F(tz)&=t^{-k}F(z),  \quad \text{for all $t\in \CC^\times$},\\
F(\gamma z)&=\chi(\gamma) F(z),  \quad \text{for all $\gamma\in \Gamma$.}
\end{align*}
\end{definition}

Assume that $\cH_{\U}$ satisfies the Looijenga condition. By \cite[Corollary 5.8]{Loo03a}, the ring $M_*^!(\Gamma)$ of modular forms for $\Gamma$ of integral weight and trivial character with poles on $\cH_{\U}$ is finitely generated over $\CC$ by forms of positive weight. Moreover, the $\mathrm{Proj}$ of $M_*^!(\Gamma)$ gives the Looijenga compactification of the complement of $\cH_{\U}$ in $\cD_{\U}(M)$ with respect to $\Gamma$. It is similar to the orthogonal case that the boundary components of the Looijenga compactification have codimension at least two. When the arrangement $\cH_{\U}$ is empty, the Looijenga compactification coincides with the Baily--Borel compactification of $\cD_{\U}(M) / \Gamma$ which is obtained by adding finitely many $0$-dimensional cusps.

\subsection{The Jacobian of unitary modular forms}
Reflections are automorphisms of finite order whose fixed point set has codimension one.  For any primitive vector $r\in M$ satisfying $h(r,r)> 0$ and $\alpha \in \mathcal{O}_\F^\times \setminus \{1\}$, the reflection associated to $r$ and  $\alpha$ is
$$
\sigma_{r, \alpha}: x \mapsto x -(1-\alpha) \frac{ h(x, r)}{h(r,r)} r .
$$
The hyperplane $r^\perp$ is the fixed point set of $\sigma_{r,\alpha}$ and is called the \textit{mirror} of $\sigma_{r, \alpha}$. Note that $\sigma_{r, \alpha} \sigma_{r, \beta}$ equals $\sigma_{r, \alpha \beta}$ or the identity when $\alpha \beta = 1$, and therefore $$
\mathrm{ord}(\sigma_{r, \alpha}) = \mathrm{ord}(\alpha) \in \{2, 3, 4, 6\},
$$ 
where $\mathrm{ord}(\alpha)$ is the order of $\alpha$ in $\mathcal{O}_\F^{\times}$. We refer to \cite[\S 2.3]{WW21a} for a full description of the reflections contained in $\U(M)$ and $\widetilde{\U}(M)$. 

For $1 \leq j \leq l+1$,  let  $F_j$ be a modular form for $\Gamma$ of weight $k_j$ and trivial character with poles on $\cH_{\U}$.   View $F_j$ as meromorphic functions defined on the affine cone $\cA_{\U}(M) \subseteq \CC^{l, 1}$.  With respect to the natural coordinates $(z_1,...,z_{l+1})$ on $\CC^{l, 1}$, the Jacobian determinant of $(F_1,...,F_{l+1})$ is defined in the usual way:
$$
J_{\U}:=J_{\U}(F_1, ...,F_{l+1})=\frac{\partial (F_1,  F_2, ..., F_{l+1})}{\partial (z_1, z_2, ..., z_{l+1})}. 
$$
Similar to \cite[Theorem 3.1]{WW21a}, the Jacobian $J_{\U}$ satisfies the following properties.
\begin{enumerate}
\item $J_{\U}$ is a modular form for $\Gamma$ of weight $l+1 + \sum_{j=1}^{l+1}k_j$ and character $\det^{-1}$ with poles on $\cH_{\U}$, where $\det$ is the determinant character. 
\item $J_{\U}$ is not identically zero if and only if these $F_j$ are algebraically independent over $\CC$.
\item Let $r\in M$ and $\alpha \in \mathcal{O}_\F^\times \setminus \{1\}$.  If $\sigma_{r,\alpha}\in \Gamma$ and $r^\perp$ is not contained in $\cH_{\U}$, then the vanishing order of $J_{\U}$ on $r^\perp$ satisfies
$$
\mathrm{ord}(J_{\U}, r^\perp) \equiv -1 \, \mathrm{mod}\, \ord(\alpha).
$$
\item If we view $F_j$ as functions of $\tau, z_1,...,z_{l-1}$ on the Siegel domain $\cH_{\U}$ attached to a zero-dimensional cusp (i.e. a rational isotropic line), then the Jacobian takes the form
$$
\left\lvert \begin{array}{cccc}
k_1F_1 & k_2F_2 & \cdots & k_{l+1}F_{l+1} \\ 
\partial_\tau F_1 & \partial_\tau F_2 & \cdots & \partial_\tau F_{l+1} \\
\partial_{z_1} F_1 & \partial_{z_1} F_2 & \cdots & \partial_{z_1} F_{l+1} \\ 
\vdots & \vdots & \ddots & \vdots \\ 
\partial_{z_{l-1}} F_{1} & \partial_{z_{l-1}} F_{2} & \cdots & \partial_{z_{l-1}} F_{l+1}
\end{array}   \right\rvert.
$$ 
\end{enumerate}

As analogues of Theorem \ref{th:freeJacobian}, Theorem \ref{th:Jacobiancriterion} and \cite[Theorem 3.3, Theorem 3.4]{WW21a}, we will describe the link between the Jacobian and free algebras of meromorphic modular forms.

\begin{theorem}\label{th:unitaryfreeJacobian}
Let $\cH_{\U}$ be an arrangement of hyperplanes satisfying the Looijenga condition. Suppose that the ring of unitary modular forms for $\Gamma$ with poles on $\cH_{\U}$ is freely generated by forms $F_j$ of weight $k_j$ for $1\leq j \leq l+1$. Then:
\begin{enumerate}
\item $\Gamma$ is generated by reflections.
\item $J_{\U}=J_{\U}(F_1,...,F_{l+1})$ is a nonzero modular form for $\Gamma$ of weight $l+1 + \sum_{j=1}^{l+1}k_j$ and character $\det^{-1}$ which satisfies:
\begin{enumerate}
        \item If a mirror $r^\perp$ of reflections in $\Gamma$ is not contained in $\cH_{\U}$, then $J_{\U}$ vanishes on $r^\perp$ with multiplicity 
        $$
        \mathrm{ord}(J_{\U}, r^{\perp}) = -1 + \max\{\mathrm{ord}(\alpha): \; \sigma_{r, \alpha} \in \Gamma\}.
        $$ 
        \item The other zeros and poles of $J_{\U}$ are contained in $\cH_{\U}$. 
\end{enumerate}
\end{enumerate}
\end{theorem}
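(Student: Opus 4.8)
The plan is to follow the template of Theorem \ref{th:freeJacobian} almost verbatim, replacing the orthogonal ingredients by their unitary counterparts from \cite[\S 3]{WW21a} and using the four properties of $J_{\U}$ recorded above. For part (1), I would first note that freeness of $M_*^!(\Gamma)$ means $\widehat{X^{\circ}} = \mathrm{Proj}\,M_*^!(\Gamma)$ is a weighted projective space $\PP(k_1,\dots,k_{l+1})$ and its affine span $\widehat{Y^{\circ}} = \mathrm{Spec}\,M_*^!(\Gamma)$ is the affine space $\CC^{l+1}$. By \cite[Corollary 5.8]{Loo03a} the Looijenga condition forces every boundary component of $\widehat{X^{\circ}} - X^{\circ}$, hence of $\widehat{Y^{\circ}} - Y^{\circ}$, to have codimension at least two; therefore $Y^{\circ} = \cA^{\circ}_{\U}(M) / \Gamma$ is smooth and simply connected. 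Armstrong's theorem \cite{Arm68} then shows $\Gamma$ is generated by elements with fixed points on $\cA^{\circ}_{\U}(M)$, and the unitary analogue of Gottschling's theorem (the Chevalley--Shephard--Todd principle, as in \cite[Theorem 3.3]{WW21a}) upgrades this to: $\Gamma$ is generated by reflections, since the stabilizer of every point of $\cA^{\circ}_{\U}(M)$ is a complex reflection group once the quotient is smooth.

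For part (2), I would first record that $J_{\U} := J_{\U}(F_1,\dots,F_{l+1})$ is nonzero because freely generating forms are algebraically independent, and its weight and character are given by property (1) of $J_{\U}$. For (2b), consider the holomorphic map
$$
\phi : \cA^{\circ}_{\U}(M) \longrightarrow Y^{\circ} = \cA^{\circ}_{\U}(M)/\Gamma \hookrightarrow \CC^{l+1}\setminus\{0\}, \qquad z \mapsto (F_1(z),\dots,F_{l+1}(z)),
$$
whose analytic Jacobian is exactly $J_{\U}$. At any $z$ with trivial stabilizer $\Gamma_z$ the action of $\Gamma$ near $z$ is free and properly discontinuous, so $\phi$ is a local biholomorphism onto $Y^{\circ}$ and $J_{\U}(z)\neq 0$. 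By the unitary Gottschling theorem again, points with nontrivial stabilizer that are not on a mirror lie on a set of codimension $\geq 2$, so $J_{\U}$ is nonvanishing on $\cD_{\U}^{\circ}$ away from the mirrors; since each $F_j$ (and hence $J_{\U}$) is holomorphic away from $\cH_{\U}$, all zeros and poles of $J_{\U}$ not on a mirror of $\Gamma$ are contained in $\cH_{\U}$, which is (2b).

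For (2a), I would work at a generic point $p$ of a mirror $r^{\perp}$ which is not contained in $\cH_{\U}$ and not on any other mirror; its stabilizer $\Gamma_p$ is then the cyclic group generated by $\sigma_{r,\alpha}$, where $\alpha$ runs over the roots of unity with $\sigma_{r,\alpha}\in\Gamma$, of maximal order $d := \max\{\mathrm{ord}(\alpha):\sigma_{r,\alpha}\in\Gamma\}$. Choosing local coordinates $(w_1,\dots,w_{l+1})$ at $p$ in which $\sigma_{r,\alpha}$ acts as $(w_1,w') \mapsto (\alpha w_1, w')$ with $\{w_1 = 0\}$ cutting out $r^{\perp}$, the ring of $\Gamma_p$-invariants is generated by $w_1^{d}, w_2,\dots,w_{l+1}$; since $Y^{\circ}$ is smooth and $\phi$ is a local isomorphism from $\cA^{\circ}_{\U}(M)/\Gamma$ onto $Y^{\circ}$, the composition $\phi$ has, in these coordinates, the form $(w_1,w') \mapsto (u(w_1^{d},w'))$ with $u$ a biholomorphism, so $J_{\U}$ vanishes to order exactly $d-1$ along $r^{\perp}$. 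This equals $-1 + \max\{\mathrm{ord}(\alpha):\sigma_{r,\alpha}\in\Gamma\}$, and it is consistent with property (3) of $J_{\U}$, which independently forces $\mathrm{ord}(J_{\U},r^{\perp}) \equiv -1 \pmod{d}$.

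The main obstacle I anticipate is the geometric input in part (1): verifying that $Y^{\circ}$ is smooth and simply connected in the Looijenga (rather than Baily--Borel) setting — this hinges on Looijenga's codimension-$\geq 2$ statement for the boundary strata under Condition \ref{condition} — together with the precise form of the unitary Chevalley--Shephard--Todd/Gottschling theorem that converts ``generated by elements with fixed points'' into ``generated by reflections''. The local analysis at the mirrors in (2a) is then essentially a Jacobian bookkeeping once the quotient is known to be smooth, and (2b) is routine; the only subtlety there is handling the finitely many higher-codimension strata of nontrivial stabilizers, which is again absorbed into the Gottschling-type statement.
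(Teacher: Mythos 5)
Your proposal is correct and follows essentially the same route as the paper, which proves this result by adapting the argument of Theorem \ref{th:freeJacobian} and of \cite[Theorem 3.3]{WW21a}, with the same essential input you identify: the boundary of the Looijenga compactification of $(\cD_{\U}(M)-\cH_{\U})/\Gamma$ has codimension at least two, so that $Y^{\circ}$ is smooth and simply connected and Armstrong's and Gottschling's theorems apply. Your local computation at a generic mirror point, giving vanishing order exactly $d-1$ from the invariant-theoretic normal form of the quotient map, is precisely the unitary refinement needed to replace the order-two reflection argument of the orthogonal case.
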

\begin{proof}
This theorem can be proved in a similar way to Theorem \ref{th:freeJacobian} and \cite[Theorem 3.3]{WW21a}. The essential fact is that the boundary components of the Loojenga compactification of the arrangement complement $(\cD_{\U}(M)- \cH_{\U}) / \Gamma$ have codimension at least two. 
\end{proof}

\begin{theorem}\label{th:unitaryJacobiancriterion}
Let $\cH_{\U}$ be an arrangement of hyperplanes satisfying the Looijenga condition. Suppose that there exist $l+1$ algebraically independent modular forms of trivial character for $\Gamma$ with poles on $\cH_{\U}$, and suppose the restriction of their Jacobian to the complement $\cD_{\U}(M) - \cH_{\U}$ vanishes precisely on all mirrors of reflections in $\Gamma$ with multiplicity $m-1$,  where $m$ is the maximal order of the reflections in $\Gamma$ through that mirror.  Then these forms freely generate the ring of unitary modular forms for $\Gamma$ with poles on $\cH_{\U}$. In particular, $\Gamma$ is generated by reflections.  
\end{theorem}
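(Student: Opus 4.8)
The plan is to run the same induction on the weight that proves Theorem~\ref{th:Jacobiancriterion} (and, originally, \cite[Theorem 5.1]{Wan21a}), the only genuinely new feature being that $\Gamma$ may contain reflections of order $3$, $4$ or $6$. Two preliminaries are needed. First, since $\cH_{\U}$ satisfies the Looijenga condition, the ring $M_*^!(\Gamma)$ is generated by forms of positive weight (\cite[Corollary 5.8]{Loo03a}), so every modular form of non-positive weight with poles on $\cH_{\U}$ is constant; and a constant among the $F_i$ would kill a column of the Jacobian matrix, contradicting algebraic independence. Hence we may assume each $F_i$ has weight $k_i\ge 1$. Second, one checks that $J_{\U}$ obeys the product rule in each slot, so that writing a meromorphic modular form as a quotient of holomorphic ones reduces the four listed properties of $J_{\U}$ to the holomorphic case, exactly as in \S\ref{sec:Looijenga}.

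I would then argue by contradiction: if $M_*^!(\Gamma)$ is not generated by $F_1,\dots,F_{l+1}$, pick $F_{l+2}$ of minimal weight $k_{l+2}$ not lying in $\CC[F_1,\dots,F_{l+1}]$, and for $1\le t\le l+2$ set $J_t:=J_{\U}(F_1,\dots,\widehat{F_t},\dots,F_{l+2})$, the Jacobian of the $l+1$ forms obtained by deleting $F_t$, so that $J:=J_{\U}(F_1,\dots,F_{l+1})=J_{l+2}$ and $g_t:=J_t/J$ is a priori only meromorphic. The key step is to show $g_t\in M_*^!(\Gamma)$, i.e.\ that $g_t$ is holomorphic away from $\cH_{\U}$. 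Each $J_t$, being a Jacobian of forms holomorphic off $\cH_{\U}$, is holomorphic off $\cH_{\U}$; and by hypothesis $J$ vanishes on $\cD_{\U}(M)-\cH_{\U}$ only along mirrors of reflections of $\Gamma$, there to order $m-1$, where $m$ is the maximal order of a reflection of $\Gamma$ fixing that mirror. So it suffices to check $\mathrm{ord}(J_t,r^{\perp})\ge m-1$ for such a mirror $r^{\perp}\not\subseteq\cH_{\U}$; but property~(3) of $J_{\U}$ gives $\mathrm{ord}(J_t,r^{\perp})\equiv -1\pmod{\mathrm{ord}(\alpha)}$ for every $\sigma_{r,\alpha}\in\Gamma$, in particular modulo $m$, and since $J_t$ is holomorphic there its order lies in $\{m-1,2m-1,\dots\}$. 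Thus $g_t\in M_*^!(\Gamma)$, of weight $k_{l+2}-k_t<k_{l+2}$.

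The conclusion is then formal. Expanding the $(l+2)\times(l+2)$ determinant with two equal weight rows along the first row yields $\sum_{t=1}^{l+2}(-1)^t k_t F_t J_t=0$, hence $\sum_{t=1}^{l+2}(-1)^t k_t F_t g_t=0$; since $g_{l+2}=1$ this expresses $k_{l+2}F_{l+2}$ as a $\CC$-linear combination of the $F_t g_t$ with $t\le l+1$. By minimality of $k_{l+2}$ each $g_t$ lies in $\CC[F_1,\dots,F_{l+1}]$, forcing $F_{l+2}\in\CC[F_1,\dots,F_{l+1}]$ --- a contradiction. So $M_*^!(\Gamma)=\CC[F_1,\dots,F_{l+1}]$, a free polynomial algebra since the $F_i$ are algebraically independent, and $\Gamma$ is generated by reflections by Theorem~\ref{th:unitaryfreeJacobian}(1), now applicable.

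I do not anticipate a serious obstacle: the scheme is identical to the orthogonal proof, and the single new point --- bounding below the vanishing order of the auxiliary Jacobians $J_t$ along mirrors of higher-order reflections --- is dispatched cleanly by the congruence $\mathrm{ord}(J_{\U},r^{\perp})\equiv -1\pmod{\mathrm{ord}(\alpha)}$. The only places requiring routine care are the reduction to positive-weight generators (via Koecher's principle on the Looijenga compactification) and checking that $J_{\U}$ behaves well enough under quotients for the four quoted properties to apply to meromorphic inputs.
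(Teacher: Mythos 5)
Your argument is correct and is precisely the ``adjustment'' of the proofs of Theorem \ref{th:Jacobiancriterion} and \cite[Theorem 3.4]{WW21a} that the paper invokes: the weight induction via the determinant identity is unchanged, and the only new point --- that $g_t = J_t/J$ remains holomorphic off $\cH_{\U}$ along mirrors of higher-order reflections --- is handled exactly as intended, by combining the congruence $\mathrm{ord}(J_{\U}, r^{\perp}) \equiv -1 \pmod{\mathrm{ord}(\alpha)}$ with the holomorphy of $J_t$ there to get $\mathrm{ord}(J_t, r^{\perp}) \ge m-1$. No gaps.
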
 
\begin{proof}
The proof is an adjustment to the proofs of Theorem \ref{th:Jacobiancriterion} and \cite[Theorem 3.4]{WW21a}.
\end{proof}

\subsection{Free algebras of meromorphic modular forms on complex balls}
Let $M$ be an even Hermitian lattice of signature $(l, 1)$. There are natural embeddings $$\U(M) \hookrightarrow \Orth^+(M_{\ZZ}) \quad \text{and} \quad \widetilde{\U}(M) \hookrightarrow \widetilde{\Orth}^+(M_{\ZZ}),$$ and modular forms for $\Orth^+(M_{\ZZ})$ can be pulled back to modular forms for $\U(M)$ by restricting to those lines in $\cD(M)$ which are preserved by the complex structure on $M$; for more detail see \cite[\S 2.4]{WW21a}. In \cite[\S 4]{WW21a} we used the relationship between the Jacobians of orthogonal and unitary modular forms to show that free algebras of modular forms for $\Gamma \le \Orth^+(M_{\ZZ})$ often yield free algebras of modular forms for $\Gamma \cap \U(M)$. This result naturally extends to modular forms with poles:

\begin{theorem}\label{th:twins}
Let $M$ be an even Hermitian lattice of signature $(l,1)$ over $\F=\QQ(\sqrt{-1})$ or $\QQ(\sqrt{-3})$ with $l\geq 2$. Let $\cH$ be a hyperplane arrangement on $\cD(M_{\ZZ})$ satisfying the Looijenga condition. Let $\cH_{\U}$ denote the hyperplane arrangement obtained as the restriction of $\cH$ to $\cD_{\U}(M)$, and assume that $\cH_{\U}$ also satisfies the Looijenga condition. 
Suppose that the ring of modular forms for $\widetilde{\Orth}^+(M_{\ZZ})$ with poles on $\cH$ is freely generated by $2l+1$ orthogonal modular forms, $l$ of whose restrictions to $\cD_{\U}(M)$ are identically zero.  Then the ring of modular forms for $\widetilde{\U}(M)$ with poles on $\cH_{\U}$ is freely generated by the restrictions to $\cD_{\U}(M)$ of the remaining $l+1$ generators of orthogonal type.
\end{theorem}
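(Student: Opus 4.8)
The plan is to verify the hypotheses of Theorem~\ref{th:unitaryJacobiancriterion} for the restrictions of the orthogonal generators, following the strategy used for holomorphic modular forms in \cite[\S 4]{WW21a}. Write $\Gamma_{\ZZ}=\widetilde{\Orth}^+(M_{\ZZ})$; since $M$ has signature $(l,1)$, the lattice $M_{\ZZ}$ has signature $(2l,2)$ and $\cD(M_{\ZZ})$ has dimension $2l\ge 4$, so by hypothesis the algebra $M_*^!(\Gamma_{\ZZ})$ of modular forms with poles on $\cH$ is freely generated by $2l+1$ forms, indexed as $F_1,\dots,F_{l+1},F_{l+2},\dots,F_{2l+1}$ so that $F_{l+2}|_{\cD_{\U}(M)}=\dots=F_{2l+1}|_{\cD_{\U}(M)}=0$. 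Put $f_i:=F_i|_{\cD_{\U}(M)}$ for $1\le i\le l+1$; each $f_i$ is a meromorphic modular form of trivial character for $\widetilde{\U}(M)$ with poles on $\cH_{\U}$. Recall from \cite[\S 2.4]{WW21a} that $\cD_{\U}(M)$ sits inside $\cD(M_{\ZZ})$ as the locus of lines preserved by the complex multiplication $I$ on $M$; it is a totally geodesic complex submanifold of complex codimension $l$, and along it the holomorphic tangent bundle of $\cD(M_{\ZZ})$ splits as $T\cD_{\U}(M)$ together with an $I$-antilinear normal part of rank $l$.

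I would first establish the Jacobian factorization. Near a generic point of $\cD_{\U}(M)$, choose holomorphic coordinates $z_1,\dots,z_l,w_1,\dots,w_l$ on the affine cone in which $\cD_{\U}(M)=\{w_1=\dots=w_l=0\}$ and the $w_k$ are linear along the normal directions, and write $F_{l+1+j}=\sum_{k=1}^{l}w_k\,H_{jk}+O(w^2)$ for $1\le j\le l$. Expanding the orthogonal Jacobian $J:=J(F_1,\dots,F_{2l+1})$ along the $l$ columns that vanish on $\cD_{\U}(M)$ gives, exactly as in \cite[\S 4]{WW21a},
\[
J(F_1,\dots,F_{2l+1})\big|_{\cD_{\U}(M)}=c\cdot\Psi\cdot J_{\U}(f_1,\dots,f_{l+1}),
\qquad \Psi:=\det\big(H_{jk}|_{\cD_{\U}(M)}\big)_{1\le j,k\le l},
\]
for a nonzero constant $c$; the computation is purely local, and the product rule for Jacobians of meromorphic forms (used already in \S\ref{sec:Looijenga} to pass from holomorphic to meromorphic generators) shows it remains valid here. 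Intrinsically $\Psi$ is $dF_{l+2}\wedge\dots\wedge dF_{2l+1}$ restricted to $\cD_{\U}(M)$, viewed as a section of $\det N^{*}$ twisted by a power of the automorphic line bundle, so it is a meromorphic modular form for $\widetilde{\U}(M)$ whose poles lie on $\cH_{\U}$. By Theorem~\ref{th:freeJacobian}, $J$ is not identically zero, and since its zero divisor outside $\cH$ is supported on mirrors of reflections of $\Gamma_{\ZZ}$ — none of which contains $\cD_{\U}(M)$, as a reflective vector cannot lie in the isotropic subspace $V_{\U}(M)^{\perp}$ of $M_{\ZZ}\otimes\CC$ — the form $J$ does not vanish identically on $\cD_{\U}(M)$. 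Hence both $\Psi$ and $J_{\U}(f_1,\dots,f_{l+1})$ are nonzero, so $f_1,\dots,f_{l+1}$ are algebraically independent over $\CC$.

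It remains to compute the divisor of $J_{\U}:=J_{\U}(f_1,\dots,f_{l+1})$ on the complement $\cD_{\U}(M)-\cH_{\U}$. By Theorem~\ref{th:freeJacobian}, $\Gamma_{\ZZ}$ is generated by its reflections, $J$ vanishes with multiplicity one on every such mirror not contained in $\cH$, and all remaining zeros and poles of $J$ lie on $\cH$. Fix a mirror $r^{\perp}$ of a reflection in $\widetilde{\U}(M)$ that is not contained in $\cH_{\U}$, and set $m=\max\{\mathrm{ord}(\alpha):\sigma_{r,\alpha}\in\widetilde{\U}(M)\}\in\{2,3,4,6\}$. Using the explicit classification of reflections in $\widetilde{\U}(M)$, together with the description of how an $I$-invariant configuration of norm-two hyperplanes of $M_{\ZZ}$ restricts to $\cD_{\U}(M)$ near $r^{\perp}$ — both carried out in \cite[\S 4]{WW21a} — one reads off from the displayed identity that the orders of vanishing along $r^{\perp}$ contributed by $J$ and by the conormal factor $\Psi$ add up to $\mathrm{ord}(J_{\U},r^{\perp})=m-1$, and that every other zero or pole of $J_{\U}$ lies on $\cH_{\U}$. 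Theorem~\ref{th:unitaryJacobiancriterion} then applies and shows that the $f_i$ freely generate the ring of modular forms for $\widetilde{\U}(M)$ with poles on $\cH_{\U}$.

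The main obstacle is this last divisor bookkeeping. On the orthogonal side every reflection has order two, so the simple zero of $J$ along the relevant norm-two mirrors accounts for only one unit of vanishing of $J_{\U}$ at $r^{\perp}$; the remaining $m-2$ units, present exactly when $\mathcal{O}_{\F}^{\times}$ contains roots of unity of order $3$, $4$ or $6$, must come from $\Psi$, and checking that $\Psi$ contributes precisely this amount — and has no zeros or poles on divisors outside $\cH_{\U}$ — requires the precise description of the conormal bundle of $\cD_{\U}(M)$ in $\cD(M_{\ZZ})$ and of the higher-order unitary reflections for $\F=\QQ(\sqrt{-1})$ and $\QQ(\sqrt{-3})$. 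This is exactly the local computation performed for holomorphic forms in \cite[\S 4]{WW21a}; since it takes place in a neighbourhood of $\cD_{\U}(M)$ it transfers without change, the only substitution being that the Baily--Borel compactification and Koecher's principle are replaced by the Looijenga compactification and Lemma~\ref{lem:Koecher}, whose hypotheses hold because $\cH$ and $\cH_{\U}$ both satisfy the Looijenga condition.
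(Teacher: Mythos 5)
Your overall strategy --- restrict the surviving generators, factor the orthogonal Jacobian along $\cD_{\U}(M)$, and feed the result into Theorem \ref{th:unitaryJacobiancriterion} --- is the same as the paper's, and the first two steps (non-vanishing of $J|_{\cD_{\U}(M)}$ because its zero divisor is supported on Heegner divisors meeting $\cD_{\U}(M)$ transversally, and the factorization $\widehat{J}_{\Orth}=c\,\Psi\,J_{\U}$, which is \cite[Proposition 4.1]{WW21a}) are fine. The gap is in the decisive multiplicity computation, where your bookkeeping runs in the wrong direction. Since $\widehat{J}_{\Orth}=c\,\Psi\,J_{\U}$ with $\Psi$ holomorphic away from $\cH_{\U}$, a zero of $\Psi$ along a unitary mirror \emph{decreases} $\ord(J_{\U})$ relative to $\ord(\widehat{J}_{\Orth})$; it cannot supply the ``remaining $m-2$ units'' you assign to it. Moreover the restriction of $J_{\Orth}$ to a unitary mirror $r^{\perp}$ does not vanish to order one: the $N=|\mathcal{O}_{\F}^{\times}/\{\pm 1\}|$ unit-translates of $r$ span distinct orthogonal hyperplanes whose restrictions to $\cD_{\U}(M)$ all coincide with $r^{\perp}$, so $\widehat{J}_{\Orth}$ already vanishes there to order $N$ ($=2$ for $d=-1$, $=3$ for $d=-3$).

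The paper closes this step with no local conormal computation at all: since $\Psi$ is holomorphic away from $\cH_{\U}$, $J_{\U}$ divides $\widehat{J}_{\Orth}$ there, giving $\ord(J_{\U},r^{\perp})\le N$; the character $\det^{-1}$ forces $J_{\U}$ to vanish on every mirror not contained in $\cH_{\U}$, and the congruence $\ord(J_{\U},r^{\perp})\equiv -1 \pmod{N}$ then pins the order down to exactly $N-1$, i.e.\ $1$ for $d=-1$ and $2$ for $d=-3$. Note also that your target ``$m-1$ with $m\in\{2,3,4,6\}$'' is incompatible with the bound $\ord(J_{\U},r^{\perp})\le N$: if a relevant mirror carried a reflection of order $4$ or $6$ in $\widetilde{\U}(M)$ the criterion could not be met, so one needs the fact (from \cite[\S 2.3]{WW21a}) that the maximal order through these mirrors is $2$ (resp.\ $3$); your proposal leaves this unexamined. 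Deferring the whole computation to ``the local computation in \cite[\S 4]{WW21a}'' does not repair this, both because the heuristic you give for it is backwards and because that accounting is precisely the content that must be supplied.
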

\begin{proof}
For simplicity we label $\cD^\circ=\cD(M_{\ZZ}) - \cH$ and $\cD_{\U}^\circ=\cD_{\U}(M) - \cH_{\U}$.
Let $F_j$, $0 \le j \le 2l$, be the $2l+1$ generators for $M_*(\widetilde{\Orth}^+(M_{\ZZ}))$, and let $f_j$ be their restrictions to $\cD_{\U}(M)$. Without loss of generality, suppose $f_j = 0$ for $j \geq l+1$. Let $J_{\Orth}$ be the Jacobian of $F_0,...,F_{2l}$, and let $J_{\U}$ be the Jacobian of $f_0,...,f_l$. Note that $J_{\Orth}$ has its zero locus supported on Heegner divisors, all of which intersect $\cD_{\U}(M)$ transversally, so its restriction $\widehat{J}_{\Orth}$ to $\cD_{\U}(M)$ does not vanish identically. 

By \cite[Proposition 4.1]{WW21a} and its natural analogue to modular forms with poles, there is a meromorphic unitary modular form $g$, holomorphic away from $\cH_{\U}$, such that $\widehat{J}_{\Orth}=g J_{\U}$. It follows that $J_{\U}$ is also not identically zero.  According to \cite[Lemma 2.2]{WW21a}, the restrictions of mirrors of reflections in $\widetilde{\Orth}^+(M_{\ZZ})$ to $\cD_{\U}(M)$ are exactly the mirrors of reflections in $\widetilde{\U}(M)$. By Theorem \ref{th:2precise}, on the complement $\cD^\circ$, the orthogonal Jacobian $J_{\Orth}$ vanishes precisely with multiplicity one on mirrors of reflections in $\widetilde{\Orth}^+(M_{\ZZ})$. As a factor of $\widehat{J}_{\Orth}$, the unitary Jacobian $J_{\U}$ also vanishes only on mirrors of reflections in $\widetilde{\U}(M)$ on $\cD_{\U}^\circ$. 

By considering its character, we see that $J_{\U}$ must vanish on every mirror of a reflection in $\widetilde{\U}(M)$ which is not contained in $\cH_{\U}$. Therefore $J_{\U}$ vanishes precisely on the mirrors of reflections in $\widetilde{\U}(M)$ in $\cD_{\U}^\circ$. To apply Theorem \ref{th:unitaryJacobiancriterion}, we must show that the order of vanishing of $J_{\U}$ on any mirror is exactly $1$ when $d=-1$ and $2$ when $d=-3$. This is true because the restriction $\widehat{J}_{\Orth}$ of $J_{\Orth}$ vanishes with multiplicity at most $$N := |\mathcal{O}_\F^\times / \{\pm 1\}| = \begin{cases}2 : & d = -1; \\ 3: & d = -3; \end{cases},$$ so its factor $J_{\U}$ vanishes to at most that multiplicity; and because $\ord(J_{\U}) \equiv -1$ mod $N$.
\end{proof}

Note that the Looijenga condition for $\cH$ does not imply the Looijenga condition for $\cH_{\U}$ in general. Two examples where this fails are $L = A_1 \oplus A_1$ and $L = 3A_1 \oplus A_1$, viewed as Gaussian lattices, where $\cH_L$ is defined in Theorem \ref{th:2precise}.

The above proof also shows that, up to constant multiple, $J_{\U} = \widehat{J}_{\Orth}^{1/2}$ if $d=-1$ and $J_{\U} = \widehat{J}_{\Orth}^{2/3}$ if $d=-3$.

As in \cite[Theorem 4.2]{WW21a}, the above theorem holds when we replace $\widetilde{\Orth}^+(M_{\ZZ})$ and $\widetilde{\U}(M)$ with $\Orth(M_{\ZZ})$ and $\U(M)$ and the proof is similar. We cannot extend this theorem to other discriminants, because when $d \notin \{-1, -3\}$ the restriction of a reflection in $\Orth^+(M_{\ZZ})$ is not necessarily a reflection in $\U(M)$.

The following lemma is a convenient way to prove that certain modular forms vanish along $\cD_{\U}(M)$.
\begin{lemma}\label{lem:zero}
Let $M=H\oplus L$ be an even Hermitian lattice of signature $(l,1)$ over $\F = \QQ(\sqrt{d})$ split by a complex line $H$ satisfying $H_{\ZZ}=U$. We write $L=\oplus_{j=1}^n L_j$.  Let $\Gamma$ denote the subgroup generated by $\widetilde{\Orth}^+(M_{\ZZ})$ and these $\Orth((L_j)_{\ZZ})$.
Suppose that $F$ is a nonzero meromorphic modular form of integral weight $k$ and trivial character for $\Gamma$.  Let $f$ denote the restriction of $F$ to $\cD_{\U}(M)$.
\begin{enumerate}
    \item When $d=-1$, if $k$ is not a multiple of $4$, then $f$ is identically zero.  
    \item When $d=-3$, if $k$ is not a multiple of $6$, then $f$ is identically zero.
\end{enumerate}
\end{lemma}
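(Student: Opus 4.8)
The idea is to produce an element of $\Gamma$ acting on the ball $\cD_{\U}(M)$ as multiplication by a primitive root of unity, and then compare this with the weight-$k$ homogeneity of $f$. Throughout, recall that $\mathcal{O}_\F^\times$ is cyclic of order $N$, with $N=4$ when $d=-1$ and $N=6$ when $d=-3$; fix a generator $\zeta$, so that $N(\zeta)=\zeta\bar\zeta=1$ and $\zeta$ has exact order $N$ in $\CC^\times$.

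The first step is to construct $g:=\zeta\cdot\mathrm{id}_M$, multiplication by $\zeta$ on all of $M$, and to check that $g\in\Gamma$. The subtle point is that $\widetilde{\Orth}^+(M_\ZZ)$ alone need not contain $g$, since $g-\mathrm{id}=(\zeta-1)\,\mathrm{id}_M$ need not carry $M'$ into $M$; so the plan is to factor $g=\mu_H\circ\mu_L$, where $\mu_L$ acts as $\zeta$ on $L$ and trivially on $H$, and $\mu_H$ acts as $\zeta$ on $H$ and trivially on $L$. Because $N(\zeta)=1$, multiplication by $\zeta$ on each $L_j$ preserves the trace form $\mathrm{Tr}_{\F/\QQ}h(-,-)$, hence lies in $\Orth((L_j)_\ZZ)$; therefore $\mu_L\in\Gamma$ by the definition of $\Gamma$. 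The map $\mu_H$ is a unitary isometry of the Hermitian lattice $M$ (on $H$ it is multiplication by $\zeta$ with $\zeta\bar\zeta=1$, on $L$ it is the identity), so $\mu_H\in\U(M)\subseteq\Orth^+(M_\ZZ)$; and since $H_\ZZ=U$ is unimodular we have $H'=H$, whence $\mu_H x-x=(\zeta-1)x_H\in H\subseteq M$ for every $x=x_H+x_L\in M'=H\oplus L'$, i.e.\ $\mu_H$ acts trivially on $M'/M$. Thus $\mu_H\in\widetilde{\Orth}^+(M_\ZZ)\subseteq\Gamma$, and $g=\mu_H\mu_L\in\Gamma$.

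The second step is to identify the action of $g$ on the unitary domain. As an $\mathcal{O}_\F$-linear map, $g$ extends to $V_{\U}(M)=M\otimes_{\mathcal{O}_\F}\CC$, where it is simply scalar multiplication by $\zeta\in\CC$, because $(\zeta x)\otimes c=x\otimes(\zeta c)$ in the tensor product over $\mathcal{O}_\F$; being unitary it preserves $\cA_{\U}(M)$ and $\cD_{\U}(M)$, and the inclusion $\cA_{\U}(M)\hookrightarrow\cA(M_\ZZ)$ is $g$-equivariant. Now I would combine two transformation laws. On one hand, since $F$ has trivial character on $\Gamma$, $F(gZ)=F(Z)$ for all $Z\in\cA(M_\ZZ)$, and restricting to $\cA_{\U}(M)$ gives $f(\zeta z)=f(z)$ for all $z\in\cA_{\U}(M)$. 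On the other hand, the weight-$k$ homogeneity of $f$ gives $f(\zeta z)=\zeta^{-k}f(z)$. Hence $(\zeta^{-k}-1)f\equiv 0$ as a meromorphic function. Since $\zeta$ has order $N$, one has $\zeta^{-k}=1$ if and only if $N\mid k$; so $f\equiv 0$ whenever $N\nmid k$, which is exactly case (1) for $d=-1$ and case (2) for $d=-3$.

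I expect the only real work here to be bookkeeping rather than any genuine obstacle: one must confirm that $g$ truly lies in $\Gamma$ — the factorization through the \emph{full} orthogonal groups $\Orth((L_j)_\ZZ)$ (rather than an attempt to place $g$ directly in the discriminant kernel) is what makes this go through — and one must keep the two compatible actions on $\cA_{\U}(M)\subseteq\cA(M_\ZZ)$ straight, so that the scalar $\zeta$ on the unitary side really descends from an honest element of $\Gamma$ on the orthogonal side. The hypothesis $H_\ZZ=U$, i.e.\ the unimodularity of $H$, enters exactly once, namely to put $\mu_H$ in the discriminant kernel $\widetilde{\Orth}^+(M_\ZZ)$.
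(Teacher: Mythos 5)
Your proposal is correct and follows essentially the same route as the paper: both exhibit multiplication by a generator $\alpha$ of $\mathcal{O}_\F^\times$ as an element of $\Gamma$ by composing its action on $H$ (which lies in the discriminant kernel since $H_\ZZ=U$ is unimodular, so $\U(H)=\widetilde{\U}(H)$) with its action on each $L_j$ (which lies in $\U(L_j)\subseteq\Orth((L_j)_\ZZ)$), and then compare the trivial-character invariance $f(\alpha z)=f(z)$ with the homogeneity $f(\alpha z)=\alpha^{-k}f(z)$ to conclude $\mathrm{ord}(\alpha)\mid k$. Your write-up just spells out the discriminant-kernel verification in more detail than the paper does.
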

\begin{proof}
For any unit $\alpha \in \mathcal{O}_\F^\times$, the map $\alpha_{L_j}: z \mapsto \alpha\cdot z$ lies in $\U(L_j)$, and $\alpha_H : z \mapsto \alpha\cdot z$ lies in $\U(H) = \widetilde{\U}(H)$. Therefore $\Gamma \cap \U(M)$ contains the automorphism $\alpha_M : z \mapsto \alpha\cdot z$, which can be viewed as the composition of $\alpha_H$ and these $\alpha_{L_j}$. Suppose $\alpha$ has order $a$. Then
$$
f(z)=f(\alpha \cdot z)=\alpha^{-k}f(z),
$$
which implies that $a|k$ if $f$ is nonzero.
\end{proof}

Some of the root lattices in Theorem \ref{th:2precise} have complex multiplication over $\QQ(\sqrt{-1})$ or $\QQ(\sqrt{-3})$.  We obtain the following result by applying Theorem \ref{th:twins} to them.

\begin{theorem}\label{th:algebras-unitary}
\noindent
 Let $M$ be a Gaussian lattice whose associated $\ZZ$-lattice is $M_{\ZZ}=2U\oplus L$, where $L$ is  $2A_1\oplus D_4$, $2A_1\oplus D_6$ or $D_{10}$, or an Eisenstein lattice whose associated $\ZZ$-lattice is $M_{\ZZ}=2U\oplus L$, where $L$ is $A_2\oplus A_2$, $2A_2\oplus A_2$, $A_2\oplus D_4$, $2A_2\oplus D_4$ or $A_2\oplus E_6$. Let $\cH_L$ be the hyperplane arrangement for $M_{\ZZ}$ defined in Theorem \ref{th:2precise} and let $\cH_{\U}$ be the restriction of $\cH_L$ to $\cD_{\U}(M)$. Then $\cH_{\U}$ satisfies the Looijenga condition and the ring of modular forms for $\widetilde{\U}(M)$ with poles on $\cH_{\U}$ is freely generated.
\end{theorem}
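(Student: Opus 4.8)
The plan is to deduce the theorem from Theorem~\ref{th:twins}. Write $M$ in signature $(l,1)$, so that $M_{\ZZ}=2U\oplus L$ has signature $(2l,2)$ and $\rk(L)=2l-2$; thus $l=6$ for the Gaussian lattice with $L=D_{10}$, $l=5$ for the Eisenstein lattice with $L=A_2\oplus E_6$, and so on, and all eight lattices $L$ are among those covered by Theorem~\ref{th:2precise}. By that theorem the ring of modular forms for $\widetilde{\Orth}^+(M_{\ZZ})=\widetilde{\Orth}^+(2U\oplus L)$ with poles on $\cH_L$ is a polynomial algebra on $\rk(L)+3=2l+1$ generators, and $\cH_L$ satisfies the Looijenga condition on $\cD(M_{\ZZ})$. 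To invoke Theorem~\ref{th:twins} it then remains to check: (a) the restriction $\cH_{\U}$ of $\cH_L$ to $\cD_{\U}(M)$ also satisfies the Looijenga condition; and (b) exactly $l$ of these $2l+1$ generators restrict identically to zero on $\cD_{\U}(M)$. Given (a) and (b), Theorem~\ref{th:twins} gives that the ring of modular forms for $\widetilde{\U}(M)$ with poles on $\cH_{\U}$ is freely generated by the restrictions of the remaining $l+1$ generators.

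For (a): $\cH_{\U}$ is a finite union of $\widetilde{\U}(M)$-orbits of complex hyperplanes $v^{\perp}\cap\cD_{\U}(M)$, where $v$ runs over the $\mathcal{O}_{\F}$-primitive vectors of $M'=M_{\ZZ}'$ lying in the norm classes (all of norm $<3$, by Lemma~\ref{lem:norm of L}) that define $\cH_L$. A one-dimensional intersection of members of $\cH_{\U}$ has as its orthogonal complement the $\mathcal{O}_{\F}$-span of such a collection of vectors, an $\mathcal{O}_{\F}$-lattice of rank $l-1$; the Looijenga condition amounts to this span being positive definite, equivalently to the intersection having complex dimension at least two. I would verify this lattice by lattice, using the codimension bookkeeping of Corollary~\ref{cor:intersection}: each Heegner divisor of $\cH_L$ in a given norm class cuts the dimension of an intersection by a controlled amount, and since the $\mathcal{O}_{\F}$-rank of $M$ is large enough in all eight cases, every intersection keeps dimension $\geq 2$. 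This step is unavoidable, since the remark following Theorem~\ref{th:twins} shows that the Looijenga condition on $\cH_L$ need not pass to $\cH_{\U}$.

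For (b): write $M=H\oplus L$ with $H_{\ZZ}=U$ and decompose $L=\bigoplus_j L_j$ into irreducible root lattices. For a root of unity $\alpha\in\mathcal{O}_{\F}^{\times}$ the scalar automorphism $\alpha_M=\alpha_H\cdot\prod_j\alpha_{L_j}$ has $\alpha_H\in\widetilde{\U}(H)$ and $\alpha_{L_j}\in\Orth((L_j)_{\ZZ})$, so it lies in the group generated by $\widetilde{\Orth}^+(M_{\ZZ})$ and the $\Orth((L_j)_{\ZZ})$, and a generator $F$ of Theorem~\ref{th:2precise} is fixed by $\alpha_M$ exactly when each Jacobi form entering its leading Fourier--Jacobi coefficient is fixed by multiplication by $\alpha$ on the corresponding component of $L$. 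When this holds and $\alpha$ has order $a$, the argument of Lemma~\ref{lem:zero} forces $F|_{\cD_{\U}(M)}=0$ unless $a$ divides the weight of $F$. The Eisenstein-type generators are fixed by all of $\mathcal{O}_{\F}^{\times}$ (their leading coefficient is constant in $\mathfrak{z}$), so their restriction vanishes unless $4$ (resp. $6$) divides their weight; the generators of Jacobi and abelian type built from even-weight Weyl-invariant Jacobi forms are fixed by $-\mathrm{id}$ and by the rotation in $\Orth((L_j)_{\ZZ})$ realizing the complex multiplication, hence are subject to essentially the same constraint; but the generators involving the sign-anti-invariant theta quotients $\psi_{-m,D_m,1}$, and (for the Eisenstein lattices with an $A_2$-factor) the odd-weight abelian-type generators, are not, and will in general survive. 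Reading the weights off Theorem~\ref{th:2precise} and the appendix, I would check for each of the eight lattices that at least $l$ generators are killed in this way while at least $l+1$ restrict to forms with nonzero leading Fourier--Jacobi coefficient; since there are $2l+1$ generators in all, both counts then become equalities and Theorem~\ref{th:twins} applies.

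The main obstacle is the bookkeeping in step (b): Lemma~\ref{lem:zero} gives only a sufficient condition for a restriction to vanish, so one must classify the $2l+1$ generators by the group of scalar automorphisms fixing each, keep track of weights and of which root of unity applies, and confirm case by case that exactly $l$ of them are killed. The generators built from $\psi_{-m,D_m,1}$ (and the odd-weight abelian-type generators in the Eisenstein cases) are the delicate ones: they lie outside the scope of Lemma~\ref{lem:zero} and survive, and one must verify that their survival still leaves the count at exactly $l$. Step (a) is more routine but still requires a short per-lattice numerical verification.
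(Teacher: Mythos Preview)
Your overall strategy is exactly the paper's: verify the Looijenga condition for $\cH_{\U}$ and then feed Theorem~\ref{th:twins} by exhibiting $l$ generators of $M_*^!(\widetilde{\Orth}^+(M_{\ZZ}))$ whose restrictions vanish, using Lemma~\ref{lem:zero} as the main tool.

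However, your step (b) has a real gap in the harder Eisenstein cases, and your intuition about which generators survive is wrong in at least one of them. Take $L=2A_2\oplus D_4$ (so $l=5$, eleven generators, five must vanish). The two abelian generators of weight $1$ are \emph{not} invariant under the swap of the two $A_2$-factors, so Lemma~\ref{lem:zero} does not apply to them individually; yet both must vanish on $\cD_{\U}(M)$, contrary to your expectation that odd-weight abelian generators survive. The paper gets around this by forming the symmetric combinations $F_1^2+G_1^2$ and $F_1^2G_1^2$, which \emph{are} invariant under $\Orth(2A_2)\otimes\Orth(D_4)$ and have weights $2$ and $4$; Lemma~\ref{lem:zero} then kills both symmetric functions, forcing $F_1|_{\cD_{\U}}=G_1|_{\cD_{\U}}=0$. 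A second subtlety in the same case: among the two weight-$2$ Jacobi-type generators $F_2,G_2$ (built from $\phi_{-4,D_4,1}$ and $\psi_{-4,D_4,1}$), one must also vanish. Neither is individually $\Orth(D_4)$-invariant, but the paper uses the identity that $\phi_{-4,D_4,1}^2-\psi_{-4,D_4,1}^2$ is the $\Orth(D_4)=W(F_4)$-invariant generator of weight $-8$ and index $2$; hence $F_2^2-G_2^2$ is invariant of weight $4$, vanishes by Lemma~\ref{lem:zero}, and so one of $F_2\pm G_2$ restricts to zero. Your proposal, which applies Lemma~\ref{lem:zero} only to individual generators and predicts that $\psi$-type and odd-weight abelian generators ``survive'', misses both of these steps and would leave you short of the required five vanishings.
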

The weights of the generators of the twin free algebras $M_*^!(\widetilde{\Orth}^+(2U \oplus L))$ and $M_*^!(\widetilde{\U}(M))$ are listed in Tables \ref{tab:-1} and \ref{tab:-3} below.

\begin{proof}
It is easy to verify that the Looijenga condition is satisfied for all $\cH_{\U}$ by Lemma \ref{lem:intersection} and its unitary analogue. To apply Theorem \ref{th:twins} we need to argue that there are $l$ generators of orthogonal type whose restrictions to $\cD_{\U}(M)$ are identically zero. (Recall that the signature of $M_{\ZZ}$ is $(2l,2)$, so $l = \rk(L) / 2 - 1$.)

We will prove the existence of these generators for two of the harder cases; the other cases are similar and we omit the details.

(1) $L=2A_1\oplus D_6$ as a Gaussian lattice. As explained in Notation \ref{notation}, we can choose the generators of the ring of $W(D_6)$-invariant weak Jacobi forms as follows: (i) $\phi_{k,D_6,2}$ are invariant under $\Orth(D_6)$ for $k=-6, -8, -10$; (ii) $\phi_{k,D_6,1}$ are invariant under $\Orth(D_6)$ for $k=0, -2, -4$. (iii) $\psi_{-6,D_6,1}$ is invariant under $W(D_6)$ and anti-invariant under the odd sign change. Therefore, we can choose the generators of the ring of modular forms for $\widetilde{\Orth}^+(M_{\ZZ})$ with poles on $\cH_L$ in the following way.
\begin{itemize}
    \item[(a)] The two generators of Eisenstein type are invariant under $\Orth(2A_1)\otimes\Orth(D_6)$. By Lemma \ref{lem:zero}, the generator of Eisenstein type of weight $6$ restricts to zero.
    \item[(b)] There are two generators of abelian type, both of weight $2$. Up to scalar there is a unique linear combination of them which is invariant under $\Orth(2A_1)\otimes\Orth(D_6)$, and whose restriction must be zero by Lemma \ref{lem:zero}.
    \item[(c)] There are six generators of Jacobi type related to the six basic $\Orth(D_6)$-invariant weak Jacobi forms. They have weights $8$, $6$, $4$, $10$, $8$, $6$. By Lemma \ref{lem:zero}, the restrictions of three generators of weight $6$, $10$ and $6$ are zero.
\end{itemize}
We have found 5 generators of $M^!_*(\widetilde{\Orth}^+(M_{\ZZ}))$ whose restrictions are identically zero, so we can apply Theorem \ref{th:twins}.

\vspace{3mm}

(2) $L=2A_2\oplus D_4$ as an Eisenstein lattice. This case is more subtle. Recall that the ring of $W(A_2)$-invariant weak Jacobi forms has generators $\phi_{0,A_2,1}$, $\phi_{-2,A_2,1}$ and $\phi_{-3,A_2,1}$. Note that $\Orth(A_2)$ is generated by $W(A_2)$ and the sign change $\mathfrak{z}\mapsto -\mathfrak{z}$, so the even weight generators are invariant under $\Orth(A_2)$, but the odd weight generator is anti-invariant under the sign change.  The $W(D_4)$-invariant weak Jacobi forms have five generators: $\phi_{0,D_4,1}$, $\phi_{-2,D_4,1}$, $\phi_{-4,D_4,1}$, $\psi_{-4,D_4,1}$ and  $\phi_{-6,D_4,2}$. The full orthogonal group of $D_4$ is the Weyl group of the $F_4$ root system and its ring of weak Jacobi forms is generated in weights and indices $(0,1)$, $(-2,1)$, $(-6,2)$, $(-8,2)$, $(-12,3)$; we can take $\phi_{0,D_4,1}$, $\phi_{-2,D_4,1}$ and $\phi_{-6,D_4,2}$ to be $\Orth(D_4)$-invariant. Similar to the above case, we can choose the generators of the ring of modular forms for $\widetilde{\Orth}^+(M_{\ZZ})$ with poles on $\cH_L$ in the following way.
\begin{itemize}
\item[(a)] The two generators of Eisenstein type are invariant under $\Orth(2A_2)\otimes\Orth(D_4)$. Lemma \ref{lem:zero} shows that the weight $4$ generator restricts to zero.
\item[(b)] There are four generators of abelian type of weights $1,1,3,3$.  The two generators of weight $1$ are labelled $F_1$ and $G_1$, and they are mapped into each other by the element $\sigma \in \Orth(L)$ which swaps the two copies of $A_2$. Then $F_1^2+G_1^2$ and $F_1^2G_1^2$ are meromorphic modular forms of weight $2$ and $4$, invariant under $\Orth(2A_2)\otimes\Orth(D_4)$, whose restrictions have to be zero by Lemma \ref{lem:zero}. It follows that the restrictions of $F_1$ and $G_1$ themselves are identically zero.
\item[(c)] There are five generators of Jacobi type, corresponding to the five Jacobi forms $\phi_{0,D_4,1}$, $\phi_{-2,D_4,1}$, $\phi_{-4,D_4,1}$, $\psi_{-4,D_4,1}$ and  $\phi_{-6,D_4,2}$, and we label them $F_{6}$, $F_{4}$, $F_{2}$, $G_{2}$ and $G_{6}$ respectively, where the subscript indicates the weight. Our choice of the generators guarantees that $F_6^2$, $F_4^2$ and $G_6$ are invariant under $\Orth(2A_2)\otimes\Orth(D_4)$, which implies that the restriction of $F_4$ is zero. By \cite[\S 4.1]{Adl20}, the basic $\Orth(D_4)$-invariant weak Jacobi form of weight $-8$ and index $2$ can be expressed as $\phi_{-4,D_4,1}^2 -\psi_{-4,D_4,1}^2$ if we choose $\phi_{-4,D_4,1}$ and $\psi_{-4,D_4,1}$ in a suitable way. Then $F_2^2 - G_2^2$ is a modular form of weight $4$, which is invariant under $\Orth(2A_2)\otimes\Orth(D_4)$, and therefore restricts to zero. It follows that the restriction of one of $F_2+G_2$ or $F_2-G_2$ is identically zero.
\end{itemize}
As before, having found $5$ generators of $M^!_*(\widetilde{\Orth}^+(M_{\ZZ}))$ whose restrictions are identically zero, we can now apply Theorem \ref{th:twins}.
\end{proof}

\begin{table}[ht]
\caption{Free algebras of modular forms for $\widetilde{\U}(M)$ with poles on $\cH_{\U}$ over $\QQ(\sqrt{-1})$}\label{tab:-1}
\renewcommand\arraystretch{1.3}
\noindent\[
\begin{array}{|c|c|c|}
\hline 
L & \text{weights of generators of $M_*^!(\widetilde{\Orth}^+(M_\ZZ))$} & \text{weights of generators of $M_*^!(\widetilde{\U}(M))$} \\ 
\hline 
2A_1\oplus D_4 & 2, 2, 4, 4, 4, 6, 6, 8, 10 & 2, 4, 4, 4, 8 \\
\hline
2A_1\oplus D_6 & 2, 2, 2, 4, 4, 6, 6, 6, 8, 8, 10 & 2, 2, 4, 4, 8, 8\\
\hline
D_{10} & 2, 4, 6, 6, 8, 8, 10, 10, 12, 12, 14, 16, 18 & 2, 4, 8, 8, 12, 12, 16\\
\hline
\end{array} 
\]
\end{table}

\begin{table}[ht]
\caption{Free algebras of modular forms for $\widetilde{\U}(M)$ with poles on $\cH_{\U}$ over $\QQ(\sqrt{-3})$}\label{tab:-3}
\renewcommand\arraystretch{1.3}
\noindent\[
\begin{array}{|c|c|c|}
\hline 
L & \text{weights of generators of $M_*^!(\widetilde{\Orth}^+(M_\ZZ))$} & \text{weights of generators of $M_*^!(\widetilde{\U}(M))$} \\ 
\hline 
A_2\oplus A_2 & 1, 3, 4, 6, 6, 7, 9 & 3, 6, 6, 9 \\ 
\hline
2A_2\oplus A_2 & 1, 1, 3, 3, 3, 4, 4, 6, 6 & 3, 3, 3, 6, 6 \\
\hline
A_2\oplus D_4 & 1, 3, 4, 5, 5, 6, 7, 9, 12 & 3, 5, 6, 9, 12 \\
\hline
2A_2\oplus D_4 & 1, 1, 2, 2, 3, 3, 4, 4, 6, 6, 6 & 2, 3, 3, 6, 6, 6 \\
\hline
A_2\oplus E_6 & 1, 3, 4, 4, 6, 7, 9, 9, 10, 12, 15 & 3, 6, 9, 9, 12, 15\\
\hline
\end{array} 
\]
\end{table}

\begin{remark}
As a direct consequence of Theorem \ref{th:algebras-unitary}, the modular groups $\widetilde{\U}(M)$ related to these free algebras are generated by reflections. Thus they provide explicit examples of finite-covolume reflection groups acting on complex hyperbolic spaces (see \cite{All00a}). 
\end{remark}

\bigskip

\noindent
\textbf{Acknowledgements} 
H. Wang thanks Max Planck Institute for Mathematics (MPIM Bonn) for its hospitality where part of the work was done, and thanks Zhiwei Zheng for helpful discussions on the Looijenga compactification. H. Wang was supported by the Institute for Basic Science (IBS-R003-D1). B. Williams thanks Cris Poor and David Yuen for interesting discussions, especially related to Section \ref{sec:non-free}.

\addtocontents{toc}{\setcounter{tocdepth}{1}} 
\section{Appendix: Tables}\label{appendix}
\subsection{Weights of generators of free algebras of meromorphic modular forms}
On the following pages we list the weights of generators for the $147$ free algebras of meromorphic modular forms on the discriminant kernels of $2U \oplus L$, which $L=L_0\oplus L_1$ are root lattices in the three families \eqref{eq:lattices}. The allowed poles lie on the hyperplane arrangement $\cH_L = \cH_{L, 0} \cup \cH_{L, 1}$ defined in Section \ref{sec:type IV}. Generators of abelian type are never holomorphic, because they have poles on $\cH_{L, 0}$; while generators of Eisenstein and Jacobi type may be holomorphic or have poles only on the part $\cH_{L, 1}$. The tables list the weights of generators of Eisenstein, abelian, and Jacobi type, and the weight of their Jacobian.

We also list in Table \ref{tab:predict} some algebras we predict to be freely generated as well as the weights, but for which we have been unable to construct appropriate generators. The ring structure of $M_*(\Orth^+(2U \oplus E_8))$ was shown by \cite{HU14} to be the free algebra on generators of weights $4$, $10$, $12$, $16$, $18$, $22$, $24$, $28$, $30$, $36$, $42$ and it is very exceptional. It does not fit into the pattern of generators for the other root lattices so we omit it and other potential interesting algebras of meromorphic modular forms on root lattices containing it as a component.

\vspace{5mm}

\noindent\[
\renewcommand{\arraystretch}{1.0}
\begin{array}{|c|c|c|c|c|c|}
\hline
L_0 & L_1 & \text{Eisenstein} & \text{Abelian} & \text{Jacobi} & \text{wt}\, J \\
\hline
0 & A_1 & 4, 6 & - & 10, 12 & 35\\
A_1 & A_1 & 4, 6 & 2 & 8, 10  & 34 \\
2A_1 & A_1 & 4, 6 & 2, 2 & 6, 8 & 33 \\
3A_1 & A_1 & 4, 6 & 2, 2, 2 & 4, 6 & 32 \\
4A_1 & A_1 & 4, 6 & 2, 2, 2, 2 & 2, 4 & 31\\
A_1 \oplus A_2 & A_1 & 4, 6 & 1, 2, 3 & 5, 7 & 34 \\
A_1 \oplus A_3 & A_1 & 4, 6 & 1, 2, 2, 4 & 4, 6 & 36  \\
A_1 \oplus A_4 & A_1 & 4, 6 & 1, 2, 2, 3, 5 & 3, 5 & 39  \\
\hline
\end{array}
\]

\clearpage

\noindent\[
\renewcommand{\arraystretch}{1.0}
\begin{array}{|c|c|c|c|c|c|}
\hline
L_0 & L_1 & \text{Eisenstein} & \text{Abelian} & \text{Jacobi} & \text{wt}\, J \\
\hline
A_1 \oplus A_5 & A_1 & 4, 6 & 1, 2, 2, 3, 4, 6 & 2, 4 & 43 \\
A_1\oplus A_6 & A_1 & 4, 6 & 1, 2, 2, 3, 4, 5, 7 & 1, 3 & 48  \\
A_1 \oplus 2A_2 & A_1 & 4, 6 & 1, 1, 2, 3, 3 & 2, 4 & 34  \\
A_1 \oplus A_2 \oplus A_3 & A_1 & 4, 6 & 1, 1, 2, 2, 3, 4 & 1, 3 & 36  \\
2A_1 \oplus A_2 & A_1 & 4, 6 & 1, 2, 2, 3 & 3, 5 & 33 \\
2A_1 \oplus A_3 & A_1 & 4, 6 & 1, 2, 2, 2, 4 & 2, 4 & 35 \\
2A_1 \oplus A_4 & A_1 & 4, 6 & 1, 2, 2, 2, 3, 5 & 1, 3 & 38 \\
3A_1 \oplus A_2 & A_1 & 4, 6 & 1, 2, 2, 2, 3 & 1, 3 & 32 \\
A_2 & A_1 & 4, 6 & 1, 3 & 7, 9 & 35 \\
2A_2 & A_1 & 4, 6 & 1, 1, 3, 3 & 4, 6 & 35 \\
3A_2 & A_1 & 4, 6 & 1, 1, 1, 3, 3, 3 & 1, 3 & 35 \\
A_2 \oplus A_3 & A_1 & 4, 6 & 1, 1, 2, 3, 4 & 3, 5 & 37 \\
A_2 \oplus A_4 & A_1 & 4, 6 & 1, 1, 2, 3, 3, 5 & 2, 4 & 40 \\
A_2 \oplus A_5 & A_1 & 4, 6 & 1, 1, 2, 3, 3, 4, 6 & 1, 3 & 44 \\
A_3 & A_1 & 4, 6 & 1, 2, 4 & 6, 8 & 37 \\
2A_3 & A_1 & 4, 6 & 1, 1, 2, 2, 4, 4 & 2, 4 & 39 \\
A_3 \oplus A_4 & A_1 & 4, 6 & 1, 1, 2, 2, 3, 4, 5 & 1, 3 & 42 \\
A_4 & A_1 & 4, 6 & 1, 2, 3, 5 & 5, 7 & 40 \\
A_5 & A_1 & 4, 6 & 1, 2, 3, 4, 6 & 4, 6 & 44 \\
A_6 & A_1 & 4, 6 & 1, 2, 3, 4, 5, 7 & 3, 5 & 49 \\
A_7 & A_1 & 4, 6 & 1, 2, 3, 4, 5, 6, 8 & 2, 4 & 55 \\
A_8 & A_1 & 4, 6 & 1, 2, 3, 4, 5, 6, 7, 9 & 1, 3 & 62 \\
0 & A_2 & 4, 6 & - & 9, 10, 12 & 45\\
A_1 & A_2 & 4, 6 & 2 & 7, 8, 10 & 42 \\
2A_1 & A_2 & 4, 6 & 2, 2 & 5, 6, 8 & 39 \\
3A_1 & A_2 & 4, 6 & 2, 2, 2 & 3, 4, 6 & 36 \\
4A_1 & A_2 & 4, 6 & 2, 2, 2, 2 & 1, 2, 4 & 33 \\
A_1 \oplus A_2 & A_2 & 4, 6 & 1, 2, 3 & 4, 5, 7 & 39 \\
A_1 \oplus A_3 & A_2 & 4, 6 & 1, 2, 2, 4 & 3, 4, 6 & 40 \\
A_1 \oplus A_4 & A_2 & 4, 6 & 1, 2, 2, 3, 5 & 2, 3, 5 & 42 \\
A_1 \oplus A_5 & A_2 & 4, 6 & 1, 2, 2, 3, 4, 6 & 1, 2, 4 & 45 \\
A_1 \oplus 2A_2 & A_2 & 4, 6 & 1, 1, 2, 3, 3 & 1, 2, 4 & 36 \\
2A_1 \oplus A_2 & A_2 & 4, 6 & 1, 2, 2, 3 & 2, 3, 5 & 36 \\
2A_1 \oplus A_3 & A_2 & 4, 6 & 1, 2, 2, 2, 4 & 1, 2, 4 & 37 \\
A_2 & A_2 & 4, 6 & 1, 3 & 6, 7, 9 & 42 \\
2A_2 & A_2 & 4, 6 & 1, 1, 3, 3 & 3, 4, 6 & 39 \\
A_2 \oplus A_3 & A_2 & 4, 6 & 1, 1, 2, 3, 4 & 2, 3, 5 & 40 \\
A_2 \oplus A_4 & A_2 & 4, 6 & 1, 1, 2, 3, 3, 5 & 1, 2, 4 & 42 \\
A_3 & A_2 & 4, 6 & 1, 2, 4 & 5, 6, 8 & 43 \\
2A_3 & A_2 & 4, 6 & 1, 1, 2, 2, 4, 4 & 1, 2, 4 & 41 \\
A_4 & A_2 & 4, 6 & 1, 2, 3, 5 & 4, 5, 7 & 45 \\
A_5 & A_2 & 4, 6 & 1, 2, 3, 4, 6 & 3, 4, 6 & 48 \\
A_6 & A_2 & 4, 6 & 1, 2, 3, 4, 5, 7 & 2, 3, 5 & 52 \\
A_7 & A_2 & 4, 6 & 1, 2, 3, 4, 5, 6, 8 & 1, 2, 4 & 57 \\
0 & A_3 & 4, 6 & - & 8, 9, 10, 12 & 54 \\
A_1 & A_3 & 4, 6 & 2 & 6, 7, 8, 10 & 49 \\
2A_1 & A_3 & 4, 6 & 2, 2 & 4, 5, 6, 8 & 44 \\
\hline
\end{array}
\]

\clearpage

\noindent\[
\renewcommand{\arraystretch}{1.0}
\begin{array}{|c|c|c|c|c|c|}
\hline
L_0 & L_1 & \text{Eisenstein} & \text{Abelian} & \text{Jacobi} & \text{wt}\, J \\
\hline
3A_1 & A_3 & 4, 6 & 2, 2, 2 & 2, 3, 4, 6 & 39 \\
A_1 \oplus A_2 & A_3 & 4, 6 & 1, 2, 3 & 3, 4, 5, 7 & 43 \\
A_1 \oplus A_3 & A_3 & 4, 6 & 1, 2, 2, 4 & 2, 3, 4, 6 & 43 \\
A_1 \oplus A_4 & A_3 & 4, 6 & 1, 2, 2, 3, 5 & 1, 2, 3, 5 & 44 \\
2A_1 \oplus A_2 & A_3 & 4, 6 & 1, 2, 2, 3 & 1, 2, 3, 5 & 38 \\
A_2 & A_3 & 4, 6 & 1, 3 & 5, 6, 7, 9 & 48 \\
2A_2 & A_3 & 4, 6 & 1, 1, 3, 3 & 2, 3, 4, 6 & 42 \\
A_2 \oplus A_3 & A_3 & 4, 6 & 1, 1, 2, 3, 4 & 1, 2, 3, 5 & 42 \\
A_3 & A_3 & 4, 6 & 1, 2, 4 & 4, 5, 6, 8 & 48 \\
A_4 & A_3 & 4, 6 & 1, 2, 3, 5 & 3, 4, 5, 7 & 49 \\
A_5 & A_3 & 4, 6 & 1, 2, 3, 4, 6 & 2, 3, 4, 6 & 51 \\
A_6 & A_3 & 4, 6 & 1, 2, 3, 4, 5, 7 & 1, 2, 3, 5 & 54 \\
0 & A_4 & 4, 6 & - & 7, 8, 9, 10, 12 & 62 \\
A_1 & A_4 & 4,6 & 2 & 5, 6, 7, 8, 10 & 55 \\
2A_1 & A_4 & 4, 6 & 2, 2 & 3, 4, 5, 6, 8 & 48 \\
3A_1 & A_4 & 4,6 & 2, 2, 2 & 1, 2, 3, 4, 6 & 41 \\
A_1 \oplus A_2 & A_4 & 4, 6 & 1, 2, 3 & 2, 3, 4, 5, 7 & 46 \\
A_1 \oplus A_3 & A_4 & 4, 6 & 1, 2, 2, 4 & 1, 2, 3, 4, 6 & 45 \\
A_2 & A_4 & 4, 6 & 1, 3 & 4, 5, 6, 7, 9 & 53 \\
2A_2 & A_4 & 4, 6 & 1, 1, 3, 3 & 1, 2, 3, 4, 6 & 44 \\
A_3 & A_4 & 4, 6 & 1, 2, 4 & 3, 4, 5, 6, 8 & 52 \\
A_4 & A_4 & 4, 6 & 1, 2, 3, 5 & 2, 3, 4, 5, 7 & 52 \\
A_5 & A_4 & 4, 6 & 1, 2, 3, 4, 6 & 1, 2, 3, 4, 6 & 53 \\
0 & A_5 & 4, 6 & - & 6, 7, 8, 9, 10, 12 & 69 \\
A_1 & A_5 & 4, 6 & 2 & 4, 5, 6, 7, 8, 10 & 60 \\
2A_1 & A_5 & 4, 6 & 2, 2 & 2, 3, 4, 5, 6, 8 & 51 \\
A_1 \oplus A_2 & A_5 & 4, 6 & 1, 2, 3 & 1, 2, 3, 4, 5, 7 & 48 \\
A_2 & A_5 & 4, 6 & 1, 3 & 3, 4, 5, 6, 7, 9 & 57 \\
A_3 & A_5 & 4, 6 & 1, 2, 4 & 2, 3, 4, 5, 6, 8 & 55 \\
A_4 & A_5 & 4, 6 & 1, 2, 3, 5 & 1, 2, 3, 4, 5, 7 & 54 \\
0 & A_6 & 4, 6 & - & 5, 6, 7, 8, 9, 10, 12 & 75 \\
A_1 & A_6 & 4, 6 & 2 & 3, 4, 5, 6, 7, 8, 10 & 64 \\
2A_1 & A_6 & 4, 6 & 2, 2 & 1, 2, 3, 4, 5, 6, 8 & 53 \\
A_2 & A_6 & 4, 6 & 1, 3 & 2, 3, 4, 5, 6, 7, 9 & 60 \\
A_3 & A_6 & 4, 6 & 1, 2, 4 & 1, 2, 3, 4, 5, 6, 8 & 57 \\
0 & A_7 & 4,6 & - & 4, 5, 6, 7, 8, 9, 10, 12 & 80 \\
A_1 & A_7 & 4, 6 & 2 & 2, 3, 4, 5, 6, 7, 8, 10 & 67 \\
A_2 & A_7 & 4,6 & 1, 3 & 1, 2, 3, 4, 5, 6, 7, 9 & 62 \\
0 & A_8 & 4, 6 & - & 3, 4, 5, 6, 7, 8, 9, 10, 12 & 84 \\
A_1 & A_8 & 4, 6 & 2 & 1, 2, 3, 4, 5, 6, 7, 8, 10 & 69 \\
0 & A_9 & 4, 6 & - & 2, 3, 4, 5, 6, 7, 8, 9, 10, 12 & 87 \\
0 & A_{10} & 4,6 & - & 1, 2, 3, 4, 5, 6, 7, 8, 9, 10, 12 & 89 \\
\hline
\hline
0 & D_4 & 4, 6 & - & 8, 8, 10, 12, 18 & 72 \\
A_1 &D_4 & 4, 6 & 2 & 6, 6, 8, 10, 14 & 63 \\
2A_1 & D_4 & 4, 6 & 2, 2 & 4, 4, 6, 8, 10 & 54 \\
3A_1 & D_4 & 4, 6 & 2, 2, 2 & 2, 2, 4, 6, 6 & 45 \\
\hline
\end{array}
\]

\clearpage

\noindent\[
\renewcommand{\arraystretch}{1.0}
\begin{array}{|c|c|c|c|c|c|}
\hline
L_0 & L_1 & \text{Eisenstein} & \text{Abelian} & \text{Jacobi} & \text{wt}\, J \\
\hline
A_1 \oplus A_2 & D_4 & 4, 6 & 1, 2, 3 & 3, 3, 5, 7, 8 & 51 \\
A_1 \oplus A_3 & D_4 & 4, 6 & 1, 2, 2, 4 & 2, 2, 4, 6, 6 & 49 \\
A_1 \oplus A_4 & D_4 & 4, 6 & 1, 2, 2, 3, 5 & 1, 1, 3, 4, 5 & 48 \\
2A_1\oplus A_2 & D_4 & 4, 6 & 1, 2, 2, 3 & 1, 1, 3, 4, 5 & 42 \\
A_2 & D_4 & 4, 6 & 1, 3 & 5, 5, 7, 9, 12 & 60 \\
2A_2 & D_4 & 4, 6 & 1, 1, 3, 3 & 2, 2, 4, 6, 6 & 48 \\
A_2 \oplus A_3 & D_4 & 4, 6 & 1, 1, 2, 3, 4 & 1, 1, 3, 4, 5 & 46 \\
A_3 & D_4 & 4, 6 & 1, 2, 4 & 4, 4, 6, 8, 10 & 58 \\
A_4 & D_4 & 4, 6 & 1, 2, 3, 5 & 3, 3, 5, 7, 8 & 57 \\
A_5 & D_4 & 4, 6 & 1, 2, 3, 4, 6 & 2, 2, 4, 6, 6 & 57 \\
A_6 & D_4 & 4, 6 & 1, 2, 3, 4, 5, 7 & 1, 1, 3, 4, 5 & 58 \\
0 & D_5 & 4, 6 & - & 7, 8, 10, 12, 16, 18 & 88 \\
A_1 & D_5 & 4, 6 & 2 & 5, 6, 8, 10, 12, 14 & 75 \\
2A_1& D_5 & 4, 6 & 2, 2 & 3, 4, 6, 8, 8, 10 & 62 \\
3A_1 & D_5 & 4, 6 & 2, 2, 2 & 1, 2, 4, 4, 6, 6 & 49 \\
A_1 \oplus A_2 & D_5 & 4, 6 & 1, 2, 3 & 2, 3, 5, 6, 7, 8 & 57 \\
A_1 \oplus A_3 & D_5 & 4, 6 & 1, 2, 2, 4 & 1, 2, 4, 4, 6, 6 & 53 \\
A_2 & D_5 & 4, 6 & 1, 3 & 4, 5, 7, 9, 10, 12 & 70 \\
2A_2 & D_5 & 4, 6 & 1, 1, 3, 3 & 1, 2, 4, 4, 6, 6 & 52 \\
A_3 & D_5 & 4, 6 & 1, 2, 4 & 3, 4, 6, 8, 8, 10 & 66 \\
A_4 & D_5 & 4, 6 & 1, 2, 3, 5 & 2, 3, 5, 6, 7, 8 & 63 \\
A_5 & D_5 & 4, 6 & 1, 2, 3, 4, 6 & 1, 2, 4, 4, 6, 6 & 61 \\
0 & D_6 & 4, 6 & - & 6, 8, 10, 12, 14, 16, 18 & 102 \\
A_1 & D_6 & 4, 6 & 2 & 4, 6, 8, 10, 10, 12, 14 & 85 \\
2A_1 & D_6 & 4, 6 & 2, 2 & 2, 4, 6, 6, 8, 8, 10 & 68 \\
A_1 \oplus A_2 & D_6 & 4, 6 & 1, 2, 3 & 1, 3, 4, 5, 5, 6, 8 & 59 \\
A_2 & D_6 & 4, 6 & 1, 3 & 3, 5, 7, 8, 9, 10, 12 & 78 \\
A_3 & D_6 & 4, 6 & 1, 2, 4 & 2, 4, 6, 6, 8, 8, 10 & 72 \\
A_4 & D_6 & 4, 6 & 1, 2, 3, 5 & 1, 3, 4, 5, 5, 6, 8 & 65 \\
0 & D_7 & 4, 6 & - & 5, 8, 10, 12, 12, 14, 16, 18 & 114 \\
A_1 & D_7 & 4, 6 & 2 & 3, 6, 8, 8, 10, 10, 12, 14 & 93 \\
2A_1 & D_7 & 4, 6 & 2, 2 & 1, 4, 4, 6, 6, 8, 8, 10 & 72 \\
A_2 & D_7 & 4, 6 & 1, 3 & 2, 5, 6, 7, 8, 9, 10, 12 &  84 \\
A_3 & D_7 & 4, 6 & 1, 2, 4 & 1, 4, 4, 6, 6, 8, 8, 10 & 68 \\
0 & D_8 & 4, 6 & - & 4, 8, 10, 10, 12, 12, 14, 16, 18 & 124 \\
A_1 & D_8 & 4, 6 & 2 & 2, 6, 6, 8, 8, 10, 10, 12, 14 & 99 \\
A_2 & D_8 & 4, 6 & 1, 3 & 1, 4, 5, 6, 7, 8, 9, 10, 12 &  88 \\
0 & D_9 & 4, 6 & - & 3, 8, 8, 10, 10, 12, 12, 14, 16, 18 & 132 \\
A_1 & D_9 & 4, 6 & 2 & 1, 4, 6, 6, 8, 8, 10, 10, 12, 14 & 103 \\
0 & D_{10} & 4, 6 & - & 2, 6, 8, 8, 10, 10, 12, 12, 14, 16, 18 & 138 \\
0 & D_{11}& 4, 6 & - & 1, 4, 6, 8, 8, 10, 10, 12, 12, 14, 16, 18 & 142 \\
\hline
\hline
0 & E_6 & 4, 6 & - & 7, 10, 12, 15, 16, 18, 24 & 120 \\
A_1 & E_6 & 4, 6 & 2 & 5, 8, 10, 11, 12, 14, 18 & 99 \\
A_2 & E_6 & 4, 6 & 1, 3 & 4, 7, 9, 9, 10, 12, 18 & 93 \\
0 & E_7 & 4, 6 & - & 10, 12, 14, 16, 18, 22, 24, 30 & 165 \\
A_1 & E_7 & 4, 6 & 2 & 8, 10, 10, 12, 14, 16, 18, 22 & 132 \\
\hline
\end{array}
\]

\begin{table}[ht]
\caption{\textbf{Predicted} free algebras of meromorphic modular forms for discriminant kernels of lattices $2U \oplus L_0 \oplus L_1$ of type $AE$.}\label{tab:predict}
\renewcommand\arraystretch{1.0}
\noindent\[
\begin{array}{|c|c|c|c|c|c|}
\hline
L_0 & L_1 & \text{Eisenstein} & \text{Abelian} & \text{Jacobi} & \text{wt}\, J \\
\hline
2A_1 & E_6 & 4, 6 & 2, 2 & 3, 6, 7, 8, 8, 10, 12 & 78 \\
3A_1 & E_6 & 4, 6 & 2, 2, 2 & 1, 3, 4, 4, 6, 6, 6 & 57 \\
A_1 \oplus A_2 & E_6 & 4, 6 & 1, 2, 3 & 2, 5, 5, 6, 7, 8, 9 & 69 \\
A_1 \oplus A_3 & E_6 & 4, 6 & 1, 2, 2, 4 & 1, 3, 4, 4, 6, 6, 6 & 61 \\
2A_2 & E_6 & 4, 6 & 1, 1, 3, 3 & 1, 3, 4, 4, 6, 6, 6 & 60 \\
A_3 & E_6 & 4, 6 & 1, 2, 4 & 3, 6, 7, 8, 8, 10, 12 & 82 \\
A_4 & E_6 & 4, 6 & 1, 2, 3, 5 & 2, 5, 5, 6, 7, 8, 9 & 75 \\
A_5 & E_6 & 4, 6 & 1, 2, 3, 4, 6 & 1, 3, 4, 4, 6, 6, 6 & 69 \\
2A_1 & E_7 & 4, 6 & 2, 2 & 6, 6, 8, 8, 10, 10, 12, 14 & 99 \\
3A_1 & E_7 & 4, 6 & 2, 2, 2 & 2, 4, 4, 4, 6, 6, 6, 6 & 66 \\
A_1 \oplus A_2 & E_7 & 4, 6 & 1, 2, 3 & 4, 5, 6, 7, 7, 8, 9, 10 & 84 \\
A_1 \oplus A_3 & E_7 & 4, 6 & 1, 2, 2, 4 & 2, 4, 4, 4, 6, 6, 6, 6 & 70 \\
A_2 & E_7 & 4, 6 & 1, 3 & 7, 8, 9, 10, 12, 13, 15, 18 & 117 \\
2 A_2 & E_7 & 4, 6 & 1, 1, 3, 3 & 2, 4, 4, 4, 6, 6, 6, 6 & 69 \\
A_3 & E_7 & 4, 6 & 1, 2, 4 & 6, 6, 8, 8, 10, 10, 12, 14 & 103 \\
A_4 & E_7 & 4, 6 & 1, 2, 3, 5 & 4, 5, 6, 7, 7, 8, 9, 10 & 90 \\
A_5 & E_7 & 4, 6 & 1, 2, 3, 4, 6 & 2, 4, 4, 4, 6, 6, 6, 6 & 78 \\
\hline
\end{array}
\]
\end{table}

\subsection{Non-free algebras of holomorphic modular forms related to reducible root lattices}\label{sec:tables_hol}

In the following we describe the $26$ non-free algebras of holomorphic modular forms determined in \S \ref{sec:non-free}, associated to the discriminant kernel of $2U+R$ for a reducible root lattice $R$, by describing the weights of a minimal system of generators and the dimensions of the spaces of modular forms. The notation $k^n$ in the weights of the generators means that there are $n$ generators of weight $k$. The rational function associated to each root system $R$ is the Hilbert--Poincar\'e series $$\mathrm{Hilb}\, M_*(\widetilde{\Orth}^+(2U \oplus R)) = \sum_{k=0}^{\infty} \mathrm{dim}\, M_k(\widetilde{\Orth}^+(2U \oplus R)) t^k,$$ from which the dimensions can be extracted easily.

\vspace{3mm}

\begin{small}
\begin{enumerate}
    \item $R=2A_1$: weights of generators: 4, 6, 8, 10, 10, 12;
    $$
    \frac{1 + t^{10}}{(1 - t^4)(1-t^6)(1-t^8)(1-t^{10})(1-t^{12})}
    $$
    \item $R=3A_1$: weights of generators: $4, 6^2, 8^3, 10^3, 12$;
    $$
    \frac{1 + 2t^8 + 2t^{10} + t^{18}}{(1 - t^4)(1-t^6)^2(1-t^8)(1-t^{10})(1-t^{12})}
    $$
    \item $R=4A_1$: weights of generators: $4^2, 6^5, 8^6, 10^4, 12$;
    $$
    \frac{1 + 3t^6 + 5t^8 + 3t^{10} + 3t^{14} + 5t^{16} + 3t^{18} + t^{24}}{(1 - t^4)^2(1-t^6)^2 (1-t^8)(1-t^{10})(1-t^{12})}
    $$
    \item $R=A_1\oplus A_2$: weights of generators: $4, 6, 7, 8, 9, 10, 10, 12$;
    $$
    \frac{1+ t^{10} - t^{17} - t^{19}}{(1 - t^4)(1-t^6)(1-t^7)(1-t^8)(1-t^9)(1-t^{10})(1-t^{12})}
    $$
    \item $R=2A_1\oplus A_2$: weights of generators: $4, 5, 6^2, 7^2, 8^3, 9, 10^3, 12$;
    $$
    \frac{1 + t^5 + t^7 + 2t^8 + 3t^{10} + t^{12} - 2t^{17} + t^{18} - 2t^{19} - t^{22} - t^{24} - t^{25} - 2t^{27} - t^{29}}{(1-t^4)(1 - t^6)^2(1-t^7)(1-t^8)(1-t^9)(1-t^{10})(1-t^{12})}
    $$
    \item $R=2A_2$: weights of generators: $4, 6^2, 7^2, 8, 9^2, 10^2, 12$;
    $$
    \frac{1 + t^4 + t^7 + t^8 + t^9 + t^{10} + t^{11} + t^{12} + t^{13} + t^{14} + t^{15} +t^{18} + t^{22}}{(1-t^6)^2(1-t^7)(1-t^8)(1-t^9)(1-t^{10})(1-t^{12})}
    $$
    \item $R=3A_2$: weights of generators: $3, 4^4, 5^3, 6^5, 7^6, 8^3, 9^3, 10^3, 12$;
    \begin{align*}
    &( 1 + 3t^4 + 3t^5 + 3t^6 + 5t^7 + 5t^8 + 8t^9 + 8t^{10} + 9t^{11} + 12t^{12} + 12t^{13}\\
    &+ 15t^{14} + 16t^{15} + 16t^{16} +14t^{17} + 16t^{18} + 16t^{19} + 15t^{20} + 12t^{21}\\
    &+ 12t^{22} + 9t^{23} + 8t^{24} + 8t^{25} + 5t^{26} + 5t^{27} + 3t^{28} + 3t^{29} + 3t^{30} + t^{34} ) \\ 
    &/ (1 - t^3)(1-t^4)(1-t^6)^2(1-t^7)(1-t^8)(1-t^9)(1-t^{10})(1-t^{12})    
    \end{align*}
    \item $R=A_1\oplus A_3$: weights of generators: $4, 6, 6, 7, 8, 8, 9, 10, 10, 12$;
    $$
    \frac{1 - t^2 + t^6 - t^{15}}{(1-t^2)(1-t^4)(1 - t^6)(1-t^7)(1-t^8)(1-t^9)(1-t^{10})(1-t^{12})}
    $$
    \item $R=2A_1\oplus A_3$: weights of generators: $4^2, 5, 6^4, 7^2, 8^4, 9, 10^3, 12$;
    \begin{align*}
     &(1 + t^5 + 2t^6 + t^7 + 3t^8 + 3t^{10} + t^{12} +t^{14} - t^{15} + 2t^{16} - 2t^{17}\\
     &+ t^{18} - 2t^{19} - t^{21} -t^{22} - 2t^{23} - t^{24} - 3t^{25} - 2t^{27} - t^{29}) \\ 
     &/(1-t^4)^2(1-t^6)^2(1-t^7)(1-t^8)(1-t^9)(1-t^{10})(1-t^{12})
    \end{align*}
    \item $R=A_2\oplus A_3$: weights of generators: $4, 5, 6^3, 7^2, 8^2, 9^2, 10^2, 12$;
    \begin{align*}
      \frac{1 + t^5 + t^6 + t^7 + t^8 + t^9 + 2t^{10} + t^{11} + t^{12} + t^{13} + t^{14} + t^{15} + t^{16} + t^{18} + t^{20} +t^{21}}{(1 - t^6)(1-t^7)(1-t^8)(1-t^9)(1-t^{10})(1-t^{12})}
    \end{align*}
    \item $R=2A_3$: weights of generators: $4^2, 5^2, 6^4, 7^2, 8^3, 9^2, 10^2, 12$;
    \begin{align*}
    &(1 + 2t^5 + 2t^6 + t^7 + 2t^8 + t^9 + 3t^{10} + 2t^{11} +2t^{12}  + 2t^{13} \\
     &+ 2t^{14} + 2t^{15} + 3t^{16} + t^{17} + 2t^{18} + t^{19} + 2t^{20} + 2t^{21}  +t^{26}) \\ 
     &/(1-t^4)^2(1-t^6)^2(1-t^7)(1-t^8)(1-t^9)(1-t^{10})(1-t^{12})
    \end{align*}
    \item $R=A_1\oplus A_4$: weights of generators: $4, 5, 6, 6, 7, 7, 8, 8, 9, 10, 12$;
    $$
    \frac{1+t^7+t^8+t^{10}-t^{13}-t^{15}-t^{17}-t^{19}}{(1-t^4)(1 - t^5)(1-t^6)^2(1-t^7)(1-t^8)(1-t^9)(1-t^{10})(1-t^{12})}
    $$
    \item $R=A_2\oplus A_4$: weights of generators: $4^2, 5^2, 6^3, 7^3, 8^2, 9^2, 10^2, 12$;
    \begin{align*}
    &(1 + t^4 + t^5 + t^6 + 2t^7 + 2t^8 + 2t^9 + 2t^{10} + 2t^{11} + 2t^{12} + 2t^{13}\\
    &+ 2t^{14} + 2t^{15} + 2t^{16} + t^{17} + 2t^{18} + t^{19} + t^{20} + t^{21} + t^{22}) \\ 
    &/ (1-t^4)(1-t^5)(1-t^6)^2(1-t^7)(1-t^8)(1-t^9)(1-t^{10})(1-t^{12})
    \end{align*}
    \item $R=A_1\oplus A_5$: weights of generators: $4^2, 5, 6^3, 7^2, 8^2, 9, 10^2, 12$;
    \begin{align*}
    \frac{1 + t^6 + t^7 + t^8 + t^{10} - t^{11} - t^{13} -t^{15} - t^{17} - t^{19}}{(1 - t^4)^2(1-t^5)(1-t^6)^2(1-t^7)(1-t^8)(1-t^9)(1-t^{10})(1-t^{12})}
    \end{align*}
    \item $R=A_1\oplus 2A_2$: weights of generators: $4^2, 5^2, 6^3, 7^4, 8^3, 9^2, 10^3, 12$;
    \begin{align*}
    &(1 + t^{4} + 2t^{5} + t^{6} + 3t^{7} + 3t^{8} + 3t^{9} + 5t^{10} + 3t^{11} + 5t^{12} + 4t^{13} + 5t^{14} + 6t^{15} + 5t^{16}\\
    & + 4t^{17} + 6t^{18} + 4t^{19} + 5t^{20} + 4t^{21} + 3t^{22} + 3t^{23} + t^{24} + 2t^{25} + t^{26} + t^{28} + t^{30})\\
    & / (1 - t^4)(1 - t^6)^2(1 - t^7)(1 - t^8)(1 - t^9)(1 - t^{10})(1 - t^{12})
    \end{align*}
    \item $R=A_1\oplus D_4$: weights of generators: $4, 6^3, 8^3, 10^2, 12, 14, 16, 18$;
    \begin{align*}
    \frac{1 + 2t^6 + t^8 + t^{10} + 3t^{12} + t^{14} + 2t^{16} + 3t^{18} + t^{20} + t^{22} + t^{24}}{(1 - t^4)(1-t^6)(1-t^8)^2 (1-t^{10})(1-t^{12})(1-t^{14})(1-t^{18})}
    \end{align*}
    \item $R=2A_1\oplus D_4$: weights of generators: $4^3, 6^6, 8^5, 10^4, 12^3, 14^3, 16^2, 18$;
    \begin{align*}
    &(1 - t^{2} + 2t^{4} + 3t^{6} + t^{8} + 5t^{10} + 7t^{12} + 4t^{14} + 10t^{16} + 8t^{18} \\
    &+ 7t^{20} + 9t^{22} + 8t^{24} + 6t^{26} + 6t^{28} + 4t^{30} + 3t^{32} + t^{34} + t^{36}) \\ 
    &/(1-t^2)(1-t^4)(1-t^6)(1-t^8)^2(1-t^{10})(1-t^{12})(1-t^{14})(1-t^{18})
    \end{align*}
    \item $R=A_2\oplus D_4$: weights of generators: $4, 5^2, 6^3, 7, 8^3, 9, 10^2, 12^2, 13, 14, 15, 16, 18$;
    \begin{align*}
    &(1 + t^{5} + t^{6} + t^{7} + t^{8} + t^{9} + 2t^{10} + 3t^{12}  + 2t^{14} + t^{15} + 2t^{16} + t^{17} + 2t^{19} - t^{20} + t^{21} - t^{22}\\
    &- t^{23} - t^{24} - t^{25} - t^{27} - t^{29} - 2t^{30} - 2t^{32} - t^{33} - 2t^{34} - t^{35} - 2t^{36} - t^{37} - t^{38} - t^{39}) \\
    & / (1 - t^4)(1-t^5)(1-t^6)^2(1-t^8)^2(1-t^{10})(1-t^{12})(1-t^{14})(1-t^{18})
    \end{align*}
    \item $R=A_3\oplus D_4$: weights of generators: $4^3, 5^2, 6^4, 7, 8^4, 9, 10^3,11, 12^3, 13, 14^2, 15, 16, 18$;
    \begin{align*}
    &(1 - t^{3} + t^{4} + t^{5} + 3t^{6} + 2t^{8} - 2t^{9} + 4t^{10} - t^{11} + 8t^{12} - 3t^{13} + 6t^{14} - 6t^{15} + 8t^{16} - 4t^{17}\\ 
    &+ 9t^{18} - 6t^{19} + 7t^{20} - 7t^{21} + 7t^{22} - 7t^{23} + 9t^{24} - 7t^{25} + 7t^{26} - 9t^{27} + 5t^{28} - 9t^{29}\\
    &+ 3t^{30} - 7t^{31} + 3t^{32} - 5t^{33} + t^{34} - 5t^{35} + t^{36} - 2t^{37} + t^{38} - 2t^{39} - 2t^{41} - t^{42} - t^{43}) \\
    &/ (1-t^3)(1-t^4)^2(1-t^5)(1-t^6)(1-t^8)^2 (1-t^{10})(1-t^{12})(1-t^{14})(1-t^{18})
    \end{align*}
    \item $R=2D_4$: weights of generators: $4^5, 6^5, 8^5, 10^8, 12^6, 14^6, 16^2, 18^2$;
    \begin{align*}
    &(1 + 2t^{4} + 3t^{6} + 5t^{8} + 8t^{10} + 11t^{12} + 10t^{14} + 12t^{16} + 10t^{18} + 13t^{20} + 9t^{22}  + 10t^{24} + 4t^{26}  - 4t^{28} \\
    & - 10t^{30} - 9t^{32} - 13t^{34} - 10t^{36} - 12t^{38} - 10t^{40} - 11t^{42}  - 8t^{44} - 5t^{46} - 3t^{48} - 2t^{50} - t^{54}) \\ 
    & /(1-t^4)^3(1-t^6)^2 (1-t^8)^2 (1-t^{10})^2 (1 - t^{12})(1-t^{14})(1-t^{18})
    \end{align*}
    \item $R=A_1\oplus D_5$: weights of generators: $4, 5, 6^2, 7, 8^2, 10^2, 12^2, 14^2, 16^2, 18$;
    \begin{align*}
    \frac{1 + t^6 + t^7 + t^8 + t^{10} + 2t^{12} + 3t^{14} + 2t^{16} + 2t^{18} + 2t^{20} + t^{22} + t^{24} + t^{26}}{(1 - t^4)(1-t^5)(1-t^6)(1-t^8)(1-t^{10})(1-t^{12})(1-t^{14})(1-t^{16})(1-t^{18})}
    \end{align*}
    \item $R=A_2\oplus D_5$: weights of generators: $4^2, 5^2, 6^2, 7^2, 8^2, 9, 10^3, 11, 12^3, 13^2, 14^2, 15, 16^2, 18$;
    \begin{align*}
    &(1 - t^{3} + t^{4} + t^{5} + t^{6} + t^{7} + t^{8} + t^{9} + t^{10} + 2t^{11} + 3t^{12} + 2t^{13}  + 3t^{14} \\
    & + t^{15} + 3t^{16} + t^{17} + 4t^{18} + 2t^{19} + 4t^{20} + 2t^{21} + 4t^{22} + 2t^{23} + 4t^{24} \\ 
    &+ 2t^{25} + 3t^{26} + t^{27} + 2t^{28} + t^{29} + 2t^{30} + 2t^{31} + t^{32} + t^{33} + t^{34} + t^{35}) \\ & /(1-t^3)(1-t^4)(1-t^5)(1-t^6)(1-t^8)(1-t^{10})(1-t^{12})(1-t^{14})(1-t^{16})(1-t^{18})
    \end{align*}
    \item $R=A_1\oplus D_6$: weights of generators: $4^2, 6^3, 8^2, 10^3, 12^3, 14^3, 16^2, 18$;
    \begin{align*}
    \frac{1+t^4+t^6+2t^8+2t^{10}+4t^{12}+4t^{14}+4t^{16}+3t^{18}+3t^{20}+2t^{22}+2t^{24}+t^{26}+t^{28}}{(1-t^4)(1-t^6)^2(1-t^8)(1-t^{10})^2(1-t^{12})(1-t^{14})(1-t^{16})(1-t^{18})}
    \end{align*}
    \item $R=A_1\oplus E_6$: weights of generators: $4, 5, 6, 7, 8, 10^2, 11, 12^2, 13, 14^2, 15, 16^2, 18^2, 20, 22, 24$;
    \begin{align*}
    &(1 + t^{5} + t^{7} + t^{8} + 2t^{10} + 2t^{12} + 2t^{13} + 2t^{14}  + 2t^{15} + 2t^{16} + 2t^{17} + 3t^{18} + 2t^{19} + 4t^{20} + 2t^{21} + 4t^{22} \\
    & + 2t^{23} + 3t^{24} + 2t^{25} + 3t^{26} + 2t^{27} + 3t^{28} + t^{29} + 3t^{30}  + t^{31} + 2t^{32} + t^{33} + t^{34} + t^{35} + t^{36} + t^{38} + t^{40}) \\
    & / (1-t^4)(1-t^6)(1-t^{10})(1-t^{11})(1-t^{12})(1-t^{14})(1-t^{15})(1-t^{16})(1-t^{18})(1-t^{24})
    \end{align*}
    \item $R=A_2\oplus E_6$: weights of generators: $4^2$, 5, 6, $7^2$, 8, 9, $10^3$, 11, $12^3$, $13^2$, $14^2$, $15^2$, $16^3$, 17, $18^3$, 19, 20, 21, 22, 24;
    \begin{align*}
    &(1 + t^{4} + t^{5} + 2t^{7} + 2t^{8} + 2t^{9} + 4t^{10} + 3t^{11} + 6t^{12} + 6t^{13} + 6t^{14} + 8t^{15} + 9t^{16} + 11t^{17}\\
    & + 12t^{18} + 13t^{19} + 16t^{20} + 16t^{21} + 19t^{22} + 18t^{23} + 21t^{24} + 23t^{25} + 23t^{26} + 24t^{27} \\
    & + 27t^{28} + 26t^{29}  + 29t^{30}+ 28t^{31} + 29t^{32} + 28t^{33} + 30t^{34} + 28t^{35} + 28t^{36} + 28t^{37} \\
    & + 28t^{38} + 25t^{39} + 26t^{40}  + 22t^{41} + 22t^{42} + 21t^{43} + 18t^{44} + 16t^{45} + 16t^{46} + 13t^{47} \\
    &+ 11t^{48} + 10t^{49} + 9t^{50} + 7t^{51} + 6t^{52} + 4t^{53} + 3t^{54} + 3t^{55} + 2t^{56} + t^{57} + t^{58} + t^{59}) \\
    & / (1-t^4)(1-t^6)(1-t^9)(1-t^{10})(1-t^{11})(1-t^{12})(1-t^{14})(1-t^{15})(1-t^{16})(1-t^{18})(1-t^{24}) 
    \end{align*}
    \item $R=A_1\oplus E_7$: weights of generators: $4, 6, 8, 10^3, 12^3, 14^3, 16^3, 18^3,20^2, 22^3, 24^2, 26, 28, 30$;
    \begin{align*}
    \frac{1 - t^2 + t^8 + t^{10} + t^{16} + t^{18} + t^{20} + t^{26} + t^{28} + t^{36}}{(1 - t^2)(1-t^4)(1-t^6)(1-t^{10})(1-t^{12})(1-t^{14})(1-t^{16})(1-t^{18})(1-t^{22})(1-t^{24})(1-t^{30})}
    \end{align*}
\end{enumerate}
\end{small}

\bibliographystyle{plainnat}
\bibliofont
\bibliography{refs}

\end{document}